\theoremstyle{plain}
\newtheorem*{atw*}{Theorem}
\newtheorem{atw}{Theorem}[section]
\newtheorem*{lemma*}{Lemma}
\newtheorem{lemma}[atw]{Lemma}
\newtheorem{pro}[atw]{Proposition}
\newtheorem*{pro*}{Proposition}
\newtheorem{cor}[atw]{Corollary}
\newtheorem*{cor*}{Corollary}
\theoremstyle{remark}
\newtheorem{rem}[atw]{Remark}
\newtheorem*{rem*}{Remark}
\newtheorem{ex}[atw]{Example}
\newtheorem*{ex*}{Example}
\theoremstyle{definition}
\newtheorem{adf}[atw]{Definition}
\newtheorem*{adf*}{Definition}
\newcommand{\ws}{{\underline{ws}}}
\newcommand{\sw}{{\underline{sw}}}
\newcommand{\wu}{{\underline{w}}}
\def\epsilon{\varepsilon}
\def\DL#1{{\mathcal{T}^{\textsc r}_{#1}}}
\def\DLL#1{{\mathcal{T}^{\textsc l}_{#1}}}
\def\DLq#1{{\mathcal{T}^{{\textsc r},q}_{#1}}}
\def\DLLq#1{{\mathcal{T}^{{\textsc l},q}_{#1}}}
\newcommand{\mC}{\operatorname{mC}^\T}
\newcommand{\mCloc}{\operatorname{mC}_{\rm loc}^\T}
\newcommand{\St}{\operatorname{stab}_{\mathfrak{C_+}}}
\newcommand{\StC}{\operatorname{stab}}
\newcommand{\Hom}{\operatorname{Hom}}
\newcommand{\Pic}{\operatorname{Pic}}
\newcommand{\rank}{\operatorname{rk}}
\newcommand{\KTh}{\operatorname{K}}
\newcommand{\eu}{\operatorname{eu}}
\newcommand{\Z}{\mathbb{Z}}
\newcommand{\Q}{\mathbb{Q}}
\newcommand{\R}{\mathbb{R}}
\newcommand{\C}{\mathbb{C}}
\newcommand{\T}{\mathbb{T}}
\newcommand{\PP}{\mathbb{P}}
\newcommand{\ttt}{{\mathfrak t}}
\def\O{\mathcal{O}}
\def\Ls{\mathcal{L}_s}
\def\LL{\mathcal{L}}
\def\id{{\rm id}}
\newcommand{\xtto}[1]{\stackrel{#1}{\longrightarrow}}
\newcommand{\xto}[1]{{\xrightarrow{#1}}}
\newcommand{\mono}{\hookrightarrow}
\def\Tf#1#2{{\mathfrak T}_{#1,#2}}
\def\Tcc#1#2{{\mathscr T}_{#1,#2}}
\def\Tfr#1#2{{\mathfrak T}^{\textsc r}_{{#1,#2}}}
\def\Tfl#1#2{{\mathfrak T}^{\textsc l}_{#1,#2}}
\def\Tcr#1#2{{\mathscr T}^{ \textsc r}_{#1,#2}}
\def\Tcrr#1#2{\widetilde{\mathscr T}^{ \textsc r}_{#1,#2}}
\def\Tcl#1#2{{\mathscr T}^{\textsc l}_{#1,#2}}
\def\BB{{\mathscr B}}
\def\EE{{\mathscr E}}
\def\XX{{\mathcal X}}
\def\ZL#1{{\mathcal{Z}^{\textsc{l}}_{#1}}}
\def\ZR#1{{\mathcal{Z}^{\textsc{r}}_{#1}}}
\def\GL{{\rm GL}}
\def\SL{{\rm SL}}
\def\TD{{\T^2}}
\def\oo{\circ}
\def\eheta{{\varepsilon}}
\newcommand{\peheta}{\eheta}
\newcommand{\ehet}{{\widehat\varepsilon}}
\newcommand{\pehet}{\ehet}
\newcommand{\mCD}{\operatorname{mC}^{\T^2}}
\newcommand{\mCm}{\operatorname{mC}^{\T^2}}
\newcommand{\mClocD}{\operatorname{mC}_{\rm loc}^{\T^2}}
\def\mClocDL#1#2{{\operatorname{mC}_{\rm loc}^{\T^2}(\ZL{#1},#2)}}
\def\mClocDR#1#2{{\operatorname{mC}_{\rm loc}^{\T^2}(\ZR{#1},#2)}}
\newcommand{\dv}{\operatorname{div}}
\newcommand{\End}{\operatorname{End}}
\title[Hecke action on twisted motivic Chern classes]
{Hecke algebra action on twisted motivic Chern classes and K-theoretic stable envelopes}
\author{Jakub Koncki}
\address{Institute of Mathematics, Polish Academy of Sciences, Poland}
\address{Institute of Mathematics, University of Warsaw, Poland}
\email{j.koncki@mimuw.edu.pl}
\author{Andrzej Weber}
\address{Institute of Mathematics, University of Warsaw, Poland}
\email{aweber@mimuw.edu.pl}
\thanks{
	AW is supported by the National Science Centre (Poland) grant 2022/47/B/ST1/01896,
	JK is supported by the National Science Centre (Poland) grant 2023/48/C/ST1/00002.
	Data sharing is not applicable to this article as no datasets were generated or analysed during the current study. The authors have no conflicts of interest to declare that are relevant to the content of this article. We would like to thank the anonymous referee for helpful comments.}
\begin{document}

\begin{abstract} Let $G$ be a linear semisimple algebraic group and $B$ its Borel subgroup. Let $\T\subset B$ be the maximal torus. 
We study the inductive construction of Bott-Samelson varieties to obtain recursive formulas for the twisted motivic Chern classes of Schubert cells in $G/B$. To this end we introduce two families of operators acting on the equivariant K-theory $\KTh_\T(G/B)[y]$, the right and left Demazure-Lusztig operators depending on a parameter. 
The twisted motivic Chern classes coincide (up to normalization) with the K-theoretic stable envelopes. 
Our results imply wall-crossing formulas for a change of the weight chamber and slope parameters.
The right and left operators generate a twisted double Hecke algebra. We show that in the type $A$ this algebra acts on the Laurent polynomials. This action is a natural lift of the action on $\KTh_\T(G/B)[y]$ with respect to the Kirwan map. We show that the left and right twisted Demazure-Lusztig operators provide a recursion for twisted motivic Chern classes of matrix Schubert varieties.  
\end{abstract}
\maketitle	

\section{Introduction}
Schubert varieties and their cohomological invariants are important objects of enumerative geometry.
These varieties are usually singular, yet they admit a well-studied resolution of singularities called the Bott-Samelson resolution.
In many cases the inductive construction of Bott-Samelson varieties gives rise to  recursive formulas for various cohomological  invariants. These formulas allow us to compute the class of a Schubert cell from classes of smaller cells.

The study of inductive properties of various characteristic classes of Schubert varieties is widely presented in the literature. Starting from formulas for fundamental classes in cohomology \cite{BGG} or in K-theory \cite{LascouxSchutz, KostantKumar} the recursion based on word length became a standard feature of cohomological study of homogenous varieties. Among further important contributions we mention \cite{Brion2, Kn} for fundamental classes, \cite{AM,MNS} for $c_{SM}$ classes, \cite{AMSS,MNS, AMSwhit} for motivic Chern classes, \cite{AMSSnew} for Hirzebruch-Todd classes, \cite{SZZ,SZZ2} for stable envelopes, \cite{RW,KRW} for elliptic classes and \cite{MNS} for classes in the quantum cohomology. Most of the mentioned results are nicely reviewed in \cite{MNS}. In \cite{RWLag} the study of such recurrences is used to observe an instance of mirror symmetry. In this paper we study the inductive properties of the twisted motivic Chern class.

The motivic Chern class $\operatorname{mC}$ was defined in \cite{BSY}. It generalizes several other classes such as Chern-Schwartz-MacPherson $c_{SM}$ class \cite{CSM}, Baum-Fulton-MacPherson Todd class \cite{BFM}, or L-class (see \cite{SYp} for a broad survey). It may be thought of as a relative, K-theoretic version of the Hirzebruch-Todd genus \cite{chiy}, which is also defined for singular varieties. The equivariant versions are due to \cite{Oh2,WeHir,FRW,AMSS}. In \cite{KonW} the twisted version of the motivic Chern class was defined. It takes into account a chosen fractional line bundle. The motivic Chern class (and its twisted version) of a locally closed subvariety $X\subset M$ can be explicitly computed in terms of a resolution of singularities of the closure of $X$.

We consider semisimple, simply connected algebraic group $G$ with a chosen Borel subgroup $B$ and a maximal torus $\T$. The rational Picard group $\Pic(G/B)\otimes\Q$ is isomorphic to the space of rational characters $\Hom(\T,\C^*)\otimes\Q$. We study the twisted motivic Chern class $\mC(w,\lambda)=\mC(X_w,\partial X_w;\LL(\lambda))$ of the Schubert variety $X_w$ for a Weyl group element $w$ and a fractional character $\lambda$ (see Section~\ref{s:mC} for a definition). To compute such classes we use the Bott-Samelson resolution. 
It assigns to a reduced word decomposition $\wu$ of a Weyl group element $w$ a resolution of singularities of the Schubert variety $X_w$
$$p_\wu:Z_\wu \to X_w \subset G/B\,.$$
Let $s$ be a simple reflection, $\alpha_s$ the corresponding simple root and $P_s$ the corresponding minimal parabolic subgroup. Suppose that $w$ is a Weyl group element such that $ws$ is longer than $w$. The Bott-Samelson variety $Z_\ws$ can be constructed inductively as a fiber product
$$
\xymatrix{
Z_\ws \ar[rr]^-{p_\ws} \ar[d]&&G/B\ar[d]\\
Z_\wu\ar[r]^-{p_\wu}&G/B\ar[r]&G/P_s
}
$$

This construction induces a recursive formula for the twisted class. We define the twisted Demazure-Lusztig operator

$$\DL{s,\lambda}=\frac{1+y\Ls^*}{1-\Ls}\cdot s^{\textsc r} -\frac{(1+y)\cdot\Ls^{\lceil -\langle\lambda,\alpha_s^\vee\rangle\rceil}}{1-\Ls} \cdot \id_{\KTh_\T(G/B)[y]} \,,$$
where  $\Ls$ is the relative tangent bundle of the projection $G/B\to G/P_s$ and $s^{\textsc r}$ denotes the right Weyl group action of $s$ (see Section \ref{s:Weyl}).
Our first result is the following theorem.
\begin{atw*}[\ref{rem:tw1}] 
	Let $w \in (G/B)^\T\simeq W$ be a fixed point and $\lambda$ a general enough  fractional character. Then
	$$(-y)^{\frac{1}{2}(l(w)+1-l(ws))}\cdot\mC(ws,s\lambda)=\DL{s,\lambda}(\mC(w,\lambda))\,.$$
\end{atw*}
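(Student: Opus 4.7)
The strategy is to derive the recursion from the fiber-product structure $Z_\ws=Z_\wu\times_{G/P_s}G/B$ of the Bott-Samelson variety, combined with an explicit push-pull computation on the $\PP^1$-fibration $\pi_s\colon G/B\to G/P_s$. First, I would write both sides as pushforwards from Bott-Samelson varieties: by the log-resolution definition of the twisted motivic Chern class, there are explicit classes $\alpha_\wu(\lambda)\in K_\T(Z_\wu)[y]$ with $\mC(w,\lambda)=(p_\wu)_*\alpha_\wu(\lambda)$, and analogously for $\mC(ws,s\lambda)$ on $Z_\ws$, built from pullbacks of $\LL(\lambda)$ and the relative tangent data of the iterated $\PP^1$-bundle.

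Next, using flat base change in the fiber square
\[
\begin{array}{ccc}
Z_\ws & \xrightarrow{p_\ws} & G/B \\
q\downarrow & & \downarrow\pi_s \\
Z_\wu & \xrightarrow{\pi_s\circ p_\wu} & G/P_s
\end{array}
\]
I can rewrite $(p_\ws)_*\circ q^*=\pi_s^*\circ(\pi_s)_*\circ(p_\wu)_*$, reducing the recursion to understanding the push-pull operator $\pi_s^*(\pi_s)_*$ on $K_\T(G/B)[y]$. Since $\pi_s$ is a $\PP^1$-bundle whose vertical tangent bundle is $\Ls$, localization at the two $\T$-fixed points in each fiber yields
\[
\pi_s^*(\pi_s)_*(\beta)=\frac{\beta}{1-\Ls}+\frac{s^{\textsc r}\beta}{1-\Ls^{-1}}.
\]
Multiplying by the motivic weighting factor $1+y\Ls^*$ attached to the new $\PP^1$-fiber direction produces the first summand $\frac{1+y\Ls^*}{1-\Ls}s^{\textsc r}$ of $\DL{s,\lambda}$. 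The scalar summand $-\frac{(1+y)\Ls^{\lceil-\langle\lambda,\alpha_s^\vee\rangle\rceil}}{1-\Ls}\id$ appears as the correction needed to transport the twist $\LL(\lambda)$ on $Z_\wu$ to $\LL(s\lambda)$ on $Z_\ws$ across the extra boundary component of $Z_\ws$ that is not pulled back from $Z_\wu$; the fractional shift $\langle\lambda,\alpha_s^\vee\rangle$ is absorbed into the divisor with a ceiling, as dictated by the log-resolution prescription.

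The prefactor $(-y)^{(l(w)+1-l(ws))/2}$ equals $1$ when $l(ws)=l(w)+1$ (the natural direction of the Bott-Samelson recursion) and $-y$ when $l(ws)=l(w)-1$, reflecting the unit change of dimension between $X_w$ and $X_{ws}$ in the motivic normalization, where each dimension contributes a factor of $-y$. The hard part will be tracking the fractional-twist correction in the scalar summand: the fractional character $\lambda$ does not define an honest line bundle on $Z_\ws$, so the log-resolution definition of $\mC(ws,s\lambda)$ assigns a \emph{rounded} multiplicity to the exceptional boundary divisor, and this rounding must survive the push-pull calculation precisely as the exponent $\lceil-\langle\lambda,\alpha_s^\vee\rangle\rceil$. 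The hypothesis that $\lambda$ be ``general enough'' presumably ensures $\langle\lambda,\alpha_s^\vee\rangle\notin\Z$, so that the rounding is sharp and the verification reduces to a transparent local calculation on a single $\PP^1$-fiber.
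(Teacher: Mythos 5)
Your outline for the increasing-length case $l(ws)>l(w)$ is essentially the paper's argument: one uses the fiber-square form of the Bott--Samelson construction, the equivariant Kempf lemma to relate the twist $\LL(s\lambda)$ on $Z_\ws$ to $\LL(\lambda)$ on $Z_\wu$ plus the ceiling multiplicity along the section $i(Z_\wu)$, and then the Lefschetz--Riemann--Roch formula to compute the pushforward fixed-point by fixed-point. However, your push-pull identity has the two summands mislabelled: the correct localization formula is
$$\pi_s^*(\pi_s)_*(\beta)=\frac{\beta}{1-\Ls^{-1}}+\frac{s^{\textsc r}\beta}{1-\Ls}\,,$$
since the tangent weight at $\sigma s$ along the fiber is the inverse of the weight at $\sigma$ and $\beta_{|\sigma s}=(s^{\textsc r}\beta)_{|\sigma}$. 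With your version, multiplying by $1+y\Ls^*$ would place the factor $\frac{1+y\Ls^*}{1-\Ls^{-1}}$ in front of $s^{\textsc r}$, which is not the first summand of $\DL{s,\lambda}$. The two expressions coincide only when $\beta=s^{\textsc r}\beta$, so this must be fixed before the argument produces the operator in definition \ref{df:DL}.

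The more serious gap is the decreasing-length case $l(ws)<l(w)$, which the stated corollary covers and which you dismiss with the remark that ``each dimension contributes a factor of $-y$.'' This is not a valid argument: when $l(ws)<l(w)$ the word $\ws$ is not reduced, $Z_\ws$ is not built from $Z_\wu$ by adding a letter, and the fiber-square structure you rely on disappears. The paper instead derives this case from the increasing case via the quadratic relation $\DL{s,\lambda}\circ\DL{s,s\lambda}=-y\,\id$ of proposition \ref{pro:kwadrat1}/corollary \ref{pro:kwadrat}. Concretely, applying theorem \ref{tw:1} with $ws$ in place of $w$ and $s\lambda$ in place of $\lambda$ gives $\mC(w,\lambda)=\DL{s,s\lambda}(\mC(ws,s\lambda))$, and then applying $\DL{s,\lambda}$ and the quadratic relation yields $\DL{s,\lambda}(\mC(w,\lambda))=-y\,\mC(ws,s\lambda)$, which is the desired identity with exponent $1$. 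This is precisely where the ``general enough'' hypothesis $\langle\lambda,\alpha_s^\vee\rangle\notin\Z$ enters: without it the quadratic relation acquires an extra scalar correction (corollary 6.3). You correctly identify this hypothesis, but attribute it only to sharpening the ceiling, whereas its essential role is to make $\DL{s,\lambda}$ invertible up to the scalar $-y$. Without the quadratic relation you have no route from the Bott--Samelson recursion to the length-decreasing case.
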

We obtain also a left counterpart of the above recursive formula. We define the left twisted Demazure-Lusztig operator
$$\DLL {s,\lambda}:=
\frac{1+y\alpha^{-1}_s}{1-\alpha^{-1}_s}\cdot s^{\textsc l} -\frac{(1+y)\cdot\alpha_s^{-\lceil \langle \lambda,\alpha^\vee_s\rangle\rceil}}{1-\alpha^{-1}_s} \cdot \id_{\KTh_\T(G/B)[y]}\,.$$
We use the recursive formula \ref{rem:tw1} to  obtain the following theorem.
\begin{atw*}[\ref{tw:L1}] 
	Let $w \in  (G/B)^\T\simeq W$ be a fixed point and $\lambda$ a general enough  fractional character. Then
	$$(-y)^{\frac{1}{2}(l(w)+1-l(sw))}\mC(sw,\lambda)=\DLL {s,w\lambda}(\mC(w,\lambda))\,.$$
\end{atw*}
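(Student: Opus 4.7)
The plan is to induct on $l(w)$, using Theorem~\ref{rem:tw1} as the engine. For the base case $w=e$ the normalization exponent $\frac12(l(w)+1-l(sw))$ vanishes, so the claim reduces to $\mC(s,\mu)=\DLL{s,\mu}\,\mC(e,\mu)$ for every $\mu$. This can be verified directly at the two $\T$-fixed points $e$ and $s$: the class $\mC(e,\mu)$ is supported at $e$, both $s^{\textsc l}$ (entering $\DLL{s,\mu}$) and $s^{\textsc r}$ (entering $\DL{s,s\mu}$) move the support to $s$, and at $e$ the character of $\mathcal{L}_s$ is determined by $\alpha_s$, so the comparison with Theorem~\ref{rem:tw1} at $w=e$, namely $\mC(s,\mu)=\DL{s,s\mu}\,\mC(e,s\mu)$, reduces to a short character identity.

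For the inductive step fix a reduced expression $w=s_{i_1}\cdots s_{i_k}$ and assume first that $l(sw)=l(w)+1$, so that $s\,s_{i_1}\cdots s_{i_k}$ is reduced. Iterating Theorem~\ref{rem:tw1} starting from $\mC(e,w\lambda)$ and tracking the shifts $\lambda_j=s_{i_{j+1}}\cdots s_{i_k}\lambda$ yields the explicit expansion
\[
\mC(w,\lambda)\;=\;\DL{s_{i_k},s_{i_k}\lambda}\cdots\DL{s_{i_1},w\lambda}\,\mC(e,w\lambda),
\]
in which every normalization factor is $1$ because the word is reduced. The analogous expansion for $sw$ appends an extra rightmost factor $\DL{s,sw\lambda}$ acting on $\mC(e,sw\lambda)$; Theorem~\ref{rem:tw1} at $w=e$ collapses this tail to $\mC(s,w\lambda)$, which by the base case equals $\DLL{s,w\lambda}\,\mC(e,w\lambda)$. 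Substituting gives
\[
\mC(sw,\lambda)\;=\;\DL{s_{i_k},s_{i_k}\lambda}\cdots\DL{s_{i_1},w\lambda}\,\DLL{s,w\lambda}\,\mC(e,w\lambda).
\]

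The decisive step is then to commute the single left operator $\DLL{s,w\lambda}$ to the outside of this product. For this I would invoke a commutation relation between left and right twisted Demazure-Lusztig operators on $K_\T(G/B)[y]$, a structural feature of the double Hecke algebra of the paper; on the level of the defining formulas it reduces to the compatibility of the left and right Weyl actions on $G/B$ together with the mutual invariance of the local data, namely that $t^{\textsc r}$ preserves $\alpha_s^{\pm 1}$ and $s^{\textsc l}$ preserves $\mathcal{L}_t$. Once $\DLL{s,w\lambda}$ is in front, the remaining string is precisely the expansion of $\mC(w,\lambda)$ displayed above, yielding the identity in the case $l(sw)=l(w)+1$. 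The case $l(sw)=l(w)-1$ follows by swapping the roles of $w$ and $sw$ and absorbing the extra factor of $-y$ predicted by the normalization.

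The main obstacle I expect is not any one of the individual ingredients but the bookkeeping of twist parameters. Although the right parameter $s_{i_j}\cdots s_{i_k}\lambda$ shifts at every step of the iteration, the combination $(s_{i_1}\cdots s_{i_{j-1}})\cdot(s_{i_j}\cdots s_{i_k}\lambda)=w\lambda$ is invariant along the chain, and this invariance is exactly what allows a single operator $\DLL{s,w\lambda}$, rather than a $j$-dependent family, to be commuted through the entire reduced word and to produce the correct left twist $w\lambda$ in the final identity.
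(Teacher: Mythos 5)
Your proposal is correct and follows essentially the same route as the paper: both arguments establish the base case $w=\id$ by a direct fixed-point comparison showing $\DLL{s,\lambda}$ and $\DL{s,s\lambda}$ act identically on $\mC(\id,\lambda)$, then reduce the general case by writing $\mC(w,\lambda)$ as a composite of right operators applied to $\mC(\id,w\lambda)$, commuting the single left operator $\DLL{s,w\lambda}$ through that chain (the paper's Lemma~\ref{lem:L}), and appealing to the already-proved right recursion (Theorem~\ref{tw:1}/Proposition~\ref{pro:tw1}). The only cosmetic difference is that the paper packages the chain of right operators as the single operator $\DL{w^{-1},w\lambda}$ rather than spelling out the reduced word, and the invariance of $w\lambda$ along the chain that you highlight is exactly what makes the paper's compact formulation work.
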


One of the important features of the twisted motivic Chern class  is its connection with the K-theoretic stable envelope. Stable envelopes are characteristic classes defined initially for symplectic resolutions in three types: cohomological \cite{OM}, K-theoretic \cite{O2,OS} and elliptic \cite{OA}. Their definition is still evolving see e.g. \cite{O3} for recent progress.
In this paper we consider only the K-theoretic stable envelopes. They depend on a fractional line bundle called a slope.

Stable envelopes for the cotangent variety of a homogeneous space were extensively studied see e.g \cite{Su15,RTV0,RTV,RSVZ, SZsurv}.
They are tightly connected to the characteristic classes mentioned earlier. See \cite{FR,RV,AMSS0} for a comparison with the $c_{SM}$ class in cohomology. In the K-theory, a comparison between the stable envelope for a small anti-ample slope and the motivic Chern class was carried out in \cite{AMSS,FRW,Kon2}. The twisted class of \cite{KonW} is a generalization of the motivic Chern class which agrees with the stable envelope for a general enough slope.
It allows us to define the stable envelope in terms of a resolution of singularities of the Schubert variety.
Our recursive formulas may be restated
 in the language of stable envelopes. \begin{cor*}[\ref{cor:stabind}, \ref{cor:stabind2}] 
 	Let $w \in  (G/B)^\T\simeq W$ be a fixed point. Consider a  general enough  fractional character $\lambda$. Then
 	\begin{align*}
 		\notag&q^{1/2}\cdot\St^{s\lambda}(ws)=\DLq{s,\lambda}(\St^\lambda(w))\,, \\
 		&q^{1/2}\cdot\St^{\lambda}(sw)=\DLL {s,w\lambda}(\St^\lambda(w))\,.
 	\end{align*}
 	where $\DLq{s,\lambda}$ and $\DLLq {s,w\lambda}$ denote operators $\DL{s,\lambda}$ and $\DLL {s,w\lambda}$ after the substitution $y=-q$, respectively.
 \end{cor*}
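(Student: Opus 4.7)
The plan is to transfer the recursions of Theorems \ref{rem:tw1} and \ref{tw:L1} through the known proportionality between the twisted motivic Chern class and the K-theoretic stable envelope. As recalled in the introduction, and established in \cite{AMSS,FRW,Kon2,KonW}, for a general enough slope the two classes agree up to normalization; in the conventions adopted here the relation takes, after the substitution $y=-q$, the form
\[
\mC(v,\lambda) = q^{l(v)/2}\,\St^\lambda(v),
\]
the exponent $l(v)/2$ accounting for the ``half-dimension shift'' intrinsic to the stable envelope.

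First I would make the change of variable $y=-q$. By definition this converts $\DL{s,\lambda}$ into $\DLq{s,\lambda}$ and $\DLL{s,w\lambda}$ into $\DLLq{s,w\lambda}$, and the scalar $(-y)^{\frac{1}{2}(l(w)+1-l(ws))}$ on the left of Theorem \ref{rem:tw1} becomes $q^{\frac{1}{2}(l(w)+1-l(ws))}$; an analogous conversion happens for Theorem \ref{tw:L1}.

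Next I would substitute $\mC(v,\lambda) = q^{l(v)/2}\St^\lambda(v)$ on both sides of Theorem \ref{rem:tw1}. The left side collapses to
\[
q^{\frac{1}{2}(l(w)+1-l(ws))}\cdot q^{l(ws)/2}\,\St^{s\lambda}(ws)=q^{\frac{l(w)+1}{2}}\,\St^{s\lambda}(ws).
\]
The operator $\DLq{s,\lambda}$ is linear over the coefficient ring, so the scalar $q^{l(w)/2}$ pulls out of the right side, producing $q^{l(w)/2}\,\DLq{s,\lambda}(\St^\lambda(w))$. Cancelling the common factor $q^{l(w)/2}$ yields the first identity $q^{1/2}\,\St^{s\lambda}(ws)=\DLq{s,\lambda}(\St^\lambda(w))$. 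The second identity follows from the identical manipulation applied to Theorem \ref{tw:L1}, with $ws$ replaced by $sw$ and $\DLq{s,\lambda}$ replaced by $\DLLq{s,w\lambda}$; the exponent arithmetic is the same since only $l(w)$ and the length of the image fixed point enter.

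The main obstacle is the bookkeeping for the scalar $q^{l(v)/2}$ relating $\mC$ and $\St$: one must verify that this normalization really holds in the conventions of this paper, uniformly for all $v\in W$ and for general enough $\lambda$. Once this proportionality is in hand --- either via a direct check that $q^{-l(v)/2}\mC(v,\lambda)$ satisfies the fixed-point axioms characterising $\St^\lambda(v)$, or by appealing to \cite{AMSS,FRW,Kon2,KonW} --- the remainder of the deduction is purely algebraic manipulation of scalars.
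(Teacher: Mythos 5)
Your proposal is correct and takes essentially the same route as the paper: the paper's one-line proofs cite theorem \ref{tw:otoczki}, which gives precisely the normalization $\operatorname{mC}^\T_{-q}(v,\lambda)=q^{\frac12\dim X^+_v}\St^\lambda(v)$ (with $\dim X^+_v=l(v)$), together with corollaries \ref{rem:tw1} and theorem \ref{tw:L1}; you have simply written out the scalar bookkeeping that the paper leaves implicit, and the exponent arithmetic checks out since only the differences $l(ws)-l(w)$ and $l(sw)-l(w)$ enter.
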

\begin{cor*}[{\ref{cor:stabind3}}] 
	Let $w \in  (G/B)^\T\simeq W$ be a fixed point. Suppose that $ws$ is longer than $w$. Consider an arbitrary  fractional character $\lambda$ and a small anti-ample character $\lambda^-$. For a small enough positive real number $\varepsilon$ we have
	$$
	q^{1/2}\cdot\St^{s\lambda+\varepsilon\lambda^-}(ws)=\DLq{s,\lambda}(\St^{\lambda+\varepsilon\lambda^-}(w))\,.
	$$
\end{cor*}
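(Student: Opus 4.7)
The plan is to reduce to Corollary \ref{cor:stabind} by perturbation. In that corollary, ``general enough'' means that $\lambda$ lies in the complement of a locally finite arrangement of rational hyperplanes in $\Hom(\T,\C^*)\otimes\Q$: the walls of the slope arrangement (on which the stable envelope jumps) together with the integral hyperplanes $\langle\lambda,\alpha_s^\vee\rangle\in\Z$ (on which the ceiling defining $\DL{s,\lambda}$ jumps). Because $\lambda^-$ is strictly anti-ample, for any fixed $\lambda$ and for all sufficiently small $\varepsilon>0$, the perturbed slope $\mu:=\lambda+\varepsilon\lambda^-$ lies in a single open chamber of this arrangement, hence is general enough. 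Applying Corollary \ref{cor:stabind} to $\mu$ yields
$$q^{1/2}\cdot\St^{s\mu}(ws)=\DLq{s,\mu}(\St^\mu(w)).$$

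It remains to identify both sides with those in the claim. The operator $\DLq{s,\mu}$ depends on $\mu$ only through the integer $\lceil-\langle\mu,\alpha_s^\vee\rangle\rceil$. Under the convention that when $\langle\lambda,\alpha_s^\vee\rangle\in\Z$ one picks the ceiling associated with the anti-ample side of the wall---precisely the convention singled out by the hypothesis $l(ws)>l(w)$ and the formulation of $\DL{s,\lambda}$---this integer equals $\lceil-\langle\lambda,\alpha_s^\vee\rangle\rceil$, and so $\DLq{s,\mu}=\DLq{s,\lambda}$. For the left-hand side, $s\mu=s\lambda+\varepsilon s\lambda^-$ differs from the slope $s\lambda+\varepsilon\lambda^-$ appearing in the claim by $\varepsilon(s\lambda^--\lambda^-)\in\R\cdot\alpha_s$, a small multiple of the simple root $\alpha_s$. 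Since $\St^{(-)}(ws)$ is locally constant in the slope away from the walls of the slope arrangement, it is enough to check that $\varepsilon\lambda^-$ and $\varepsilon s\lambda^-$ lie in a common open chamber near $s\lambda$; granting this, $\St^{s\mu}(ws)=\St^{s\lambda+\varepsilon\lambda^-}(ws)$.

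The main obstacle is this final chamber comparison at $s\lambda$: although $\lambda^-$ and $s\lambda^-$ both point into anti-ample-like directions, one must verify that the residual displacement $\varepsilon(s\lambda^--\lambda^-)$, a small multiple of $\alpha_s$, does not push the perturbation across any wall of the slope arrangement near $s\lambda$. I would handle this by exploiting the Weyl-symmetry of the slope arrangement together with the explicit root-theoretic description of the walls: the reflection $s$ permutes the walls, and the residual $\alpha_s$-direction shift is absorbed inside a single open chamber adjacent to $s\lambda$. Once this local combinatorial fact is in place, the corollary follows by chaining the three equalities above.
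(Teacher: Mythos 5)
Your approach has a genuine gap that you partially acknowledged but did not resolve, and in fact the issue is worse than your final paragraph suggests. The claim $\DLq{s,\mu}=\DLq{s,\lambda}$ for $\mu=\lambda+\varepsilon\lambda^-$ is false in precisely the case of interest, namely when $\langle\lambda,\alpha_s^\vee\rangle\in\Z$. Writing $n=\langle\lambda,\alpha_s^\vee\rangle$, the operator $\DL{s,\lambda}$ uses the exponent $\lceil -n\rceil=-n$, whereas $\mu=\lambda+\varepsilon\lambda^-$ with $\lambda^-$ anti-ample gives $-\langle\mu,\alpha_s^\vee\rangle=-n+\varepsilon|\langle\lambda^-,\alpha_s^\vee\rangle|$, slightly above the integer, so $\lceil-\langle\mu,\alpha_s^\vee\rangle\rceil=-n+1$. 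There is no ``convention'' in definition~\ref{df:DL} to invoke; the ceiling is taken literally, and proposition~\ref{pro:defcor}~(4) confirms that $\DL{s,\lambda}$ is stable only under the perturbation $\lambda\mapsto\lambda-\varepsilon\lambda^-$, not the one you use. Simultaneously, $s\mu=s\lambda+\varepsilon s\lambda^-$ and $s\lambda+\varepsilon\lambda^-$ lie on \emph{opposite} sides of the wall $H_{\alpha_s,-n}$ whenever $n\in\Z$ (since $\langle s\lambda^-,\alpha_s^\vee\rangle=-\langle\lambda^-,\alpha_s^\vee\rangle$), so $\St^{s\mu}(ws)\neq\St^{s\lambda+\varepsilon\lambda^-}(ws)$ in general, by the wall-crossing theorem~\ref{tw:SZZ}. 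Your proposed appeal to Weyl symmetry and a common chamber cannot work here because the displacement $\varepsilon(s\lambda^--\lambda^-)$ is a positive multiple of $\alpha_s$ and genuinely crosses $H_{\alpha_s,-n}$ when $s\lambda$ sits on it.

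The two discrepancies would need to cancel precisely (via corollary~\ref{pro:kwadrat} on the operator side and theorem~\ref{tw:SZZ} on the envelope side), and while it seems plausible that they do, you have not carried out that non-trivial accounting; without it the argument is incomplete. The paper's proof sidesteps all of this by routing through twisted motivic Chern classes: theorem~\ref{tw:1}, which holds for \emph{arbitrary} $\lambda$ with no genericity hypothesis, gives $\mC(ws,s\lambda)=\DL{s,\lambda}(\mC(w,\lambda))$ directly, and corollary~\ref{cor:otoczki} converts $\mC(w,\lambda)$ and $\mC(ws,s\lambda)$ into the perturbed stable envelopes $\St^{\lambda+\varepsilon\lambda^-}(w)$ and $\St^{s\lambda+\varepsilon\lambda^-}(ws)$ without ever perturbing the argument of the operator. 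That is the clean route; you should replace the perturbation-of-$\lambda$ step with an appeal to theorem~\ref{tw:1} and corollary~\ref{cor:otoczki}.
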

The best known
recursive formulas computing the stable envelope for $G/B$ concern only specific values of the slope parameter (see e.g. \cite[Proposition 3.3 and Theorem~3.5]{SZZ} and \cite[Theorem 5.4]{SZZ2}). Our formulas generalize these results and work for an arbitrary slope.

One of the important notions in the theory of K-theoretic stable envelopes are R-matrices (also called wall-crossing formulas).
They describe the behavior of the stable envelope for elementary modifications of parameters (weight chamber and slope).
They were studied in e.g. \cite{OS,SZZ2,GorNeg,SmirHilb2}. 
In \cite{SZZ2} the slope R-matrix was fully computed for generalized flag varieties.

Our recursive formulas allow for an alternative, geometric approach to the wall-crossing problem based on the study of the Bott-Samelson resolution.
The right recursion leads to the wall-crossing formula related to a change of the slope (Theorem \ref{tw:2}), which is equivalent to the results of \cite{SZZ2}. The left induction implies the wall-crossing formula related to a change of the weight chamber (Theorem \ref{tw:wallL}).

Recursive formulas in the K-theory surprisingly rely on simple algebraic operators involving characters and the classes of line bundles. It turns out that one can build an algebra generated by natural lifts of operators $\DL{s,\lambda}$ and $\DLL{s,\lambda}$, denoted by $\Tcr{s}{\lambda}$ and $\Tcl{s}{\lambda}$, or their variants depending on a scalar $\Tfr{s}{a}$ and $\Tfl{s}{a}$. That algebra acts on the representation ring $R(\T^2)$ extended by the formal variable $y$. Practically this means, that to check identities in this algebra it is enough to perform calculus involving Laurent polynomials in two sets of variables and $y$. This algebra formally resembles the construction of Hecke algebra of Ginzburg-Kapranov-Vasserot, but there is a difference: the algebra of \cite{GKV} acts on $R(\T\times\T^\vee)$, where $\T^\vee$ is the dual torus. On the other hand we have a dependence on $\lambda$, which means that our operators can be treated as functions on $\ttt^*$. 
It is remarkable that the braid relations hold for the lifted operators. Braid relations can be deduced from the properties of the motivic Chern classes or simply checked  by hand performing elementary transformation of rational functions. Our twisted Hecke algebra is presented in full generality in the last section. Before, starting from Section \ref{sec:upgrade} we analyze the $A_n$ case. The braid relations have the form (satisfied by left and right operators) 
$$\Tcc{i}{s_{i+1}s_{i}\lambda}\circ \Tcc{i+1}{s_i \lambda}\circ \Tcc{i}{\lambda}= 
 \Tcc{i+1}{s_{i}s_{i+1}\lambda}\circ \Tcc{i}{s_{i+1} \lambda}\circ \Tcc{i+1}{\lambda}$$
or
$$\Tf{i}{a}\circ \Tf{i+1}{a+b}\circ \Tf{i}{b}=  \Tf{i+1}{b}\circ \Tf{i}{a+b}\circ \Tf{i+1}{a}\,,$$
see Lemma \ref{universal_braid} and Propositions \ref{uniwersalny_braid}, \ref{universalny_braid_l}.
The quadratic relations hold:
if $\langle \lambda ,\alpha_i^\vee\rangle\not\in\Z$, $a\not\in \Z$  
$$\Tcl{i}{s_i\lambda}\circ \Tcl{i}{\lambda}=-y \,\id\,,$$
$$\Tfl{i}{-a}\circ \Tfl{i}{a}=-y \,\id\,.$$
For integer values of the parameter $a$ the relation is different, we have
$$\Tfl{i}{1-a}\circ \Tfl{i}{a}=-y \,\id\,,$$
see Lemma \ref{quadratic}.
Dependence on the parameter is not a surprise. Felder in \cite{Felder}, see also \cite{EitingofVarchenko}, stated braid relations in their incarnation of the Yang-Baxter equation for R-matrices
$$
R^{(12)}(z)
R^{(13)}(z+w)
R^{(23)}(w)
=
R^{(23)}(w)
R^{(13)}(z+w)
R^{(12)}(z)\,.$$
\medskip

A natural question arises: How the operators $\Tcl{s}{\lambda}$ and $\Tcr{s}{\lambda}$ are related to geometry? What do they compute? We give an answer in the $A_n$ case. We prove that after a suitable normalization the operators $\Tcl{s}{\lambda}$ and $\Tcr{s}{\lambda}$ provide recursive formulas for the twisted motivic Chern classes of matrix Schubert varieties $\XX_w$. We consider only the maximal rank matrix Schubert varieties in $\Hom(\C^n,\C^n)$, i.e. $B\times B$-orbits of the permutation matrices. We prove     
\begin{atw*}[\ref{indukcja_macierzowa}] The left and right recursions hold
$$\Tcl{i}{w\lambda}\big(\BB^{-1} \mCD(\XX_w,\partial \XX_w;D_{w,\lambda})\big)=\BB^{-1}\mCD(\XX_{s_iw},\partial \XX_{s_iw};D_{s_iw,\lambda})\quad \text{if }~~l(s_iw)>l(w)\,,$$
$$\Tcr{i}{\lambda}\big(\BB^{-1} \mCD(\XX_w,\partial \XX_w;D_{w,\lambda})\big)=\BB^{-1}\mCD(\XX_{ws_i},\partial \XX_{ws_i};D_{ws_i,s_i\lambda})\quad \text{if }~~l(ws_i)>l(w)\,.$$
\end{atw*}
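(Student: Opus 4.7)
The idea is to reduce both recursions to their counterparts on $G/B$, Theorems~\ref{rem:tw1} and~\ref{tw:L1}, by exploiting the defining property of $\Tcr{i}{\lambda}$ and $\Tcl{i}{\lambda}$: they are the natural lifts of $\DL{s_i,\lambda}$ and $\DLL{s_i,\lambda}$ through the Kirwan map, as announced in the abstract. The ambient ring is $R(\T^2)[y]$, which hosts the $\T^2$-equivariant motivic Chern classes of matrix Schubert varieties $\XX_w\subset\Hom(\C^n,\C^n)$, and the normalisation $\BB$ is the Euler-class-type factor that implements this Kirwan identification on classes.

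\emph{Step 1 (Matrix Bott--Samelson structure).} First I would set up matrix Bott--Samelson-type resolutions $\widetilde{\XX}_\wu\to\XX_w$, obtained from reduced words $\wu$ for $w$ by the same fibre-product recipe as $Z_\wu\to X_w$, now performed inside $\Hom(\C^n,\C^n)$. When $l(ws_i)>l(w)$, this realises $\widetilde{\XX}_{\wu s_i}$ as a $\PP^1$-bundle over $\widetilde{\XX}_\wu$ arising from the minimal parabolic $P_{s_i}$, and the boundary $\partial\XX_{ws_i}$ together with the fractional divisor $D_{ws_i,s_i\lambda}$ decomposes into the pull-back of the corresponding data on $\XX_w$ plus two new irreducible components (the preimage of $\partial\XX_w$ and a horizontal section). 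The left case $l(s_iw)>l(w)$ is obtained by swapping source and target flags.

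\emph{Step 2 (Push-forward, matching the operator formulas).} Next, the resolution formula for the twisted motivic Chern class from Section~\ref{s:mC} expresses $\mCD(\XX_{ws_i},\partial\XX_{ws_i};D_{ws_i,s_i\lambda})$ as a push-forward along the above $\PP^1$-bundle of the pulled-back class of $\XX_w$ multiplied by the contribution of the new divisor, twisted by $\Ls^{\lceil-\langle\lambda,\alpha_{s_i}^\vee\rangle\rceil}$. This computation is identical, term for term, to the one underlying Theorem~\ref{rem:tw1}, only now carried out in $R(\T^2)[y]$ with the matrix-side relative tangent line bundle playing the role of $\Ls$. The resulting algebraic operator is precisely $\Tcr{i}{\lambda}$ (rather than $\DL{s_i,\lambda}$) by the very definition of $\Tcr{i}{\lambda}$ as a lift. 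The left recursion follows in the same way from Theorem~\ref{tw:L1}, with the parameter $w\lambda$ in $\Tcl{i}{w\lambda}$ appearing for the same geometric reason as there.

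\emph{Step 3 (Intertwining by $\BB$ and length factor).} Finally, one has to insert the normalisation $\BB^{-1}$ on both sides and commute it through the operator: by construction $\BB$ conjugates $\DL{s_i,\lambda}$ into $\Tcr{i}{\lambda}$ and $\DLL{s_i,\lambda}$ into $\Tcl{i}{\lambda}$, so the push-forward identity of Step~2 pre- and post-composed with $\BB^{-1}$ gives exactly the stated equalities. I expect the main obstacle to lie in this bookkeeping: one must verify that the $w$-dependence of $\BB$ correctly conjugates the slope shift $\lambda\rightsquigarrow s_i\lambda$ into the right-operator identity while leaving the slope unchanged in the left-operator identity, and that the length factors $(-y)^{(l(w)+1-l(ws_i))/2}$ appearing in Theorems~\ref{rem:tw1} and~\ref{tw:L1} are absorbed into the ratio of the two $\BB$'s on either side of the equation, so that no explicit length factor survives in the matrix statement.
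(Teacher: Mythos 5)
Your overall plan (set up a Bott--Samelson-type resolution of $\XX_w$ inside $\End(\C^n)$, push forward the twisted class along the resulting $\PP^1$-bundle, and normalize by $\BB^{-1}$) is the right shape, and matches the paper's strategy in Sections~\ref{sec:matrix_resolution}--\ref{dowodindukcji}. But there are several concrete gaps that would need to be filled for this to become a proof.

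First, the opening framing --- ``reduce both recursions to their counterparts on $G/B$ by exploiting the lift through the Kirwan map'' --- cannot work as stated. The Kirwan map $\kappa\colon\Z[\underline t^{\pm},\underline x^{\pm},y]\to K_\T(\GL_n/B)[y]$ is surjective but far from injective. Applying $\kappa$ to the left side, Theorem~\ref{podniesienie} and Proposition~\ref{dzielenie} give that $\kappa\bigl(\Tcr{i}{\lambda}(\BB^{-1}\mCm(w,\lambda))\bigr)$ and $\kappa\bigl(\BB^{-1}\mCm(ws_i,s_i\lambda)\bigr)$ agree in $K_\T(\GL_n/B)[y]$, by Theorem~\ref{tw:1}; but that is a strictly weaker statement than the claimed equality upstairs in $\Z[\underline t^{\pm},\underline x^{\pm},y]$. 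So the recursion genuinely has to be verified at the matrix level, as your Steps~1--2 begin to do; Theorems~\ref{tw:1} and~\ref{tw:L1} cannot do the work for you.

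Second, your Step~3 is confused about the role of $\BB$. The statement ``$\BB$ conjugates $\DL{s_i,\lambda}$ into $\Tcr{i}{\lambda}$'' is not meaningful: $\DL{s_i,\lambda}$ acts on $K_\T(G/B)[y]$ and $\Tcr{i}{\lambda}$ on the Laurent polynomial ring, so there is no ambient algebra in which one conjugates the other. What actually happens is asymmetric between the two recursions. For the left recursion, $\BB$ is a function of the $x$-variables only while $\Tcl{i}{w\lambda}$ acts only on $t$-variables, so $\BB$ simply commutes through and the LRR push-forward along $\varpi\colon\ZL\sw\to P_i/B$ (Lemmas~\ref{lem:ind1}--\ref{lewaindukcja}) directly produces $\Tcl{i}{w\lambda}$. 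For the right recursion, the LRR push-forward along $\ZR\ws\to B\backslash P_i$ naturally produces the different operator $\Tcrr{i}{\lambda}$ (the $\Tfl{i}{\cdot}$-form acting on $x$-variables), which differs from $\Tcr{i}{\lambda}$ by conjugation by $1+yL_{i,x}$; only the identity $\BB^{-1}\Tcrr{i}{\lambda}(f)=\Tcr{i}{\lambda}(\BB^{-1}f)$ (valid because $s_i^x(\BB)/\BB=(1+yL_{i,x}^{-1})/(1+yL_{i,x})$) converts this into the stated form. Your proposal misses that this conjugation step is needed on one side only.

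Third, your worry about absorbing the length factor $(-y)^{\frac12(l(w)+1-l(ws_i))}$ is a red herring. The theorem is stated under the hypotheses $l(ws_i)>l(w)$ (resp.\ $l(s_iw)>l(w)$), in which case that exponent is zero. The relevant inputs are Theorems~\ref{tw:1} and~\ref{tw:L1}~(1), not the length-corrected Corollaries~\ref{rem:tw1} and~\ref{tw:L1}~(2), and no factor has to be absorbed into anything.

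Finally, your description of the matrix resolution and its boundary omits the crucial new ingredient. The resolutions $\ZL\wu=P_{i_1}\times_B\dots\times_B P_{i_l}\times_B\overline B$ and $\ZR\wu$ have, in addition to the $l(w)$ Bott--Samelson boundary components, the $n$ extra \emph{$B$-boundary} components $\partial_{B,j}$ coming from $\partial B=\overline B\setminus B$. Propositions~\ref{krotnosc} and~\ref{krotnoscdual} show that the pull-back of $D_{w,\lambda}$ has coefficient $\lambda_j$ (resp.\ $(w\lambda)_j$) along $\partial_{B,j}$, and these contributions are exactly what makes the starting class $\BB^{-1}\mCm(\id,\lambda)$ genuinely $\lambda$-dependent and, for instance, makes the $s^t$ (resp.\ $s^x$) twist appear in the $\delta=1$ fibers in Corollaries~\ref{cor:krotnosc2} and~\ref{cor:krotnosc1}. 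Without tracking these divisors the push-forward computation will not close.
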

Here $\BB$ is a certain fixed rational function (see the formula \eqref{eq:BB}) and $D_{w,\lambda}$ is the distinguished divisor contained in the boundary of the matrix Schubert variety (Definition \ref{dywizor}).
The factor $\BB$ is responsible for the motivic Chern class of the fiber of the projection $\GL_n\to \GL_n/B$, and it is present in a similar formula for Rim\'anyi-Tarasov-Varchenko weight function, \cite[Section 6.1]{RTV0}. For the proof we apply two resolutions of matrix Schubert varieties: the left resolution and the right resolution, giving rise to the left and right Demazure-Lusztig operators.

\tableofcontents
\section{Notations}
All considered varieties are complex and quasi-projective. We consider an algebraic torus \hbox{$\T\simeq(\C^*)^{\rank \T}$.}

\subsection{Line bundles and divisors}Let $X$ be a quasiprojective $\T$-variety. An equivariant line bundle is a line bundle  $\LL\to X$ together with a linearization, i.e. a lift of the $\T$-action to $\LL$ which is linear on fibers. For any equivariant line bundle $\LL\to X$ and a fixed point $x\in X^\T$, the fiber ${\LL}_{|x}$ is a representation of the torus $\T$.

Suppose that $X$ is a smooth $\T$-variety, $x\in X^\T$ a fixed point and $D$ a $\T$-invariant codimension one subvariety. The line bundle $\O_X(D)$ (treated as a subsheaf of meromorphic functions) has the natural linearization such that the weight of the representation $\O_X(D)_{|x}$ is:
\begin{itemize}
	\item  trivial when $x\notin D$,
	\item  equal to the normal weight to $D$ at $x$ when $x$ is a smooth point of $D$. 
\end{itemize}
Suppose now that $D=\sum_{i=1}^{n} a_iD_i$ is an arbitrary $\T$-invariant divisor on $X$. The isomorphism
$$\O_X(D) \simeq \bigotimes_{i=1}^n \O_X(D_i)^{a_i}$$
induces the natural linearization of the bundle $\O_X(D)$.
\subsection{Round-up divisor}
\begin{adf}[{\cite[Definition 9.1.2]{multiplier}}]
	Let $D=\sum q_iD_i$ be a $\Q$-divisor. The round-up divisor $\lceil D\rceil$ is given by
	$$\lceil D\rceil=\sum\lceil q_i\rceil D_i \,.$$
\end{adf}
In general rounding-up does not commute with pullbacks (cf. \cite[Remark 9.1.4]{multiplier}). The following proposition shows commutation in a special case. 
\begin{pro} \label{pro:round}
	Let $X$ be a smooth variety and $\pi:Y\to X$ a smooth morphism (for example a $\PP^1$-bundle). Let $D$  be a~$\Q$-divisor on $X$. Then
	$$\lceil \pi^* D\rceil=\pi^*\lceil D\rceil \,.$$
\end{pro}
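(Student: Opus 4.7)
The plan is to reduce the identity to the case of a single prime divisor by linearity, and then to show that for a smooth morphism the pullback of a prime divisor is reduced, so no new multiplicities are introduced.

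Write $D=\sum_i q_i D_i$ with $D_i$ distinct prime divisors. By linearity of both $\pi^*$ and the rounding operation it suffices to prove that $\pi^*D_i=\sum_j E_{ij}$ with every component appearing with multiplicity one, and that the irreducible components $E_{ij}$ are all distinct as $(i,j)$ varies. Granted this, one immediately obtains
$$\lceil \pi^*D\rceil=\sum_i\sum_j\lceil q_i\rceil E_{ij}=\sum_i\lceil q_i\rceil\pi^*D_i=\pi^*\lceil D\rceil.$$

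For the reducedness of $\pi^*D_i$ I would use that a smooth morphism is flat with geometrically regular fibers. Locally $D_i$ is cut out by a single element $f$ which is a non-zero-divisor on $X$; by flatness of $\pi$ the function $f\circ\pi$ is a non-zero-divisor on $Y$, and its zero locus is the scheme-theoretic base change of $D_i$ along $\pi$. Smoothness of $\pi$ then ensures that this base change is smooth over $D_i$ at all codimension-one points, in particular reduced at each of its generic points. Distinctness of the components as $(i,j)$ varies is immediate, since $\pi\vert_{E_{ij}}$ is dominant onto $D_i$, so $E_{ij}=E_{i'j'}$ would force $D_i=D_{i'}$.

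The only place where the full strength of smoothness (as opposed to mere flatness) is used is in the reducedness of $\pi^*D_i$, and this is where I would be most careful; indeed a flat but non-smooth cover can easily produce a pullback with higher multiplicity, which is exactly the obstruction mentioned in \cite[remark 9.1.14]{multiplier}. The rest is routine divisor-theoretic bookkeeping.
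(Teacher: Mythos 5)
Your proof is correct and fills in exactly the argument the paper leaves as ``straightforward''. The essential points — that smoothness (not mere flatness) forces each $\pi^*D_i$ to be reduced, that flatness forces every component of $\pi^{-1}(D_i)$ to dominate $D_i$ (so the $E_{ij}$ are pairwise distinct), and that rounding is then applied coefficient-by-coefficient on the canonical decomposition of $\pi^*D$ — are all present and correctly justified. One small imprecision: rounding is not linear, so ``by linearity of \dots\ the rounding operation'' is misleading; what you actually use, and do use correctly in the displayed chain, is that $\lceil\cdot\rceil$ is defined termwise on the prime decomposition and that $\sum_{i,j} q_i E_{ij}$ is that prime decomposition of $\pi^*D$ because the $E_{ij}$ are distinct and each occurs with coefficient exactly $q_i$.
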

The proof is straightforward.
\subsection{Equivariant K-theory}
For a quasiprojective  $\T$-variety $X$ we consider the equivariant K-theory of coherent sheaves $\operatorname{G}^\T(X)$ and the equivariant K-theory of vector bundles~$\KTh_\T(X)$. For a smooth $\T$-variety $X$ these two notions coincide i.e. we have a canonical isomorphism
$$\KTh_\T(X) \simeq  \operatorname{G}^\T(X)\,,$$
induced by taking the sheaf of sections.
\\
The equivariant K-theory of a point is isomorphic to the ring of Laurent polynomials
$$\KTh_\T(pt) \simeq \Z[\Hom(\T,\C^*)] \simeq \Z[t_1^\pm,\dots,t_{\rank \T}^\pm]\,.$$
Let $S\subset \KTh_\T(pt)$ be the multiplicative system consisting of all nonzero elements. By the localized K-theory of $X$ we denote the ring $S^{-1}\KTh_\T(X)$. The localization theorem \cite[Theorem 2.1]{Tho} implies that the restriction map induces an isomorphism
$$S^{-1}\KTh_\T(X) \simeq S^{-1}\KTh_\T(X^\T)\,.$$
Suppose that a $\T$-variety $X$ is smooth and projective. Suppose moreover that the fixed point set $X^\T$ is finite. Then the equivariant K-theory of $X$ is a free $\KTh_\T(pt)$-module\footnote{The proof is fairly easy induction on Bia{\l}ynicki-Birula skeleta, as in \cite[Section 9]{FRW}. For the structure of the K-theory of homogeneous spaces see \cite{KostantKumar} or \cite{Uma}.} 
In this case we have an inclusion of rings 
$$\KTh_\T(X) \subset S^{-1}\KTh_\T(X) \simeq S^{-1}\KTh_\T(X^\T)$$
and
$$\KTh_\T(X^\T)\simeq \bigoplus_{x\in X^\T}\Z[t_1^\pm,\dots,t_{\rank \T}^\pm]\,.$$
For an equivariant complex vector bundle $E$ over a $\T$-variety $X$ we define $\lambda_y$ class as
$$\lambda_{y}(E):=\sum_{k=0}^{{\rm rk}(E)}y^k[\Lambda^k E] \in \KTh_\T(X)[y]\,.$$
The K-theoretic Euler class of $E$ is defined as
$$\eu(E):=\lambda_{-1}(E^*)=\sum_{k=0}^{{\rm rk}(E)}(-1)^k[\Lambda^k E^*] \in \KTh_\T(X)\,.$$
The Lefschetz-Riemann-Roch formula (LRR for short) allows us to compute push-forwards using restriction to the fixed point set. Let us remind it here.
\begin{atw}[{\cite[Theorem 3.5]{Tho} and \cite[Theorem 5.11.7]{ChGi}}] \label{tw:LRR}
	Let $X$ and $Y$ be smooth $\T$-varieties. Let $p:X \to Y$ be an equivariant proper morphism. Consider a component of the fixed point set $F \subset Y^{\T}$. Let $I$ be the set of components of the fixed point set $p^{-1}(F)^\T$.
	For $\alpha \in S^{-1}\KTh_\T(X)$ we have
	$$
	\frac{(p_*\alpha)_{|F}}{\eu(\nu(F\subset Y))}=
	\sum_{F'\in I}(p_{F'})_*\frac{\alpha_{|F'}}{\eu(\nu(F'\subset X))} \in S^{-1}\KTh_\T(F)\,,
	$$
where $\nu(F\subset Y)$ is the normal bundle and $p_{F'}\colon F'\to F$ is the restriction of $p$. 
\end{atw}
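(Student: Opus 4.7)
My plan is to derive the formula directly from Thomason's localization theorem (cited just above as \cite[Theorem 2.1]{Tho}) together with the self-intersection formula and functoriality of equivariant push-forward. The strategy has three steps: expand $\alpha$ via localization on $X$, apply $p_*$ and group the resulting sum by the target component of $Y^\T$, and then extract the $F$-component via localization on $Y$.

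Denote by $\iota_{F'}\colon F'\hookrightarrow X$ the inclusion of a component of $X^\T$ and by $j_F\colon F\hookrightarrow Y$ analogously. Since $\alpha\in S^{-1}K_\T(X)$, localization gives
$$\alpha=\sum_{F'\subset X^\T}(\iota_{F'})_*\frac{\alpha|_{F'}}{eu(\nu(F'\subset X))}.$$
Applying $p_*$ and using functoriality,
$$p_*\alpha=\sum_{F'}(p\circ\iota_{F'})_*\frac{\alpha|_{F'}}{eu(\nu(F'\subset X))}.$$
Each $F'$ is connected and $\T$-invariant, so $p(F')$ lies in a single component $\tilde F(F')\subset Y^\T$; writing $p_{F'}\colon F'\to\tilde F(F')$ for the induced map, we have $p\circ\iota_{F'}=j_{\tilde F(F')}\circ p_{F'}$.

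Now I would restrict both sides to $F$. For $\tilde F(F')=F$ the self-intersection formula $j_F^*(j_F)_*\beta=eu(\nu(F\subset Y))\cdot\beta$ applies. For $\tilde F(F')\neq F$, the class $(j_{\tilde F(F')})_*(\cdot)$ lies in the summand indexed by $\tilde F(F')$ under the direct-sum decomposition $S^{-1}K_\T(Y)\cong\bigoplus_{F\subset Y^\T}S^{-1}K_\T(F)$ provided by Thomason's theorem, hence restricts to zero along $j_F^*$. The surviving summands are precisely those indexed by $F'\in I$, and dividing by $eu(\nu(F\subset Y))$ yields the claim.

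The main technical point is the vanishing $j_F^*(j_{\tilde F})_*=0$ for $\tilde F\neq F$ in the localized K-theory; this is not a naive ``disjoint support'' statement at the level of coherent sheaves but rather a direct consequence of the direct-sum decomposition furnished by the localization theorem, which is why we must work in $S^{-1}K_\T(-)$ throughout. One should also check that the Euler classes $eu(\nu(F\subset Y))$ and $eu(\nu(F'\subset X))$ lie in $S$: since $F$ and $F'$ are components of the fixed loci, these normal bundles carry no $\T$-trivial weights, so their $\lambda_{-1}$ classes evaluate at a fixed point to nonzero Laurent polynomials. Once this is granted, the remaining manipulations are formal.
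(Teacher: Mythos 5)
The paper does not prove this theorem; it is quoted from Thomason and Chriss--Ginzburg, so there is no internal proof to compare against. Your derivation --- expand $\alpha$ via the localization formula on $X$, push forward using functoriality of $p_*$, and read off the $F$-component via the self-intersection formula --- is the standard route and is correct in outline, and your remark about the Euler classes lying in $S$ is right.

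The one step that needs repair is the justification of $j_F^*(j_{\tilde F})_*=0$ for $\tilde F\neq F$. You dismiss the disjoint-support argument as ``naive'' and instead appeal to the direct-sum decomposition $S^{-1}K_\T(Y)\cong\bigoplus_F S^{-1}K_\T(F)$, asserting that $(j_{\tilde F})_*\beta$ lies in the $\tilde F$-summand. But this decomposition is furnished precisely by the restriction maps, so saying a class lies in the $\tilde F$-summand is the same as saying its restrictions to all other components vanish --- which is exactly what was to be shown; the argument is circular. The disjoint-support argument is in fact the honest one and it does hold at the derived level: if $\mathcal{G}$ is a coherent sheaf on $Y$ supported on the closed subset $\tilde F$, then $\mathrm{Tor}_i^{\O_Y}(\O_F,\mathcal{G})$ is a coherent sheaf on $F$ supported on $F\cap\tilde F=\emptyset$, hence zero for every $i$, so $Lj_F^*[\mathcal{G}]=0$ in $K_\T(F)$. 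Substituting this reasoning for the appeal to the direct sum, your proof is complete.
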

We will apply the LRR theorem for the Bott-Samelson resolution, where the fixed point sets are finite, hence the pushforward maps $(p_{F'})^*$ are isomorphisms. Due to the identification $F'\simeq F\simeq pt$, they can be treated as identities.

\section{Generalized flag varieties}
We summarize the facts from Lie theory and about Coxeter groups which we will further use.
\subsection{Notation and assumptions}\label{notation}
\begin{itemize}
	\item $G$ is a connected, semisimple, simply connected complex Lie group 
	with a chosen Borel subgroup $B$ and a maximal torus $\T$, such that
	$\T\subset B\subset G\,.$
	\item $W$ is the Weyl group of $G$. We use the identification $W\simeq(G/B)^\T$.
	\item For a Weyl group element $w\in W$ let $X_w^\oo \subseteq G/B$ be its $B$-orbit. We call this orbit the Schubert cell of $w$. The Schubert variety
	$X_w$ is the closure of the Schubert cell~$X_w^\oo $ in $G/B$. Denote by $\partial X_w$ the boundary
	$$\partial X_w=X_w \setminus X_w^\oo  \,. $$
	\item $\iota_w$ denotes the inclusion $\iota_w\colon X_w \mono G/B\,.$
	\item $\Hom(\T,\C^*)$ is the group of characters.
	\item $\ttt$ is the real part of Lie algebra of the torus $\T$, and
	$$\ttt^*\simeq \Hom(\T,\C^*)\otimes_\Z\R\,. $$
	The Weyl group acts on $\ttt^*$ from the left.	
	\item We fix a $W$-invariant inner product $\langle- ,- \rangle$ on $\ttt^*$.
	\item For a root $\alpha \in \ttt^*$ of $G$ let
	$$ \alpha^\vee=\frac{2\alpha}{\langle\alpha,\alpha\rangle}$$
	be the dual root. If a root appears as an element of $\ttt^*$ then we use the additive notation. If a root appears as an element of the representation ring $R(\T)$, then the multiplicative notation is applied. For example the dual of the line representation given by the root $\alpha$ is denoted $\alpha^{-1}$, which in some sources would be denoted by $e^{-\alpha}$.
	\item We call a reflection $s\in W$ simple if it corresponds to a simple root.
	\item For $w\in W$ let $l(w)$ be its length. It is equal to the dimension of the Schubert variety of $w$, i.e.
	$$\dim(X_w)=l(w)\,.$$
	\item For a simple reflection $s \in W$ we consider the corresponding minimal parabolic subgroup $P_s$ containing $B$ and a lift of $s$ to $N\T$. Let
	$$\pi_s:G/B \to G/P_s $$ be the projection
	\item For a character $\lambda \in \Hom(\T,\C^*)\simeq\Hom(B,\C^*)$ let $\LL(\lambda) \in \Pic(G/B)$ be the line bundle of the form 
	$$\LL(\lambda) \simeq G\times_B \C_{-\lambda}\,. $$ 
	\item The bundle $\LL(\lambda)$ has a natural linearization such that for a fixed point $w\in  (G/B)^\T\simeq W$ we have $$\LL(\lambda)_{|w}=- w \lambda \in \Hom(\T,\C^*)\,.$$ 
	\item For a $\Q$-character $\lambda \in \Hom(\T,\C^*)\otimes_\Z \Q$ let $\LL(\lambda)\in \Pic(G/B)\otimes_\Z\Q$ be the corresponding fractional line bundle. This assignment induces an isomorphism 
	$$\Hom(\T,\C^*)\otimes\Q \simeq \Pic(G/B)\otimes\Q\,. $$
\item We say that $\lambda\in\ttt^*$ is general enough if $\langle \lambda,\alpha^\vee\rangle\not\in\Z$ for all roots $\alpha$. 
\item We say that $\lambda\in\ttt^*$ is anti-ample if $\langle \lambda,\alpha^\vee\rangle<0$ for all simple roots $\alpha$.

\end{itemize}
\subsection{Weyl group actions} \label{s:Weyl}
The Weyl group acts on the maximal torus $\T$ by conjugation. This induces a left Weyl group action on the characters $\Hom(\T,\C^*)$. The action extends naturally to an action on fractional characters and the equivariant K-theory of a point
$$\KTh_\T(pt) \simeq \Z[\Hom(\T,\C^*)]\,.$$

Consider a generalized flag variety $G/B$. There are two actions (the left and the right action) of the Weyl group on the equivariant K-theory $\KTh_\T(G/B)$.
\begin{itemize}
	\item The normalizer of the torus $N\T$ acts on $G/B$ by the left multiplication. This induces the left action of $W=N\T/\T$ on $\KTh_\T(G/B)$. This action does not preserve the torus action. It is invariant when twisted by the torus automorphism given by the $W$-action.
	\item The right action is obtained using the identification
	$G/B\simeq K/T$, where $K\subset G$ is a maximal compact group and $T\subset K$ is a maximal compact torus. We can assume that $T\subset \T$. The Weyl group $W=NT/T$ acts on $K/T$ by the right multiplication. This induces the right action of $W$ on $\KTh_\T(G/B)$, by the pullback. This action is torus equivariant, but it is not holomorphic.
\end{itemize}

 See \cite[Section 3.1]{MNS} or \cite{Kn} for the definition and a detailed discussion of these actions. For an element $w\in W$ we denote the left action of $w$ by $w^{\textsc l}$ and the right one by $w^{\textsc r}$. The weights of the restrictions to the fixed point set are related by the following formula.
\begin{pro}
	For $w,\sigma\in W$ and $x\in \KTh_\T(G/B)$ we have
	\begin{align*}
		w^{\textsc r}(x)_{|\sigma}&=x_{|\sigma w}\,, \\
		w^{\textsc l}(x)_{|\sigma}&=w(x_{|w^{-1}\sigma })\,.
	\end{align*}
\end{pro}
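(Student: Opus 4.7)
The proof plan is to unpack the definitions of the two actions directly; both formulas follow from the pullback-restriction rule applied to suitable geometric maps, together with careful bookkeeping of the torus-action twist in the left case.

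For the right action, I would use the diffeomorphism $G/B\simeq K/T$ recalled just above. Under this identification, $w^{\textsc r}$ is pullback along right multiplication $R_w\colon kT\mapsto kn_w T$ by a lift $n_w\in NT$ of $w$. Since $\T$ acts on $K/T$ by left multiplication, which commutes with right multiplication, $R_w$ is honestly $\T$-equivariant; in particular, its pullback commutes with restriction to fixed points. Observing that $R_w(n_\sigma T)=n_\sigma n_w T$, i.e.\ $R_w$ sends the fixed point $\sigma\in W\simeq (G/B)^\T$ to $\sigma w$, immediately yields $w^{\textsc r}(x)_{|\sigma}=x_{|\sigma w}$.

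For the left action, I would use left multiplication by a lift of $w^{-1}$; the inverse is forced by the convention that $w^{\textsc l}$ should be a genuine left action of $W$ on $K_\T(G/B)$. The map $L\colon gB\mapsto n_{w^{-1}}gB$ satisfies $L(t\cdot gB) = (n_{w^{-1}}tn_{w^{-1}}^{-1})\cdot L(gB) = (w^{-1}\cdot t)\cdot L(gB)$, so it is not $\T$-equivariant in the naive sense; it is equivariant only after twisting the torus action on the source by conjugation. By definition, $w^{\textsc l}$ is the pullback $L^*$ followed by the identification of the resulting $K$-theory (with twisted torus action) with the standard $K_\T(G/B)$; this identification acts on characters in $K_\T(pt)$ by $w$. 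On fixed points, $L$ sends $\sigma$ to $w^{-1}\sigma$, so the naive pullback restriction gives $x_{|w^{-1}\sigma}$, and applying the character twist produces $w(x_{|w^{-1}\sigma})$, as claimed.

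The only delicate point is the bookkeeping of the conjugation twist in the left case, which is precisely what produces the outer $w$ in the second formula. As a sanity check I would verify both formulas on the natural families of generators of $K_\T(G/B)$: for a constant class $\chi\in K_\T(pt)$ one has $w^{\textsc l}(\chi)=w\chi$, matching $w(\chi_{|w^{-1}\sigma})=w\chi$; and the $G$-equivariant line bundle $\LL(\lambda)$ is fixed by $w^{\textsc l}$, with the formula correctly reproducing $\LL(\lambda)_{|\sigma}=-\sigma\lambda=w(-w^{-1}\sigma\,\lambda)$. These checks can then be extended to all of $K_\T(G/B)$ by linearity and the ring structure, completing the verification.
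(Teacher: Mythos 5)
The paper states this proposition without proof, deferring to \cite[section 3.1]{MNS} and \cite{Kn} for the definitions and supporting discussion, so there is no in-paper argument to compare against. Your derivation correctly unpacks those definitions: for $w^{\textsc r}$, the $\T$-equivariance of right multiplication $R_w$ on $K/T$ together with $R_w(\sigma)=\sigma w$ on fixed points gives the first formula directly; for $w^{\textsc l}$, pullback along left multiplication by $n_{w^{-1}}$ together with the conjugation twist (equivariance of $L$ against the automorphism $t\mapsto w^{-1}\cdot t$ of $\T$ forces the outer $w$ on restrictions) gives the second. The choice of $n_{w^{-1}}$ rather than $n_w$ is indeed what makes $w\mapsto w^{\textsc l}$ a left action, and your sanity check on $\LL(\lambda)$ ($-\sigma\lambda = w(-w^{-1}\sigma\lambda)$) confirms the sign of the twist, which is the one place an error would be easy to make. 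The argument is correct and is essentially the standard one you would find in the cited references.
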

\begin{rem} The action of $W$ on $\KTh_\T(G/B)$ via $w^{\textsc r}$ satisfies $(uw)^{\textsc r}=u^{\textsc r}w^{\textsc r}$ i.e.~is an action from the left, despite its name.
\end{rem}
\subsection{Reflections}

The Weyl group can be identified with the group generated by the reflections in the simple roots of the Lie algebra. We record the following proposition, which is a direct consequence of the properties of root systems (cf. \cite[Proposition in Section~1.2]{Hump}).

\begin{pro} \label{lem:Chev}
	Let $\alpha$ and $\beta$ be roots and $s_\alpha,s_\beta$ the corresponding reflections. Let $w\in W$ be a Weyl group
	element. Then
	$$s_\beta=ws_\alpha w^{-1}  \iff \beta=\pm w\alpha \in \ttt^*\,.$$
\end{pro}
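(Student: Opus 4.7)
The plan is to verify the identity $ws_\alpha w^{-1} = s_{w\alpha}$ directly by computation, and then deduce the stated equivalence from the fact that a reflection determines its root up to sign.

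First I would note that because the inner product $\langle -,-\rangle$ on $\ttt^*$ is $W$-invariant, the dual root operation commutes with the Weyl group action: $(w\alpha)^\vee = 2 w\alpha/\langle w\alpha,w\alpha\rangle = w(2\alpha/\langle \alpha,\alpha\rangle) = w(\alpha^\vee)$, and $\langle v,w(\alpha^\vee)\rangle = \langle w^{-1}v,\alpha^\vee\rangle$. Starting from the reflection formula $s_\alpha(v) = v - \langle v,\alpha^\vee\rangle \alpha$, I apply it inside $ws_\alpha w^{-1}$:
\begin{align*}
ws_\alpha w^{-1}(v) &= w\bigl(w^{-1}v - \langle w^{-1}v,\alpha^\vee\rangle\, \alpha\bigr)\\
&= v - \langle w^{-1}v,\alpha^\vee\rangle\, w\alpha\\
&= v - \langle v,(w\alpha)^\vee\rangle\, w\alpha = s_{w\alpha}(v).
\end{align*}
This gives the identity $ws_\alpha w^{-1} = s_{w\alpha}$ on all of $\ttt^*$, and since $s_{-\gamma} = s_\gamma$ for any root $\gamma$, the implication $\beta = \pm w\alpha \Rightarrow s_\beta = ws_\alpha w^{-1}$ follows.

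For the converse, I would argue that a reflection $s_\gamma$ determines its root $\gamma$ up to sign: the $(-1)$-eigenspace of $s_\gamma$ is the line $\R\gamma$, so if $s_\beta = s_{w\alpha}$ then $\R\beta = \R w\alpha$, and as both are roots of the same root system they have comparable length only when $\beta = \pm w\alpha$. Combining with the first part, $s_\beta = ws_\alpha w^{-1}$ forces $\beta = \pm w\alpha$, completing the equivalence.

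The calculation is entirely mechanical; there is no real obstacle beyond keeping track of the $W$-invariance of $\langle-,-\rangle$ when moving $w$ across the pairing. If one preferred, one could also invoke the standard reference cited in the statement, where the claim is recorded as a direct consequence of the axioms of a root system.
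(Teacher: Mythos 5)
Your proof is correct, and since the paper does not give its own argument for this proposition (it simply cites Humphreys), you have essentially reconstructed the standard textbook verification. The computation $ws_\alpha w^{-1} = s_{w\alpha}$ via $W$-invariance of the inner product is exactly right, and the deduction from $s_{-\gamma}=s_\gamma$ gives the forward implication cleanly.

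One small point of wording in the converse direction: after concluding $\R\beta = \R w\alpha$ from equality of the $(-1)$-eigenspaces, the clause ``they have comparable length only when $\beta=\pm w\alpha$'' is not quite the right justification. What you actually want is the root-system axiom that in a \emph{reduced} root system (as is the case for a semisimple group) the only roots proportional to a given root $\gamma$ are $\pm\gamma$; since $w\alpha$ is again a root and $\beta$ is proportional to it, $\beta=\pm w\alpha$. Lengths play no role here. With that phrasing tightened, the argument is complete.
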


We will need further properties of the Weyl group. 
\begin{pro} \label{lem:Weyl}
	 Let $s \in W$ be any reflection. There exists $w\in W$ such that $wsw^{-1}$ is a simple reflection.
\end{pro}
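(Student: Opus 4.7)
The plan is to combine Proposition~\ref{lem:Chev} with the standard fact that every root in a root system is $W$-conjugate to a simple root. Indeed, if $s=s_\alpha$ and we can find $w\in W$ with $w\alpha=\alpha_i$ a simple root, then Proposition~\ref{lem:Chev} (applied to $\alpha_i=w\alpha$) gives $s_{\alpha_i}=ws_\alpha w^{-1}$, which is exactly the conclusion we want. So the whole task reduces to showing that any root is in the $W$-orbit of some simple root.

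To establish this, first replace $\alpha$ by $-\alpha$ if necessary; this does not change $s_\alpha$, so I may assume $\alpha$ is positive. I would then induct on the height $\mathrm{ht}(\alpha)=\sum c_i$, where $\alpha=\sum c_i\alpha_i$ is the expansion in simple roots. The base case $\mathrm{ht}(\alpha)=1$ means $\alpha$ itself is simple and there is nothing to prove. For the inductive step, note that
\[
0<\langle\alpha,\alpha\rangle=\sum_i c_i\langle\alpha,\alpha_i\rangle,
\]
so at least one simple root $\alpha_i$ satisfies $\langle\alpha,\alpha_i^\vee\rangle>0$. Since $\mathrm{ht}(\alpha)\ge 2$ we have $\alpha\ne\alpha_i$, hence $s_i\alpha$ is again a positive root (because $s_i$ permutes the positive roots other than $\alpha_i$), and its height equals $\mathrm{ht}(\alpha)-\langle\alpha,\alpha_i^\vee\rangle<\mathrm{ht}(\alpha)$. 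By the inductive hypothesis $s_i\alpha=u\alpha_j$ for some $u\in W$ and simple $\alpha_j$, and then $\alpha=(s_iu)\alpha_j$, completing the induction.

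There is no real obstacle here: the statement is a textbook consequence of the structure of root systems. The only point that requires care is the fact that $s_i$ permutes the positive roots different from $\alpha_i$, which guarantees that the inductive step stays inside the set of positive roots and that the height strictly decreases. Once this is in place, Proposition~\ref{lem:Chev} turns the orbit statement into the desired conjugacy of reflections.
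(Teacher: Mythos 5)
Your proof is correct and follows essentially the same route as the paper: reduce via Proposition~\ref{lem:Chev} to the claim that every root is $W$-conjugate to a simple root. The paper simply cites this last fact to the literature, whereas you supply the standard height-induction argument for it; that is the only difference.
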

\begin{proof}
	Let $\alpha$ be the positive root corresponding to $s$. By Proposition \ref{lem:Chev} it is enough to show that there exists $w\in W$ such that $w \alpha$ is a simple root. This is a standard fact, see e.g. \cite[Proposition 5.12]{CarLieAlg}. 
\end{proof}

\begin{pro}  \label{lem:Wall1}
	Suppose $q_1,q_2,q_3\in W$ are reflections such that $q_3=q_1q_2q_1$, $q_1$ is simple and $q_1\neq q_2$. Let $w\in W$ be any Weyl group element. Then
	$$l(wq_1)<l(wq_1q_2) \iff l(w)<l(wq_3)\,.$$
\end{pro}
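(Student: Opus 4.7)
The plan is to use the standard length criterion from Coxeter-group theory: for any $v\in W$ and any reflection $s_\beta$ with associated positive root $\beta$, one has $l(vs_\beta)>l(v)$ if and only if $v(\beta)>0$ (see e.g.\ \cite[section 1.6]{Hump}). Applying this to both sides of the desired equivalence reduces the statement to a comparison of signs of $w$ applied to the positive roots of $q_2$ and $q_3$.

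Concretely, let $\beta_2$ and $\beta_3$ denote the positive roots corresponding to $q_2$ and $q_3$, and let $\alpha_1$ be the simple root of $q_1$. The left-hand side $l(wq_1)<l(wq_1q_2)$ is, by the criterion applied to $v=wq_1$, equivalent to $(wq_1)(\beta_2)>0$, i.e.\ $w(q_1\beta_2)>0$. The right-hand side $l(w)<l(wq_3)$ is equivalent to $w(\beta_3)>0$. So it suffices to prove $\beta_3=q_1\beta_2$.

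From $q_3=q_1q_2q_1$ and proposition \ref{lem:Chev} (with $\alpha=\beta_2$ and $w=q_1$) we immediately get $\beta_3=\pm q_1\beta_2$, and the only thing to settle is the sign. Here I use the standard fact that a simple reflection $q_1$ sends its own simple root $\alpha_1$ to $-\alpha_1$ and permutes all the remaining positive roots among themselves. Since $q_1\neq q_2$ we have $\beta_2\neq \alpha_1$, hence $q_1\beta_2$ is a positive root, forcing $\beta_3=q_1\beta_2$. Combining this with the previous paragraph gives the desired equivalence.

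I do not expect any genuine obstacle. The only subtle point is the sign determination in $\beta_3=\pm q_1\beta_2$, which is precisely where the two hypotheses ``$q_1$ is simple'' and ``$q_1\neq q_2$'' are used together; without both, $q_1\beta_2$ could be negative and the equivalence would flip.
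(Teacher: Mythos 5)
Your proof is correct and is essentially the same as the paper's: both reduce the claim to the identity $\beta_3 = q_1\beta_2$ via the length criterion, and both determine the sign using that a simple reflection $q_1$ permutes the positive roots other than $\alpha_1$. The only cosmetic difference is that the paper phrases the length criterion geometrically via tangent weights of $\T$-orbits in $G/B$ (that weight at $w$ is exactly $-w\alpha_i$), whereas you invoke the equivalent Coxeter-theoretic statement directly.
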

\begin{proof}
First let us note that $l(w)<l(wq_i)$ if and only if the weight of the tangent space of the one dimensional $\T$-orbit joining $w$ with $wq_i$ at $w$ is a negative root.
	Let $\alpha_2,\alpha_3$ be positive roots corresponding to reflections $q_2$ and $q_3$, respectively. The reflection $q_1$ is simple and $q_1\neq q_2$, therefore $q_1\alpha_2$ is also a positive root (see e.g. \cite[Lemma 5.9]{CarLieAlg}). Thus $q_1\alpha_2=\alpha_3$ (cf. Proposition \ref{lem:Chev}).
	The tangent weight at $w$ to the one-dimensional $\T$-orbit connecting $wq_3$ and $w$ is equal to $-w\alpha_3$. The tangent weight at $wq_1$ to the one-dimensional $\T$-orbit connecting $wq_1q_2$ and $wq_1$ is equal to $-wq_1\alpha_2$. These two weights are equal which proves the lemma.
\end{proof}
\begin{pro} \label{pro:walpha}
	Let $s$ be a simple reflection, $\alpha_s$ the corresponding simple root and $w\in W$ a Weyl group element.
	\begin{itemize}
		\item Suppose that $l(w)<l(ws)$ then $w\alpha_s$ is a positive root.
		\item Suppose that $l(w)<l(sw)$ then $w^{-1}\alpha_s$ is a positive root.
	\end{itemize}
\end{pro}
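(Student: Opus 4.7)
The plan is to invoke the standard characterization of Weyl group length in terms of inversions, namely $l(w)=|\Phi^+\cap w^{-1}\Phi^-|$, and then exploit the well-known symmetry that a simple reflection $s$ with simple root $\alpha_s$ permutes $\Phi^+\setminus\{\alpha_s\}$ and sends $\alpha_s$ to $-\alpha_s$. This last fact is exactly the content of \cite[lemma 5.9]{CarLieAlg} used already in the proof of Proposition~\ref{lem:Wall1}, so I regard it as free.

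For the first bullet, I would compute
\[
l(ws)=\bigl|\{\beta\in\Phi^+:ws\beta\in\Phi^-\}\bigr|
\]
by splitting the sum according as $\beta=\alpha_s$ or $\beta\ne\alpha_s$. On the complement $\Phi^+\setminus\{\alpha_s\}$, the reflection $s$ restricts to a bijection, so substituting $\gamma=s\beta$ gives
\[
\bigl|\{\beta\in\Phi^+\setminus\{\alpha_s\}:ws\beta\in\Phi^-\}\bigr|=\bigl|\{\gamma\in\Phi^+\setminus\{\alpha_s\}:w\gamma\in\Phi^-\}\bigr|
=l(w)-[\![w\alpha_s\in\Phi^-]\!],
\]
while the term coming from $\beta=\alpha_s$ contributes $[\![w\alpha_s\in\Phi^+]\!]$ (since $s\alpha_s=-\alpha_s$). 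Adding these gives $l(ws)=l(w)+1$ if $w\alpha_s\in\Phi^+$ and $l(ws)=l(w)-1$ otherwise, so the hypothesis $l(w)<l(ws)$ forces $w\alpha_s\in\Phi^+$.

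For the second bullet, I would use $l(v)=l(v^{-1})$ applied to $v=w$ and $v=sw$: the inequality $l(w)<l(sw)$ is equivalent to $l(w^{-1})<l(w^{-1}s)$, and the first bullet applied to $w^{-1}$ then yields $w^{-1}\alpha_s\in\Phi^+$.

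There is no real obstacle here; the entire proof is bookkeeping with the inversion-count formula, and the only nontrivial input is the permutation property of simple reflections on $\Phi^+\setminus\{\alpha_s\}$, which is cited elsewhere in the paper. In fact the whole proposition is textbook (e.g.\ \cite[Section 1.6]{Hump}) and could alternatively be dispatched by a one-line reference; I would still include the short computation above for self-containedness, matching the style adopted for Proposition~\ref{lem:Wall1}.
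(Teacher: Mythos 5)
Your proof is correct, and it takes a genuinely different route from the paper's. The paper deduces the first bullet geometrically: it has already observed (in the proof of Proposition~\ref{lem:Wall1}) that $l(w)<l(wq)$ exactly when the tangent weight at $w$ of the one-dimensional $\T$-orbit joining $w$ to $wq$ in $G/B$ is a negative root, and then simply records that this weight for $q=s$ is $-w\alpha_s$; the second bullet is reduced to the first via $w\mapsto w^{-1}$, just as you do. You instead run the purely combinatorial inversion-count argument, using $l(w)=|\Phi^+\cap w^{-1}\Phi^-|$ together with the fact that $s$ permutes $\Phi^+\setminus\{\alpha_s\}$. Your approach is self-contained at the level of root systems, avoids any appeal to the $\T$-orbit structure of $G/B$, and in fact proves slightly more (the precise dichotomy $l(ws)=l(w)\pm1$ according to the sign of $w\alpha_s$), whereas the paper's version is shorter in context because it leans on a geometric fact already set up two propositions earlier and matches the paper's flag-variety viewpoint. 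Both are valid; either could be replaced by a textbook citation, as you note.
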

\begin{proof}
	The first part follows from the fact that $-w\alpha_s$ is the tangent weight at $w$ of the orbit connecting $w$ and $ws$. The second part follows from the first since $l(w)<l(sw)$ if and only if $l(w^{-1})<l(w^{-1}s)$.
\end{proof}

 \section{Bott-Samelson resolution}\label{BottSa_resolution}
 The construction of the Demazure-Lusztig operators are based on the geometry of Bott-Samelson resolution. Below we briefly summarize what we need.
 \subsection{Inductive construction}
 Let $\wu$
 be a reduced word decomposition of $w\in W$. We denote by
 $$p_\wu:Z_\wu \to X_w \subset G/B $$
 the corresponding Bott--Samelson resolution of singularities (see e.g. \cite[Section~2.2]{BK},
 \cite[Section 2]{AMSS} or \cite[Section 3]{RW}).
 Let
 $$\partial Z_\wu =p_{\wu}^{-1}(\partial X_w), \qquad Z^\oo _\wu=Z_\wu\setminus \partial Z_\wu \,.$$ 
 It is a standard fact that
 $$p_{\wu}:(Z_\wu,\partial Z_\wu) \to (X_w, \partial X_w)\,.$$
 is a simple normal crossing resolution of singularities. 
 The boundary $\partial Z_\wu$ consists of~$l(w)$ components corresponding to omitting a single letter in $\wu$. Denote these components by~$\partial_j Z_\wu$ for $j\in\{1,2,\dots ,l(w)\}$.
 
 The Bott--Samelson resolution may be constructed inductively. Consider a simple reflection $s$. Suppose that $l(ws)>l(w)$. Then we have a pullback diagram
 \begin{align}
 \xymatrix{
 	Z_\ws \ar[rr]^{p_\ws} 
 	\ar[d]^{\pi_\ws}
 	&&X_{ws} \ar[r]^{\iota_{ws}} 
 	& G/B \ar[d]^{\pi_s} \\
 	Z_\wu \ar[r]^{p_\wu}&X_{w}\ar[r]^{\iota_{w}}&
 	G/B \ar[r]^{\pi_s}&
 	 G/P_s
 }
 \end{align}
 The map $\pi_\ws: Z_\ws \to Z_\wu$ has a $\T$-invariant section $i:Z_\wu \to Z_\ws$. We have an equality
  $$\partial Z_\ws=\pi_\ws^{-1}(\partial Z_\wu) \cup i(Z_\wu) \,.$$ 
  One may use the above construction to inductively describe the fixed point set $Z_\wu^\T$. It can be identified with the set of binary sequences of length~$l(w)$. For the empty word identification is obvious, the variety $Z_\varnothing$ is a single point. Suppose that $l(ws)>l(w)$. 
  Let $\eheta$ be a binary sequence of length~$l(w)$. By the inductive assumption it corresponds to a fixed point in $Z_\wu^\T$. The preimage $\pi^{-1}_{\ws}(\eheta)$ is a projective line $\PP^1$ equipped with a nontrivial action of $\T$. The set $\pi^{-1}_{\ws}(\eheta)^\T$ consists of two points. We identify these fixed points with $(\eheta,0)$ and $(\eheta, 1)$. We assume that $(\eheta,0)=i(\eheta)$ is the point that lies in the image of section $i$. For a detailed discussion of the fixed point set and an example see \cite[Section 3.2]{RW}.
  \subsection{Pull-backs of line bundles}
  Consider a fractional character $\lambda\in \Hom(\T,\C^*)\otimes_\Z \Q$. We are interested in the pullback $p^*_\wu\LL(\lambda)$ of the corresponding line bundle to the Bott--Samelson variety. Let us recall several classical results.
  \begin{pro}[Kempf lemma {\cite[Section 2, Lemma 3]{Kempf}}] \label{pro:kem}
  	Let $\wu$ be a reduced word decomposition of $w\in W$. Suppose that $s$ is the reflection corresponding to a simple root~$\alpha_s$. Assume that $l(ws)>l(w)$. Then, we have an isomorphism
  	$$
  	p^*_\ws\LL(s\lambda)\simeq \pi_\ws^*p_\wu^* \LL(\lambda) \otimes \O_{Z_{\ws}}(-\langle\lambda,\alpha^\vee_s\rangle\cdot i(Z_\wu)) \in \Pic(Z_\ws)\,.
  	$$
  \end{pro}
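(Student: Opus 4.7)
The plan is to exploit the $\PP^1$-bundle structure $\pi_\ws\colon Z_\ws\to Z_\wu$, which is obtained by base change from $\pi_s\colon G/B \to G/P_s$, together with its canonical section $i$ satisfying $\pi_\ws\circ i = \id_{Z_\wu}$ and $p_\ws\circ i = p_\wu$. Since $\pi_\ws$ admits a section, the Picard group of $Z_\ws$ splits as $\Pic(Z_\ws) \simeq \pi_\ws^*\Pic(Z_\wu) \oplus \Z\cdot[\O(i(Z_\wu))]$, so (tensoring with $\Q$ to accommodate the fractional character $\lambda$) every fractional line bundle on $Z_\ws$ decomposes uniquely as $\pi_\ws^*\mathcal{M}\otimes\O(n\cdot i(Z_\wu))$. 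The task is to identify $\mathcal{M}$ and $n$ for $p_\ws^*\LL(s\lambda)$.

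The coefficient $n$ is determined by restricting to any fiber of $\pi_\ws$, which $p_\ws$ maps isomorphically onto a fiber of $\pi_s$, i.e.~a $G$-translate of $P_s/B\simeq\PP^1$. Using fiber weights $-s\lambda$ and $-\lambda$ of $\LL(s\lambda)$ at $[B]$ and $[sB]$ together with tangent weight $-\alpha_s$ at $[B]$, the standard equivariant degree formula on $\PP^1$ gives $\deg\LL(s\lambda)|_{P_s/B} = \langle s\lambda,\alpha_s^\vee\rangle = -\langle\lambda,\alpha_s^\vee\rangle$. Since $\pi_\ws^*\mathcal{M}$ has fiber degree zero while $\O(i(Z_\wu))$ has fiber degree one, this forces $n = -\langle\lambda,\alpha_s^\vee\rangle$.

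To pin down $\mathcal{M}$, pull the intended identity back along $i$. The left side becomes $p_\wu^*\LL(s\lambda)$ while the right becomes $\mathcal{M}\otimes N^{-\langle\lambda,\alpha_s^\vee\rangle}$, where $N = N_{i(Z_\wu)/Z_\ws}$ is the normal bundle of the section. Since $\pi_\ws$ is base-changed from $\pi_s$, this normal bundle equals the restriction of the relative tangent bundle to the section, namely $p_\wu^*\Ls$; and a fixed-point comparison (both have weight $-w\alpha_s$ at every $w\in (G/B)^\T$) shows $\Ls\simeq\LL(\alpha_s)$ on $G/B$. Combined with the rational identity $s\lambda = \lambda - \langle\lambda,\alpha_s^\vee\rangle\alpha_s$, which translates into $\LL(s\lambda)\simeq\LL(\lambda)\otimes\LL(\alpha_s)^{-\langle\lambda,\alpha_s^\vee\rangle}$ in $\Pic(G/B)\otimes\Q$, the equation collapses to $\mathcal{M}\simeq p_\wu^*\LL(\lambda)$.

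The main obstacle is the equivariant identification of the normal bundle with $p_\wu^*\LL(\alpha_s)$, which requires both the base-change formula for the relative tangent of a pulled-back $\PP^1$-bundle and the fixed-point identification $\Ls\simeq\LL(\alpha_s)$ on $G/B$. The remaining ingredients—the fiber-degree computation on $P_s/B$, the splitting of $\Pic$ via the section, and transport of the Weyl-group identity through $p_\wu^*$—are standard, and extending from $\Pic$ to $\Pic\otimes\Q$ is purely formal.
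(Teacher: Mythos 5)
The paper cites Kempf's original lemma and offers no proof of its own (the short argument that follows in the paper is for the equivariant Corollary 4.2, not for this non-equivariant statement). Your argument is therefore an independent proof, and it is sound. The three key ingredients all check out: (i) since $\pi_\ws$ is a $\PP^1$-bundle with a section, $\Pic(Z_\ws)\otimes\Q$ splits as $\pi_\ws^*\Pic(Z_\wu)\otimes\Q \oplus \Q\cdot[\O(i(Z_\wu))]$, and the degree-one line bundle $\O(i(Z_\wu))$ is a valid generator for the rank-one summand because $i(Z_\wu)$ meets each fiber transversally in a single point; (ii) the fiber-degree computation on $P_s/B$ using the characters $-s\lambda$ at $\id$, $-\lambda$ at $s$, and tangent weight $-\alpha_s$ does give $-\langle\lambda,\alpha_s^\vee\rangle$; (iii) since $Z_\ws$ is the base change of $\pi_s$, the relative tangent bundle of $\pi_\ws$ is $p_\ws^*\Ls$, and restricting to the section identifies the normal bundle of $i(Z_\wu)$ with $p_\wu^*\Ls \simeq p_\wu^*\LL(\alpha_s)$, so pulling back the putative identity along $i$ and invoking $\LL(s\lambda)=\LL(\lambda)\otimes\LL(\alpha_s)^{-\langle\lambda,\alpha_s^\vee\rangle}$ forces $\mathcal{M}=p_\wu^*\LL(\lambda)$. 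The only implicit step you lean on is the injectivity of $\pi_\ws^*$ on Picard groups (standard for a $\PP^1$-bundle with $\pi_{\ws*}\O=\O$), which is fine to leave unremarked.
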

\begin{cor} \label{cor:Kem}
		The Kempf lemma holds equivariantly. Consider the natural  $\T$-linearization of bundle $\LL(\lambda)$ (i.e. such that the weight at $\id\in G/B$ is equal to $-\lambda$) and the natural linearization of $\O_{Z_{\ws}}(i(Z_\wu))$. We have an isomorphism
		$$
		p^*_\ws\LL(s\lambda)\simeq \pi_\ws^*p_\wu^* \LL(\lambda) \otimes \O_{Z_{\ws}}(-\langle\lambda,\alpha^\vee_s\rangle\cdot i(Z_\wu)) \in \Pic^\T(Z_\ws)\,.
		$$
	\end{cor}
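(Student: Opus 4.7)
The plan is to upgrade the non-equivariant isomorphism of Proposition~\ref{pro:kem} by a standard argument: once one knows two equivariant line bundles are non-equivariantly isomorphic, on a connected variety with a $\T$-fixed point their linearizations differ by a single global character of $\T$, so it suffices to verify that both sides have the same weight at one convenient fixed point.

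More precisely, the Bott--Samelson variety $Z_\ws$ is smooth, proper and connected, and has the distinguished $\T$-fixed point $\bar 0:=(0,\dots,0)$ corresponding to the empty subword, which maps to $\id B\in G/B$ under $p_\ws$ and lies in the section $i(Z_\wu)$. I would take the non-equivariant isomorphism of Proposition~\ref{pro:kem}, tensor the right-hand side with the unique character of $\T$ needed to match the weight at $\bar 0$, and conclude that the result is an equivariant isomorphism.

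So the core of the proof is the weight computation at $\bar 0$. On the left-hand side
\[
p_\ws^*\LL(s\lambda)_{|\bar 0}=\LL(s\lambda)_{|\id}=-s\lambda.
\]
For the right-hand side, first $\pi_\ws^*p_\wu^*\LL(\lambda)_{|\bar 0}=\LL(\lambda)_{|\id}=-\lambda$. It remains to compute the weight of $\O_{Z_\ws}(-\langle\lambda,\alpha_s^\vee\rangle\cdot i(Z_\wu))$ at $\bar 0$. Since $\bar 0$ is a smooth point of the $\T$-invariant divisor $i(Z_\wu)$, the natural linearization from the Notations section gives weight equal to $-\langle\lambda,\alpha_s^\vee\rangle$ times the normal weight of $i(Z_\wu)$ at $\bar 0$. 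The normal direction is the direction of the $\PP^1$-fiber of $\pi_\ws$ through $\bar 0$, and this fiber is mapped isomorphically by $p_\ws$ onto the fiber $P_s/B\subset G/B$ of $\pi_s$ over $\id P_s$, sending $\bar 0\mapsto \id B$. The tangent weight of $P_s/B$ at $\id B$ is $-\alpha_s$, so the normal weight is $-\alpha_s$ and the fiber weight of the divisor bundle at $\bar 0$ equals $\langle\lambda,\alpha_s^\vee\rangle\,\alpha_s$. Combining, the right-hand side has weight
\[
-\lambda+\langle\lambda,\alpha_s^\vee\rangle\,\alpha_s=-\bigl(\lambda-\langle\lambda,\alpha_s^\vee\rangle\,\alpha_s\bigr)=-s\lambda,
\]
agreeing with the left-hand side.

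The main obstacle, and really the only delicate point, is this normal-weight identification: one must be sure that the tangent weight of $P_s/B$ at $\id B$ is $-\alpha_s$ in the paper's sign conventions, and that this weight is indeed the normal weight of $i(Z_\wu)$ inside $Z_\ws$ at $\bar 0$ (i.e.\ that $p_\ws$ is \'etale along the fiber $\PP^1$ through $\bar 0$, which follows from the fiber-product description of $Z_\ws$). Everything else is a bookkeeping consequence of $s\lambda=\lambda-\langle\lambda,\alpha_s^\vee\rangle\alpha_s$ and of the rigidity statement that on $Z_\ws$ a non-equivariant line bundle admits a unique equivariant structure up to twisting by a character of $\T$.
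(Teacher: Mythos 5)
Your proposal is correct and follows the same strategy as the paper's proof: invoke the rigidity of $\T$-linearizations on the connected proper variety $Z_\ws$ (the paper cites \cite[proposition 2.10]{Brion}) and reduce to comparing weights at a single fixed point. The only difference is the choice of that point: the paper uses the maximal fixed point $p_\ws^{-1}(ws)$, which does not lie on $i(Z_\wu)$ so the divisor term contributes trivially and both sides are immediately seen to have weight $-w\lambda$, whereas your minimal fixed point $\bar 0$ lies on $i(Z_\wu)$ and hence requires the extra (correctly executed) computation of the normal weight $-\alpha_s$.
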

\begin{proof}
	We only need to prove that $\T$-linearizations of both sides agree. Due to \cite[Proposition~2.10]{Brion} 
	it is enough to check that the weights at some chosen fixed point agree. On the maximal fixed point $p^{-1}_\ws(ws)$ both bundles have weight $-w\lambda.$
\end{proof} \label{pro:Chev0}
	\begin{pro}[Chevalley formula {\cite[Paragraph 4.3]{Demazure}}, see also {\cite[Proposition 4.1]{KRW}} for this formulation] 
		Let \hbox{$\wu=s_{i_1},s_{i_2},\dots,s_{i_{l(w)}}$} be a reduced word decomposition of $w\in W$. Let $\alpha_{j}$ be the simple root corresponding to the reflection $s_{i_j}$. Consider elements
		
		$$\gamma_j =s_{i_{l(w)}}s_{i_{l(w)-1}}\dots s_{i_{j+1}}\cdot \alpha_j \in \ttt^*\,. $$
		There is an isomorphism
		$$
		p^*_\wu\LL(\lambda)\simeq  \O_{Z_{\wu}}\Big(
		\sum_{j=1}^{l(w)}\langle\lambda,\gamma_j^\vee\rangle\cdot\partial_jZ_\wu
		\Big) \in \Pic(Z_\wu)\,.
		$$
	\end{pro}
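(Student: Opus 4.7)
My plan is to prove the Chevalley formula by induction on $l(w)$, using the inductive construction of the Bott--Samelson resolution together with Kempf's lemma (corollary \ref{cor:Kem}) as the engine of the induction. The base case $w=e$ is trivial: $Z_\varnothing$ is a point, the pullback of $\LL(\lambda)$ is trivial, and the sum on the right-hand side is empty.

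For the inductive step, assume the formula holds for the reduced word $\wu$ of length $l(w)$. Let $s = s_{i_{l(w)+1}}$ be a simple reflection with $l(ws)=l(w)+1$ and consider the extended word $\ws$. I would like to prove the formula for an arbitrary fractional character $\mu$; setting $\lambda = s\mu$ in Kempf's lemma gives
$$p^*_\ws\LL(\mu)\simeq \pi_\ws^*p_\wu^*\LL(s\mu)\otimes\O_{Z_\ws}\bigl(\langle\mu,\alpha_s^\vee\rangle\cdot i(Z_\wu)\bigr),$$
where I used $\langle s\mu,\alpha_s^\vee\rangle = -\langle\mu,\alpha_s^\vee\rangle$ since $s\alpha_s^\vee=-\alpha_s^\vee$. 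The section $i(Z_\wu)$ is precisely the boundary component $\partial_{l(w)+1}Z_\ws$ obtained by omitting the last letter, and for the word $\ws$ one has $\gamma_{l(w)+1}^{\ws} = \alpha_s$ (the product of reflections to the right is empty), so this last factor matches the $j=l(w)+1$ term in the target formula.

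It then remains to identify $\pi_\ws^*p_\wu^*\LL(s\mu)$ with the sum over the first $l(w)$ components. By the inductive hypothesis applied to $\wu$ and to the character $s\mu$,
$$p_\wu^*\LL(s\mu)\simeq\O_{Z_\wu}\Bigl(\sum_{j=1}^{l(w)}\langle s\mu,(\gamma_j^{\wu})^\vee\rangle\cdot \partial_jZ_\wu\Bigr),$$
where $\gamma_j^{\wu} = s_{i_{l(w)}}\cdots s_{i_{j+1}}\alpha_j$. Pulling back along $\pi_\ws$ sends $\partial_jZ_\wu$ to $\partial_jZ_\ws$ for $j\le l(w)$, because $\partial Z_\ws = \pi_\ws^{-1}(\partial Z_\wu)\cup i(Z_\wu)$ and the first $l(w)$ components arise from the first factor. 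For the coefficients, the $W$-invariance of $\langle -,-\rangle$ gives $\langle s\mu,(\gamma_j^{\wu})^\vee\rangle = \langle\mu,(s\gamma_j^{\wu})^\vee\rangle = \langle\mu,(\gamma_j^{\ws})^\vee\rangle$, using the elementary identity $\gamma_j^{\ws} = s\cdot\gamma_j^{\wu}$ that follows directly from the definition of $\gamma_j$ for the word $\ws$. Combining the two factors completes the inductive step.

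The main bookkeeping point is to track the indexing of the boundary components and verify that $\gamma_j^{\ws}=s\gamma_j^{\wu}$ for $j\le l(w)$, and $\gamma_{l(w)+1}^{\ws}=\alpha_s$. No genuine difficulty arises beyond this combinatorial matching, since the two geometric inputs — Kempf's lemma and the description $\partial Z_\ws = \pi_\ws^{-1}(\partial Z_\wu)\cup i(Z_\wu)$ — do all the heavy lifting.
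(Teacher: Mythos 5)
Your inductive argument via Kempf's lemma is correct, and the bookkeeping is handled properly: for $j\le l(w)$ you have $\gamma_j^{\ws}=s\gamma_j^{\wu}$, which together with $W$-invariance of the pairing converts $\langle s\mu,(\gamma_j^{\wu})^\vee\rangle$ into $\langle\mu,(\gamma_j^{\ws})^\vee\rangle$; the new term $\gamma_{l(w)+1}^{\ws}=\alpha_s$ matches the divisor $i(Z_\wu)=\partial_{l(w)+1}Z_\ws$ coming from Kempf's lemma with the correct coefficient $\langle\mu,\alpha_s^\vee\rangle$; and $\pi_\ws^*\partial_jZ_\wu=\partial_jZ_\ws$ holds with multiplicity one because $\pi_\ws$ is a smooth $\PP^1$-fibration. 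Note, however, that the paper does not actually prove this proposition: it is stated purely as a citation to Demazure and to proposition~4.1 of \cite{KRW}, so there is no paper proof against which to compare your approach. Your inductive derivation is the standard one implicit in those references, and it also fits naturally with the equivariant refinement (corollary~\ref{cor:Kem}) and the round-up compatibility (proposition~\ref{pro:round}) that the paper does establish and then uses in the proof of theorem~\ref{tw:1}.
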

	\begin{pro} \label{pro:Chev}
		The coefficients $\langle \lambda, \gamma_j^\vee\rangle$ from the Chevalley formula belong to the set
		$$ \{\langle\lambda,\alpha^\vee\rangle|\alpha \text{ is a root, } l(w)>l(ws_\alpha)\} \,,$$
		where $s_\alpha$ is the reflection corresponding to the root $\alpha$.
	\end{pro}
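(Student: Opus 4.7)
The plan is to exhibit, for each index $j$, a single concrete root $\alpha$ in the target set whose associated coefficient equals $\langle\lambda,\gamma_j^\vee\rangle$. The natural candidate is $\alpha=\gamma_j$ itself, so the task splits into two checks: that $\gamma_j$ is a root (so the paired coroot is defined), and that $l(ws_{\gamma_j})<l(w)$. The first is immediate, because $\gamma_j=\sigma\alpha_j$ for $\sigma:=s_{i_{l(w)}}s_{i_{l(w)-1}}\cdots s_{i_{j+1}}$, and the Weyl group permutes the root system.

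For the length condition I would apply proposition \ref{lem:Chev} to recognise the reflection $s_{\gamma_j}$ as the conjugate $\sigma s_{i_j}\sigma^{-1}$. Then the product $ws_{\gamma_j}=(w\sigma)\cdot s_{i_j}\cdot\sigma^{-1}$ simplifies by telescoping. In $w\sigma$ the suffix $s_{i_{j+1}}\cdots s_{i_{l(w)}}$ of $w$ cancels letter-by-letter against the prefix $s_{i_{l(w)}}\cdots s_{i_{j+1}}$ of $\sigma$, leaving $w\sigma=s_{i_1}\cdots s_{i_j}$; multiplying on the right by $s_{i_j}$ removes its last letter, and appending $\sigma^{-1}=s_{i_{j+1}}\cdots s_{i_{l(w)}}$ yields
$$ws_{\gamma_j}=s_{i_1}\cdots s_{i_{j-1}}s_{i_{j+1}}\cdots s_{i_{l(w)}}\,,$$
a (possibly non-reduced) word of length $l(w)-1$. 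Hence $l(ws_{\gamma_j})\le l(w)-1<l(w)$, which is exactly the condition required to place $\gamma_j$ in the target set.

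I do not anticipate any serious obstacle beyond the bookkeeping above: once $s_{\gamma_j}$ is rewritten via proposition \ref{lem:Chev}, the cancellations are purely mechanical and require no braid moves. One subtlety worth flagging is that $\gamma_j$ need not be positive, but this is harmless because the condition $l(w)>l(ws_\alpha)$ depends only on the unordered pair $\{\alpha,-\alpha\}$, and both values $\pm\langle\lambda,\alpha^\vee\rangle$ appear in the target set as $\alpha$ ranges over all roots.
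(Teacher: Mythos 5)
Your argument is correct and matches the paper's proof: both set $\gamma_j$ up as a Weyl translate of $\alpha_j$, invoke proposition \ref{lem:Chev} to identify $s_{\gamma_j}$ with a conjugate of $s_{i_j}$, and then cancel telescopically to realize $ws_{\gamma_j}$ as the word $\wu$ with the $j$-th letter omitted. The closing remark about $\gamma_j$ possibly being negative also agrees with the remark the paper places after the proposition.
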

\begin{proof}
Let $s_{\gamma_j}$ be the reflection corresponding to the root ${\gamma_j}$. We need to prove that
	$$l(w)>l(ws_{\gamma_j})\,.$$
	Let $w_{>j}=s_{i_{j+1}}s_{i_{j+2}}\dots s_{i_{l(w)}}\in W$. We have $\gamma_j=w_{>j}^{-1}\cdot\alpha_{j}$. Proposition \ref{lem:Chev} implies that
	$$s_{\gamma_j}=w^{-1}_{>j}s_{i_j} w_{>j} \in W\,.$$
	It follows that $ws_{\gamma_j}$ is represented by the word $\wu$ with $s_{i_j}$ omitted. Therefore \hbox{$l(w)>l(ws_{\gamma_j})\,.$}
\end{proof}
\begin{rem}
	In Proposition \ref{pro:Chev} we do not assume that the roots are positive. Of course the set of coefficients may be much smaller, e.g. all appearing coefficients are nonnegative if $\lambda$ is dominant.
\end{rem}
\section{Twisted motivic Chern class}
The K-theoretic characteristic classes of singular varieties are our main protagonists. Below we recall the twisted version of motivic Chern classes.
\subsection{Motivic Chern class}
The motivic Chern class was defined in \cite{BSY} (see also \cite{SYp} for a survey). Its equivariant version is due to \cite{FRW,AMSS}. Here we remind only the definition of $\T$-equivariant motivic Chern class. Consult \cite{FRW,AMSS} for a detailed account.
\begin{adf}[after {\cite[Section 2.3]{FRW}}]
	The motivic Chern class assigns to every $\T$-equivariant map of quasi-projective $\T$-varieties $f:Z \to Y$ an element
	$$\mC(f)=\mC(Z \xtto{f} Y) \in \operatorname{G}^\T(Y)[y]\,,$$
	such that the following properties are satisfied
	\begin{description}
		\item[1. Additivity] Let $Z$ be a $\T$-variety and $U \subset Z$ an invariant open subvariety.  Then
		$$\mC(Z\xtto{f} Y)=\mC(U\xtto{f_{|U}} Y)+\mC(Z\setminus U\xtto{f_{|Z\setminus U}} Y)\,.$$
		\item[2. Functoriality] For an equivariant proper map $g:Y\to Y'$ we have
		$$\mC(Z\xtto{g\circ f} Y')=g_*\mC(Z\xtto{f} Y) \,.$$
		\item[3. Normalization] For a smooth $\T$-variety $X$ we have
		$$\mC(\id_X)=\lambda_y(T^*X)
		\,.$$ 
	\end{description}
	The equivariant motivic Chern class is the unique assignment satisfying the above properties. For a smooth $\T$-variety $X$ we may consider the class $\mC(Z\to X)$ as an element of $\KTh_\T(X)[y]$.
\end{adf}
See \cite[Theorem 4.2]{AMSS} for an equivalent definition in terms of a natural transformation of functors. The above definition is meaningful also in the non-equivariant setting, i.e. for $\T$ equal to the trivial group (cf. \cite{BSY}). 
\subsection{Twisted class}
The twisted motivic Chern class was defined in \cite{KonW}. We repeat its definition here.
Let $(Y,\partial Y)$ be a pair, consisting of an algebraic quasiprojective $\T$-variety $Y$ and an invariant closed subvariety $\partial Y \subset Y$. We assume that the complement $Y^\oo =Y\setminus \partial Y$ is smooth.
Let $\Delta$ be an invariant $\Q$-Cartier divisor on $Y$ with support contained in the boundary $|\Delta|\subset \partial Y$. 

\begin{adf}[{\cite[Definition 2.2]{KonW}}]\label{df:twistedmC} 
	The twisted motivic Chern class of a triple $(Y,\partial Y; \Delta)$ as above is defined by the formula
	$$\mC(Y,\partial Y;\Delta)=f_*\Big(\O_Z(\lceil f^*(\Delta)\rceil) \cdot \mC(Z^\oo \subset Z)\Big) \,,$$
	where  $f:(Z,\partial Z)\to (Y,\partial Y)$ is a resolution of singularities such that $\partial Z=f^{-1}(\partial Y)$ is a simple normal crossing divisor and $Z^\oo =Z\setminus \partial Z$.
	We assume that
	$f_{|Z^\oo }:Z^\oo \to Y^\oo $
	is an isomorphism.
	\end{adf}
	It is proved in \cite[Corollary 4.6]{KonW}
	that this class is well defined, i.e. it does not depend on a choice of a SNC resolution of singularities.
	\begin{rem}
		The twisted motivic Chern class may be obtained as a limit of the equivariant version of Borisov-Libgober elliptic class \cite{BoLi1}, whenever it is defined, see \cite[Section 3]{KonW} for a detailed discussion. 
	\end{rem}
	\subsection{Twisted classes of Schubert cells and stable envelopes} \label{s:mC}
	The K-theoretic stable envelopes were defined in \cite{O2,OS}. They depend on a fractional line bundle called slope. It turns out that for homogeneous varieties the stable envelopes for a small anti-ample slope coincide (up to normalization) with the motivic Chern classes of Schubert varieties, see \cite[Theorem 8.5]{AMSS}, \cite{FRW} and \cite[Theorem 6.2]{Kon2}. The twisted class is a class depending on a fractional line bundle which generalizes the motivic Chern class and coincides with the stable envelope for an arbitrary slope, see \cite[Theorem~7.1]{KonW}. \medskip \\
	Consider a generalized flag variety $G/B$. 
	Let $\lambda \in \Hom(\T,\C^*)\otimes_\Z\Q$ be a fractional character and $w\in W$ a fixed point.
	\begin{pro}[{\cite[Section 10]{KonW}}]
		There exists a unique $\T$-invariant $\Q$-Cartier divisor $\Delta_{w,\lambda}$ on~$X_w$ such that
		\begin{enumerate}
			\item The divisor $\Delta_{w,\lambda}$ represents $\LL(\lambda)_{|X_w}$.
			\item The support of $\Delta_{w,\lambda}$ is contained in the boundary $\partial X_w$.
		\end{enumerate}
	\end{pro}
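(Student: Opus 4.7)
The plan is to construct $\Delta_{w,\lambda}$ as the divisor of a distinguished $\T$-eigen rational section of $\LL(\lambda)_{|X_w}$, and to establish uniqueness using the fact that the Schubert cell $X_w^\oo$ is, as a $\T$-variety, isomorphic to an affine space $\A^{l(w)}$ with a linear $\T$-action (whose unique fixed point is $w$).

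For existence, first pick $N\in\N$ so that $N\lambda\in\Hom(\T,\C^*)$ is an honest character; it suffices to produce an integer $\T$-invariant Cartier divisor $D$ on $X_w$, supported in $\partial X_w$ and representing $\LL(N\lambda)_{|X_w}$, as then $\Delta_{w,\lambda}:=\frac{1}{N}D$ will satisfy the two required conditions. Since $\Pic(\A^{l(w)})=0$, the restriction $\LL(N\lambda)_{|X_w^\oo}$ is trivial as a line bundle; $\T$-equivariantly it is therefore isomorphic to the trivial bundle twisted by the character of its fiber at the fixed point $w$, namely $-w(N\lambda)$. Choose a $\T$-eigen nowhere-vanishing section $s$ of $\LL(N\lambda)_{|X_w^\oo}$. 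Viewed as a rational section of the line bundle $\LL(N\lambda)_{|X_w}$ on the normal variety $X_w$, its divisor $D:=\dv(s)$ is Cartier (because the bundle is genuinely locally free of rank one), $\T$-invariant (since $s$ is a $\T$-eigenvector, so its zero/pole locus is $\T$-stable), and supported in $\partial X_w$ (since $s$ has no zeros or poles on $X_w^\oo$), yielding existence.

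For uniqueness, suppose $\Delta'$ is another $\T$-invariant $\Q$-Cartier divisor on $X_w$ satisfying both properties. Clearing denominators, one obtains an integer $M$ such that $M(\Delta_{w,\lambda}-\Delta')$ is a principal Cartier divisor $\dv(f)$ for some $f\in\C(X_w)^*$, still supported in $\partial X_w$. Restricted to $X_w^\oo\simeq\A^{l(w)}$, the rational function $f$ is regular and nowhere vanishing; since $\O(\A^n)^*=\C^*$, it follows that $f$ is a nonzero constant, hence $\dv(f)=0$ and $\Delta_{w,\lambda}=\Delta'$.

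The step I expect to be the main obstacle is the existence argument, specifically the combination of two structural inputs: that the Schubert cell is $\T$-equivariantly a linear representation (so that equivariant line bundles on it are classified by characters and the $\T$-eigen trivialization is available), and that $X_w$ is normal, which is what allows the rational section $s$ chosen on $X_w^\oo$ to define a bona fide Cartier divisor on the possibly singular variety $X_w$. Both facts are classical for Schubert varieties but constitute the geometric heart of the proof; everything else reduces to bookkeeping with $\Q$-coefficients and the affineness of the cell.
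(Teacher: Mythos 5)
Your argument is correct, and it is self-contained where the paper simply points to \cite[section 10]{KonW} without reproducing a proof. The existence step --- pass to an integral multiple, note that the Schubert cell is $\T$-equivariantly a linear representation so the equivariant Picard group of the cell is the character lattice, take a $\T$-eigen trivializing section and its divisor as a rational section on $X_w$ --- and the uniqueness step --- the difference is, after clearing denominators, a principal divisor $\dv(f)$ supported in $\partial X_w$, so $f$ is a unit on $\A^{l(w)}$, hence constant --- both go through. The one inaccuracy is in your concluding meta-comment: normality of $X_w$ is not in fact used anywhere. The divisor of a rational section of an honest line bundle is Cartier on \emph{any} variety, irrespective of normality, and the ``regular and nowhere vanishing'' step in the uniqueness argument only uses that the open cell $X_w^\oo\simeq\A^{l(w)}$ is smooth (hence normal), not that $X_w$ is. The genuine geometric input is exactly the other one you isolate: the cell is a $\T$-stable affine space with linear $\T$-action, a unique fixed point, trivial Picard group, and unit group $\C^*$. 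One further difference in emphasis worth noting: your argument is a pure existence/uniqueness proof and does not identify the divisor, whereas the reference (and the remark that follows the proposition in this paper) goes further and computes the coefficients of $\Delta_{w,\lambda}$ on the boundary Schubert divisors via the Chevalley formula; that explicit description is what gets used downstream for the Bott--Samelson computations.
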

	\begin{adf}\label{mc-defndf}
		The twisted motivic Chern class of a Schubert cell is defined as
		\begin{equation}\label{mc-defn}\mC(w,\lambda):=\iota_{w*}\mC(X_w,\partial X_w; \Delta_{w,\lambda})\in \KTh_\T(G/B)[y]\,. \end{equation}
	\end{adf}
	\begin{rem}
		To compute the class $\mC(w,\lambda)$ one may use the Bott-Samelson resolution. The divisor $p_\wu^*\Delta_{w,\lambda}$ is the unique $\T$-invariant $\Q$-divisor representing $p_\wu^*\LL(\lambda)$ with support contained in $\partial Z_\wu$. Its coefficients are computed by the Chevalley formula (cf. Propositions~\ref{pro:Chev0} and \ref{pro:Chev}).
	\end{rem}
	\begin{rem}[{\cite[Proposition 2.5]{KonW}}]
		The twisted motivic Chern class generalizes the motivic Chern class, i.e. for a small anti-ample $\lambda$ we have
		$$\mC(w,\lambda)=\mC(w,0)=\mC(X_w^\oo \subset G/B) \,.$$
	\end{rem}
	\begin{atw}[{\cite[Theorem 7.1 and Remark 6.4]{KonW}}] \label{tw:otoczki}
		Suppose that a fractional character~$\lambda$ is general enough. Then the class
		$$q^{-\frac{1}{2} \dim X_w}\operatorname{mC}^\T_{-q}(w,\lambda) \in \KTh_\T(G/B)[q^{1/2},q^{-1/2}]$$
		is equal to the stable envelope
		$ \St^\lambda(w)\,.$
	\end{atw}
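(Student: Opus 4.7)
The proof plan rests on the uniqueness of K-theoretic stable envelopes: once a class is supported on the correct union of Schubert varieties, has the prescribed value at the diagonal fixed point, and satisfies the Newton-polytope (``smallness'') condition with respect to the slope $\lambda$, it must equal $\St^\lambda(w)$ (after substituting $y=-q$ and normalizing by $q^{-\frac{1}{2}\dim X^+_w}$). So the plan is to verify these three axioms directly for $q^{-\frac{1}{2}\dim X^+_w}\mC^\T_{-q}(w,\lambda)$.

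First, I would dispose of the support axiom. By Definition \ref{mc-defndf} we have $\mC(w,\lambda)=\iota_{w*}\mC(X_w,\partial X_w;\Delta_{w,\lambda})$, so the class is pushed forward from $X_w$, which is the union of the Schubert cells $X_v^\oo$ for $v\le w$. This is the required full-attractor condition for the stable envelope in the positive chamber.

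Next, I would compute the restriction at the fixed point $w$ itself using the Bott-Samelson resolution $p_\wu:Z_\wu\to X_w$ and the Lefschetz-Riemann-Roch formula \ref{tw:LRR}. The fixed point $w\in X_w^\oo$ lies in the open cell, so its unique preimage in $Z_\wu$ is the maximal fixed point, which sits away from the boundary $\partial Z_\wu$. Consequently the divisor $\lceil p_\wu^*\Delta_{w,\lambda}\rceil$ vanishes in a neighbourhood of this point, and the local contribution is simply $\lambda_y(T^*_w X_w^\oo)$. After the usual LRR division by $eu(\nu(w\subset G/B))$ one recovers exactly the polarization factor demanded by the diagonal normalization of the stable envelope (here the polarization is the cotangent polarization, which explains the $\lambda_{-q}$-class and the shift $q^{-\frac{1}{2}\dim X^+_w}$).

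The main obstacle is the third axiom: for $v<w$ one must control the $\T$-weights (the Newton polytope in $\ttt^*$) of $\mC(w,\lambda)_{|v}$ and show they lie in the shifted box dictated by $\lambda$. My plan for this step is inductive, running along a reduced decomposition $\wu$ and applying LRR at each fixed point of $Z_\wu$. By Kempf's lemma (Corollary \ref{cor:Kem}) and the Chevalley formula (Proposition \ref{pro:Chev0}), the coefficients of $p_\wu^*\Delta_{w,\lambda}$ are of the form $\langle\lambda,\gamma_j^\vee\rangle$ with $\gamma_j$ roots such that $l(ws_{\gamma_j})<l(w)$ (Proposition \ref{pro:Chev}); after applying $\lceil\cdot\rceil$ to these coefficients, one gets integer multiplicities whose contribution to a restriction $(\,\cdot\,)_{|v}$ shifts the weight precisely into the target polytope. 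The general-enough hypothesis $\langle\lambda,\alpha^\vee\rangle\notin\Z$ is used exactly here to make the rounding strict on the right faces of the polytope, ensuring the strict inequalities in the stable-envelope smallness condition. Combined with the fact (implicit in propositions \ref{lem:Weyl}--\ref{pro:walpha}) that the $\T$-weights of the normal bundle at $v$ are controlled by $w^{-1}v$, this comparison of Newton polytopes is the technically delicate step, but once it is carried out it completes the verification of all three axioms and hence identifies the normalized twisted motivic Chern class with $\St^\lambda(w)$.
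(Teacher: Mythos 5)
This statement is not proved in the present paper; it is cited directly from the authors' earlier work \cite[theorem 7.1 and remark 6.4]{KonW}, so there is no internal proof to compare your plan against. Your outline is the standard axiomatic verification used in \cite{KonW} (and in the related works \cite{AMSS,FRW,Kon2}): check support, the diagonal normalization, and the ``smallness'' (Newton polytope) axiom, then invoke uniqueness of the stable envelope. In that sense the route you propose is the right one.

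However, as a proof the proposal has a genuine gap. The support axiom and the computation at the diagonal fixed point are indeed routine once one has Lemma \ref{lem:BSind0}-type formulas, but the third axiom --- the degree bound at the off-diagonal fixed points $v<w$, which is where the slope $\lambda$ and the rounding $\lceil\cdot\rceil$ actually enter --- is only described as ``the technically delicate step'' without being carried out. Verifying that the weights of $\mC(w,\lambda)_{|v}$ land in the correct shifted window, for \emph{every} $v<w$, is precisely the content of the theorem and cannot be waved through by pointing at the Chevalley formula and Proposition \ref{pro:Chev}: one must track how the ceiling of the $\Q$-multiplicities interacts with the Euler-class denominators and the normal weights at each fixed point of $Z_\wu$, and show that the genericity hypothesis $\langle\lambda,\alpha^\vee\rangle\notin\Z$ forces the required strict inequalities. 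Without that estimate the argument does not close. A smaller but real inaccuracy: you say the relevant polarization is the cotangent one, but the paper explicitly fixes the \emph{tangent} bundle $TG/B$ as the polarization (and the factor $q^{-\frac{1}{2}\dim X^+_w}$ is calibrated to that choice); using the wrong convention here would throw off the diagonal normalization check by a sign/shift.
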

	\begin{rem}
		We write $\operatorname{mC}^\T_{-q}$ for the image of the class $\mC$ under the map
		$$\rho:\KTh_\T(G/B)[y] \to \KTh_\T(G/B)[q^{-1/2},q^{1/2}]\,,$$
		which sends $y$ to $-q$.
	\end{rem}
	\begin{cor} \label{cor:otoczki}
		Let $\lambda$ be an arbitrary rational character. Let $\lambda^-$ be a small anti-ample fractional character and
		$\varepsilon$ a small enough positive rational number. Then the class
		$$q^{-\frac{1}{2} \dim X_w}\operatorname{mC}^\T_{-q}(w,\lambda) \in \KTh_\T(G/B)[q^{1/2},q^{-1/2}]$$
		is equal to the stable envelope
		$ \St^{\lambda+\varepsilon\lambda^-}(w)\,.$
	\end{cor}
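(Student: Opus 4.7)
The plan is to reduce the statement for an arbitrary (possibly non-general-enough) $\lambda$ to the general-enough case already handled by Theorem \ref{tw:otoczki}, by showing that the twisted motivic Chern class is locally constant under small anti-ample perturbations.

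First I would set $\lambda' := \lambda + \varepsilon\lambda^-$ and observe that, since $\lambda^-$ is rational and anti-ample, for generic small positive $\varepsilon$ the character $\lambda'$ avoids all the countably many walls $\{\mu : \langle\mu,\alpha^\vee\rangle\in\Z\}$, hence is general enough. Theorem \ref{tw:otoczki} applied to $\lambda'$ then gives
\begin{equation*}
q^{-\tfrac12\dim X^+_w}\operatorname{mC}^\T_{-q}(w,\lambda+\varepsilon\lambda^-) = \St^{\lambda+\varepsilon\lambda^-}(w).
\end{equation*}
Thus the corollary reduces to the invariance claim $\mC(w,\lambda)=\mC(w,\lambda+\varepsilon\lambda^-)$ for all sufficiently small positive~$\varepsilon$.

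Next I would unravel both sides using a fixed Bott--Samelson resolution $p_\wu\colon Z_\wu\to X_w$ and Definition \ref{df:twistedmC}:
\begin{equation*}
\mC(w,\mu)=\iota_{w*}p_{\wu*}\bigl(\O_{Z_\wu}(\lceil p_\wu^*\Delta_{w,\mu}\rceil)\cdot\mC(Z_\wu^\circ\subset Z_\wu)\bigr).
\end{equation*}
Since $\mC(Z_\wu^\circ\subset Z_\wu)$ is independent of $\mu$, the claim reduces to showing that the integer divisors $\lceil p_\wu^*\Delta_{w,\lambda}\rceil$ and $\lceil p_\wu^*\Delta_{w,\lambda+\varepsilon\lambda^-}\rceil$ on $Z_\wu$ coincide. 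By the Chevalley formula (Propositions \ref{pro:Chev0} and \ref{pro:Chev}), these rounded divisors have the form $\sum_{j=1}^{l(w)}\lceil\langle\mu,\gamma_j^\vee\rangle\rceil\,\partial_jZ_\wu$ with $\gamma_j=s_{i_{l(w)}}\cdots s_{i_{j+1}}\alpha_j$, and the coefficients depend linearly on $\mu$, so the question becomes purely numerical: check that $\lceil\langle\lambda,\gamma_j^\vee\rangle\rceil=\lceil\langle\lambda+\varepsilon\lambda^-,\gamma_j^\vee\rangle\rceil$ for every $j$.

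The key input is the standard fact that the roots $\gamma_j$ attached to a reduced decomposition are positive (they enumerate $\mathrm{Inv}(w^{-1})\subset\Delta^+$). Once this is in hand, anti-ampleness of $\lambda^-$ forces $\langle\lambda^-,\gamma_j^\vee\rangle<0$, so perturbation decreases each coefficient by a small positive amount. In the integer case $\langle\lambda,\gamma_j^\vee\rangle\in\Z$ we are then rounding up from slightly below an integer and obtain the same ceiling; in the non-integer case the perturbation is smaller than the distance to the next integer for $\varepsilon$ small, and again the ceiling is unchanged. Combining this divisor equality with the first step concludes the proof. I expect the main technical point to be the positivity of the Chevalley roots $\gamma_j$, which controls the sign in the estimate and is the only place where the specific choice of perturbation direction (\emph{anti-ample}, not merely small) is used.
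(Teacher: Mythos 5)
Your proof is correct and takes the approach that the paper evidently intends (no separate proof is printed for this corollary). You correctly reduce to Theorem \ref{tw:otoczki} applied at $\lambda'=\lambda+\varepsilon\lambda^-$, and then to the combinatorial statement that the round-ups $\lceil\langle\cdot,\gamma_j^\vee\rangle\rceil$ are unchanged under the anti-ample perturbation; the positivity of the Chevalley roots $\gamma_j$ follows from Proposition \ref{pro:walpha} (applied to $\gamma_j=w_{>j}^{-1}\alpha_{i_j}$ with $l(w_{>j})<l(s_{i_j}w_{>j})$), and anti-ampleness gives $\langle\lambda^-,\gamma_j^\vee\rangle<0$ since each positive root is a nonnegative combination of simple roots. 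One small sharpening: you do not need to restrict to \emph{generic} small $\varepsilon$; because $\langle\lambda^-,\alpha^\vee\rangle\neq 0$ for every root $\alpha$, the ray $\lambda+\varepsilon\lambda^-$ leaves every wall $H_{\alpha,n}$ transversally, so $\lambda'$ is general enough for \emph{all} sufficiently small $\varepsilon>0$, which is what the statement of the corollary uses.
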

	\begin{rem}
		We do not include the definition of the stable envelope here. For a definition adapted to generalized flag varieties see e.g.~\cite{AMSS,SZZ,KonW}  
	\end{rem}
	\begin{rem} 
		We write $\operatorname{stab}_{\mathfrak{C}}^\lambda(w)$ for the stable envelope of the fixed point $w \in W$ for the weight chamber $\mathfrak{C}$, slope $\lambda$ and the tangent polarization.
		We consider stable envelopes only for 
		the tangent bundle $TG/B$ as a polarization. 
		This restriction does not reduce the generality of the results.
		Stable envelopes corresponding to different
		polarizations are related by a shift of slope and renormalization, see \cite[Paragraph 9.1.12]{O2}. 
		We usually consider stable envelopes for the positive weight chamber $\mathfrak{C^+}$.
	\end{rem}

\section{Twisted Demazure-Lusztig operators}
Many cohomological invariants of Schubert varieties can be computed inductively due to the inductive construction of the Bott-Samelson resolution of singularities. By application of the Demazure or the Bernstein-Gelfand-Gelfand operators, starting from the class of 0-dimensional Schubert variety, the fundamental class can be computed, \cite{BGG}. The same is true for the motivic Chern classes in K-theory as proven in \cite{AMSS}, where the Demazure-Lusztig operators are employed. The
twisted version demands a twisted version of operators. 
\subsection{The definitions and main properties}
Let $\alpha_s$ be a simple root and $s\in W$ the corresponding reflection. Denote by $\Ls$ the~relative tangent bundle of the projection $\pi_s:G/B\to G/P_s$. Let us recall the definition of the Demazure-Lusztig operator.
\begin{adf}
	The  Demazure-Lusztig operator $\DL{s}$ is defined by
	\begin{align*}
		\DL{s}&= \lambda_y(\Ls^*)\cdot \pi_s^*\pi_{s*} -\id_{\KTh_{\T}(G/B)[y]} \\
		&= \frac{1+y\Ls^*}{1-\Ls}\cdot s^{\textsc r}-\frac{(1+y)}{1-\Ls}\cdot \id_{\KTh_{\T}(G/B)[y]}\,.
	\end{align*}
They appeared already in Lusztig's article \cite{Lusztig} on the K-theory of flag varieties. In fact,  $\KTh_\T(G/B)[q,q^{-1}]$ is a free, rank one  module over the Hecke algebra, hence it admits both right and left Hecke action.  
The interplay with characteristic classes was exposed in \cite{AMSS}. Because of the relation with Hirzebruch $\chi_y$-genus we use the variable $y$ which is equal to $-q$ from Lusztig's work.
	The left Demazure-Lusztig $\DLL{s}$ operator is defined by (cf. \cite[Section 5.3]{MNS})
	\begin{align*}
		\DLL {s}:=&
		\frac{1+y\alpha^{-1}_s}{1-\alpha^{-1}_s}\cdot s^{\textsc l} -\frac{(1+y)}{1-\alpha^{-1}_s} \cdot \id_{\KTh_\T(G/B)[y]}\,.
	\end{align*}
\end{adf}
Note that here $\alpha_s^{-1}$ denotes the one-dimensional representation, the dual of $\alpha_s$. We use here the multiplicative notations for characters.

\begin{rem}For the flag varieties in type $A$ the  left Demazure-Lusztig operators appeared in the form of  R-matrix relations connecting weight functions \cite[Section 7.2]{RTV0}, \cite[Theorem 3.2]{RTV}. In \cite{FRW} it was shown, that the trigonometric weight functions coincide with the motivic Chern classes of Schubert cells.\end{rem}
 
We define a twisted version of the Demazure-Lusztig operators.
\begin{adf} \label{df:DL} \  
	\begin{enumerate}[\bf 1)]
		\item For $a\in \R$ we define the right twisted  Demazure-Lusztig operator 
		\begin{align*}
			\DL{s,a}:=&
			\frac{1+y\Ls^*}{1-\Ls}\cdot s^{\textsc r} -\frac{(1+y)\cdot\Ls^{\lceil a\rceil}}{1-\Ls} \cdot \id_{\KTh_\T(G/B)[y]}\\
			=&
			\frac{1+y\Ls^*}{1-\Ls}\cdot s^{\textsc r} +\frac{(1+y)\cdot\Ls^{\lceil a\rceil-1}}{1-\Ls^*} \cdot \id_{\KTh_\T(G/B)[y]}\,.
		\end{align*}
	\item Let $\lambda \in \Hom(\T,\C^*)\otimes\Q$ be a fractional character. We define the right twisted Demazure-Lusztig operator $\DL{s,\lambda}:\KTh_\T(G/B)[y] \to \KTh_\T(G/B)[y]$ by
	$$\DL{s,\lambda}=\DL{s,-\langle\lambda,\alpha_s^\vee\rangle} \,.$$
	\item For $a\in \R$ we define the left  twisted Demazure-Lusztig operator
	\begin{align*}
		\DLL {s,a}:=&
		\frac{1+y\alpha^{-1}_s}{1-\alpha^{-1}_s}\cdot s^{\textsc l} -\frac{(1+y)\cdot\alpha_s^{-\lceil a\rceil}}{1-\alpha^{-1}_s} \cdot \id_{\KTh_\T(G/B)[y]}\\
		=&
		\frac{1+y\alpha^{-1}_s}{1-\alpha^{-1}_s}\cdot s^{\textsc l} +\frac{(1+y)\cdot\alpha_s^{1-\lceil a\rceil}}{1-\alpha_s} \cdot \id_{\KTh_\T(G/B)[y]}
		\,.
	\end{align*}
	\item Let $\lambda \in \Hom(\T,\C^*)\otimes\Q$ be a fractional character. We define the twisted left Demazure-Lusztig operator $\DLL{s,\lambda}:\KTh_\T(G/B)[y] \to \KTh_\T(G/B)[y]$ by
	$$\DLL{s,\lambda}=\DLL{s,\langle\lambda,\alpha_s^\vee\rangle} \,. $$
	\end{enumerate}
\end{adf}
Formally the definitions make sense after the inversion of the elements $1-\LL$ and $1-\alpha$, but it will be clear later, that the operations lead to $\KTh_\T(G/B)[y]$, see Remark \ref{rem:calkowitosc}.
\subsection{Right Demazure-Lusztig operators}
We note several straightforward consequences of the above definition.
\begin{pro}\label{pro:defcor}
	Let $s$ be a simple reflection, $\lambda$ a fractional character and  $a \in \R$ a real number.
	\begin{enumerate}
		\item For $a\in (-1,0]$ the twisted operators are equal to the Demazure-Lusztig operators 
		$$\DL{s}=\DL{s,a}\,, \qquad \DLL{s}=\DLL{s,a}\,.$$
		\item The operators $\DL{s,\lambda}$ are morphisms of $\KTh_\T(pt)[y]$-modules. 
		\item For a small enough positive real number $\varepsilon>0$ we have
		$$ \DL{s,a}=\DL{s,a-\varepsilon}\,, \qquad \DLL{s,a}=\DLL{s,a-\varepsilon}\,.$$
		\item Let $\lambda^-\in \Hom(\T,\C^*)\otimes\Q$ be an anti-ample fractional character. 
For a small enough positive real number $\varepsilon>0$ we have
		$$ \DL{s,a}=\DL{s,\lambda-\varepsilon\lambda^-}\,, \qquad \DLL{s,a}=\DLL{s,\lambda+\varepsilon\lambda^-}\,.$$
		\item Let $\lambda$ be a fractional character. The twisted Demazure-Lusztig operator is equal to
		\begin{align*}\DL{s,\lambda}&=\lambda_y(\Ls^*)\cdot\pi_s^*\pi_{s*} -\left(1+\frac{(1+y)(1-\Ls^{\lceil -\langle\lambda,\alpha_s\rangle\rceil})}{1-\Ls}\right)\cdot \id\\
&=\DL{s}-\frac{(1+y)(1-\Ls^{\lceil -\langle\lambda,\alpha_s\rangle\rceil})}{1-\Ls}\cdot \id
\,.\end{align*}	
	\end{enumerate}
	
\end{pro}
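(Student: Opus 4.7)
All five parts are direct unpackings of Definition~\ref{df:DL}, using only elementary behaviour of the ceiling function, the $\T$-equivariance of the right Weyl action, and a one-line algebraic identity. I would handle them in order.

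For (1), the range $a\in(-1,0]$ forces $\lceil a\rceil=0$, so the line-bundle factors $\Ls^{\lceil a\rceil}$ and $\alpha_s^{-\lceil a\rceil}$ become trivial, and the expressions in Definition~\ref{df:DL} collapse onto the untwisted $\DL{s}$ and $\DLL{s}$ from the introduction. For (2), the operator $\DL{s,\lambda}$ is a composition of multiplications by elements of $K_\T(G/B)[y]$ with the right action $s^{\textsc r}$. Multiplication by an equivariant class is automatically $K_\T(pt)[y]$-linear, and $s^{\textsc r}$ is $\T$-equivariant (Section~\ref{s:Weyl}), hence $K_\T(pt)[y]$-linear, so the composition is a $K_\T(pt)[y]$-module morphism. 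The analogous statement fails for $\DLL{s,\lambda}$, since $s^{\textsc l}$ intertwines the torus action with its conjugate under $s$, which is why only the right operator appears in the claim.

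For (3), the single input is that $\lceil a-\varepsilon\rceil=\lceil a\rceil$ for every $\varepsilon\in(0,\delta_a)$, where $\delta_a=1$ if $a\in\Z$ and $\delta_a=a-\lfloor a\rfloor$ otherwise; once the ceiling is constant, Definition~\ref{df:DL} gives the same operator term by term. Part (4) then reduces to (3) by translating the shift of $\lambda$ into a shift of the scalar parameter: substituting $\lambda-\varepsilon\lambda^-$ into $\DL{s,\lambda}$ changes the parameter from $-\langle\lambda,\alpha_s^\vee\rangle$ to $-\langle\lambda,\alpha_s^\vee\rangle+\varepsilon\langle\lambda^-,\alpha_s^\vee\rangle$, and anti-ampleness of $\lambda^-$ gives $\langle\lambda^-,\alpha_s^\vee\rangle<0$, so this is a small decrease and (3) applies. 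The $\DLL$-case is parallel, the opposite sign ($\lambda+\varepsilon\lambda^-$) in the statement being dictated by the opposite sign of the parameter in Definition~\ref{df:DL}(3).

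For (5), I would subtract the presentation $\DL{s}=\lambda_y(\Ls^*)\pi_s^*\pi_{s*}-\id$ from the defining expression for $\DL{s,\lambda}$. The $s^{\textsc r}$-terms cancel, and the scalar difference collapses via $\tfrac{A}{B}-\tfrac{C}{B}=\tfrac{A-C}{B}$ to $\tfrac{(1+y)(1-\Ls^{\lceil-\langle\lambda,\alpha_s^\vee\rangle\rceil})}{1-\Ls}$; the second equality is this factorisation, and the first is simply regrouping against $\id$. The only implicit ingredient is the standard identification $\lambda_y(\Ls^*)\pi_s^*\pi_{s*}-\id=\tfrac{1+y\Ls^*}{1-\Ls}s^{\textsc r}-\tfrac{1+y}{1-\Ls}\id$, obtained from the Bott-style formula for $\pi_s^*\pi_{s*}$ on the $\PP^1$-bundle $\pi_s$. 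There is no substantial obstacle; the only care needed is in (3), where the integer and non-integer cases must both be verified, and in (5), where signs must be tracked when combining the two presentations of $\DL{s}$.
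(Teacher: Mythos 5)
Your treatment of parts (1)--(4) is correct and is the intended argument: all four are direct unpackings of Definition~\ref{df:DL}, using $\lceil a\rceil=0$ on $(-1,0]$, the torus-equivariance of $s^{\textsc r}$ from section~\ref{s:Weyl}, the local constancy of $a\mapsto\lceil a\rceil$ from the right, and for (4) the inequality $\langle\lambda^-,\alpha_s^\vee\rangle<0$. You also correctly read the printed left-hand sides $\DL{s,a}$, $\DLL{s,a}$ in (4) as a slip for $\DL{s,\lambda}$, $\DLL{s,\lambda}$.

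In part (5) there is a gap you have not confronted. Carrying out the subtraction you describe, the $s^{\textsc r}$ terms cancel and the scalar parts give
$$\DL{s,\lambda}-\DL{s}=\left(\frac{1+y}{1-\Ls}-\frac{(1+y)\Ls^{\lceil-\langle\lambda,\alpha_s^\vee\rangle\rceil}}{1-\Ls}\right)\id=\frac{(1+y)\bigl(1-\Ls^{\lceil-\langle\lambda,\alpha_s^\vee\rangle\rceil}\bigr)}{1-\Ls}\,\id\,,$$
which is exactly the expression you write down -- but with a $+$ sign, i.e.~$\DL{s,\lambda}=\DL{s}+X\cdot\id$, not $\DL{s}-X\cdot\id$ as the proposition is printed (equivalently, the first display should have $1-X$ inside the bracket rather than $1+X$). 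A sanity check: for $\langle\lambda,\alpha_s^\vee\rangle=-1$ one has $\lceil a\rceil=1$, $X=1+y$, and the definition directly gives $\DL{s,1}-\DL{s}=\tfrac{1+y}{1-\Ls}-\tfrac{(1+y)\Ls}{1-\Ls}=1+y=+X$. So ``the second equality is this factorisation'' is not quite right: your factorisation yields the opposite sign, and the caveat that ``signs must be tracked'' is precisely where the writeup needed to stop and report that the statement as printed is off by a sign. (The slip is harmless for the paper's downstream use of (5), which only invokes it when $\langle\lambda,\alpha_s^\vee\rangle=0$, so that $X=0$ and both signs agree.)
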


\begin{rem}\label{rem:calkowitosc}
	For every integer $n \in \Z$ the element $1-\Ls^{n}$ is divisible by $1-\Ls$ in the equivariant K-theory $\KTh_\T(G/B)$. Therefore, the twisted Demazure-Lusztig operators define maps
	$$\DL{s,a}:\KTh_\T(G/B)[y]\to \KTh_\T(G/B)[y]\,.$$
Similarly $\DLL{s,a}$ acts on $\KTh_\T(G/B)[y]$. This action does not preserve the $\KTh_\T(pt)$-module structure. On the other hand, unlike $\DL{s,a}$, it descends to an action on $\KTh_\T(G/P)[y]$ for any parabolic group $P\supset B$.
\end{rem}
We prove a quadratic relation for the twisted Demazure-Lusztig polynomials.
\begin{pro}\label{pro:kwadrat1}Let $s\in W$ be a simple reflection and $a\in \R$ a real number. Suppose that $a\notin \Z$. Then
		\begin{align*}
			\DL{s,a}\circ \DL{s,-a}=-y \,\id_{\KTh_\T(G/B)[y]}\,.
		\end{align*}
\end{pro}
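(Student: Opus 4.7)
The plan is to prove this as a direct operator computation, isolating the two coefficients (of $\id$ and of $s^{\textsc r}$) in the composition and showing that they equal $-y$ and $0$ respectively. Two preparatory observations drive everything. First, at every fixed point $\sigma \in W$ the weight of $\Ls_{|\sigma}$ is $-\sigma\alpha_s$, so $\Ls_{|\sigma s}$ has weight $-\sigma s\alpha_s = \sigma\alpha_s$; this gives $s^{\textsc r}(\Ls) = \Ls^{-1}$ globally in $K_\T(G/B)$ (using the injectivity into fixed-point restrictions). Since $s^{\textsc r}$ is a ring homomorphism, this determines $s^{\textsc r}$ on every rational expression in $\Ls$. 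Second, the hypothesis $a \notin \Z$ gives $\lceil -a\rceil = 1-\lceil a\rceil$, so with $n := \lceil a\rceil$ the "diagonal" terms of $\DL{s,a}$ and $\DL{s,-a}$ involve $\Ls^n$ and $\Ls^{1-n}$ respectively.

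Write $A := \frac{1+y\Ls^*}{1-\Ls}$ and $B_k := \frac{(1+y)\Ls^k}{1-\Ls}$, so that $\DL{s,a} = A\, s^{\textsc r} - B_n$ and $\DL{s,-a} = A\, s^{\textsc r} - B_{1-n}$. Using $(s^{\textsc r})^2 = \id$ and the ring-homomorphism property together with the $K_\T(G/B)$-module structure, I would expand
$$\DL{s,a}\circ\DL{s,-a} = A\cdot s^{\textsc r}(A)\cdot\id \;-\;\bigl(A\cdot s^{\textsc r}(B_{1-n}) + B_n\cdot A\bigr)\cdot s^{\textsc r} \;+\; B_n\cdot B_{1-n}\cdot\id.$$

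Next I would kill the $s^{\textsc r}$-coefficient. Applying $s^{\textsc r}(\Ls) = \Ls^{-1}$ gives
$$s^{\textsc r}(B_{1-n}) = \frac{(1+y)\Ls^{n-1}}{1-\Ls^{-1}} = -\frac{(1+y)\Ls^{n}}{1-\Ls} = -B_n,$$
so the $s^{\textsc r}$-coefficient becomes $-A B_n + B_n A = 0$, since multiplications by elements of $K_\T(G/B)$ commute. For the $\id$-coefficient, the key algebraic miracle is that $B_n \cdot B_{1-n} = \frac{(1+y)^2\Ls}{(1-\Ls)^2}$ is independent of $n$, while $A \cdot s^{\textsc r}(A) = -\frac{(\Ls+y)(1+y\Ls)}{(1-\Ls)^2}$. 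A one-variable expansion of the numerator of the sum yields
$$-(\Ls+y)(1+y\Ls) + (1+y)^2\Ls = -y\bigl(\Ls^2 - 2\Ls + 1\bigr) = -y(1-\Ls)^2,$$
and the denominator cancels, leaving $-y$.

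I do not anticipate a serious obstacle. The only non-routine point is verifying that the two "boundary" ingredients, namely $s^{\textsc r}(\Ls) = \Ls^{-1}$ and the integrality hypothesis $\lceil -a\rceil = 1-\lceil a\rceil$, combine to make the answer $n$-independent. Everything else reduces to a short calculation with Laurent polynomials in the single variable $\Ls$; the fact that the final expression lies in $K_\T(G/B)[y]$ (rather than a localization) is guaranteed by Remark~\ref{rem:calkowitosc}, which was observed to apply to each twisted Demazure-Lusztig operator individually.
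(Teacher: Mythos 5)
Your proof is correct and follows essentially the same route as the paper: expand $\DL{s,a}\circ\DL{s,-a}$ using $(s^{\textsc r})^2=\id$ together with $s^{\textsc r}(\Ls)=\Ls^{-1}$ and $\lceil -a\rceil=1-\lceil a\rceil$, then check that the $s^{\textsc r}$-coefficient cancels and the $\id$-coefficient simplifies to $-y$. The only cosmetic differences are your explicit isolation of $s^{\textsc r}(\Ls)=\Ls^{-1}$ as a lemma (the paper uses it tacitly) and the shorthand $A$, $B_k$, which merely streamline the same computation.
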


\begin{proof}
	For two real numbers $a,b$ and $\xi\in \KTh_\T(G/B)[y]$ the composition $\DL{s,a}\circ \DL{s,b}$ evaluated at $\xi$ is of the form
	$$\frac{1+y\Ls^{*} }{1-\Ls }\left(\frac{1+y\Ls 
	}{1-\Ls^{* }}\xi-\frac{(1+y) 
		\Ls^{-\lceil b\rceil
	}}{1-\Ls^{*}}s^{\textsc r}\xi\right)
	-\frac{(1+y) \Ls ^{\lceil a\rceil }}{1-\Ls }
	\left(\frac{
		1+{y}{\Ls^* }}{1-\Ls }s^{\textsc r}\xi-\frac{(1+y)  \Ls ^{\lceil b\rceil
	}}{1-\Ls }\xi\right) \,.$$
	Suppose that $b=-a$ and the number $a$ is not an integer, then 	
	$$-\lceil b\rceil=-\lceil -a\rceil=\lfloor a\rfloor
	=\lceil a\rceil-1 \,. $$
	In the expression above the coefficient of $s^{\textsc r}\xi$ is equal to
	$$-\frac{1+y\Ls^{*} }{1-\Ls }\cdot\frac{(1+y) 
		\Ls^{\lceil a\rceil-1
	}}{1-\Ls^{*}}
	-\frac{(1+y) \Ls ^{\lceil a\rceil }}{1-\Ls }
	\cdot
	\frac{1+{y}{\Ls^* }}{1-\Ls }=0\,.$$
	The coefficient of $\xi$ is equal to
	$$\frac{1+y\Ls^{*} }{1-\Ls }\cdot\frac{1+y\Ls 
	}{1-\Ls^{* }}
	+\frac{(1+y) \Ls ^{\lceil a\rceil }}{1-\Ls }
	\cdot   
	\frac{(1+y)  \Ls ^{1-\lceil a\rceil
	}}{1-\Ls }\,.$$
	Simplifying we obtain $-y$.
\end{proof}
\begin{cor}
	Suppose that $a \in \Z$. Then
	$$\DL{s,a}\circ \DL{s,-a}=-y-(1+y)\Ls^a\cdot \DL{s,-a}\,.$$
	For $a=0$ we obtain the standard relation
	$$(\DL{s}+y)(\DL{s}+1)=0 \,. $$
\end{cor}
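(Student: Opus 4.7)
\bigskip
\noindent\textbf{Proof plan.}
The plan is to reuse the general formula for $\DL{s,a}\circ \DL{s,b}$ written out at the start of the proof of Proposition~\ref{pro:kwadrat1}, and simply re-examine what happens when $b=-a$ with $a\in\Z$. The only place where the argument for the non-integer case really used $a\notin\Z$ was the identity $-\lceil -a\rceil=\lceil a\rceil-1$; for integer $a$ one has instead $-\lceil -a\rceil=-(-a)=a$ and $\lceil a\rceil+\lceil -a\rceil=0$ rather than $1$. So I would specialize the generic expression
\[
\DL{s,a}\circ \DL{s,b}(\xi)=\frac{1+y\Ls^{*}}{1-\Ls}\!\left(\frac{1+y\Ls}{1-\Ls^{*}}\xi-\frac{(1+y)\Ls^{-\lceil b\rceil}}{1-\Ls^{*}}s^{\textsc r}\xi\right)-\frac{(1+y)\Ls^{\lceil a\rceil}}{1-\Ls}\!\left(\frac{1+y\Ls^{*}}{1-\Ls}s^{\textsc r}\xi-\frac{(1+y)\Ls^{\lceil b\rceil}}{1-\Ls}\xi\right)
\]
at $b=-a\in\Z$, and isolate the coefficients of $s^{\textsc r}\xi$ and $\xi$ separately.

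Using the basic identity $1-\Ls^{*}=-\Ls^{*}(1-\Ls)$ (which also gives $(1-\Ls)(1-\Ls^{*})=-\Ls^{*}(1-\Ls)^{2}$), I would rewrite both coefficients over the common denominator $(1-\Ls)^{2}$. In the coefficient of $s^{\textsc r}\xi$, the powers of $\Ls$ that appeared in the non-integer case collapse to a common factor $\Ls^{a}$, and the two summands combine to
\[
-\,(1+y)\Ls^{a}\cdot\frac{1+y\Ls^{*}}{1-\Ls}.
\]
In the coefficient of $\xi$, the cross term $\Ls^{\lceil a\rceil+\lceil b\rceil}=\Ls^{0}=1$ makes the second contribution equal to $(1+y)^{2}/(1-\Ls)^{2}$ (this is precisely where the non-integer computation differed, giving an extra factor of $\Ls$). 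Expanding the first contribution by the same $-\Ls^{*}(1-\Ls)^{2}$ trick and collecting, the numerator factorizes as $(1-\Ls)\bigl[1+y(1+\Ls)+y^{2}\bigr]$, so the coefficient of $\xi$ simplifies to $-y+(1+y)^{2}/(1-\Ls)$.

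Finally I would recognize the output as a linear combination of $\xi$ and $\DL{s,-a}(\xi)$: multiplying out $-(1+y)\Ls^{a}\DL{s,-a}(\xi)$ reproduces exactly the term $-\tfrac{(1+y)(1+y\Ls^{*})\Ls^{a}}{1-\Ls}s^{\textsc r}\xi$ together with $\tfrac{(1+y)^{2}}{1-\Ls}\xi$, which are precisely the non-$(-y\xi)$ pieces. This yields
\[
\DL{s,a}\circ \DL{s,-a}(\xi)=-y\,\xi-(1+y)\Ls^{a}\cdot \DL{s,-a}(\xi),
\]
as claimed. For $a=0$, Proposition~\ref{pro:defcor}(1) gives $\DL{s,0}=\DL{s}$ and $\Ls^{0}=1$, so the identity becomes $\DL{s}^{2}+(1+y)\DL{s}+y=0$, which factors as $(\DL{s}+y)(\DL{s}+1)=0$. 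The calculation is entirely mechanical; the only conceptual point is the arithmetic $-\lceil -a\rceil=a$ (versus $a-1$), and the main bookkeeping obstacle is just keeping track of signs when clearing $1-\Ls^{*}$.
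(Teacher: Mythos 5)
Your proof is correct, and the calculation checks out: specializing the general expression for $\DL{s,a}\circ\DL{s,b}$ at $b=-a$ with $a\in\Z$, the coefficient of $s^{\textsc r}\xi$ collapses to $-\frac{(1+y)\Ls^{a}(1+y\Ls^{*})}{1-\Ls}$ (using $1-\Ls^{*}=-\Ls^{*}(1-\Ls)$), the coefficient of $\xi$ simplifies to $-y+\frac{(1+y)^{2}}{1-\Ls}$, and these two pieces reassemble exactly into $-y\,\id-(1+y)\Ls^{a}\DL{s,-a}$. However, this is a genuinely different route from the paper's. The paper does not redo the algebra at all: it observes that $\DL{s,-a}=\DL{s,-a-\varepsilon}$ for small $\varepsilon>0$ and that $\DL{s,a}=\DL{s,a+\varepsilon}-(1+y)\Ls^{a}$ (since the only difference between $\DL{s,a}$ and $\DL{s,a+\varepsilon}$ is replacing $\Ls^{a}$ by $\Ls^{a+1}$ in one term, and $\frac{(1+y)(\Ls^{a}-\Ls^{a+1})}{1-\Ls}=(1+y)\Ls^{a}$), then composes and invokes the already-proved non-integer case $\DL{s,a+\varepsilon}\circ\DL{s,-a-\varepsilon}=-y\,\id$. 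The paper's perturbation trick is shorter and exploits Proposition~\ref{pro:kwadrat1} as a black box, while your direct computation is more self-contained and makes visible exactly where the extra factor of $\Ls$ appears in the integer case (the cross term $\Ls^{\lceil a\rceil+\lceil -a\rceil}$ being $\Ls^{0}=1$ rather than $\Ls^{1}$). Both are sound; the perturbation argument is the one worth internalizing, since the same $\varepsilon$-shift device recurs later in the paper (e.g.\ Proposition~\ref{pro:defcor}(3)--(4) and Corollary~\ref{cor:stabind3}).
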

\begin{proof}
	Let $\varepsilon>0$ be a small enough positive real number. We have
	$$\DL{s,-a}=\DL{s,-a-\varepsilon}\,, \qquad \DL{s,a}=\DL{s,a+\varepsilon} -(1+y)\Ls^a \,. $$
	Therefore
	$$\DL{s,a}\circ \DL{s,-a}=(\DL{s,a+\varepsilon} -(1+y)\Ls^a)\circ\DL{s,-a-\varepsilon}=
	\DL{s,a+\varepsilon}\circ \DL{s,-a-\varepsilon} -(1+y)\Ls^a\cdot\DL{s,-a}\,.
	$$
	The result follows from Proposition \ref{pro:kwadrat1}.
\end{proof}
\begin{cor}\label{pro:kwadrat} Let $s\in W$ be a simple reflection and $\lambda \in \Hom(\T,\C^*)\otimes\Q$ a fractional character.
	\begin{enumerate}
		\item Suppose that $\langle\lambda,\alpha_s^\vee\rangle\not\in\Z$, then
		$$\DL{s,\lambda}\circ \DL{s,s\lambda}=-y \,\id\,.$$
		\item Suppose that $\langle\lambda,\alpha_s^\vee\rangle\in\Z$. Then
		$$\DL{s,\lambda}\circ \DL{s,s\lambda}=-y\cdot \id-(1+y)\Ls^{-\langle\lambda,\alpha_s\rangle}\cdot \DL{s,s\lambda}\,.
		$$
	\end{enumerate}
	
\end{cor}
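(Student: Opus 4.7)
The plan is to reduce both cases to the real-parameter statements proved immediately above (Proposition \ref{pro:kwadrat1} and its corollary) by a direct substitution. Set $a := -\langle \lambda, \alpha_s^\vee\rangle$. Part (2) of Definition \ref{df:DL} then gives $\DL{s,\lambda} = \DL{s,a}$, so the first step is to identify $\DL{s,s\lambda}$ in the same parameterization.

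For this I would use the standard identity for reflections: since $s\lambda = \lambda - \langle\lambda,\alpha_s^\vee\rangle\alpha_s$ and $\langle\alpha_s,\alpha_s^\vee\rangle = 2$, one obtains $\langle s\lambda, \alpha_s^\vee\rangle = -\langle\lambda,\alpha_s^\vee\rangle$. Consequently $\DL{s,s\lambda} = \DL{s,-\langle s\lambda,\alpha_s^\vee\rangle} = \DL{s,-a}$, and the composition in question equals $\DL{s,a}\circ \DL{s,-a}$. Note also that the dichotomy $\langle\lambda,\alpha_s^\vee\rangle\in\Z$ versus $\langle\lambda,\alpha_s^\vee\rangle\notin\Z$ translates word-for-word into $a\in\Z$ versus $a\notin\Z$.

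With this setup, case (1) is immediate from Proposition \ref{pro:kwadrat1}, which gives $\DL{s,a}\circ\DL{s,-a} = -y\,\id$. For case (2), the corollary following Proposition \ref{pro:kwadrat1} yields $\DL{s,a}\circ \DL{s,-a} = -y\cdot\id-(1+y)\Ls^a\cdot \DL{s,-a}$; substituting back $a = -\langle\lambda,\alpha_s^\vee\rangle$ and $\DL{s,-a}=\DL{s,s\lambda}$ reproduces the stated formula verbatim.

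There is no genuine obstacle here: the statement is a reformulation of the already-established real-parameter quadratic relations in terms of characters, and the only non-tautological ingredient is the reflection identity $\langle s\lambda, \alpha_s^\vee\rangle = -\langle\lambda,\alpha_s^\vee\rangle$. The only care needed is to keep the ceiling convention consistent: in the integer case, $\lceil a\rceil = a$ while $\lceil -a\rceil = -a$, so the asymmetry of the formula in (2) (the extra term multiplied by $\Ls^{-\langle\lambda,\alpha_s^\vee\rangle}$ rather than $\Ls^{\langle\lambda,\alpha_s^\vee\rangle}$) is exactly the $\Ls^a$ coming from the previous corollary.
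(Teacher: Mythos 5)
Your proof is correct and matches the paper's (implicit) argument exactly: the paper states this as a corollary without proof precisely because it follows from Proposition \ref{pro:kwadrat1} and its corollary by substituting $a=-\langle\lambda,\alpha_s^\vee\rangle$ together with the reflection identity $\langle s\lambda,\alpha_s^\vee\rangle=-\langle\lambda,\alpha_s^\vee\rangle$, which is the reduction you carry out. (The exponent in the paper's statement of part (2) reads $\alpha_s$ where it should read $\alpha_s^\vee$ to match Definition \ref{df:DL}; your version with $\alpha_s^\vee$ is the consistent one.)
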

Further properties of the twisted Demazure-Lusztig operators are treated in Section~\ref{s:alg}.

\subsection{Bott-Samelson recursion}
In this section we prove our first inductive formula.
Let $\alpha_s$ be a simple root and $s\in W$ the corresponding reflection. Denote by $\Ls$ the relative tangent bundle of the projection $\pi_s:G/B\to G/P_s\,.$ 
\begin{atw} \label{tw:1} 
	Let $\lambda \in \Hom(\T,\C^*)\otimes\Q$ be a fractional character and $w \in W\simeq (G/B)^\T$ a fixed point such that $l(ws)>l(w)$. Then
	\begin{align*}
		\mC(ws,s\lambda)=\DL{s,\lambda}(\mC(w,\lambda))\,.
	\end{align*}
\end{atw}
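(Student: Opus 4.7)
The plan is to compute both sides via the Bott--Samelson resolution and manipulate. Fix a reduced word $\wu$ for $w$; since $l(ws)>l(w)$, the word $\wu\cdot s$ is reduced for $ws$, yielding compatible Bott--Samelson resolutions $p_\wu\colon Z_\wu\to X_w$ and $p_\ws\colon Z_\ws\to X_{ws}$ related by the smooth $\PP^1$-bundle $\pi_\ws\colon Z_\ws\to Z_\wu$ and its $\T$-invariant section $i\colon Z_\wu\to Z_\ws$, which satisfies $p_\ws\circ i=p_\wu$ by construction of the Cartesian square. By Definition~\ref{df:twistedmC},
$$\mC(ws,s\lambda)=(\iota_{ws}\circ p_\ws)_*\bigl[\O_{Z_\ws}(\lceil p_\ws^*\Delta_{ws,s\lambda}\rceil)\cdot\mC(Z_\ws^\oo\subset Z_\ws)\bigr],$$
and analogously for $\mC(w,\lambda)$ with $\wu$ in place of $\ws$. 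The aim is to rewrite the $\mC(ws,s\lambda)$-integrand using the $\pi_\ws$-structure until $\DL{s,\lambda}(\mC(w,\lambda))$ emerges after pushforward.

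The first key input is the equivariant Kempf lemma (Corollary~\ref{cor:Kem}). Combined with Proposition~\ref{pro:round} (round-up commutes with the smooth pullback $\pi_\ws^*$), with the disjointness of the components $i(Z_\wu)$ and $\pi_\ws^{-1}(\partial Z_\wu)$ of $\partial Z_\ws$, and with the uniqueness of the boundary-supported $\Q$-divisor representing a given line bundle on $X_{ws}$, it upgrades to the $\Q$-divisor identity
$$\lceil p_\ws^*\Delta_{ws,s\lambda}\rceil=\pi_\ws^*\lceil p_\wu^*\Delta_{w,\lambda}\rceil+k\cdot i(Z_\wu),\qquad k:=\lceil-\langle\lambda,\alpha_s^\vee\rangle\rceil.$$
Hence the twist bundle factorises as $\pi_\ws^*\O_{Z_\wu}(\lceil p_\wu^*\Delta_{w,\lambda}\rceil)\otimes\O_{Z_\ws}(k\cdot i(Z_\wu))$.

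The second input is additivity of $\mC$. From $\pi_\ws^{-1}(Z_\wu^\oo)=Z_\ws^\oo\sqcup i(Z_\wu^\oo)$ and the fact that $\pi_\ws$ is a smooth $\PP^1$-bundle with relative cotangent $p_\ws^*\Ls^*$,
$$\mC(Z_\ws^\oo\subset Z_\ws)=\pi_\ws^*\mC(Z_\wu^\oo\subset Z_\wu)\cdot\lambda_y(p_\ws^*\Ls^*)-i_*\mC(Z_\wu^\oo\subset Z_\wu).$$
Substituting this and the divisor splitting into the formula for $\mC(ws,s\lambda)$ decomposes it into a \emph{bulk} summand (pure $\pi_\ws$-pullback of the $\mC(w,\lambda)$-integrand, twisted by $\O_{Z_\ws}(k\cdot i(Z_\wu))$ and by $\lambda_y(p_\ws^*\Ls^*)$) and a \emph{section} summand (the $i_*$-part with the same twist).

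The final step pushes forward to $G/B$. For the bulk summand, the projection formula along $\pi_\ws$ together with the base-change identity $p_{\ws*}\pi_\ws^*=\pi_s^*(\pi_s\circ p_\wu)_*$ coming from the Cartesian square replaces $p_{\ws*}$ by $\pi_s^*\pi_{s*}$ acting on $\iota_{w*}p_{\wu*}(\cdots)$; a $\PP^1$-fibre localisation (Lefschetz--Riemann--Roch, Theorem~\ref{tw:LRR}) then evaluates $\pi_{\ws*}\bigl(\lambda_y(p_\ws^*\Ls^*)\cdot\O_{Z_\ws}(k\cdot i(Z_\wu))\bigr)$ as a rational function in $\Ls$, producing the $s^{\textsc r}$-part of $\DL{s,\lambda}$ plus a residual $\id$-contribution. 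For the section summand, $p_\ws\circ i=p_\wu$ combined with $i^*\O_{Z_\ws}(i(Z_\wu))=p_\wu^*\Ls$ (normal bundle of the section equals the relative tangent line) collapses it, via the projection formula through $i$, to $-\Ls^k\cdot\mC(w,\lambda)$. The main obstacle is the bookkeeping in this final step: one must verify that the geometric series in $\Ls$ coming from the $\PP^1$-fibre pushforward of powers of $\O_{Z_\ws}(i(Z_\wu))$ sums to the factor $\frac{\Ls^k}{1-\Ls}$, and that the bulk $\id$-contribution combines with the section contribution to give precisely $-\frac{(1+y)\Ls^k}{1-\Ls}\id$, while the bulk $s^{\textsc r}$-contribution yields exactly $\frac{1+y\Ls^*}{1-\Ls}s^{\textsc r}$, reproducing Definition~\ref{df:DL} with no spurious terms.
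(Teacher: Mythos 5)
Your proposal uses the same geometric inputs as the paper's proof---the equivariant Kempf lemma (Corollary~\ref{cor:Kem}), commutation of round-up with the smooth pullback $\pi_\ws^*$ (Proposition~\ref{pro:round}), additivity and Verdier--Riemann--Roch for $\mC$, the uniqueness of the boundary-supported $\Q$-divisor, and the Cartesian square---so the divisor identity and the $\mC$-decomposition you derive are correct. (One wording slip: the components $i(Z_\wu)$ and $\pi_\ws^{-1}(\partial_j Z_\wu)$ of $\partial Z_\ws$ are \emph{distinct}, not \emph{disjoint}; distinctness is what makes the round-up separate componentwise.) The organization differs from the paper's: the paper restricts all ingredients to the fixed points of $Z_\ws$ at once (Lemma~\ref{lem:BSind0}) and then assembles via LRR (Lemma~\ref{lem:BSind}), whereas you assemble globally and would localize only at the very end.

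The gap is in the bulk pushforward. You assert that ``the projection formula along $\pi_\ws$ together with the base-change identity $\dots$ replaces $p_{\ws*}$ by $\pi_s^*\pi_{s*}$'', but $\O_{Z_\ws}(k\cdot i(Z_\wu))$ is neither a $\pi_\ws$- nor a $p_\ws$-pullback, so neither the projection formula nor base change applies to it; moreover $p_\ws\neq p_\wu\circ\pi_\ws$, so the auxiliary class $\pi_{\ws*}\bigl(\lambda_y(p_\ws^*\Ls^*)\cdot\O_{Z_\ws}(k\cdot i(Z_\wu))\bigr)\in K_\T(Z_\wu)[y]$ that you propose to compute does not feed directly into $p_{\ws*}$. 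The fix is implicit in your final sentence: expand $\O_{Z_\ws}(k\cdot i(Z_\wu))=1+\sum_{j=1}^{k}i_*\bigl[(p_\wu^*\Ls)^{j}\bigr]$ in $K_\T(Z_\ws)$ (a telescoping identity coming from the residue sequences $\O((j-1)D)\to\O(jD)\to\O_D(jD)$, with the corresponding sign convention when $k\le 0$). After factoring out $\lambda_y(\Ls^*)$ by the projection formula for $p_\ws$, the constant term is handled by base change, $p_{\ws*}\pi_\ws^*(\cdots)=\pi_s^*\pi_{s*}\,\mC(w,\lambda)$, while each $i_*$-term is handled exactly like the section summand via $p_\ws\circ i=p_\wu$. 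The resulting $\id$-coefficient from the $i_*$-terms plus the section is $\lambda_y(\Ls^*)\sum_{j=1}^{k}\Ls^{j}-\Ls^{k}=\frac{\Ls+y-(1+y)\Ls^k}{1-\Ls}$, which combined with the self-term $\frac{1+y\Ls^*}{1-\Ls^*}=-\frac{\Ls+y}{1-\Ls}$ of $\lambda_y(\Ls^*)\pi_s^*\pi_{s*}$ gives exactly $-\frac{(1+y)\Ls^{k}}{1-\Ls}$, and the $s^{\textsc r}$-coefficient is $\frac{1+y\Ls^*}{1-\Ls}$, reproducing Definition~\ref{df:DL}. So the strategy is salvageable, but the bulk pushforward step as you wrote it would fail.
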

\begin{rem}
	Theorem \ref{tw:1} is the limit case of the induction for the elliptic classes of \cite{RW}. We prefer to stay entirely in the framework of K-theory and analyze the Bott-Samelson construction from the point of view of twisted motivic Chern classes. 
\end{rem}
Before proving the above theorem we note several corollaries.
\begin{cor}
	For $\lambda=0$ we recover \cite[Corollary 5.2]{AMSS}. 
\end{cor}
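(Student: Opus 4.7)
The plan is to verify that both sides of the recursion in Theorem \ref{tw:1} degenerate to the untwisted objects that appear in [AMSS, corollary 5.2] when $\lambda = 0$, after which the corollary is immediate.

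First, I would unpack the left-hand side. By the remark following Definition \ref{mc-defndf}, for a small anti-ample $\lambda$ (in particular for $\lambda = 0$) the twisted motivic Chern class of a Schubert cell coincides with the untwisted one:
\[
\mC(w,0) = \mC(X_w^\oo \subset G/B), \qquad \mC(ws,0) = \mC(X_{ws}^\oo \subset G/B).
\]
Since $s\cdot 0 = 0$, substituting $\lambda = 0$ into Theorem \ref{tw:1} produces exactly the untwisted motivic Chern classes of the Schubert cells $X_w^\oo$ and $X_{ws}^\oo$ appearing in the statement of [AMSS, corollary 5.2].

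Next, I would reduce the operator side. By Definition \ref{df:DL}(2) we have $\DL{s,\lambda} = \DL{s,-\langle\lambda,\alpha_s^\vee\rangle}$, so $\lambda = 0$ corresponds to the real parameter $a = 0$. Proposition \ref{pro:defcor}(1) asserts that for any $a \in (-1,0]$ the twisted operator $\DL{s,a}$ coincides with the classical Demazure-Lusztig operator $\DL{s}$; in particular $\DL{s,0} = \DL{s}$. This can alternatively be seen directly from the formula in Definition \ref{df:DL}(1): with $\lceil 0 \rceil = 0$, the second summand becomes $\tfrac{(1+y)}{1-\Ls}\cdot\id$, matching the defining formula of $\DL{s}$ recalled at the beginning of Section 6.

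Combining the two reductions, the identity of Theorem \ref{tw:1} at $\lambda = 0$ reads
\[
\mC(X_{ws}^\oo \subset G/B) = \DL{s}\bigl(\mC(X_w^\oo \subset G/B)\bigr),
\]
which is precisely [AMSS, corollary 5.2]. There is no real obstacle here; the only thing to double-check is that the rounding convention $\lceil 0 \rceil = 0$ is the one in force (so that no extra $\Ls^0$ term is generated), and this is exactly the case $a \in (-1,0]$ covered by Proposition \ref{pro:defcor}(1).
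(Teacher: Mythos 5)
Your proposal is correct and follows the same (and essentially the only) route the paper has in mind: the corollary is stated without proof precisely because, as you verify, at $\lambda=0$ the twisted class $\mC(w,0)$ reduces to the untwisted motivic Chern class of the Schubert cell and $\DL{s,0}=\DL{s}$ since $\lceil 0\rceil=0$, so Theorem \ref{tw:1} specializes verbatim to the untwisted recursion of \cite[corollary 5.2]{AMSS}. The only cosmetic point is that $\lambda=0$ is not itself anti-ample, but the cited remark states $\mC(w,0)=\mC(X_w^\oo\subset G/B)$ directly, so your conclusion stands.
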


\begin{cor} \label{cor:ws} \label{rem:tw1} 
	Let $\lambda \in \Hom(\T,\C^*)\otimes\Q$ be a general enough fractional character, i.e. such that $\langle\lambda,\alpha_s\rangle$ is not an integer. For a fixed point $w \in W$  such that $l(w)>l(ws)$ we have
	$$(-y)\cdot\mC(ws,s\lambda)=\DL{s,\lambda}(\mC(w,\lambda))\,.$$
	Therefore, for an arbitrary $w \in W$ we have
	$$(-y)^{\frac{1}{2}(l(w)-l(ws)+1)}\cdot\mC(ws,s\lambda)=\DL{s,\lambda}(\mC(w,\lambda))\,.$$
\end{cor}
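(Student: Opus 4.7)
The plan is to reduce the claim to Theorem \ref{tw:1} by swapping the roles of $w$ and $ws$ and then inverting the relevant Demazure-Lusztig operator via the quadratic relation. The hypothesis that $\lambda$ is general enough (with $\langle\lambda,\alpha_s^\vee\rangle\notin\Z$) is exactly what makes the quadratic relation in Corollary \ref{pro:kwadrat}(1) collapse to $-y\cdot\id$, with no correction term.

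First I would set $w':=ws$. The assumption $l(w)>l(ws)$ is equivalent to $l(w's)>l(w')$, so Theorem \ref{tw:1} applies to the fixed point $w'$ and the fractional character $s\lambda$. Note that $s\lambda$ is also general enough, since $\langle s\lambda,\alpha_s^\vee\rangle=-\langle\lambda,\alpha_s^\vee\rangle\notin\Z$. Applying the theorem and using $s^2=e$ gives
\[
\mC(w,\lambda)=\mC(w's,s(s\lambda))=\DL{s,s\lambda}\bigl(\mC(w',s\lambda)\bigr)=\DL{s,s\lambda}\bigl(\mC(ws,s\lambda)\bigr).
\]
Now I apply $\DL{s,\lambda}$ to both sides:
\[
\DL{s,\lambda}\bigl(\mC(w,\lambda)\bigr)=\bigl(\DL{s,\lambda}\circ\DL{s,s\lambda}\bigr)\bigl(\mC(ws,s\lambda)\bigr).
\]
Since $\langle\lambda,\alpha_s^\vee\rangle\notin\Z$, Corollary \ref{pro:kwadrat}(1) yields $\DL{s,\lambda}\circ\DL{s,s\lambda}=-y\cdot\id$, and substitution gives the first displayed identity
\[
(-y)\cdot\mC(ws,s\lambda)=\DL{s,\lambda}\bigl(\mC(w,\lambda)\bigr).
\]

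For the unified formula, recall that $s$ is a simple reflection, so $l(w)-l(ws)\in\{-1,+1\}$. In the case $l(ws)>l(w)$ the exponent $\tfrac12(l(w)-l(ws)+1)$ equals $0$, and the formula reduces verbatim to Theorem \ref{tw:1}. In the case $l(w)>l(ws)$ the exponent equals $1$, and the formula reduces to the identity just established. This exhausts both cases. The only subtlety worth flagging is that the argument genuinely needs $\langle\lambda,\alpha_s^\vee\rangle\notin\Z$: in the integral case Corollary \ref{pro:kwadrat}(2) provides only a modified quadratic relation with an additional $\DL{s,s\lambda}$ term, and a direct inversion of this form is not possible without extra input. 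Thus the "general enough" hypothesis is exactly what makes the simple argument above succeed.
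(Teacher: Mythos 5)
Your proof is correct and is exactly the argument the paper has in mind — the paper's proof is simply the sentence ``It follows from theorem \ref{tw:1} and corollary \ref{pro:kwadrat},'' and your write-up is the natural expansion of that: apply Theorem \ref{tw:1} to $w'=ws$ with slope $s\lambda$, then hit both sides with $\DL{s,\lambda}$ and use the quadratic relation $\DL{s,\lambda}\circ\DL{s,s\lambda}=-y\,\id$ from Corollary \ref{pro:kwadrat}(1). Your observation that the unified exponent formula covers both cases and that the non-integrality hypothesis is precisely what rules out the correction term of Corollary \ref{pro:kwadrat}(2) is a faithful reading of the argument.
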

\begin{proof}
	It follows from Theorem \ref{tw:1} and Corollary \ref{pro:kwadrat}.
\end{proof}
We recall that the condition \emph{general enough} means here and elsewhere that $\langle\lambda,\alpha^\vee\rangle\not\in \Z$ for any root $\alpha$.
The above corollary may be restated in the language of stable envelopes.
\begin{cor}\label{cor:stabind}
	Let $w \in W$ be an arbitrary fixed point and $\lambda$ a  general enough  fractional character. Then
	$$
		q^{1/2}\cdot\St^{s\lambda}(ws)=\DLq{s,\lambda}(\St^{\lambda}(w))\,,
	$$
	where $\DLq{s,\lambda}$ denotes the operation $\DL{s,\lambda}$ after the substitution $y=-q$.
\end{cor}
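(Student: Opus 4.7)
The plan is to derive the corollary by pushing the Bott--Samelson recursion of Corollary~\ref{rem:tw1} through the substitution $\rho\colon y\mapsto -q$ and then renormalizing motivic Chern classes into stable envelopes using Theorem~\ref{tw:otoczki}.

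First I would invoke Corollary~\ref{rem:tw1}, which for any $w\in W$ and any general enough fractional character $\lambda$ reads
$$(-y)^{\frac{1}{2}(l(w)-l(ws)+1)}\mC(ws,s\lambda)=\DL{s,\lambda}(\mC(w,\lambda)).$$
The exponent $\tfrac{1}{2}(l(w)-l(ws)+1)$ is an integer, taking the value $0$ when $l(ws)>l(w)$ and $1$ when $l(ws)<l(w)$, so applying $\rho$ sends $(-y)$ to $q$ unambiguously and converts $\mC$ to $\operatorname{mC}^\T_{-q}$, producing
$$q^{\frac{1}{2}(l(w)-l(ws)+1)}\operatorname{mC}^\T_{-q}(ws,s\lambda)=\DLq{s,\lambda}(\operatorname{mC}^\T_{-q}(w,\lambda)).$$

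Next I would apply Theorem~\ref{tw:otoczki}, which states $\operatorname{mC}^\T_{-q}(v,\mu)=q^{\frac{1}{2}\dim X^+_v}\St^\mu(v)$, together with the identification $\dim X^+_v=l(v)$ for the positive weight chamber used throughout. Since $\DLq{s,\lambda}$ is $K_\T(pt)[y]$-linear by Proposition~\ref{pro:defcor}(2), it extends to be linear over $K_\T(pt)[q^{\pm 1/2}]$ after $\rho$, so the scalar $q^{\frac{1}{2}l(w)}$ can be pulled outside the operator on the right. Substituting the normalization on both sides and cancelling the common factor $q^{\frac{1}{2}l(w)}$ collapses the exponent arithmetic
$$\tfrac{1}{2}\bigl(l(w)-l(ws)+1\bigr)+\tfrac{1}{2}l(ws)-\tfrac{1}{2}l(w)=\tfrac{1}{2},$$
which yields precisely $q^{1/2}\St^{s\lambda}(ws)=\DLq{s,\lambda}(\St^\lambda(w))$.

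No substantial obstacle is expected: the argument is essentially bookkeeping of the length-dependent normalizations. The only points requiring care are that the exponent produced by Corollary~\ref{rem:tw1} is an honest integer (so that passing through $\rho$ is unambiguous and no sign ambiguity in $(-y)^{1/2}$ arises), and that the normalization $q^{\frac{1}{2}\dim X^+_v}=q^{\frac{1}{2}l(v)}$ matches the positive weight chamber convention declared in the remark following Theorem~\ref{tw:otoczki}.
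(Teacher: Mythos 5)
Your proposal is correct and follows exactly the route the paper intends: the paper's own proof is the one-line ``It follows directly from theorem~\ref{tw:otoczki} and corollary~\ref{rem:tw1},'' and you have simply filled in the bookkeeping — apply Corollary~\ref{rem:tw1}, push through $\rho\colon y\mapsto -q$ (noting the exponent $\tfrac12(l(w)-l(ws)+1)\in\{0,1\}$ is an integer), substitute the normalization of Theorem~\ref{tw:otoczki} with $\dim X^+_v=l(v)$, use $K_\T(pt)[q^{\pm1/2}]$-linearity of $\DLq{s,\lambda}$, and observe the length terms cancel to leave $q^{1/2}$. No gap; this is the intended argument with the exponent arithmetic made explicit.
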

\begin{proof}
	It follows directly from Theorem \ref{tw:otoczki} and Corollary \ref{rem:tw1}.
\end{proof}
We may obtain a new interesting recursion for stable envelopes by choosing a non-generic value of $\lambda$.
\begin{cor} \label{cor:stabind3}
	Let $w \in W$ be a fixed point such that $l(ws)>l(w)$. Let $\lambda$ be an arbitrary character and $\lambda^-\in \Hom(\T,\C^*)\otimes\Q$ a small anti-ample character.
	For a small enough positive real number $\varepsilon>0$ we have
	$$
	q^{1/2}\cdot\St^{s\lambda+\varepsilon\lambda^-}(ws)=\DLq{s,\lambda}(\St^{\lambda+\varepsilon\lambda^-}(w))\,,
	$$
	where $\DLq{s,\lambda}$ denotes operator $\DL{s,\lambda}$ after the substitution $y=-q$.
\end{cor}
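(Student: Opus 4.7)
The plan is to mirror the proof of Corollary \ref{cor:stabind}, but with Corollary \ref{cor:otoczki} (which allows an arbitrary rational slope at the cost of a small anti-ample perturbation) in place of Theorem \ref{tw:otoczki}, and with Theorem \ref{tw:1} (which imposes no genericity condition on $\lambda$ when $l(ws)>l(w)$) in place of Corollary \ref{rem:tw1}. The hypothesis $l(ws)>l(w)$ is precisely what lets me invoke Theorem \ref{tw:1} without any quadratic correction, so no integrality condition on $\langle\lambda,\alpha_s^\vee\rangle$ is needed; the content of the corollary, compared with Corollary \ref{cor:stabind}, is that one trades the genericity hypothesis on $\lambda$ for the freedom of a small anti-ample perturbation of the slope.

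First I would rewrite both stable envelopes via Corollary \ref{cor:otoczki}. Applied to the pair $(w,\lambda)$ it gives $\St^{\lambda+\varepsilon\lambda^-}(w)=q^{-\frac12\dim X^+_w}\operatorname{mC}^\T_{-q}(w,\lambda)$ for all sufficiently small $\varepsilon>0$; applied to the pair $(ws,s\lambda)$, with the same small anti-ample $\lambda^-$, it gives $\St^{s\lambda+\varepsilon\lambda^-}(ws)=q^{-\frac12\dim X^+_{ws}}\operatorname{mC}^\T_{-q}(ws,s\lambda)$. Taking $\varepsilon$ smaller than both thresholds (the minimum of two positive reals) makes a single choice work for both identifications simultaneously.

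Next I would apply Theorem \ref{tw:1}, whose hypothesis $l(ws)>l(w)$ is exactly our assumption, to obtain $\mC(ws,s\lambda)=\DL{s,\lambda}\bigl(\mC(w,\lambda)\bigr)$ in $K_\T(G/B)[y]$; the substitution $y=-q$ turns this into $\operatorname{mC}^\T_{-q}(ws,s\lambda)=\DLq{s,\lambda}\bigl(\operatorname{mC}^\T_{-q}(w,\lambda)\bigr)$. Since $\DLq{s,\lambda}$ is linear over $K_\T(\mathrm{pt})[q^{\pm1/2}]$, the scalar $q^{-\frac12\dim X^+_w}$ passes through the operator and, combined with the two identifications above, yields
\begin{equation*}
\DLq{s,\lambda}\bigl(\St^{\lambda+\varepsilon\lambda^-}(w)\bigr)=q^{\tfrac12(\dim X^+_{ws}-\dim X^+_w)}\,\St^{s\lambda+\varepsilon\lambda^-}(ws).
\end{equation*}
Using the convention $\dim X^+_v=l(v)$ together with $l(ws)=l(w)+1$, the exponent equals $\tfrac12$, giving exactly the claimed identity.

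The argument is essentially bookkeeping and I do not foresee a serious obstacle. The only point to verify is that a single $\varepsilon$ works for both invocations of Corollary \ref{cor:otoczki}, which is immediate, and that the $q$-exponents balance to produce the factor $q^{1/2}$, which reduces to the single arithmetic identity $\dim X^+_{ws}-\dim X^+_w=1$ under the length hypothesis. The crucial conceptual input, which is what distinguishes this statement from Corollary \ref{cor:stabind}, is that Theorem \ref{tw:1} requires no assumption of general position on $\lambda$ in the direction $l(ws)>l(w)$.
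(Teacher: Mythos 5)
Your proposal is correct and follows exactly the paper's own route: the paper's proof is the one-line statement that the corollary follows directly from Corollary \ref{cor:otoczki} and Theorem \ref{tw:1}, which is precisely the bookkeeping you have spelled out (including the observation that the hypothesis $l(ws)>l(w)$ is what allows Theorem \ref{tw:1} to be applied without any genericity assumption on $\lambda$, and that a common $\varepsilon$ can be chosen for both invocations of Corollary \ref{cor:otoczki}).
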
 
\begin{proof}
	It follows directly from Corollary \ref{cor:otoczki} and Theorem \ref{tw:1}.
\end{proof}
\begin{rem}
	Corollaries \ref{cor:stabind3} and \ref{cor:stabind} produce different recursive formulas, e.g. we may obtain \cite[Theorem~3.5]{SZZ} taking $\lambda=0$ in Corollary \ref{cor:stabind3} and \cite[Theorem 5.4]{SZZ2} from Corollary \ref{cor:stabind} (see Corollary \ref{cor:SZZ}).
\end{rem}
 The rest of this section is devoted to the proof of Theorem \ref{tw:1}. We split the proof into several lemmas. For an arbitrary element $\sigma\in W$ with a reduced word decomposition $\underline{\sigma}$ we define
 \begin{align*}
 	\mCloc(\sigma,\lambda)&:=
 	\frac{\mC(\sigma,\lambda)}{\eu(TG/B)} \in S^{-1}\KTh_\T(G/B)[y]\,, \\
 	\mCloc(\underline{\sigma},\lambda)&:=
 	\frac{\mC(Z^\oo _{\underline{\sigma}}\subset Z_{\underline{\sigma}})\cdot \O_{Z_{\underline{\sigma}}}(\lceil p_{\underline{\sigma}}^*\Delta_{\sigma,\lambda}\rceil)}{\eu(TZ_{\underline{\sigma}})} \in S^{-1}\KTh_\T(Z_{\underline{\sigma}})[y]\,. 
 \end{align*}
 Consider the situation as in Theorem \ref{tw:1}. Fix a reduced word decomposition $\wu$ of $w$. It induces a reduced word decomposition $\ws$ of $ws$. \\
  \begin{lemma} \label{lem:BSind0}
 	Consider the situation as in Theorem \ref{tw:1}.
 	Let $\eheta$ be a binary sequence corresponding to a fixed point in $Z_\wu$. Let $\ehet$ be a binary sequence of the form $\ehet=(\eheta,\delta)$, where $\delta\in\{0,1\}$. We have the following equalities in the localized K-theory of a point $S^{-1}\KTh_\T(pt)[y]$.
 	\begin{enumerate}
 		\item $$\eu(TZ_\ws)_{|\pehet}=\eu(T Z_\wu)_{|\peheta}\cdot(1-\Ls^*)_{|\pehet} \,.$$
 		
 		\item
 		$$
 		\O_{Z_\ws}(\lceil p_\ws^*\Delta_{ws,s\lambda}\rceil)_{|\pehet}=
 		\begin{cases}
 			   \O_{Z_\wu}(\lceil p_\wu^*\Delta_{w,\lambda}\rceil)_{|\peheta}\cdot \LL_{s\,|\pehet}^{\lceil -\langle\lambda,\alpha_s\rangle\rceil}  &  \text{ when } \delta=0\,, \\
 			\O_{Z_\wu}(\lceil p_\wu^*\Delta_{w,s\lambda}\rceil)_{|\peheta}  & \text{ when } \delta=1\,.
 		\end{cases}
 		$$
 		\item
 		$$\mCloc(\ws,s\lambda)_{|\pehet}=
 		\begin{cases}
 			 \mCloc(\wu,\lambda)_{|\peheta}\cdot
 			 \left(\frac{(1+y)\cdot\Ls^{\lceil -\langle\lambda,\alpha_s\rangle\rceil-1}}{1-{\Ls^*}}\right)_{|\pehet} &
 			 \text{ when } \delta=0\,,\\
 			\mCloc(\wu,\lambda)_{|\peheta}\cdot
 			\left(\frac{1+y{\Ls^*}}{1-{\Ls^*}}\right)_{|\pehet} & \text{ when } \delta=1\,.
 		\end{cases}$$

 		\item In particular
$$\mC(Z^\oo _\ws \subset Z_\ws)_{|\pehet}=
 		\begin{cases}
 			\mC(Z_\wu^\oo  \subset Z_\wu)_{|\peheta}\cdot(1+y)\cdot {\Ls^*}_{|\pehet} & \text{ when } \delta=0\,, \\
 			\mC(Z_\wu^\oo  \subset Z_\wu)_{|\peheta}\cdot\left(1+y\Ls^*\right)_{|\pehet} &   \text{ when } \delta=1\,.\\
 		\end{cases}$$
 			\end{enumerate}
 \end{lemma}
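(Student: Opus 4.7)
The four claims split cleanly: (1), (2), and (4) are independent fixed-point computations, while (3) is their combination via the definition of $\mCloc$. Throughout the key structural fact is that $\pi_\ws\colon Z_\ws \to Z_\wu$ is obtained by base change from the $\PP^1$-bundle $\pi_s\colon G/B \to G/P_s$, so its relative tangent bundle is $p_\ws^*\Ls$.

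For (1), use the short exact sequence $0 \to p_\ws^*\Ls \to TZ_\ws \to \pi_\ws^*TZ_\wu \to 0$: the $K$-theoretic Euler class factors multiplicatively, and restricting to $\pehet$ together with $eu(L)=1-L^*$ yields the formula. For (2), apply the equivariant Kempf lemma (Corollary~\ref{cor:Kem}) and pass to $\Q$-divisors supported in $\partial Z_\ws$, using uniqueness of such boundary-supported representatives (the Bruhat cell $Z_\ws\setminus\partial Z_\ws$ is affine with trivial Picard group) to deduce
\[
p_\ws^*\Delta_{ws,s\lambda} \;=\; \pi_\ws^* p_\wu^*\Delta_{w,\lambda} \;-\; \langle\lambda,\alpha_s^\vee\rangle\cdot i(Z_\wu).
\]
Proposition~\ref{pro:round} pulls $\lceil\cdot\rceil$ through the smooth map $\pi_\ws$, and since $i(Z_\wu)$ is a distinct irreducible component from each $\pi_\ws^{-1}(\partial_jZ_\wu)$, the round-up decomposes additively. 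Restricting at $\pehet=(\peheta,\delta)$: for $\delta=1$ only the $\pi_\ws^*$-term contributes; for $\delta=0$ the extra factor is the normal weight of the section, which equals the fibre tangent $\Ls|_\pehet$.

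For (4), combine additivity of $\mC$ on the SNC decomposition $\partial Z_\ws = \bigcup_{j\le l(w)}\pi_\ws^{-1}(\partial_jZ_\wu)\cup i(Z_\wu)$ with the Lefschetz--Riemann--Roch restriction formula (Theorem~\ref{tw:LRR}). The cleanest route is fibrewise: over $\peheta\in Z_\wu$ the fibre of $\pi_\ws$ is a $\PP^1$ with tangent weight $\Ls|_{(\peheta,0)}$ at the section point, and a direct computation using $\mC(\PP^1)=\lambda_y(T^*\PP^1)$ and subtracting the class of a point yields
\[
\mC(\PP^1\setminus\{(\peheta,0)\}\subset\PP^1)|_{(\peheta,0)} = (1+y)\,\Ls^*|_{(\peheta,0)},\qquad
\mC(\PP^1\setminus\{(\peheta,0)\}\subset\PP^1)|_{(\peheta,1)} = (1+y\Ls^*)|_{(\peheta,1)}.
\]
Because the components $\pi_\ws^{-1}(\partial_jZ_\wu)$ are pulled back along the $\PP^1$-bundle $\pi_\ws$, their contributions in the inclusion--exclusion sum factor through $\mC(Z^\oo_\wu\subset Z_\wu)|_\peheta$, while the new section direction contributes the universal local factor depending only on $\delta$. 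Part (3) is then obtained by dividing the product of (4) and (2) by (1), simplifying with $\Ls^*=\Ls^{-1}$.

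The main obstacle is the bookkeeping in (4) at fixed points $\peheta\in\partial Z_\wu$ lying on several SNC components simultaneously: one must verify that the $\PP^1$-direction contributes a universal factor independent of the SNC stratum of $\peheta$. This is ultimately a Künneth-type property of $\mC$ for the equivariantly locally-trivial $\PP^1$-fibration $\pi_\ws$, and it is precisely what makes the recursion so clean.
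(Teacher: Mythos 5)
Your plan is correct and runs along the same lines as the paper's argument: part~(1) via the short exact sequence for $\pi_\ws$, part~(2) via the equivariant Kempf lemma and Proposition~\ref{pro:round}, part~(4) by decomposing $Z^\oo_\ws=\pi_\ws^{-1}(Z^\oo_\wu)\setminus i(Z_\wu)$, and part~(3) by combining the others. One naming quibble: in part~(4) what you invoke as a ``Künneth-type property'' and attribute to LRR is really the Verdier--Riemann--Roch formula for the smooth map $\pi_\ws$ (cited in the paper as \cite[theorem~4.2~(4)]{AMSS}); together with additivity and the self-intersection formula for the section $i$, your fibrewise $\PP^1$ computation reproduces the paper's use of \cite[lemma~9.7]{KonW} for $\delta=0$ and localness plus VRR for $\delta=1$.
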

\begin{proof}
	{\bf 1)} The bundle $p_{\ws}^*\Ls$ is the relative tangent bundle of the projection $\pi_{\ws}$. Therefore we have a short exact sequence
	$$0\to p_{\ws}^*\Ls\to TZ_\ws \to\pi^*TZ_\wu\to 0\,. $$
	The claim follows from the multiplicative properties of Euler class. \\
	\\
	{\bf 2)} Denote the divisor $i(Z_\wu)\subset Z_\ws$ by $D$. The bundle $\O_{Z_\ws}(D)_{|D}$ is isomorphic to~${\Ls}_{|D}$. The point $\pehet$ belongs to $D$ if and only if $\delta=0$. Therefore
	\begin{align} \label{w:OD}
	\O_{Z_\ws}(D)_{|\pehet}=
	\begin{cases}
		\LL_{s|\pehet} & \text{ when } \delta=0\,, \\
		1 & \text{ when } \delta=1\,.
	\end{cases}
	\end{align}
	The Kempf lemma (Proposition \ref{pro:kem}) implies that
	$$p_\ws^*\Delta_{ws,s\lambda}=\pi^*_\ws p_\wu^*\Delta_{w,\lambda } + \langle-\lambda,\alpha_s\rangle\cdot D\,.$$
	We use this equality to obtain
	\begin{align*}
		\notag \O_{Z_\ws}(\lceil p_\ws^*\Delta_{ws,s\lambda}\rceil)_{|\pehet}&=
		\O_{Z_\ws}(\lceil\pi^*_\ws p_\wu^*\Delta_{w,\lambda}  +\langle-\lambda,\alpha_s\rangle\cdot D\rceil )_{|\pehet} \\
		&=\O_{Z_\ws}(\pi_\ws^*\lceil p_\wu^*\Delta_{w,\lambda}\rceil)_{|\pehet}\cdot\O_{Z_\ws}(\lceil \langle-\lambda,\alpha_s\rangle\rceil\cdot D )_{|\pehet} \\
		&=\O_{Z_\wu}(\lceil p_\wu^*\Delta_{w,\lambda}\rceil)_{|\peheta}\cdot\O_{Z_\ws}(D)^{\lceil \langle-\lambda,\alpha_s\rangle\rceil}_{|\pehet}\,,
	\end{align*}
	where the second equality follows from Proposition \ref{pro:round}. The claim follows from formula~\eqref{w:OD}. \\
	{\bf 4)} Suppose that $\delta=0$. Then $\pehet \in D$. The divisor $\partial Z_\ws$ is SNC. Therefore (cf. \cite[Lemma 9.7]{KonW})
	\begin{align*}
		\mC(Z^\oo _\ws \subset Z_\ws)_{|D} 
		&=(1+y)\cdot \O_{Z_\ws}(-D)_{|D} \cdot \mC(D^\oo  \subset D)\\
		&=(1+y)\cdot {\Ls^*}_{|D} \cdot \mC(Z_\wu^\oo  \subset Z_\wu)\,.
	\end{align*}
	Suppose that $\delta=1$ then
	\begin{align*} 
		\mC(Z^\oo _\ws \subset Z_\ws)_{|\ehet}
		&=
		\mC(\pi_\ws^{-1}(Z^\oo _\wu)  \subset Z_\ws)_{|\ehet} \\
		&=\mC(Z^\oo _\wu  \subset Z_\wu)_{|\eheta} \cdot \lambda_y(\LL^*_s)_{|\ehet}\,,
	\end{align*}
	where the first equality comes from localness of the motivic Chern class \cite[Section~2.3]{FRW} and the second from the Verdier-Riemann-Roch formula \cite[Theorem 4.2 (4)]{AMSS}. \\
	\\
	{\bf 3)} This point follows from {\bf 1--2)} and {\bf 4)}. 
	\end{proof}

\begin{lemma} \label{lem:BSind}
	Consider the situation as in Theorem \ref{tw:1}. For $\sigma \in (G/B)^\T$ we have
	$$\mCloc(ws,s\lambda)_{|\sigma}=
	\mCloc(w,{\lambda})_{|\sigma} \cdot\frac{(1+y)\cdot{\LL_{s\,|\sigma}^{\lceil -\langle\lambda,\alpha_s\rangle\rceil-1}}}{1-{\Ls^*}_{|\sigma}} +
	\mCloc(w,{\lambda})_{|\sigma s} \cdot \frac{1+y{\Ls^*}_{|\sigma}}{1-{\Ls^*}_{|\sigma}} \,.$$
\end{lemma}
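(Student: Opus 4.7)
The plan is to apply the Lefschetz--Riemann--Roch theorem (Theorem \ref{tw:LRR}) to the map $p_\ws : Z_\ws \to G/B$ and then collapse the resulting sum over the fibre $p_\ws^{-1}(\sigma)^\T$ into two sub-sums, each of which is evaluated by a second application of LRR, this time for $p_\wu$. By the definition of the twisted motivic Chern class applied to the Bott--Samelson resolution,
\[
\mC(ws,s\lambda) = p_{\ws*}\bigl(\O_{Z_\ws}(\lceil p_\ws^*\Delta_{ws,s\lambda}\rceil)\cdot \mC(Z^\oo_\ws\subset Z_\ws)\bigr).
\]
Dividing by $eu(TG/B)$ and applying LRR yields
\[
\mCloc(ws,s\lambda)_{|\sigma}=\sum_{\pehet\in Z_\ws^\T,\;p_\ws(\pehet)=\sigma}\mCloc(\ws,s\lambda)_{|\pehet}.
\]

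Next I would use the inductive description of $Z_\ws^\T$ to parametrise $\pehet=(\peheta,\delta)$ with $\peheta\in Z_\wu^\T$ and $\delta\in\{0,1\}$. Since $p_\ws\circ i = p_\wu$ by our convention on the section, $p_\ws(\peheta,0)=p_\wu(\peheta)$; the other $\T$-fixed point of the $\PP^1$-fibre of $\pi_s$ through $p_\wu(\peheta)$ is obtained by the right action of $s$, so $p_\ws(\peheta,1)=p_\wu(\peheta)\cdot s$. Hence the condition $p_\ws(\pehet)=\sigma$ picks out, within the $\delta=0$ stratum, those $\peheta$ with $p_\wu(\peheta)=\sigma$, and within the $\delta=1$ stratum those with $p_\wu(\peheta)=\sigma s$.

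I would then substitute the restriction formulas of Lemma \ref{lem:BSind0}(3) into each term. The essential simplification is that $\Ls$ is pulled back from $G/B$, so $\Ls_{|\pehet}=\Ls_{|p_\ws(\pehet)}=\Ls_{|\sigma}$ in both strata. The factors $(1+y)\Ls_{|\sigma}^{\lceil -\langle\lambda,\alpha_s\rangle\rceil-1}/(1-\Ls^*_{|\sigma})$ for $\delta=0$ and $(1+y\Ls^*_{|\sigma})/(1-\Ls^*_{|\sigma})$ for $\delta=1$ therefore depend only on $\sigma$ and may be pulled out of their respective sub-sums, reducing the problem to computing $\sum_{p_\wu(\peheta)=\sigma}\mCloc(\wu,\lambda)_{|\peheta}$ and $\sum_{p_\wu(\peheta)=\sigma s}\mCloc(\wu,\lambda)_{|\peheta}$.

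Finally, a second application of LRR to the proper map $p_\wu$ on the class $\O_{Z_\wu}(\lceil p_\wu^*\Delta_{w,\lambda}\rceil)\cdot \mC(Z^\oo_\wu\subset Z_\wu)$ yields
\[
\sum_{p_\wu(\peheta)=\sigma}\mCloc(\wu,\lambda)_{|\peheta}=\mCloc(w,\lambda)_{|\sigma},\qquad \sum_{p_\wu(\peheta)=\sigma s}\mCloc(\wu,\lambda)_{|\peheta}=\mCloc(w,\lambda)_{|\sigma s},
\]
from which the claimed identity follows immediately. The only step requiring real care is the bookkeeping in the second paragraph: correctly identifying which fixed points of $Z_\ws$ lie above $\sigma$ under $p_\ws$ and recognising that $\Ls$, being pulled back from $G/B$, feels only $p_\ws(\pehet)=\sigma$ and not $\peheta$. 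Every other ingredient is immediate from Lemma \ref{lem:BSind0}, the definition of $\mCloc$, and Theorem \ref{tw:LRR}.
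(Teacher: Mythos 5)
Your proof is correct and matches the paper's argument essentially step for step: one application of LRR for $p_\ws$, the splitting of $p_\ws^{-1}(\sigma)^\T$ into the $\delta=0$ and $\delta=1$ strata according to whether $p_\wu(\peheta)$ equals $\sigma$ or $\sigma s$, substitution of Lemma~\ref{lem:BSind0}(3), factoring out the $\Ls$-dependent coefficients (since $\Ls$ is pulled back from $G/B$ its restriction at $\pehet$ depends only on $\sigma$), and a second application of LRR for $p_\wu$ to collapse each sub-sum to $\mCloc(w,\lambda)_{|\sigma}$ or $\mCloc(w,\lambda)_{|\sigma s}$. No discrepancy with the paper's proof beyond presentational detail.
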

 
\begin{proof}
	By definition
	$$\mC(ws,s\lambda)=p_{\ws*} \mC(Z^\oo _{\ws}\subset Z_\ws)\cdot \O_{Z_\ws}(\lceil p_\ws^*\Delta_{ws,\lambda}\rceil)\,. $$
	The LRR formula (Theorem \ref{tw:LRR}) implies that 
	\begin{align} \label{eq:mS}
		\mCloc(ws,s\lambda)_{|\sigma}=&
		\sum_{\ehet\in (Z_\ws)^\T,\ p_{\ws}(\eheta')=\sigma}
		\mCloc(\ws,s\lambda)_{|\eheta'} \,.
	\end{align}
	We have a set decomposition
	$$
	\{\ehet\in Z_\ws^\T|p_{\ws}(\ehet)=\sigma\}=
	\{(\eheta,0)|\eheta\in Z_\wu^\T, \, p_{\wu}(\eheta)=\sigma\}\sqcup
	\{(\eheta,1)|\eheta\in Z_\wu^\T, \, p_{\wu}(\eheta)=\sigma s\}\,.
	$$
	This allows us to split the sum \eqref{eq:mS} into two sums. Lemma \ref{lem:BSind0} (4) and the LRR formula implies that the first part is equal to
	\begin{align*}
	\sum_{\eheta\in (Z_\wu)^\T, \ p_\wu(\eheta)=\sigma}\mCloc(\wu,{\lambda})_{|\eheta} \cdot \frac{(1+y)\cdot{\LL_{s\,|\ehet}^{\lceil -\langle\lambda,\alpha_s\rangle\rceil-1}}}{1-{\Ls^*}_{|\ehet}}
	&=
	\mCloc({w},{\lambda})_{|\sigma} \cdot\frac{(1+y)\cdot{\LL_{s\,|\sigma}^{\lceil -\langle\lambda,\alpha_s\rangle\rceil-1}}}{1-{\Ls^*}_{|\sigma}}\,,
	\end{align*}
	and the second part to
	\begin{align*}
	\sum_{\eheta\in (Z_\wu)^\T, \ p_\wu(\eheta) = \sigma s}
	\mCloc(\wu,{\lambda})_{|\eheta} \cdot \frac{1+y{\Ls^*}_{|\ehet}}{1-{\Ls^*}_{|\ehet}}
	=
	\mCloc(w,{\lambda})_{|\sigma s} \cdot \frac{1+y{\Ls^*}_{ |\sigma}}{1-{\Ls^*}_{|\sigma}} \,.
	\end{align*}
	This proves the lemma.
\end{proof}
\begin{proof}[Proof of Theorem \ref{tw:1}]
	Let $\sigma\in (G/B)^\T$ be an arbitrary fixed point. We multiply the equation from Lemma \ref{lem:BSind} by the Euler class $\eu(T_\sigma G/B)$. After simplification we obtain
	$$\mC(ws,s\lambda)_{|\sigma}=
	\frac{1+y{\Ls^*}_{|\sigma}}{1-{\Ls}_{|\sigma}}\cdot \mC(w,\lambda)_{|\sigma s}
	-\frac{(1+y)\cdot{\LL_{s\,|\sigma}^{\lceil -\langle\lambda,\alpha_s\rangle\rceil}}}{1-{\Ls}_{|\sigma}}\cdot \mC(w,\lambda)_{|\sigma}\,.$$ 
	Theorem \ref{tw:1} follows from the localization isomorphism.
\end{proof}

\section{The Twisted Hecke algebra} \label{s:alg} 
In this section we describe the algebra of operations $\DL{s,a}\in \End(\KTh_\T(G/B)[y])$. We show that one can define the operators $\DL{w,a}$ for any $w\in W$ in a way that the resulting algebra is a deformation of the Hecke algebra and it acts in the expected way on the twisted motivic Chern classes of Schubert cells. Further on, in Section \ref{final}, purely algebraically, we lift this action to the endomorphisms of the representation ring $R(\T\times \T)[y]$. From that point of view both operators $\DL{s,a}$ and $\DLL{s,a}$ play equal roles. 

\begin{pro} \label{pro:DL1}
	Let $s$ be a simple reflection and $a\in \R$ a real number. The operator $\DL{s,a}$ commute with the left Weyl group action.
\end{pro}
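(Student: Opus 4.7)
The plan is to reduce commutativity to fixed-point checks via the localization embedding $K_\T(G/B) \hookrightarrow S^{-1} K_\T((G/B)^\T)$, using the explicit formulas for $w^{\textsc l}$ and $s^{\textsc r}$ on restrictions recalled in section \ref{s:Weyl}. Since $\DL{s,a}$ is built from $s^{\textsc r}$, from multiplication by classes of the form $\Ls^k$, and from scalars in $K_\T(pt)[y]$, it suffices to show that each of these ingredients commutes with $w^{\textsc l}$ for an arbitrary $w \in W$.

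First I would observe that $s^{\textsc r}$ and $w^{\textsc l}$ always commute: for $x\in K_\T(G/B)$ and $\sigma\in W$,
$$(w^{\textsc l}\circ s^{\textsc r})(x)_{|\sigma} = w\bigl(s^{\textsc r}(x)_{|w^{-1}\sigma}\bigr) = w(x_{|w^{-1}\sigma s}) = (s^{\textsc r}\circ w^{\textsc l})(x)_{|\sigma},$$
because left and right multiplications on $W$ commute and $s^{\textsc r}$ does not touch the character coefficients.

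Next, the main point is that the class of the relative tangent bundle $\Ls$ is fixed by every $w^{\textsc l}$. At a fixed point $\sigma$ the restriction $\Ls_{|\sigma}$ is the $\T$-weight of the tangent line to the fibre $\sigma P_s/B\cong\PP^1$ at $\sigma$, which in multiplicative notation has the form $\sigma(\alpha_s^{-1})$. Hence
$$w^{\textsc l}(\Ls)_{|\sigma} = w(\Ls_{|w^{-1}\sigma}) = w\bigl(w^{-1}\sigma(\alpha_s^{-1})\bigr) = \sigma(\alpha_s^{-1}) = \Ls_{|\sigma}.$$
Conceptually this reflects the fact that $\pi_s$, and hence $\Ls$, is $G$-equivariant while $w^{\textsc l}$ is induced by left multiplication by lifts in $N\T\subset G$. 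The same computation gives $w^{\textsc l}(\Ls^*) = \Ls^*$ and $w^{\textsc l}(\Ls^k) = \Ls^k$ for every $k\in\Z$.

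Finally, the formula $(xy)_{|\sigma} = x_{|\sigma}\cdot y_{|\sigma}$ implies that $w^{\textsc l}$ is a ring endomorphism of $K_\T(G/B)$, so multiplication by any $w^{\textsc l}$-invariant class commutes with $w^{\textsc l}$. Applying this to the classes $(1+y\Ls^*)/(1-\Ls)$ and $(1+y)\Ls^{\lceil a\rceil}/(1-\Ls)$, and combining with the first step, each summand of $\DL{s,a}$ commutes with $w^{\textsc l}$, which yields the proposition. The only real hazard is bookkeeping the sign/direction convention in $\Ls_{|\sigma}$, but the invariance argument is insensitive to it.
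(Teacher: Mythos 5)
Your proof is correct and follows essentially the same approach as the paper: both arguments reduce to the two facts that the left and right Weyl group actions commute and that the class $\Ls$ is invariant under $w^{\textsc l}$ (because $\pi_s$, and hence $\Ls$, is $G$-equivariant). The paper cites these facts from Mihalcea--Naruse--Su, whereas you verify them directly by restriction to fixed points, which is a harmless elaboration rather than a different route.
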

\begin{proof}
	The proof is analogous to \cite[Proposition~5.9~(a)]{MNS}. The left and right Weyl group actions commute (see \cite[Proposition~5.3~(c)]{MNS}). Moreover, the line bundle $\Ls$ is $G$-equivariant, so it is preserved by the left Weyl group action.
\end{proof}
\begin{pro} \label{lem:L}
	The left and right twisted Demazure-Lusztig operators commute. Let $s_1,s_2 \in W$ be simple reflections and $a,b\in \R$ real numbers. Then
	$$\DL{s_1,a}\circ \DLL {s_2,b}=
	\DLL {s_2,b}\circ \DL{s_1,a}\,.$$
\end{pro}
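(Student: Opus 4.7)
The plan is to reduce the commutation claim to the two facts that (i) $\DL{s_1,a}$ is $K_\T(pt)[y]$-linear, and (ii) $\DL{s_1,a}$ commutes with the left Weyl group action $s_2^{\textsc l}$. Both of these are already available: (i) is part of proposition \ref{pro:defcor} (2), since $\DL{s_1,a}$ is built from the $K_\T(pt)$-linear operator $s_1^{\textsc r}$ and multiplication by the $K_\T(G/B)$-class $\Ls$ (recall the right Weyl action is torus-equivariant), and (ii) is exactly proposition \ref{pro:DL1}.

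The next step is to write
$$\DLL{s_2,b}=A\cdot s_2^{\textsc l}+B\cdot \id,\qquad A=\frac{1+y\alpha^{-1}_{s_2}}{1-\alpha^{-1}_{s_2}},\quad B=-\frac{(1+y)\cdot\alpha_{s_2}^{-\lceil b\rceil}}{1-\alpha^{-1}_{s_2}},$$
so that $A,B$ lie in (a localization of) $K_\T(pt)[y]$. For any $x\in K_\T(G/B)[y]$ we expand
$$\DL{s_1,a}\bigl(\DLL{s_2,b}(x)\bigr)=\DL{s_1,a}\bigl(A\cdot s_2^{\textsc l}(x)+B\cdot x\bigr).$$
Using $K_\T(pt)[y]$-linearity of $\DL{s_1,a}$ we pull the scalars $A$ and $B$ outside, obtaining
$$A\cdot \DL{s_1,a}\bigl(s_2^{\textsc l}(x)\bigr)+B\cdot \DL{s_1,a}(x),$$
and then proposition \ref{pro:DL1} lets us swap $\DL{s_1,a}$ with $s_2^{\textsc l}$ in the first term. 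The resulting expression is visibly $\DLL{s_2,b}\bigl(\DL{s_1,a}(x)\bigr)$, which proves the lemma.

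There is really no obstacle: the whole argument is that $\DL{s_1,a}$ is $K_\T(pt)[y]$-linear and left-$W$-equivariant, while $\DLL{s_2,b}$ is built only out of multiplication by elements of $K_\T(pt)[y]$ and $s_2^{\textsc l}$. The only mildly subtle point is the use of a localization to make sense of $A$ and $B$ as scalars, but by remark \ref{rem:calkowitosc} every operator in sight preserves $K_\T(G/B)[y]$, so the intermediate computation in the localized ring is harmless.
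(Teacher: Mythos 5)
Your proof is essentially the paper's own argument, just spelled out in more detail: the paper likewise reduces the claim to the two facts that $\DL{s_1,a}$ commutes with the left Weyl group action (proposition \ref{pro:DL1}) and is $S^{-1}K_\T(pt)[y]$-linear, so that it passes through the scalar coefficients $A,B$ and the $s_2^{\textsc l}$ appearing in $\DLL{s_2,b}$. The decomposition and the localization remark you added are reasonable bookkeeping, but nothing in substance differs from the paper's proof.
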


\begin{proof}
	The twisted Demazure-Lusztig operator $\DL{s_1,a}$ commutes with both the left Weyl group action and multiplication by a character from $S^{-1}\KTh_{\T}(pt)[y]$, see Proposition \ref{pro:DL1}.
\end{proof}
\begin{cor} 
	Let $s_1,s_2 \in W$ be simple reflections and $\lambda_1,\lambda_2 \in \Hom(\T,\C^*)\otimes\Q$ fractional characters. Then
	$$\DL{s_1,\lambda_1}\circ \DLL {s_2,\lambda_2}=
	\DLL {s_2,\lambda_2}\circ \DL{s_1,\lambda_1}\,.$$
\end{cor}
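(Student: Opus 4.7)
The plan is to observe that this corollary is an immediate specialization of the preceding lemma (Lemma \ref{lem:L}). Recall from Definition \ref{df:DL} that the fractional-character variants are obtained from the scalar variants by the substitutions
$$\DL{s_1,\lambda_1} = \DL{s_1,-\langle\lambda_1,\alpha_{s_1}^\vee\rangle}, \qquad \DLL{s_2,\lambda_2} = \DLL{s_2,\langle\lambda_2,\alpha_{s_2}^\vee\rangle}.$$
Thus the operators on both sides of the desired identity agree with the corresponding scalar operators for specific choices of real parameters $a = -\langle\lambda_1,\alpha_{s_1}^\vee\rangle$ and $b = \langle\lambda_2,\alpha_{s_2}^\vee\rangle$.

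Given this, the proof is a one-line reduction: apply Lemma \ref{lem:L} with these values of $a$ and $b$ to conclude
$$\DL{s_1,\lambda_1}\circ \DLL{s_2,\lambda_2} = \DL{s_1,a}\circ \DLL{s_2,b} = \DLL{s_2,b}\circ \DL{s_1,a} = \DLL{s_2,\lambda_2}\circ \DL{s_1,\lambda_1}.$$
No separate calculation is required, since the fractional-character versions carry no additional structure beyond the scalar versions they are defined from.

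There is no real obstacle here; the only thing to check is that the definitional passage from a fractional character to a real parameter is consistent on both the left and right sides, which it is by construction in Definition \ref{df:DL}. Consequently the corollary is a direct translation of Lemma \ref{lem:L}, whose content — commutation of left Weyl-action-based operators with right Weyl-action-based operators multiplied by torus characters — is already the substantive statement.
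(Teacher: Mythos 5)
Your proof is correct and matches the paper's intent exactly: the paper states this as a corollary of Lemma \ref{lem:L} with no separate argument, precisely because the fractional-character operators $\DL{s,\lambda}$ and $\DLL{s,\lambda}$ are, by Definition \ref{df:DL}, nothing but the scalar operators $\DL{s,a}$ and $\DLL{s,b}$ evaluated at $a=-\langle\lambda,\alpha_s^\vee\rangle$ and $b=\langle\lambda,\alpha_s^\vee\rangle$. Your one-line reduction is the intended argument.
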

The twisted operators satisfy certain braid relations with parameters described in Lemma \ref{universal_braid} which allows us to define operator~$\DL{ w,\lambda}$ for an arbitrary element $w\in W$.
\begin{adf}
	Let $w\in W$ be a Weyl group element and \hbox{$\wu=(s_{i_1},s_{i_2},\dots ,s_{i_l})$} a reduced word decomposition of $w$. Let $w_{> k}\in W$ be the composition of the last $l-k$ letters of $\wu$, i.e. 
	$$w_{> k}=s_{i_{k+1}}s_{i_{k+2}}\dots s_{i_{l}} \in W \,.$$
	For a fractional character $\lambda$ we define the twisted Demazure-Lusztig operator $\DL{ \wu,\lambda}$ by
$$\DL{\wu,\lambda}:=\DL{s_{i_1},w_{> 1}\lambda}\circ
	\DL{s_{i_2},w_{>2}\lambda} \circ \dots \circ
	\DL{s_{i_{l-1}},s_{i_l}\lambda} \circ
	\DL{s_{i_l},\lambda}\,. $$
\end{adf}

\begin{pro} \label{pro:braid}
	Let $\lambda$ be a fractional character and $w\in W$ a Weyl group element.
	Let $\wu$~and~$\wu'$ be two reduced word representations of $w$.
	Then
	$$ \DL{ \wu,\lambda} =\DL{ \wu',\lambda}\,.$$
\end{pro}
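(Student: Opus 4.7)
The plan is to reduce the independence of $\DL{\wu,\lambda}$ from the reduced word $\wu$ to the twisted braid relations for the generators $\DL{s,\mu}$. By Matsumoto's theorem, any two reduced decompositions of $w\in W$ are connected by a finite sequence of braid moves of the form $s_is_js_i\cdots \leftrightarrow s_js_is_j\cdots$ of common length $m_{ij}=\mathrm{ord}(s_is_j)$. Hence it suffices to prove $\DL{\wu,\lambda}=\DL{\wu',\lambda}$ when $\wu$ and $\wu'$ differ by a single such move.

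Suppose the braid move takes place at positions $k+1,\dots,k+m$ with $m=m_{ij}$, so $\wu$ and $\wu'$ coincide on positions $1,\dots,k$ and on positions $k+m+1,\dots,l$. For any position $p$ outside the braid block the tail $w_{>p}=s_{i_{p+1}}\cdots s_{i_l}$ is the same for $\wu$ and for $\wu'$, since the two middle subwords $s_is_js_i\cdots$ and $s_js_is_j\cdots$ represent the same element of $W$. Consequently the factors $\DL{s_{i_p},w_{>p}\lambda}$ agree for $\wu$ and $\wu'$ at each such $p$, and the whole equality reduces to the equality of the middle blocks. Writing $\mu:=w_{>k+m}\lambda$, the middle block for $\wu$ is
\begin{equation*}
\DL{s_i,\,s_js_i\cdots\mu}\circ \DL{s_j,\,s_is_j\cdots\mu}\circ\cdots\circ \DL{s_j,s_i\mu}\circ \DL{s_i,\mu},
\end{equation*}
with $m$ factors and parameters descending by one reflection per step, and the middle block for $\wu'$ is the same expression with $s_i$ and $s_j$ interchanged. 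Their equality is precisely the twisted braid relation of length $m$; for $m=3$ it is the identity displayed in the introduction, and the cases $m=2,4,6$ are formally analogous. Granting these braid relations, composing over the elementary moves linking any two reduced decompositions of $w$ yields the statement.

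The main obstacle is the twisted braid relation itself. In the untwisted case $\lambda=0$ this is the classical Hecke-algebra braid relation, but here the ceiling term $\Ls^{\lceil -\langle \mu,\alpha_s^\vee\rangle\rceil}$ in $\DL{s,\mu}$ interacts non-trivially with the parameter shift $\mu\mapsto s'\mu$, and the direct algebraic check becomes cumbersome. The cleanest route, foreshadowed in the introduction, is to verify the identity after restriction to each fixed point $\sigma\in(G/B)^\T$, where both sides collapse to rational-function identities in characters of $\T$ and in the scalar $\Ls_{|\sigma}$; this is the content of lemma \ref{universal_braid} and propositions \ref{uniwersalny_braid}, \ref{universalny_braid_l}. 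With those identities in hand the present proposition follows from the Matsumoto reduction described above.
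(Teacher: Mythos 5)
Your proof takes a genuinely different route from the paper's. The paper proves the proposition by a basis argument: the classes $\{\sigma^{\textsc l}(\mC(\id,\lambda))\}_{\sigma\in W}$ form a basis of $S^{-1}K_\T(G/B)[y]$, and both $\DL{\wu,\lambda}$ and $\DL{\wu',\lambda}$ send each basis element to $\sigma^{\textsc l}(\mC(w^{-1},w\lambda))$ by Proposition \ref{pro:DL1} (commutation with the left Weyl action) and iterated applications of Theorem \ref{tw:1}. This uses no Matsumoto reduction and no explicit braid-relation check; the geometry of Bott--Samelson resolutions does all the work. Your approach --- reduce by Matsumoto's theorem to a single braid move, observe that the tails $w_{>p}$ agree outside the block, and then invoke the twisted braid relation of length $m_{ij}$ --- is precisely the ``alternative proof'' the paper alludes to in the remark following its own proof. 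The trade-off: the paper's route avoids any direct rational-function manipulation, while your route would establish the braid identity itself, which is of independent interest but is exactly what the paper avoids tackling head-on at this point.

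However, as written your proof has a genuine gap. The references you lean on to discharge the braid relation --- Lemma \ref{universal_braid} and Propositions \ref{uniwersalny_braid}, \ref{universalny_braid_l} --- are all stated and proved only in the $A_n$ setting (from Section \ref{sec:upgrade} onward the paper restricts to $\GL_n$), whereas Proposition \ref{pro:braid} is asserted for an arbitrary semisimple $G$. The cases $m_{ij}=4$ (type $C_2/B_2$) and $m_{ij}=6$ (type $G_2$) are not ``formally analogous'' in the sense of being free; they are separate identities. The paper does not verify them at this stage --- it only sketches them at the very end (Section \ref{final}), calling the proofs ``straightforward generalizations.'' So your appeal to Lemma \ref{universal_braid} leaves the non-simply-laced cases unjustified. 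You should also be careful about a latent circularity: the ``alternative proof'' of Lemma \ref{universal_braid} given at the end of Section 9 itself invokes Proposition \ref{pro:braid}, so you must rely specifically on the direct computational proof of the braid relation (the one done by hand) to avoid arguing in a circle. Finally, a small imprecision: Lemma \ref{universal_braid} is an identity of operators on Laurent polynomials (the lifted operators $\Tf{i}{a}$), not literally ``after restriction to each fixed point''; passing from the lifted identity to the $K_\T(G/B)$-level identity requires Theorem \ref{podniesienie} and the Kirwan map, again only available here in type $A$.
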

\begin{proof}
	It is enough to check that the equality holds on some basis of $S^{-1}\KTh_\T(G/B)[y]$ over $S^{-1}\KTh_\T(pt)[y]$.
	The elements $\{\sigma^{\textsc l}(\mC(\id,\lambda))\}_{\sigma\in W}$
	form such a basis. Thus, it is enough to check that
$$ \DL{ \wu,\lambda}(\sigma^{\textsc l}(\mC(\id,\lambda))) =\DL{ \wu',\lambda}(\sigma^{\textsc l}(\mC(\id,\lambda)))\,.$$
	Proposition \ref{pro:DL1} and multiple use of Theorem \ref{tw:1} prove that both sides are equal to
$$\sigma^{\textsc l}(\mC(w^{-1},w\lambda))\,. $$
\end{proof}
The proof above uses the fact that the motivic Chern classes for a fixed $\lambda$ span $S^{-1}\KTh_\T(G/B)[y]$. An alternative proof is provided by using directly the braid relation. Eventually the result follows from the calculus of operations acting on Laurent polynomials, see Section \ref{abstract_algebra}.

\begin{ex}\label{przyklad_sl3} Let $G=\SL_3\subset \GL_3$, $\lambda=(\lambda_1,\lambda_2,\lambda_3)\in \ttt^*\simeq \R^3\subset \mathfrak{gl}_3$.
For the word $\wu=s_2s_1s_2$ we have:
$$
\DL{s_1s_2s_1,\lambda}=\DL{s_1,s_2s_1\lambda}\circ\DL{s_2,s_1\lambda}\circ\DL{s_1,\lambda}
=\DL{s_1,(\lambda_2,\lambda_3,\lambda_1)}\circ\DL{s_2,(\lambda_2,\lambda_1,\lambda_3)}\circ\DL{s_1,(\lambda_1,\lambda_2,\lambda_3)}\,.
$$
The corresponding parameters $\lceil\langle -w^{-1}_{< j}\lambda,\alpha_{s_{i_j}} \rangle\rceil$ in Definition \ref{df:DL} (4) are equal to:
$$\lceil\lambda_3-\lambda_2\rceil,\qquad \lceil\lambda_3-\lambda_1\rceil,\qquad \lceil\lambda_2-\lambda_1\rceil\,.$$
For the word $\wu=s_2s_1s_2$: 
$$
\DL{s_2s_1s_2,\lambda}=\DL{s_2,s_1s_2\lambda}\circ\DL{s_1,s_2\lambda}\circ\DL{s_2,\lambda}
=\DL{s_2,(\lambda_3,\lambda_1,\lambda_2)}\circ\DL{s_1,(\lambda_1,\lambda_3,\lambda_2)}\circ\DL{s_2,(\lambda_1,\lambda_2,\lambda_3)}\,.
$$
The corresponding parameters $\lceil\langle -w^{-1}_{< j}\lambda,\alpha_{s_{i_j}} \rangle\rceil$ are equal to:
$$\lceil\lambda_2-\lambda_1\rceil,\qquad \lceil\lambda_3-\lambda_1\rceil,\qquad \lceil\lambda_3-\lambda_2\rceil\,.$$
We obtain relation
$$
\DL{s_1,\lambda_3-\lambda_2}\circ\DL{s_2,\lambda_3-\lambda_1}\circ\DL{s_1,\lambda_2-\lambda_1}=
\DL{s_2,\lambda_2-\lambda_1}\circ\DL{s_1,\lambda_3-\lambda_1}\circ\DL{s_2,\lambda_3-\lambda_2} \,.
$$
\end{ex}

\begin{adf}
	Let $\lambda$ be a fractional character and $w \in W$ a Weyl group element.
	We define
	$$\DL{ w,\lambda}:= \DL{ \wu,\lambda}\,,$$
	for any reduced word representation $\wu$ of $w$.
\end{adf}
\begin{pro} \label{pro:tw1}
	Let $w,w' \in W$ be  Weyl group elements and $\lambda \in \Hom(\T,\C^*)\otimes\Q$ a~fractional character.
	\begin{enumerate}
		\item Suppose that $l(w')+l(w)=l(w'w)$, then
$$\DL{w^{-1},\lambda}(\mC(w',\lambda))=\mC(w'w,w^{-1}\lambda)\,. $$

		\item Suppose that $\lambda$ is general enough, then
	$$\DL{w^{-1},\lambda}(\mC(w',\lambda))=(-y)^{\frac{1}{2}(l(w')+l(w)-l(w'w))}\mC(w'w,w^{-1}\lambda)\,. $$
	\end{enumerate}
\end{pro}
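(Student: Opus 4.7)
The plan is to induct on $l(w)$ by unpacking the definition of $\DL{w^{-1},\lambda}$ and applying Theorem \ref{tw:1} (respectively Corollary \ref{rem:tw1}) at each step. Choose a reduced word decomposition $\wu=(s_{i_1},s_{i_2},\dots,s_{i_l})$ of $w^{-1}$, so that by definition
$$\DL{w^{-1},\lambda} \;=\; \DL{s_{i_1},\,s_{i_2}\cdots s_{i_l}\lambda}\circ \DL{s_{i_2},\,s_{i_3}\cdots s_{i_l}\lambda}\circ\cdots\circ \DL{s_{i_l},\lambda}.$$
Set $\mu_k:=s_{i_{k+1}}\cdots s_{i_l}\lambda$ (so $\mu_l=\lambda$, $\mu_0=w^{-1}\lambda$) and $w'_k:=w's_{i_l}s_{i_{l-1}}\cdots s_{i_{k+1}}$ (so $w'_l=w'$, $w'_0=w'w$). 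The key observation is that the parameter $\mu_k$ simultaneously matches the second argument of the twisted motivic Chern class of $w'_k$ obtained so far \emph{and} the parameter of the next operator $\DL{s_{i_k},\mu_{k-1}\cdot s_{i_k}^{-1}}=\DL{s_{i_k},\mu_k}$ (since $w^{-1}_{>k}\lambda=\mu_k$).

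First I would apply $\DL{s_{i_l},\mu_l}$ to $\mC(w',\lambda)$: by Theorem \ref{tw:1} if $l(w's_{i_l})>l(w')$ the result is $\mC(w's_{i_l},s_{i_l}\lambda)=\mC(w'_{l-1},\mu_{l-1})$, while by Corollary \ref{rem:tw1} if the length drops (this is where the general-enough hypothesis enters) the result picks up a factor of $(-y)$. Proceeding inductively, after the first $l-k$ operators we have
$$\DL{s_{i_{k+1}},\mu_{k+1}}\circ\cdots\circ \DL{s_{i_l},\mu_l}\bigl(\mC(w',\lambda)\bigr)= (-y)^{d_k}\,\mC(w'_k,\mu_k),$$
where $d_k$ counts the number of length-decreasing transitions in the chain $w'_l\to w'_{l-1}\to\cdots\to w'_k$. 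Applying all $l$ operators yields $(-y)^{d_0}\mC(w'w,w^{-1}\lambda)$.

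For part (1), the additivity $l(w')+l(w)=l(w'w)$ together with the subword property forces every transition $w'_{k}\to w'_{k-1}$ to \emph{increase} length by one (otherwise the total length $l(w'w)$ would be strictly smaller than $l(w')+l$), so $d_0=0$ and only Theorem \ref{tw:1} is invoked, in particular the general-enough hypothesis is not required. For part (2), if $u$ and $d$ denote the numbers of up and down steps, then $u+d=l(w)$ and $u-d=l(w'w)-l(w')$, giving $d_0=\tfrac{1}{2}(l(w')+l(w)-l(w'w))$, exactly the exponent in the statement.

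The main obstacle is essentially bookkeeping: one must verify carefully that the parameter propagates correctly, namely that the action of $s_{i_k}$ on $\mu_k$ produces $\mu_{k-1}$, and that the two base cases (length goes up versus down) are handled by the correct result. Proposition \ref{pro:braid} ensures the whole computation is independent of the chosen reduced word, so no compatibility between different choices of $\wu$ needs to be checked. No further geometric input beyond Theorem \ref{tw:1} and its corollary is needed.
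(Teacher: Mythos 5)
Your proposal is correct and is exactly the intended unwinding of the paper's one‑line proof: the paper simply cites theorem~\ref{tw:1} for part (1) and corollary~\ref{rem:tw1} for part (2), and you fill in the induction on the reduced word of $w^{-1}$, verify that the slope parameter $\mu_k=(w^{-1})_{>k}\lambda$ propagates correctly, count up/down length steps to get the exponent, and note that proposition~\ref{pro:braid} makes the result independent of the chosen reduced word. No gap.
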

\begin{proof}
	The first part follows from Theorem \ref{tw:1}. The second is a  consequence of Corollary~\ref{rem:tw1}.
\end{proof}
\begin{rem},,General enough'' means that $\langle\lambda,\alpha^\vee\rangle\not\in\Z$ for any root $\alpha$.\end{rem} 
\begin{cor} \label{cor:tw1}
	Let $w,w' \in W$ be Weyl group elements and $\lambda \in \Hom(\T,\C^*)\otimes\Q$ a~general enough fractional character. Then the corresponding stable envelopes are related by the recursive relation
	$$\DLq{w^{-1},\lambda}(\St^\lambda(w'))=q^{l(w)/2}\cdot\St^{w^{-1}\lambda}(w'w)\,. $$
\end{cor}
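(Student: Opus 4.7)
The plan is to reduce the corollary to Proposition \ref{pro:tw1}(2) via the comparison Theorem \ref{tw:otoczki} between twisted motivic Chern classes and K-theoretic stable envelopes. The key is to carefully track the normalizations so that the various powers of $q$ combine correctly.

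First I would substitute $y=-q$ in Proposition \ref{pro:tw1}(2) to get
\begin{equation*}
\DLq{w^{-1},\lambda}\bigl(\operatorname{mC}^\T_{-q}(w',\lambda)\bigr)=q^{\frac{1}{2}(l(w')+l(w)-l(w'w))}\,\operatorname{mC}^\T_{-q}(w'w,w^{-1}\lambda)\,.
\end{equation*}
By Theorem \ref{tw:otoczki}, for $\lambda$ general enough we may rewrite both sides in terms of stable envelopes using
\begin{equation*}
\operatorname{mC}^\T_{-q}(\sigma,\mu)=q^{\frac{1}{2}\dim X_\sigma^+}\,\St^\mu(\sigma)\,,
\end{equation*}
applied to the pairs $(\sigma,\mu)=(w',\lambda)$ and $(\sigma,\mu)=(w'w,w^{-1}\lambda)$.

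Since $\DLq{w^{-1},\lambda}$ is a $K_\T(pt)[q^{1/2},q^{-1/2}]$-linear operator (proposition \ref{pro:defcor}(2) carries over after substitution), the scalar $q^{\frac{1}{2}\dim X_{w'}^+}$ can be pulled outside, yielding
\begin{equation*}
q^{\frac{1}{2}\dim X_{w'}^+}\DLq{w^{-1},\lambda}\bigl(\St^\lambda(w')\bigr)=q^{\frac{1}{2}(l(w')+l(w)-l(w'w))+\frac{1}{2}\dim X_{w'w}^+}\St^{w^{-1}\lambda}(w'w)\,.
\end{equation*}
Using $\dim X_\sigma^+=l(\sigma)$, the exponent on the right equals $\tfrac{1}{2}(l(w')+l(w))$, and after cancellation with the prefactor $q^{\frac{1}{2}l(w')}$ on the left we arrive at the claimed identity
\begin{equation*}
\DLq{w^{-1},\lambda}\bigl(\St^\lambda(w')\bigr)=q^{l(w)/2}\,\St^{w^{-1}\lambda}(w'w)\,.
\end{equation*}

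The step that requires some care is the bookkeeping of the exponents: one must verify that the factor $(-y)^{\frac{1}{2}(l(w')+l(w)-l(w'w))}$ from Proposition \ref{pro:tw1}(2), which records the quadratic relations for non-reduced insertions, combines with the two normalizing powers $q^{\dim X_\sigma^+/2}$ to produce the clean factor $q^{l(w)/2}$ independent of $w'$. This is the only nontrivial content in the argument; once this arithmetic is done, the corollary follows immediately, and no further geometric input beyond Theorem \ref{tw:otoczki} and Proposition \ref{pro:tw1} is needed.
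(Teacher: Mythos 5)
Your proof is correct and coincides with the paper's (implicit) argument: the paper states Corollary~\ref{cor:tw1} immediately after Proposition~\ref{pro:tw1} with no separate proof, precisely because it follows by substituting $y=-q$, rewriting via Theorem~\ref{tw:otoczki}, and using $K_\T(pt)[q^{\pm 1/2}]$-linearity to pull out the normalizing power of $q$. Your bookkeeping $\tfrac{1}{2}(l(w')+l(w)-l(w'w))+\tfrac{1}{2}l(w'w)-\tfrac{1}{2}l(w')=\tfrac{1}{2}l(w)$ is exactly the point, and you carried it out correctly.
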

The above results imply \cite[Theorem 5.4]{SZZ2}. To see this we need the following proposition.
\begin{pro} \label{pro:antiample}
	Let $s \in W$ be a simple reflection and $w\in W$ a Weyl group element such that $l(sw)>l(w)$. Let $\lambda^-\in\Hom(\T,\C^*)\otimes\Q$ be a small anti-ample slope. Then $$\DL{s,sw\lambda^-}=\DL{s}\,.$$  
\end{pro}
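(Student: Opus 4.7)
The plan is to unpack the definition of $\DL{s,\lambda}$ until the claim reduces to an elementary statement about the pairing $\langle \lambda^-, w^{-1}\alpha_s^\vee\rangle$, and then invoke Proposition~\ref{pro:walpha} together with the meaning of \emph{small anti-ample}.

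First I would spell out the definition. By Definition~\ref{df:DL}(2),
\[
\DL{s, sw\lambda^-} = \DL{s, -\langle sw\lambda^-, \alpha_s^\vee \rangle},
\]
and by Proposition~\ref{pro:defcor}(1) we have $\DL{s,a} = \DL{s}$ whenever $a \in (-1,0]$. Hence it suffices to prove that the real parameter
\[
a := -\langle sw\lambda^-, \alpha_s^\vee \rangle
\]
lies in the half-open interval $(-1,0]$ (in fact, in $(-1,0)$, under the standing ``small'' assumption).

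Next I would manipulate the pairing using the $W$-invariance of $\langle -,-\rangle$ and the fact that $s\alpha_s^\vee = -\alpha_s^\vee$:
\[
\langle sw\lambda^-,\alpha_s^\vee\rangle = \langle w\lambda^-, s\alpha_s^\vee\rangle = -\langle w\lambda^-,\alpha_s^\vee\rangle = -\langle \lambda^-, w^{-1}\alpha_s^\vee\rangle.
\]
Thus $a = \langle \lambda^-, w^{-1}\alpha_s^\vee\rangle$. The hypothesis $l(w) < l(sw)$ together with the second bullet of Proposition~\ref{pro:walpha} implies that $w^{-1}\alpha_s$ is a positive root, so $w^{-1}\alpha_s^\vee$ is a positive coroot, expressible as a nonnegative integer combination of simple coroots.

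Finally, the anti-ampleness of $\lambda^-$ means $\langle \lambda^-, \beta^\vee\rangle < 0$ for every simple root $\beta$, and the word ``small'' is meant precisely in the sense that every pairing $\langle \lambda^-, \alpha^\vee\rangle$ with a positive coroot lies in $(-1,0)$. Combining these two facts with the expansion of $w^{-1}\alpha_s^\vee$ in simple coroots gives $a \in (-1,0)$, and therefore $\DL{s,sw\lambda^-} = \DL{s}$ by Proposition~\ref{pro:defcor}(1). No serious obstacle is expected here; the only place that requires care is tracking the two sign flips (from $W$-invariance of $\langle-,-\rangle$ and from $s\alpha_s^\vee = -\alpha_s^\vee$) and making the quantitative meaning of ``small anti-ample'' explicit so that the parameter does fall in the required interval rather than just being negative.
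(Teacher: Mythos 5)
Your proof is correct and follows essentially the same route as the paper's: both reduce the claim to showing $-\langle sw\lambda^-,\alpha_s^\vee\rangle\in(-1,0]$ (equivalently $\lceil\cdot\rceil=0$), both use $s\alpha_s^\vee=-\alpha_s^\vee$ and $W$-invariance of the pairing to rewrite this as $\langle\lambda^-,w^{-1}\alpha_s^\vee\rangle$, and both finish by invoking Proposition~\ref{pro:walpha} (positivity of $w^{-1}\alpha_s$) together with the smallness of the anti-ample slope. The only cosmetic difference is that you route the reduction through Proposition~\ref{pro:defcor}(1), whereas the paper states the equivalent condition on the round-up directly.
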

\begin{proof}
	It is enough to show that
	$$\lceil-\langle sw\lambda^-,\alpha_s^\vee\rangle\rceil=\lceil\langle w\lambda^-,\alpha_s^\vee\rangle\rceil=0\,.$$
	The slope $\lambda^-$ is small enough, therefore we only need to prove that
	$\langle w\lambda^-,\alpha_s^\vee\rangle$ is negative.
	We have
	$$\langle w\lambda^-,\alpha_s^\vee\rangle=\langle \lambda^-,w^{-1}\alpha_s^\vee\rangle<0\,,$$
	where the last inequality follows from Proposition \ref{pro:walpha} implying that $w^{-1}\alpha_s^\vee$ is positive.
\end{proof}
\begin{cor}[{\cite[Theorem 5.4]{SZZ2}}] \label{cor:SZZ}
	Let $w,w' \in W$ be Weyl group elements and $\lambda^{-}\in\Hom(\T,\C^*)\otimes\Q$ a small anti-ample slope.
	Then
$$\DLq{w^{-1}}(\St^{w\lambda^-}(w'))={}q^{l(w)/2}\cdot\St^{\lambda^-}(w'w)\,. $$ 
\end{cor}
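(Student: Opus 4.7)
The strategy is to deduce the corollary from Corollary~\ref{cor:tw1} combined with a factor-by-factor application of Proposition~\ref{pro:antiample}. Substituting $\lambda = w\lambda^-$ in Corollary~\ref{cor:tw1} gives
$$\DLq{w^{-1},w\lambda^-}(\St^{w\lambda^-}(w')) = q^{l(w)/2}\cdot\St^{\lambda^-}(w'w),$$
so it is enough to prove the identity of operators $\DLq{w^{-1},w\lambda^-} = \DLq{w^{-1}}$. For the hypothesis of Corollary~\ref{cor:tw1} to be met, the slope $w\lambda^-$ must be general enough; since the Weyl group permutes the roots, this is equivalent to $\lambda^-$ being general enough, and can be arranged by a small perturbation inside the anti-ample chamber without changing either the stable envelopes involved or the untwisted operator $\DLq{w^{-1}}$.

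Fix a reduced word $\wu=(s_{j_1},\ldots,s_{j_l})$ for $w^{-1}$ and set $\sigma_{>k}:=s_{j_{k+1}}\cdots s_{j_l}$, so that by definition
$$\DL{w^{-1},w\lambda^-} = \DL{s_{j_1},\sigma_{>1}w\lambda^-}\circ\cdots\circ\DL{s_{j_l},w\lambda^-}.$$
The key combinatorial observation I would establish is that $\sigma_{>k}w$ equals $s_{j_k}s_{j_{k-1}}\cdots s_{j_1}$ as a reduced word of length $k$. Indeed, using $w = s_{j_l}\cdots s_{j_1}$ and cancelling the adjacent pairs $s_{j_l}s_{j_l}$, $s_{j_{l-1}}s_{j_{l-1}}$, \ldots, $s_{j_{k+1}}s_{j_{k+1}}$ one at a time in $\sigma_{>k}w = s_{j_{k+1}}\cdots s_{j_l}\cdot s_{j_l}\cdots s_{j_1}$ leaves $s_{j_k}\cdots s_{j_1}$; this is the inverse of the prefix $s_{j_1}\cdots s_{j_k}$ of the reduced expression $\wu$, hence itself reduced. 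Writing $v_k:=s_{j_{k-1}}\cdots s_{j_1}$ one then has $\sigma_{>k}w = s_{j_k}v_k$ with $l(s_{j_k}v_k)=k>k-1=l(v_k)$, so Proposition~\ref{pro:antiample} (applied to the pair $s_{j_k}, v_k$) yields
$$\DL{s_{j_k},\sigma_{>k}w\lambda^-} = \DL{s_{j_k},s_{j_k}v_k\lambda^-} = \DL{s_{j_k}}$$
for every $k$. Composing these factor-wise identities gives $\DL{w^{-1},w\lambda^-} = \DL{s_{j_1}}\circ\cdots\circ\DL{s_{j_l}} = \DL{w^{-1}}$, and substituting $y=-q$ finishes the argument.

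The main obstacle is the combinatorial cancellation identity together with the verification that each intermediate word $s_{j_k}\cdots s_{j_1}$ is reduced of length exactly $k$; this is precisely the length hypothesis $l(sv) > l(v)$ demanded by Proposition~\ref{pro:antiample}. Once this is in place, the corollary follows formally, with only a minor genericity perturbation of $\lambda^-$ needed to justify the appeal to Corollary~\ref{cor:tw1}.
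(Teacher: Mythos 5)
Your argument is correct and matches the paper's proof in substance: both set $\lambda=w\lambda^-$, reduce via Corollary~\ref{cor:tw1} to showing $\DL{w^{-1},w\lambda^-}=\DL{w^{-1}}$, and then apply Proposition~\ref{pro:antiample} factor-by-factor after the same telescoping cancellation (you index by a reduced word for $w^{-1}$ while the paper indexes by one for $w$, but the computation is identical). One small remark: the perturbation you invoke is unnecessary, since a small anti-ample $\lambda^-$ already satisfies $\langle\lambda^-,\alpha^\vee\rangle\in(-1,0)\cup(0,1)$ for every root $\alpha$ and is therefore automatically general enough.
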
 
\begin{proof} Set $\lambda=w\lambda^-$.
	 By the Corollary \ref{cor:tw1} 
		$$\DLq{w^{-1},w\lambda^-}(\St^{w\lambda^-}(w'))=q^{l(w)/2}\cdot\St^{\lambda^-}(w'w)\,. $$
It is enough to prove that $\DL{ w^{-1},w\lambda^-}=\DL{ w^{-1}}\,.$ Fix any reduced word decomposition $\wu=(s_{i_1},s_{i_2},\dots ,s_{i_m})$ of $w$. By definition of the twisted operators it is enough to prove that for any $k$  we have
	 $$\DL{s_{i_k},(w_{< k})^{-1}w\lambda^-}=\DL{s_{i_k},w_{\ge k}\lambda^-}=\DL{s_{i_k},s_{i_k}w_{> k}\lambda^-}=\DL{s_{i_k}}\,,$$
	 where $w_{< k}$ is the element corresponding to the first $k-1$ letters of the word $\wu$ and $w_{\geq k}=s_{i_k}w_{> k}=w_{< k}^{-1}w$. The last equation follows from Proposition 
\ref{pro:antiample}. 
\end{proof}

\begin{rem}In \cite{SZZ2}  a different notation is used. Namely our $\St^{w\lambda^-}(w')$
denotes
${\rm stab}^{+,w\nabla_-}_{w'}$. 
\end{rem}
\section{Left induction}
So far we have concentrated on the right Demazure-Lusztig operator. Below we study the left operations purely algebraically, not giving any geometric interpretation. The geometric source of the algebraic formulas is explained (at least in the $A_n$-case) by analysing resolutions of matrix Schubert varieties, see Section \ref{sec:matrix_resolution}.
 
\subsection{Recursive formula for $G/B$}
In this section we prove a counterpart of Theorem \ref{tw:1} concerning the left Weyl group action. The proof is similar to the proof \cite[Proposition 7.3]{RW}, see also \cite{MNS}.

\begin{atw} \label{tw:L1}
	Let $\lambda \in \Hom(\T,\C^*)\otimes\Q$ be a fractional character and $w \in (G/B)^\T\simeq W$ a fixed point.
	\begin{enumerate}
		\item Suppose that $l(sw)>l(w)$, then
		\begin{align*}
			\mC(sw,\lambda)=\DLL {s,w\lambda}(\mC(w,\lambda))\,.
		\end{align*}
		\item Suppose that $\lambda$ is general enough, then
		\begin{align*}
			(-y)^{\frac{1}{2}(l(w)-l(sw)+1)}\mC(sw,\lambda)=\DLL {s,w\lambda}(\mC(w,\lambda))\,.
		\end{align*}
	\end{enumerate}
\end{atw}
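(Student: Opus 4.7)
The plan is to deduce the left recursion from the already-established right recursion (theorem~\ref{tw:1}) via the commutativity of left and right twisted Demazure-Lusztig operators (lemma~\ref{lem:L}), reducing the whole statement to a direct fixed-point check at the base case $w=\id$.

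\textbf{Reduction to the base case.} Suppose $l(sw)=l(w)+1$, and fix a reduced word $\wu=(s_{i_1},\dots,s_{i_l})$ for $w^{-1}$; then $(s_{i_1},\dots,s_{i_l},s)$ is a reduced decomposition of $(sw)^{-1}=w^{-1}s$. Unfolding the definition of $\DL{\cdot,\cdot}$ on words and tracking how the twist parameter shifts past each letter (the final $s$ turns the starting twist $sw\lambda$ into $w\lambda$, which is precisely the starting twist for the sub-word representing $w^{-1}$), one obtains
\begin{equation*}
\DL{(sw)^{-1},sw\lambda}=\DL{w^{-1},w\lambda}\circ\DL{s,sw\lambda}.
\end{equation*}
Applying proposition~\ref{pro:tw1} at $w'=\id$ yields
\begin{equation*}
\mC(w,\lambda)=\DL{w^{-1},w\lambda}(\mC(\id,\cdot)),\qquad \mC(sw,\lambda)=\DL{w^{-1},w\lambda}(\mC(s,w\lambda)),
\end{equation*}
where the second identity uses $\DL{s,sw\lambda}(\mC(\id,\cdot))=\mC(s,w\lambda)$ (theorem~\ref{tw:1} for the single letter $s$). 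Since $\DLL{s,w\lambda}$ commutes with every right twisted Demazure-Lusztig operator by lemma~\ref{lem:L}, and hence with the composition $\DL{w^{-1},w\lambda}$, part~(1) reduces to the single identity
\begin{equation*}
\mC(s,\mu)=\DLL{s,\mu}(\mC(\id,\cdot))\qquad\text{for every fractional character }\mu.
\end{equation*}

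\textbf{Base case.} This is a short fixed-point computation. Both sides vanish at every fixed point other than $e$ and $s$. At $\sigma=e$ the first term of $\DLL{s,\mu}$ vanishes (because $(s^{\textsc l}\mC(\id,\cdot))_{|e}=s(\mC(\id,\cdot)_{|s})=0$), and the comparison reduces to the elementary rational identity $(1-\alpha_s)/(1-\alpha_s^{-1})=-\alpha_s$. At $\sigma=s$ the key input is the Weyl-reflection identity $s(eu(T_eG/B))=(1-\alpha_s^{-1})\cdot eu(T_eG/B)/(1-\alpha_s)$, coming from the fact that $s$ permutes positive roots except for $\alpha_s\mapsto\alpha_s^{-1}$; plugging this into the first term of $\DLL{s,\mu}$ and simplifying matches the Bott-Samelson expression for $\mC(s,\mu)_{|s}$ coming from the resolution $Z_{(s)}=P_s/B\simeq\PP^1$.

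\textbf{Part (2).} For $l(sw)>l(w)$, part~(1) applies verbatim. For $l(sw)<l(w)$, set $w'=sw$, so that $l(sw')>l(w')$; part~(1) applied to $w'$ gives $\mC(w,\lambda)=\DLL{s,sw\lambda}(\mC(sw,\lambda))$. Applying $\DLL{s,w\lambda}$ to both sides and invoking the left analog of the quadratic relation in proposition~\ref{pro:kwadrat1},
$\DLL{s,w\lambda}\circ\DLL{s,sw\lambda}=-y\cdot\id$ (valid since the generality of $\lambda$ ensures $\langle w\lambda,\alpha_s^\vee\rangle=\langle\lambda,w^{-1}\alpha_s^\vee\rangle\notin\Z$), produces $\DLL{s,w\lambda}(\mC(w,\lambda))=-y\cdot\mC(sw,\lambda)$, which matches the statement since $\tfrac{1}{2}(l(w)-l(sw)+1)=1$.

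\textbf{Main obstacle.} The most delicate point is the careful bookkeeping of twist parameters in the factorization $\DL{(sw)^{-1},sw\lambda}=\DL{w^{-1},w\lambda}\circ\DL{s,sw\lambda}$: at each letter of the word the twist shifts by the corresponding Weyl element, and the two sides must be verified to assemble consistently. The base-case computation is short but requires careful use of the multiplicative convention for the Weyl group action on $R(\T)$, so that $s(\alpha_s)=\alpha_s^{-1}$ and not $-\alpha_s$. The left quadratic relation invoked in part~(2) is not stated explicitly in the preceding text, but its proof is identical to that of proposition~\ref{pro:kwadrat1}.
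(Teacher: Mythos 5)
Your proof is correct and follows essentially the same route as the paper's: establish the $w=\id$ base case by fixed-point restriction, then bootstrap to general $w$ using the commutativity of left and right twisted Demazure--Lusztig operators (lemma~\ref{lem:L}) together with proposition~\ref{pro:tw1}, and handle part~(2) via a quadratic relation. The only cosmetic differences are that the paper organizes the base case by observing $\DLL{s,\lambda}(\mC(\id,\lambda))=\DL{s,s\lambda}(\mC(\id,\lambda))$ and then citing theorem~\ref{tw:1} rather than re-deriving $\mC(s,\mu)$ from the $\PP^1$ resolution, and it absorbs the $(-y)$ factor in part~(2) into proposition~\ref{pro:tw1}(2) rather than invoking the left quadratic relation explicitly.
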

\begin{proof}
First we prove the theorem for $w=\id$. The Schubert variety of $\id$ is a single point. Thus, the class $\mC(\id,\lambda)$ does not depend on the character $\lambda$. It vanishes at all fixed points other than $\id$ and at the fixed point $\id$ we have
$$\mC(\id,\lambda)_{|\id}=\eu(T_{\id}G/B)
\,.$$
Restrictions of the relative tangent bundle $\Ls$ are given by ${\Ls}_{|\id}=\alpha_s^{-1}$ and ${\Ls}_{|s}=\alpha_s$. (We treat $\alpha_s$ as a character and we apply the multiplicative notation.) It follows that
	\begin{align} \label{wyr:L}
		\DLL {s,\lambda}(\mC(\id,\lambda))=
		\DL{s,s\lambda}(\mC(\id,\lambda))=
		\DL{s,s\lambda}(\mC(\id,s\lambda))=
		\mC(s,\lambda)\,.
	\end{align}
	This proves the theorem for $w=\id\,.$ \\
	Let us focus on the case $l(ws)>l(w)$. Then $l(ws)=l(w)+l(s)$. We have
	\begin{align*}
		\DLL {s,w\lambda}(\mC(w,\lambda))
		&=\DLL {s,w\lambda}\circ\DL{ w^{-1},w\lambda}(\mC(\id,w\lambda)) \\
		&=\DL{ w^{-1},w\lambda}\circ\DLL {s,w\lambda}(\mC(\id,w\lambda)) \\
		&=\DL{ w^{-1},w\lambda}(\mC(s,w\lambda)) \\
		&=\mC(sw,\lambda)
		\,,
	\end{align*}
	where the first and the fourth equality follow from Proposition \ref{pro:tw1} (1), the second from Lemma \ref{lem:L} and the third from equation \eqref{wyr:L}. \\
	The proof of the other case is analogous. In the last equality we need to use Proposition \ref{pro:tw1} (2) instead of Proposition \ref{pro:tw1} (1).
\end{proof}
\begin{rem} \label{rem:BP}
	Alternatively, we may prove Theorem \ref{tw:L1}  using another inductive construction of the Bott-Samelson varieties and reasoning similar to the proof of Theorem \ref{tw:1}. Let $w$ be
	a Weyl group element such that $sw$ is longer than $w$. Then the variety $Z_{\underline{sw}}$ admits a
	locally trivial fibration over the projective line $\PP^1$ with fiber $Z_\wu$. Compare the proof of Proposition \ref{lewaindukcja}.\end{rem}
\subsection{Recursion for $G/P$}
The left induction is also valid for homogeneous varieties $G/P$ where $P$ is an arbitrary parabolic subgroup. The left Weyl group action on $\KTh_\T(G/B)$ generalizes to the left Weyl group action $\KTh_\T(G/P)$, see \cite[Section 3]{MNS}.
Therefore, analogously to Definition \ref{df:DL}, we may define the twisted left Demazure-Lusztig operators on $\KTh_\T(G/P)$
$$
	\DLL {s,a}:=
	\frac{1+y\alpha^{-1}_s}{1-\alpha^{-1}_s}\cdot s^{\textsc l} -\frac{(1+y)\cdot\alpha_s^{-\lceil a\rceil}}{1-\alpha^{-1}_s} \cdot \id_{\KTh_\T(G/P)[y]}\,,\qquad \DLL{s,\lambda}=\DLL{s,\langle\lambda,\alpha_s^\vee\rangle}\,.
$$
Denote by $\pi$ the projection $\pi:G/B\to G/P$. The map  $\pi_*:\KTh_\T(G/B) \to \KTh_\T(G/P)$ commutes with the left Weyl group action \cite[Proposition 5.3 (d)]{MNS}, therefore the twisted left Demazure-Lusztig operators also commute with $\pi_*$. \\
Let $W_P$ be a subgroup of $W$ corresponding to $P$. The fixed point set $(G/P)^\T$ is in bijection with the set of left cosets $W/W_P$. Let $W^P\subset W$ be a set of minimal length representatives. For a coset $wW_P$ we consider the Schubert variety
$$ X_{wW_P}=\overline{B\pi(w)} \subset G/P\,, \qquad
X^\oo _{wW_P}=B\pi(w) \subset G/P\,, \qquad
\partial X_{wW_P}=X_{wW_P}\setminus X_{wW_P}^\oo \,.$$
Denote by $i_{wW_P}$ the inclusion $X_{wW_P}\to G/P$. The rational Picard group $\Pic_\Q(G/P)$  coincides with $\Pic_\Q(G/B)^{W_P}.$  For a coset $wW_P$ and a rational character $\lambda\in\Pic_\Q(G/P)$ let $\Delta^P_{wW_P,\lambda}$ be the unique $\T$-invariant divisor with support contained in the boundary $\partial X_{wW_P}$, which represents $\LL(\lambda)_{|X_{wW_P}}$. We use the  notation
$$\mC(wW_P,\lambda):=i_{wW_P*}\mC(X_{wW_P},\partial X_{wW_P}; \Delta^P_{wW_P,\lambda}) \in \KTh_\T(G/P)[y] \,.$$
\begin{lemma} \label{lem:G/P}
	Let $w\in W^P$ and $\lambda\in \Pic_\Q(G/P)$. Then
	$$
	\mC(wW_P,\lambda)=\pi_*\mC(w,\lambda)\,.$$
\end{lemma}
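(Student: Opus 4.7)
The strategy is to exhibit a single smooth resolution that simultaneously computes both twisted motivic Chern classes, and then to invoke functoriality of $\mC$. Fix a reduced word decomposition $\wu$ of $w$ and consider the Bott-Samelson resolution $p_\wu\colon Z_\wu\to X_w$. Since $\pi\circ\iota_w=i_{wW_P}\circ\pi|_{X_w}$, it suffices to show
$$(\pi|_{X_w})_*\,\mC(X_w,\partial X_w;\Delta_{w,\lambda})=\mC(X_{wW_P},\partial X_{wW_P};\Delta^P_{wW_P,\lambda}),$$
and then push forward via $i_{wW_P*}$.

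First I would check that $\pi|_{X_w}\circ p_\wu\colon Z_\wu\to X_{wW_P}$ is itself a SNC resolution of singularities such that the preimage of $\partial X_{wW_P}$ equals $\partial Z_\wu$. The crucial geometric input here is that because $w\in W^P$ is minimal in its coset, the Bruhat decomposition gives $X_w\cap \pi^{-1}(X_{wW_P}^\oo)=X_w^\oo$: any Schubert cell $BwvB/B$ with $e\ne v\in W_P$ has dimension $l(w)+l(v)>l(w)$ and hence does not lie in $X_w$. Consequently $(\pi|_{X_w})^{-1}(\partial X_{wW_P})=\partial X_w$, and pulling back through $p_\wu$ gives $\partial Z_\wu$. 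Moreover $\pi|_{X_w}$ is birational (an isomorphism on $X_w^\oo$), so $\pi|_{X_w}\circ p_\wu$ is a proper birational map onto $X_{wW_P}$, still with SNC exceptional locus and an isomorphism on the open part.

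Next I would identify the two twisting divisors after pullback. Both $p_\wu^*\Delta_{w,\lambda}$ and $(\pi|_{X_w}\circ p_\wu)^*\Delta^P_{wW_P,\lambda}$ are $\T$-invariant $\Q$-divisors on $Z_\wu$, supported in $\partial Z_\wu$, and both represent $p_\wu^*\iota_w^*\LL(\lambda)=p_\wu^*(\pi|_{X_w})^*i_{wW_P}^*\pi^*\LL(\lambda)$ (using $\pi^*\LL^P(\lambda)=\LL(\lambda)$). The complement $Z_\wu^\oo\simeq X_w^\oo$ is $\T$-equivariantly isomorphic to an affine space with a linear torus action, hence $\Pic^\T(Z_\wu^\oo)=0$, which forces the two divisors to coincide. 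Applying $\lceil-\rceil$ yields identical round-up divisors, so by Definition~\ref{df:twistedmC},
$$\mC(X_{wW_P},\partial X_{wW_P};\Delta^P_{wW_P,\lambda})=(\pi|_{X_w})_*(p_\wu)_*\bigl(\O_{Z_\wu}(\lceil p_\wu^*\Delta_{w,\lambda}\rceil)\cdot\mC(Z_\wu^\oo\subset Z_\wu)\bigr)=(\pi|_{X_w})_*\mC(X_w,\partial X_w;\Delta_{w,\lambda}).$$
Pushing forward along $i_{wW_P}$ and using $i_{wW_P*}(\pi|_{X_w})_*=\pi_*\iota_{w*}$ finishes the proof.

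The main obstacle is the uniqueness step: one must argue that on a smooth variety whose open complement of the boundary has trivial $\T$-equivariant Picard group, a $\T$-linearized line bundle has at most one representing $\Q$-divisor supported in the boundary. The hypothesis $w\in W^P$ is essential both for this cleanliness of the boundary and for the equality $(\pi|_{X_w})^{-1}(X_{wW_P}^\oo)=X_w^\oo$; without it $\pi|_{X_w}$ could contract components of $\partial X_w$ and the direct identification of divisors would fail.
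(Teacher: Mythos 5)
Your overall strategy — composing the Bott--Samelson resolution $p_\wu$ with the birational map $\pi|_{X_w}\colon X_w\to X_{wW_P}$, observing that minimality of $w\in W^P$ gives $(\pi|_{X_w})^{-1}(\partial X_{wW_P})=\partial X_w$ so that the composite is a SNC resolution for both pairs, and then matching the twisting divisors — is exactly the paper's argument. The paper identifies the divisors one level down, on $X_w$ itself: it observes that $\pi_{|X_w}^*\Delta^P_{wW_P,\lambda}$ is $\T$-invariant, supported in $\partial X_w$, and represents $\LL(\lambda)|_{X_w}$, so by the uniqueness statement from \cite[section 10]{KonW} (quoted in the paper) it must equal $\Delta_{w,\lambda}$.

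The one place where your justification slips is the step ``$\Pic^\T(Z_\wu^\oo)=0$, which forces the two divisors to coincide.'' This claim is false: for an affine space with a linear $\T$-action one has $\Pic^\T(\A^N)\simeq\Hom(\T,\C^*)$, the character group, not zero (a trivial bundle admits a character's worth of linearizations). Moreover, even the correct non-equivariant vanishing $\Pic(Z_\wu^\oo)=0$ only yields, via the localization sequence, that the boundary classes \emph{generate} $\Pic(Z_\wu)$, i.e.~surjectivity; for uniqueness of the representing divisor one needs the boundary classes to be linearly \emph{independent}. This independence is true — it follows from the iterated $\PP^1$-bundle structure of $Z_\wu$, which forces $\Pic(Z_\wu)\cong\Z^{l(w)}$, so a surjection from $\Z^{l(w)}$ of boundary classes must be an isomorphism — but it is not what you argued. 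The cleaner route, taken by the paper, is to invoke the uniqueness of $\Delta_{w,\lambda}$ already established in \cite[section 10]{KonW}, applied on $X_w$ before pulling back to $Z_\wu$ at all.
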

\begin{proof}
	Fix a reduced word decomposition $\wu$ of $w$. The element $w$ is a minimal length representative, therefore the map
	$$\pi_{|X_w}\colon X_w \to X_{wW_P}$$
	is birational. It is isomorphic on the Schubert cell and $\pi^{-1}_{|X_w}(\partial  X_{wW_P})=\partial  X_{w}.$ Thus
	$$(Z_\wu,\partial Z_{\wu}) \xto{p_{\wu}} (X_w,\partial X_w) \xto{\pi_{|X_w}} (X_{wW_P},\partial X_{wW_P})$$
	is a SNC resolution of singularities. \\
	The divisor $\pi_{|X_w}^*\Delta^P_{wW_P,\lambda}$ represents $\LL(\lambda)_{|X_w}$. It is $\T$-invariant and contained in the boundary of Schubert variety $X_w$. Therefore (cf. \cite[Section 10]{KonW})
	$$\pi_{|X_w}^*\Delta^P_{wW_P,\lambda}=\Delta_{w,\lambda}\,. $$
	It follows that both considered classes are equal to the push-forward of
	$$\mC(Z_\wu,\partial Z_\wu; p^*_{\wu}\Delta_{w,\lambda}) \,.$$
\end{proof}

\begin{lemma}[{\cite[Lemma 2.1]{Deo}}] \label{lem:Deo} 
	Suppose that $w\in W^P$ and $s$ is a simple root. There are three possibilities:
	\begin{enumerate}
	 \item $l(sw)<l(w)$, then $sw\in W^P$,
	 \item  $l(sw)>l(w)$ and $sw\in W^P$,
	 \item $l(sw)>l(w)$ and $sw\not\in W^P$, then there exists a simple reflection $\tilde s\in W_P$, such that $sw=w\tilde s$.
	 \end{enumerate} 
\end{lemma}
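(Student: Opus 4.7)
The plan is to reduce everything to the root-theoretic characterization of $W^P$: an element $w$ lies in $W^P$ if and only if $w(\beta)$ is a positive root for every simple root $\beta$ of $W_P$; equivalently, $l(wt)>l(w)$ for every simple reflection $t\in W_P$. I also use the standard fact $l(sw)<l(w)\iff w^{-1}(\alpha_s)<0$, and the observation that the simple reflection $s$ sends every positive root different from $\alpha_s$ to a positive root.

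First, I would handle case (1). Assume $l(sw)<l(w)$, so $w^{-1}(\alpha_s)<0$. For any simple root $\beta$ of $W_P$, $w(\beta)>0$ because $w\in W^P$; moreover $w(\beta)\neq\alpha_s$, since otherwise $\beta=w^{-1}(\alpha_s)<0$, contradicting that $\beta$ is simple. Therefore $(sw)(\beta)=s(w(\beta))>0$, showing $sw\in W^P$. Case (2) needs no argument: under the assumption $l(sw)>l(w)$, either $sw\in W^P$ (this is case (2)) or $sw\notin W^P$ (this is case (3)), and these are exclusive.

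The main content is case (3). Assume $l(sw)>l(w)$ and $sw\notin W^P$. By the characterization of $W^P$, there exists a simple root $\beta$ of $W_P$ with $(sw)(\beta)<0$. Since $w\in W^P$ we have $w(\beta)>0$, and since $s$ turns only $\alpha_s$ among positive roots into a negative one, this forces $w(\beta)=\alpha_s$. Now proposition \ref{lem:Chev} gives
\[
w s_\beta w^{-1}=s_{w(\beta)}=s_{\alpha_s}=s,
\]
i.e.\ $sw=w\tilde s$ with $\tilde s:=s_\beta\in W_P$, as claimed.

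The only point requiring a little care is the positivity tracking in cases (1) and (3); once one notes that the simple reflection $s$ is very rigid on the set of positive roots (only $\alpha_s$ changes sign), both cases fall out cleanly and proposition \ref{lem:Chev} handles the identification of the conjugate reflection. There is no serious obstacle; essentially everything is forced by the two bullet characterizations.
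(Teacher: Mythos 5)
Your proof is correct. The paper itself supplies no proof of this lemma---it simply cites Deodhar's paper \cite{Deo}---so there is no in-paper argument to compare against; your argument is the standard one, reducing to the characterization $w\in W^P\iff w(\beta)>0$ for every simple root $\beta$ of $W_P$, together with the fact that a simple reflection $s$ sends exactly one positive root (namely $\alpha_s$) to a negative root, and then invoking proposition~\ref{lem:Chev} to identify $s_\beta=w^{-1}sw$. One minor remark: in case~(3) you never actually need the hypothesis $l(sw)>l(w)$; it is automatic once $w\in W^P$ and $sw\notin W^P$, since by your case~(1) the inequality $l(sw)<l(w)$ would force $sw\in W^P$. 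The argument is complete and clean.
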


Let us analyse the first possibility:
\begin{pro}
	Consider $w\in W^P$. Let $s$ be a simple  reflection and $\lambda\in \Pic_\Q(G/P)$ a general enough fractional character.
		 Suppose that $l(sw)<l(w)$, then
		\begin{align*}
			\DLL {s,w\lambda}(\mC(wW_P,\lambda))=(-y)\mC(swW_P,\lambda)\,.
		\end{align*}
\end{pro}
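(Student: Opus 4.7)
The natural strategy is to reduce the statement on $G/P$ to the already established left recursion on $G/B$ (theorem~\ref{tw:L1}) by pushing forward along $\pi:G/B\to G/P$. Two ingredients make this reduction work: lemma~\ref{lem:G/P}, which identifies $\mC(wW_P,\lambda)$ with $\pi_*\mC(w,\lambda)$ whenever the representative $w$ is of minimal length in its coset, and the fact (see remark~\ref{rem:calkowitosc}) that the left operator $\DLL{s,w\lambda}$ descends to an operator on $K_\T(G/P)[y]$, so that it commutes with $\pi_*$.

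First I would verify that the hypothesis $l(sw)<l(w)$ with $w\in W^P$ puts us in case (1) of lemma~\ref{lem:Deo}, so that $sw\in W^P$ and lemma~\ref{lem:G/P} applies to both $w$ and $sw$, giving
\[\mC(wW_P,\lambda)=\pi_*\mC(w,\lambda),\qquad \mC(swW_P,\lambda)=\pi_*\mC(sw,\lambda).\]
Next, since $\lambda$ is general enough and $l(w)-l(sw)=1$, part~(2) of theorem~\ref{tw:L1} applied at the level of $G/B$ yields
\[\DLL{s,w\lambda}(\mC(w,\lambda))=(-y)\,\mC(sw,\lambda).\]
Applying $\pi_*$ to both sides and using commutation with $\DLL{s,w\lambda}$ gives the desired identity on $G/P$.

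The key technical point, and the place where I would spend the most care, is the commutation $\pi_*\circ \DLL{s,w\lambda}=\DLL{s,w\lambda}\circ \pi_*$. This is essentially the content of the parenthetical remark in remark~\ref{rem:calkowitosc}: the operator $\DLL{s,a}$ is assembled out of (i) multiplication by rational functions in the character $\alpha_s$, which are pulled back from $K_\T(pt)$ and hence commute with $\pi_*$ by the projection formula, and (ii) the left Weyl group action $s^{\textsc l}$, which is induced from left multiplication by a lift of $s\in N\T$ on $G/B$ and on $G/P$ and commutes with the equivariant projection $\pi$. Since $\pi$ is $\T$-equivariant, proper, and compatible with both of these operations, the required commutation follows. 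Assuming this, the argument closes in a single line as outlined above.
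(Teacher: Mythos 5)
Your proposal is correct and is essentially the paper's own proof: both rely on lemma~\ref{lem:G/P}, case~(1) of lemma~\ref{lem:Deo}, theorem~\ref{tw:L1}~(2) on $G/B$, and commutation of $\DLL{s,w\lambda}$ with $\pi_*$ (the paper cites \cite[proposition~5.3~(d)]{MNS} for the latter, while you give a short direct justification via the projection formula and $G$-equivariance of $\pi$, which amounts to the same thing).
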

\begin{proof} 
	 The proof is analogous to the proof of \cite[Theorem 4.3]{MNS}. We have
	 \begin{multline*}
	 	\DLL {s,w\lambda}(\mC(wW_P,\lambda))=
	 	\DLL {s,w\lambda}(\pi_*\mC(w,\lambda))= \\
	 	=\pi_*\DLL {s,w\lambda}(\mC(w,\lambda))=
	 	(-y)\cdot\pi_*\mC(sw,\lambda)=(-y)\cdot \mC(swW_P,\lambda)\,.
	 \end{multline*}
 The first equality follows from Lemma \ref{lem:G/P}. The second from commutation of  $\pi_*$ with the left Weyl group action \cite[Proposition 5.3 (d)]{MNS}, and the third from Theorem \ref{tw:L1}~(2). The last one follows from Lemmas \ref{lem:G/P} and \ref{lem:Deo}. 
\end{proof}

For a generic slope $\lambda$ we have $\DLL{s,w\lambda}\circ\DLL{s,sw\lambda}=(-y)id$. This is proven purely algebraically, as in Proposition \ref{pro:kwadrat1}, see also Section \ref{smallalgebra}. Replacing $w$ by $sw$ and using the identity  we obtain the corollary:
\begin{cor}
	Consider $w\in W^P$. Let $s$ be a simple reflection and $\lambda\in\Pic_\Q(G/P)$ a general enough fractional character.
		 Suppose that $l(sw)>l(w)$ and $sw\in W^P$, then
		\begin{align*}
			\DLL {s,w\lambda}(\mC(wW_P,\lambda))=\mC(swW_P,\lambda)\,.
		\end{align*}
\end{cor}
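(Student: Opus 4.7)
The plan is to deduce this from the previous proposition (the case $l(sw) < l(w)$) by applying the quadratic relation for the left twisted Demazure-Lusztig operator.

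First, I would apply the previous proposition with $w$ replaced by $sw$. By hypothesis, $sw \in W^P$, and $l(s \cdot sw) = l(w) < l(sw)$, so the hypotheses of the previous proposition are satisfied for $sw$. This yields
\begin{equation*}
\DLL{s,sw\lambda}\bigl(\mC(swW_P,\lambda)\bigr) = (-y)\,\mC(wW_P,\lambda).
\end{equation*}

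Next, I would apply $\DLL{s,w\lambda}$ to both sides. Since $\lambda$ is general enough, $\langle w\lambda,\alpha_s^\vee\rangle \notin \Z$, so the quadratic relation for left twisted Demazure-Lusztig operators, established purely algebraically (as indicated before the corollary, and following the same computation as in proposition \ref{pro:kwadrat1}), gives
\begin{equation*}
\DLL{s,w\lambda} \circ \DLL{s,sw\lambda} = -y\cdot\id.
\end{equation*}
Combining these two identities produces
\begin{equation*}
-y\cdot\mC(swW_P,\lambda) = (-y)\,\DLL{s,w\lambda}\bigl(\mC(wW_P,\lambda)\bigr),
\end{equation*}
and dividing by $-y$ yields the desired equality.

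There is no real obstacle here: the entire argument is a two-line formal manipulation once one has the previous proposition and the quadratic relation in hand. The only point requiring care is verifying the hypotheses of the previous proposition (namely $sw \in W^P$, which is assumed, and $l(s \cdot sw) < l(sw)$, which is immediate) and verifying the genericity assumption needed for the quadratic relation to hold without an extra correction term, which is exactly the ``general enough'' hypothesis on $\lambda$.
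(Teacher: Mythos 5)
Your proof is correct and follows exactly the paper's intended route: apply the preceding proposition with $w$ replaced by $sw$, then compose with $\DLL{s,w\lambda}$ and invoke the quadratic relation $\DLL{s,w\lambda}\circ\DLL{s,sw\lambda}=(-y)\,\id$ to solve for $\mC(swW_P,\lambda)$. The verification of the hypotheses for $sw$ and the observation that genericity supplies the non-integrality needed for the quadratic relation are both handled as in the paper.
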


It remains to describe the result of the action of the left Demazure-Lusztig operator in the case when $swW_P=wW_P$.

\begin{pro}
	Consider $w\in W^P$. Let $s$ be a simple reflection and $\lambda\in \ttt_\Q^W\simeq\Pic_\Q(G/P)$ a general enough fractional character.
		 Suppose that $l(sw)>l(w)$ and $sw\not\in W^P$ then 
		\begin{align*}
			\DLL {s,w\lambda}(\mC(wW_P,\lambda))=-y\mC(wW_P,\lambda)\,.
		\end{align*}
\end{pro}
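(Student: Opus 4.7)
The plan is to lift everything to $G/B$ via lemma~\ref{lem:G/P}, convert the left Demazure--Lusztig action into a right one using the identity $sw=w\tilde s$ supplied by lemma~\ref{lem:Deo}, and then evaluate the pushforward through the intermediate $\PP^1$-bundle $\pi_{\tilde s}\colon G/B\to G/P_{\tilde s}$.

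By lemma~\ref{lem:G/P}, $\mC(wW_P,\lambda)=\pi_*\mC(w,\lambda)$. The operator $\DLL{s,w\lambda}$ is a $K_\T(pt)[y]$-linear combination of the identity and the left Weyl action $s^{\textsc{l}}$, and both commute with $\pi_*$ (cf.~\cite[proposition~5.3(d)]{MNS}); combining this with theorem~\ref{tw:L1}(1) (applicable because $l(sw)=l(w)+1$) gives
$$\DLL{s,w\lambda}(\mC(wW_P,\lambda))=\pi_*\mC(sw,\lambda),$$
so the proposition reduces to the claim $\pi_*\mC(sw,\lambda)=-y\,\pi_*\mC(w,\lambda)$.

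By lemma~\ref{lem:Deo}(3) there is a simple reflection $\tilde s\in W_P$ with $sw=w\tilde s$. Since $\lambda$ corresponds to an element of $\Pic_\Q(G/P)$ it is $W_P$-invariant, hence $\tilde s\lambda=\lambda$ and $\langle\lambda,\alpha_{\tilde s}^\vee\rangle=0$. Applying theorem~\ref{tw:1} with the simple reflection $\tilde s$ yields
$$\mC(sw,\lambda)=\mC(w\tilde s,\tilde s\lambda)=\DL{\tilde s,\lambda}(\mC(w,\lambda))=\DL{\tilde s}(\mC(w,\lambda)),$$
the last equality holding because $\lceil-\langle\lambda,\alpha_{\tilde s}^\vee\rangle\rceil=0$ collapses the twisted operator into the untwisted Demazure--Lusztig operator.

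It therefore suffices to prove $\pi_*\DL{\tilde s}=-y\cdot\pi_*$. Since $\tilde s\in W_P$, the parabolic $P_{\tilde s}$ is contained in $P$ and $\pi$ factors as $\pi=\pi'\circ\pi_{\tilde s}$, so it is enough to show $\pi_{\tilde s*}\circ\DL{\tilde s}=-y\cdot\pi_{\tilde s*}$. From the presentation $\DL{\tilde s}=\lambda_y(\LL_{\tilde s}^*)\,\pi_{\tilde s}^*\pi_{\tilde s*}-\id$ and the projection formula,
$$\pi_{\tilde s*}\DL{\tilde s}(\xi)=\bigl(\pi_{\tilde s*}\lambda_y(\LL_{\tilde s}^*)-1\bigr)\pi_{\tilde s*}(\xi).$$
The map $\pi_{\tilde s}$ is a $\PP^1$-bundle with relative canonical sheaf $\LL_{\tilde s}^*$, so $\pi_{\tilde s*}\mathcal{O}=1$ and relative Serre duality yields $\pi_{\tilde s*}\LL_{\tilde s}^*=-1$; hence $\pi_{\tilde s*}\lambda_y(\LL_{\tilde s}^*)=1-y$ and the required identity follows. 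This last Serre-duality evaluation on the $\PP^1$-bundle is the only non-formal ingredient; everything else is functoriality plus one application each of theorems~\ref{tw:1} and~\ref{tw:L1}.
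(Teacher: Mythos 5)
Your proof is correct and follows essentially the same route as the paper: lift to $K_\T(G/B)$ via lemma~\ref{lem:G/P}, commute $\DLL{s,w\lambda}$ through $\pi_*$ and apply theorem~\ref{tw:L1}(1), rewrite $sw=w\tilde s$ by lemma~\ref{lem:Deo}(3) and use $\tilde s\lambda=\lambda$ to collapse to the untwisted $\DL{\tilde s}$, and finally evaluate $\pi_{\tilde s*}\circ\DL{\tilde s}=-y\,\pi_{\tilde s*}$ by the projection formula. Your explicit Serre-duality justification of $\pi_{\tilde s*}(1+y\LL_{\tilde s}^*)=1-y$ is a small supplement to the paper, which simply asserts this $\PP^1$-fibration computation.
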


\begin{proof} We have \begin{multline*}\DLL {s,w\lambda}(\mC(wW_P,\lambda))=\pi_*\left(\DLL {s,w\lambda}(\mC(w,\lambda))\right)=
\pi_*\left(\mC(sw,\lambda)\right)=\\
=\pi_*\left(\mC(w\tilde s,\lambda)\right)
=\pi_*\left(\DL {\tilde s,\tilde s\lambda}(\mC(w,\tilde s\lambda))\right)=\pi_*\left(\DL {\tilde s,\lambda}(\mC(w,\lambda))\right)\,,\end{multline*}
where $\tilde s$ is given by Lemma \ref{lem:Deo}.(3). By our assumptions $\tilde s\lambda=\lambda$, hence the last equality. It remains to show that 
$$\pi_*\left(\DL {\tilde s,\lambda}(\mC(w,\lambda))\right)=-y\pi_*\left(\mC(w,\lambda)\right)\,.$$
The statement is a consequence of the following lemma.\end{proof}

\begin{lemma} Let $\tilde s$ be a simple reflection, such that $\tilde s\lambda=\lambda$. Consider a parabolic subgroup $P\subset G$ such that $P_{\tilde s} \subset P$. Denote by $\pi$ the projection $\pi:G/B\rightarrow G/P$. Then
\begin{enumerate}
\item
$$\DL{\tilde s,\lambda}=\DL{\tilde s}\,,$$
\item 
$$\pi_*\circ \DL{\tilde s}=-y\,\pi_*\,.$$
\end{enumerate}
\end{lemma}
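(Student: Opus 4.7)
For part (1), the plan is purely definitional. The Weyl action gives $\tilde s\lambda-\lambda = -\langle\lambda,\alpha_{\tilde s}^\vee\rangle\alpha_{\tilde s}$, so the assumption $\tilde s\lambda = \lambda$ forces $\langle\lambda,\alpha_{\tilde s}^\vee\rangle = 0$. Then $\DL{\tilde s,\lambda} = \DL{\tilde s, -\langle\lambda,\alpha_{\tilde s}^\vee\rangle} = \DL{\tilde s,0}$, and proposition \ref{pro:defcor} (1) identifies $\DL{\tilde s,0}$ with $\DL{\tilde s}$ (since $0\in(-1,0]$).

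For part (2), the plan is to factor $\pi$ through the intermediate $\PP^1$-bundle. Since $P_{\tilde s}\subset P$, the projection $\pi$ decomposes as $\pi = q\circ\pi_{\tilde s}$, with $\pi_{\tilde s}\colon G/B\to G/P_{\tilde s}$ and $q\colon G/P_{\tilde s}\to G/P$. Using the form $\DL{\tilde s} = \lambda_y(\Ls^*)\cdot\pi_{\tilde s}^*\pi_{\tilde s *} - \id$, the projection formula gives, for any $\xi\in K_\T(G/B)[y]$,
\begin{align*}
\pi_*\DL{\tilde s}(\xi)
&= q_*\pi_{\tilde s *}\bigl(\lambda_y(\Ls^*)\cdot \pi_{\tilde s}^*\pi_{\tilde s *}\xi\bigr) - \pi_*\xi\\
&= q_*\bigl(\pi_{\tilde s *}\lambda_y(\Ls^*)\cdot \pi_{\tilde s *}\xi\bigr) - \pi_*\xi.
\end{align*}
So the whole question reduces to the computation of $\pi_{\tilde s *}\lambda_y(\Ls^*) \in K_\T(G/P_{\tilde s})[y]$.

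For this computation I would use that $\pi_{\tilde s}$ is a $\PP^1$-bundle and $\Ls$ is its relative tangent bundle, whose restriction to a fiber is $\O_{\PP^1}(2)$. Thus $\Ls^*$ restricts to $\O_{\PP^1}(-2)$ on each fiber, so $\pi_{\tilde s *}\O = 1$ (fiberwise $H^0 = \C$, $H^1 = 0$) and $\pi_{\tilde s *}\Ls^* = -1$ (fiberwise $H^0 = 0$, $H^1 = \C$, with the sign from the derived pushforward). Hence $\pi_{\tilde s *}\lambda_y(\Ls^*) = \pi_{\tilde s *}(1 + y\Ls^*) = 1 - y$. Plugging back,
\begin{align*}
\pi_*\DL{\tilde s}(\xi) = q_*\bigl((1-y)\pi_{\tilde s *}\xi\bigr) - \pi_*\xi = (1-y)\pi_*\xi - \pi_*\xi = -y\,\pi_*\xi,
\end{align*}
as required.

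The only nontrivial step is the K-theoretic fiberwise computation $\pi_{\tilde s *}\Ls^* = -1$, and even this is standard once one notes that $\Ls^*$ is the relative cotangent bundle of a $\PP^1$-bundle. Everything else is the projection formula and the factorization $\pi = q\circ\pi_{\tilde s}$, so I do not anticipate any serious obstacle.
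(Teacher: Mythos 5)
Your proof is correct and follows essentially the same route as the paper: part (1) by observing $\langle\lambda,\alpha_{\tilde s}^\vee\rangle=0$ and invoking the definition (the paper cites item (5) of the same proposition rather than item (1), but it is the same rewriting), and part (2) by factoring through $G/P_{\tilde s}$, applying the projection formula, and computing $\pi_{\tilde s*}(1+y\Ls^*)=1-y$ on the $\PP^1$-fibration. The only cosmetic difference is that the paper says ``we may assume $P=P_{\tilde s}$'' where you write out $\pi=q\circ\pi_{\tilde s}$ explicitly.
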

\begin{proof} 
The part (1) follows from Proposition \ref{pro:defcor} (5), since $\langle\lambda, \alpha^\vee_{\tilde s}\rangle=0$. 
For the proof of (2) we can assume, that $P=P_{\tilde s}$. Then for $\xi\in \KTh_\T(G/B)[y]$
$$\pi_* \DL{\tilde s}(\xi)=\pi_*((1+y\LL_{\tilde s}^*)\cdot\pi^*\pi_*(\xi))-\xi)=
\pi_*((1+y\LL_{\tilde s}^*)\pi^*\pi_*(\xi))-\pi_*(\xi)
\,.$$
We set $\eta=\pi_*(\xi)$ and apply the projection formula 
$$\pi_*\DL{\tilde s}(\xi)=
\pi_*((1+y\LL_{\tilde s}^*)\cdot\pi^*\eta)-\eta=\pi_*(1+y\LL_{\tilde s}^*)\cdot\eta-\eta
\,.$$
Since $\pi$ is a $\PP^1$-fibration  $\pi_*(1+y\LL_{\tilde s}^*)=(1-y)$. Hence
$$\pi_* \DL{\tilde s}(\xi)=(1-y)\eta-\eta=-y\,\pi_*(\xi)\,.$$
\end{proof}
We sum up the properties of the left action:
\begin{atw} Let  $w\in W^P$. Let $s$ be a simple  reflection and $\lambda\in \Pic_\Q(G/P)$ a general enough fractional character. Then
\begin{align*}
			\DLL {s,w\lambda}(\mC(wW_P,\lambda))=(-y)^{\dim X^P_w-\dim X^P_{sw}+1}\,\mC(swW_P,\lambda)\,.
		\end{align*}
\end{atw}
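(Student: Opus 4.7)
The plan is to read this theorem as a uniform repackaging of the three propositions proven immediately above, so the proof reduces to a case-by-case bookkeeping using Lemma~\ref{lem:Deo}. That lemma gives a trichotomy for a pair $(w,s)$ with $w\in W^P$ and $s$ a simple reflection: (i) $l(sw)<l(w)$, in which case $sw\in W^P$ automatically; (ii) $l(sw)>l(w)$ with $sw\in W^P$; (iii) $l(sw)>l(w)$ with $sw=w\tilde s$ for a simple reflection $\tilde s\in W_P$, in which case $swW_P=wW_P$. These three alternatives are exhaustive and mutually exclusive.

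The first step is to record, from the three propositions just proven, what $\DLL{s,w\lambda}(\mC(wW_P,\lambda))$ equals in each case: in case (i) it is $(-y)\cdot\mC(swW_P,\lambda)$; in case (ii) it is $\mC(swW_P,\lambda)$; in case (iii) it is $(-y)\cdot\mC(wW_P,\lambda)=(-y)\cdot\mC(swW_P,\lambda)$, using the identification $swW_P=wW_P$. The second step is to fix the notational convention that the symbol $X^P_{sw}$ stands for $X^P_{swW_P}$ whenever $sw\notin W^P$; this is forced in case (iii) since $sw$ is not a minimal length representative of its coset.

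With this convention in hand, the proof reduces to checking that the exponent in the claimed formula selects the right power of $-y$ in each case: in the two cases where $sw\in W^P$ one uses $\dim X^P_{sw}=l(sw)$ together with the length relations $l(sw)=l(w)\pm 1$, while in case (iii) one uses $\dim X^P_{sw}=\dim X^P_w$ because the two cosets coincide. The only place where any actual work is needed is case (iii): the vanishing of $\langle\lambda,\alpha_{\tilde s}^\vee\rangle$ (which follows from $\lambda\in\Pic_\Q(G/P)\simeq\ttt_\Q^{W_P}$) must be used to rewrite $\DLL{s,w\lambda}$ as an untwisted operator before pushing forward by $\pi$ via the auxiliary lemma on $\pi_*\circ\DL{\tilde s}$. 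I expect this identification of $\DLL{s,w\lambda}$ with $\DL{\tilde s,\lambda}$ through the interplay between $s$ and $\tilde s=w^{-1}sw$ to be the one conceptually delicate point, though the preceding proposition already carries it out; once those pieces are assembled, the unified statement follows immediately.
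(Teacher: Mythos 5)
Your structural reading is exactly the intended one: the theorem appears right after the three preceding statements with only the lead-in ``We sum up the properties of the left action'', so it is designed to be a uniform repackaging via the trichotomy of Lemma~\ref{lem:Deo}. The delicate point you single out — that in case~(iii) one must use $\lambda\in\ttt_\Q^{W_P}$ to replace $\DLL{s,w\lambda}$ by an untwisted $\DL{\tilde s}$ with $\tilde s=w^{-1}sw\in W_P$, then invoke $\pi_*\circ\DL{\tilde s}=-y\,\pi_*$ — is indeed the conceptual heart, and it is carried out in the preceding proposition and auxiliary lemma.

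The gap in your proposal is precisely the claim that ``the exponent in the claimed formula selects the right power of $-y$'' — you defer this to bookkeeping but do not do it, and if one does, case~(i) fails. When $l(sw)<l(w)$ (so $sw\in W^P$ and $l(sw)=l(w)-1$), the exponent $\dim X^P_w-\dim X^P_{sw}+1$ evaluates to $l(w)-(l(w)-1)+1=2$, while the first proposition yields $(-y)^1\,\mC(swW_P,\lambda)$, not $(-y)^2$. Cases~(ii) and~(iii) do come out to $0$ and $1$ respectively and agree with the corresponding statements, so the mismatch is isolated to case~(i). The three preceding statements return exponents $1,0,1$, but the displayed formula returns $2,0,1$; no uniform correction of the obvious kind reconciles all three at once (inserting a factor $\frac{1}{2}$, as in Theorem~\ref{tw:L1}, fixes cases~(i) and~(ii) but would give $\frac{1}{2}$ in case~(iii) once $\dim X^P_{sw}=\dim X^P_w$). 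Either the exponent in the theorem is a typo, or $\dim X^P_{sw}$ is being read in a nonstandard way; in any event, asserting the match without computation is where your argument breaks, since this verification is the only content the proof has.
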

\begin{rem}
	For $\lambda=0$ Theorem \ref{tw:L1} implies \cite[Theorem 7.6]{MNS}.
\end{rem}
\begin{rem}
	The left operators $\DLL {s,\lambda}$ are defined using the left Weyl group action on $\KTh_\T(G/P)$. The right Weyl group action on $\KTh_\T(G/B)$ does not generalize to $\KTh_\T(G/P)$, so there is no generalization of induction from Theorem \ref{tw:1} to $G/P$. A similar phenomenon is thoroughly discussed in \cite{MNS}.
\end{rem}

\subsection{Wall-crossing for a change of the coweight chamber}
Due to Theorem \ref{tw:otoczki} we obtained a recursive formula for twisted motivic Chern classes. After translation to stable envelopes it reads as follows:
\begin{cor} \label{cor:stabind2} 
	Let $\lambda \in \Hom(\T,\C^*)\otimes\Q$ be a general enough fractional character and $w \in (G/B)^\T\simeq W$ a fixed point. Then
	\begin{align} \label{w:3}
		q^{1/2}\cdot\St^{\lambda}(sw)={}\DLLq {s,w\lambda}(\St^\lambda(w))\,,
	\end{align}
	where $\DLLq {s,w\lambda}$ denotes the operator $\DLL {s,w\lambda}$ after the substitution $y=-q$.
\end{cor}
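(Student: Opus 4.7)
The plan is to derive this identity by combining Theorem \ref{tw:L1}(2) with the normalization link provided by Theorem \ref{tw:otoczki}, exactly paralleling the derivation of Corollary \ref{cor:stabind} from Theorem \ref{tw:1}.

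First, I would start from the general-enough case of Theorem \ref{tw:L1}:
$$(-y)^{\frac{1}{2}(l(w)-l(sw)+1)}\mC(sw,\lambda)=\DLL{s,w\lambda}(\mC(w,\lambda))\,.$$
Applying the map $\rho$ that sends $y\mapsto -q$, and using that $\rho$ commutes with the operator $\DLL{s,w\lambda}$ (its coefficients depend on $y$ polynomially, so substitution can be performed termwise), this becomes
$$q^{\frac{1}{2}(l(w)-l(sw)+1)}\operatorname{mC}^\T_{-q}(sw,\lambda)=\DLLq{s,w\lambda}(\operatorname{mC}^\T_{-q}(w,\lambda))\,.$$

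Next I would invoke Theorem \ref{tw:otoczki} to identify $\operatorname{mC}^\T_{-q}(v,\lambda)=q^{\frac{1}{2}\dim X^+_v}\St^\lambda(v)$ for $v\in\{w,sw\}$. Multiplying both sides of the previous displayed equation by the scalar $q^{-\frac{1}{2}\dim X^+_w}$, and using that $q^{\pm 1/2}$ is a central formal parameter that commutes with the operator $\DLLq{s,w\lambda}$, the right-hand side becomes $\DLLq{s,w\lambda}(\St^\lambda(w))$, while the left-hand side becomes
$$q^{\frac{1}{2}(l(w)-l(sw)+1)-\frac{1}{2}\dim X^+_w+\frac{1}{2}\dim X^+_{sw}}\St^\lambda(sw)\,.$$

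The final step is to confirm that this exponent collapses to $\tfrac{1}{2}$, i.e.~that $\dim X^+_w - \dim X^+_{sw} = l(w)-l(sw)$. For the positive chamber $\mathfrak{C}^+$ associated with $B$, the attracting sets $X^+_v$ are the Schubert cells $X_v^\circ$ of dimension $l(v)$, so the identity holds and the desired corollary follows. There is no genuine obstacle here; the argument is a direct translation of the motivic Chern recursion of Theorem \ref{tw:L1}(2) into the language of stable envelopes via the normalization of Theorem \ref{tw:otoczki}, and the only bookkeeping to check is the exponent of $q$.
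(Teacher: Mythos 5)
Your proof is correct and follows the same route the paper intends: it is the verbatim analogue of how Corollary \ref{cor:stabind} is deduced from Corollary \ref{rem:tw1} and Theorem \ref{tw:otoczki}, here using Theorem \ref{tw:L1}(2) in place of Corollary \ref{rem:tw1}. The exponent bookkeeping with $\dim X^+_v = l(v)$ is exactly what makes the $q^{1/2}$ normalization come out, and your checks (commutation of $\rho$ and of central $q^{\pm1/2}$ with the operator) are the right ones.
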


Stable envelopes depend on a choice of a coweight chamber $\mathfrak{C}$. Up to this point we considered only the positive coweight chamber $\mathfrak{C_+}$.
All other chambers are of the form $\sigma\mathfrak{C_+}$ for some $\sigma\in W$.
It follows from the naturality of stable envelopes (\cite[Lemma 8.2 (a)]{AMSS}) that for Weyl group elements $\sigma,w\in W$, arbitrary slope $\lambda$ and arbitrary coweight chamber $\mathfrak{C}$ we have
\begin{align} \label{w:5}
	\sigma^{\textsc l}(\operatorname{stab}_{\mathfrak{C}}^{\lambda}(w))=
	\operatorname{stab}_{\sigma\mathfrak{C}}^{\lambda}(\sigma w)\,.
\end{align}
For any choice of the coweight chamber $\mathfrak{C}$ and slope $\lambda$ the classes
$\{\StC_{\mathfrak{C}}^\lambda(w)\}_{w\in W}$ form a base of the localized K-theory $$S^{-1}\KTh_\T(G/B)[q^{1/2},q^{-1/2}]$$
over the localized Laurent polynomials ring
$$S^{-1}\KTh_\T(pt)[q^{1/2},q^{-1/2}]\,.$$
Let $\mathfrak{C}_1$ and $\mathfrak{C}_2$ be two coweight chambers. The wall-crossing formula (or the R-matrix) describes the base change matrix from the basis $\{\StC_{\mathfrak{C}_1}^{\lambda}(w)\}_{w\in W}$ to $\{\StC_{\mathfrak{C}_2}^{\lambda}(w)\}_{w\in W}$ (see \cite[Paragraph~9.2.11]{O2}). Recursion \eqref{w:3} may be used to obtain such a formula.

\begin{atw} \label{tw:wallL}
	 Let $\lambda \in \Hom(\T,\C^*)\otimes\Q$ be a general enough fractional character and $w,\sigma \in W$  Weyl group elements. Let $s \in W$ be a simple reflection. Then
	$$\operatorname{stab}_{\sigma s\mathfrak{C_+}}^{\lambda}(w)=
	 \frac{(1-q)\cdot\sigma\alpha_s^{\lfloor \langle w\lambda,\sigma\alpha_s \rangle\rfloor}}
 {1-q\sigma\alpha^{-1}_s} \operatorname{stab}_{\sigma \mathfrak{C_+}}^\lambda(\sigma s\sigma^{-1}w)
	 +\frac{q^{1/2}(1-\sigma\alpha^{-1}_s)}{1-q\sigma\alpha^{-1}_s}\operatorname{stab}_{\sigma \mathfrak{C_+}}^{\lambda}(w) \,.
	$$
\end{atw}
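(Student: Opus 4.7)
The plan is to reduce first to the case $\sigma = \id$ by naturality \eqref{w:5}, then extract the formula from the left Bott--Samelson recursion of Corollary~\ref{cor:stabind2}. Throughout, I assume $\lambda$ is general enough so that the recursion applies without extra length factors.

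First I would apply naturality: writing $\operatorname{stab}_{\sigma s \mathfrak{C_+}}^\lambda(w) = \sigma^{\textsc l}(\operatorname{stab}_{s\mathfrak{C_+}}^\lambda(\sigma^{-1}w))$, the general statement follows from the case $\sigma = \id$ with $w$ replaced by $w' = \sigma^{-1} w$, once one uses that $\sigma^{\textsc l}$ acts on $K_\T(pt)$ by $\sigma$ (so $\alpha_s^{-1}$ becomes $\sigma \alpha_s^{-1}$), that $\langle w'\lambda, \alpha_s^\vee\rangle = \langle w\lambda, \sigma\alpha_s^\vee\rangle$ by $W$-invariance of the pairing, and that $\sigma^{\textsc l}\operatorname{stab}^\lambda_{\mathfrak{C_+}}(\sigma^{-1}w) = \operatorname{stab}_{\sigma \mathfrak{C_+}}^\lambda(w)$, $\sigma^{\textsc l}\operatorname{stab}^\lambda_{\mathfrak{C_+}}(s\sigma^{-1}w) = \operatorname{stab}_{\sigma \mathfrak{C_+}}^\lambda(\sigma s \sigma^{-1} w)$.

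For the case $\sigma = \id$, naturality again gives $\operatorname{stab}_{s\mathfrak{C_+}}^\lambda(w) = s^{\textsc l}\bigl(\St^\lambda(sw)\bigr)$, so it suffices to compute $s^{\textsc l}\St^\lambda(sw)$ in terms of $\St^\lambda(w)$ and $\St^\lambda(sw)$. I apply Corollary~\ref{cor:stabind2} with the fixed point $sw$ in place of $w$ (noting $s\cdot sw = w$):
\[
q^{1/2}\St^\lambda(w) \;=\; \DLLq{s,sw\lambda}\bigl(\St^\lambda(sw)\bigr)
\;=\;\frac{1-q\alpha_s^{-1}}{1-\alpha_s^{-1}}\cdot s^{\textsc l}\bigl(\St^\lambda(sw)\bigr)\;-\;\frac{(1-q)\,\alpha_s^{-\lceil \langle sw\lambda,\alpha_s^\vee\rangle\rceil}}{1-\alpha_s^{-1}}\St^\lambda(sw).
\]
Solving for $s^{\textsc l}\St^\lambda(sw)$ is a one-line algebraic rearrangement:
\[
s^{\textsc l}\bigl(\St^\lambda(sw)\bigr)\;=\;\frac{q^{1/2}(1-\alpha_s^{-1})}{1-q\alpha_s^{-1}}\,\St^\lambda(w)\;+\;\frac{(1-q)\,\alpha_s^{-\lceil\langle sw\lambda,\alpha_s^\vee\rangle\rceil}}{1-q\alpha_s^{-1}}\,\St^\lambda(sw).
\]

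The remaining point is to identify the exponent in the first coefficient. Since $s\alpha_s^\vee = -\alpha_s^\vee$ and the pairing is $W$-invariant, $\langle sw\lambda,\alpha_s^\vee\rangle = -\langle w\lambda,\alpha_s^\vee\rangle$, and using $-\lceil -x\rceil = \lfloor x\rfloor$ one gets $-\lceil\langle sw\lambda,\alpha_s^\vee\rangle\rceil = \lfloor\langle w\lambda,\alpha_s^\vee\rangle\rfloor$. Substituting and then inserting back into the naturality step of the previous paragraph produces exactly the claimed formula, with the coefficient $\sigma \alpha_s^{\lfloor\langle w\lambda,\sigma\alpha_s^\vee\rangle\rfloor}$ appearing in front of $\operatorname{stab}_{\sigma \mathfrak{C_+}}^\lambda(\sigma s\sigma^{-1}w)$. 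The main obstacle, such as it is, lies in this last bookkeeping step: all algebra is elementary, but care is required with the two $W$-actions (left on characters versus left on fixed points), the sign flip produced by $s^{\textsc r}/s^{\textsc l}$ on roots, and the identity $-\lceil -x\rceil = \lfloor x\rfloor$ that converts the ceiling in the definition of $\DLLq{s,\cdot}$ into the floor appearing in the wall-crossing formula.
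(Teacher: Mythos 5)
Your proof is correct and takes essentially the same route as the paper: you apply Corollary~\ref{cor:stabind2} to $s\sigma^{-1}w$ (the paper's $\tilde w$, your $sw'$), solve for $s^{\textsc l}\St^\lambda(\cdot)$, apply $\sigma^{\textsc l}$ via naturality, and handle the ceiling-to-floor conversion exactly as the paper does. The only cosmetic difference is that you organize it as a reduction to the $\sigma=\id$ case before the algebra, whereas the paper performs the algebra first and then applies $\sigma^{\textsc l}$; the computation is identical.
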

\begin{proof}
	Let $\tilde{w}=s\sigma^{-1}w$. Corollary \ref{cor:stabind2} for $\tilde{w}$ implies that
	$$q^{1/2}\cdot\St^{\lambda}(s\tilde{w})
	=\frac{1-q\alpha^{-1}_s}{1-\alpha^{-1}_s}\cdot s^{\textsc l}(\St^{\lambda}(\tilde{w}))
	-\frac{(1-q)\cdot\alpha_s^{-\lceil \langle\tilde{w}\lambda,\alpha_s\rangle \rceil}}{1-\alpha^{-1}_s} \cdot \St^{\lambda}(\tilde{w})\,, $$
	which may be rewritten as
	$$
	s^{\textsc l}(\St^{\lambda}(\tilde{w}))=
	\frac{(1-q)\cdot\alpha_s^{-\lceil \langle\tilde{w}\lambda,\alpha_s\rangle\rceil}} {1-q\alpha^{-1}_s} \St^\lambda(\tilde{w})+
	\frac{q^{1/2}(1-\alpha^{-1}_s)}{1-q\alpha^{-1}_s}\St^{\lambda}(s\tilde{w}) \,.
	$$
	We apply the left Weyl group action of $\sigma^{\textsc l}$ to this equation and use formula \eqref{w:5}. Note that $\sigma^{\textsc l}$ acts also on the coefficients $\alpha_s\in \KTh_\T(pt)$. We obtain
	$$
	\StC_{\sigma s\mathfrak{C}_+}^{\lambda}(\sigma s\tilde{w})=
	\frac{(1-q)\cdot\sigma\alpha_s^{-\lceil \langle\tilde{w}\lambda,\alpha_s\rangle\rceil}} {1-q\sigma\alpha^{-1}_s} \StC_{\sigma \mathfrak{C}_+}^\lambda(\sigma\tilde{w})
	+\frac{q^{1/2}(1-\sigma\alpha^{-1}_s)}{1-q\sigma\alpha^{-1}_s}\StC_{\sigma \mathfrak{C}_+}^{\lambda}(\sigma s\tilde{w}) \,.
	$$
	Substitution $\tilde{w}=s\sigma^{-1}w$ completes the proof. The exponent
$${-\lceil \langle s\sigma^{-1}w\lambda,\alpha_s \rangle\rceil}
={-\lceil -\langle \sigma^{-1}w\lambda,\alpha_s \rangle\rceil}$$ is equal to
$\lfloor \langle w\lambda,\sigma\alpha_s \rangle\rfloor$.
\end{proof}

The above theorem describes the wall-crossing formula from the coweight chamber $\sigma\mathfrak{C_+}$ to $\sigma s\mathfrak{C_+}$. All chambers are of the form $\sigma'\mathfrak{C_+}$ for some $\sigma'\in W$. Therefore, repetitive use of the above theorem determines the wall-crossing formula between arbitrary coweight chambers.

\section{Wall-crossing formula for a change of the slope} \label{s:Rmat} 
As an application of the action of the left and right Demazure-Lusztig operators we will give an easy proof of 
the wall-crossing formula for the slope.
\bigskip

For a root $\alpha$ and an integer $n\in \Z$ let
$$H_{\alpha,n}=\{\lambda\in \ttt^*|\langle\lambda,\alpha^\vee\rangle=n\}\subset \Hom(\T,\C^*)\otimes_\Z\R \simeq \Pic(G/B)\otimes_\Z\R \,.$$
\begin{adf}
	An alcove is a connected component of the complement of the union of the hyperplanes~$H_{\alpha,n}$.
\end{adf}
Let $\lambda_1$ and $\lambda_2$ be slopes belonging to two adjacent alcoves. The slope R-matrix  is the base change matrix from the basis $\{\St^{\lambda_1}(w)\}_{w\in W}$ to $\{\St^{\lambda_2}(w)\}_{w\in W}$ (see \cite[Section~2.2]{OS}).
In \cite{SZZ2} the slope R-matrix for $G/B$ was computed.
\begin{atw}[{\cite[Theorems 4.1 and 5.1]{SZZ2}}] \label{tw:SZZ} 
	Let $\alpha$ be a positive root and $s$ the corresponding reflection. Suppose that alcoves $\nabla_1$ and $\nabla_2$ are adjacent and separated by the wall $H_{\alpha,0}$ and the functional $\langle-,\alpha^\vee\rangle$ is positive on $\nabla_2$. For any fixed point $w\in  (G/B)^\T\simeq W$ we have
	$$
	\St^{\nabla_1}(w)=
	\begin{cases}
		\St^{\nabla_2}(w)+ (q^{1/2}-q^{-1/2})\cdot\St^{\nabla_2}(ws) & \text{ if } l(w)>l(ws)\,, \\
		\St^{\nabla_2}(w) & \text{ if } l(ws)>l(w)\,. \\
	\end{cases}
	$$
\end{atw}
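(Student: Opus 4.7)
The plan is to translate the assertion via Theorem~\ref{tw:otoczki} to an identity of twisted motivic Chern classes, and then proceed case by case using the right Demazure--Lusztig recursion. Fix generic slopes $\lambda_i\in\nabla_i$; by Theorem~\ref{tw:otoczki}, $\St^{\nabla_i}(w)=q^{-\frac{1}{2}\dim X^+_w}\operatorname{mC}^\T_{-q}(w,\lambda_i)$, so it suffices to compare $\mC(w,\lambda_1)$ with $\mC(w,\lambda_2)$.

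I would first dispose of the case $l(ws_\alpha)>l(w)$ for an arbitrary positive root $\alpha$ by a direct Bott--Samelson argument. Pick a reduced decomposition $\wu$ of $w$; by Chevalley's formula (Proposition~\ref{pro:Chev0}) the coefficients of $p_\wu^*\Delta_{w,\lambda_i}$ are the numbers $\langle\lambda_i,\gamma_j^\vee\rangle$, and by Proposition~\ref{pro:Chev} each associated reflection $s_{\gamma_j}$ satisfies $l(ws_{\gamma_j})<l(w)$. Since $\lambda_1$ and $\lambda_2$ are generic and differ only across the wall $H_{\alpha,0}$, the rounded coefficients $\lceil\langle\lambda_i,\gamma_j^\vee\rangle\rceil$ coincide unless some $\gamma_j=\pm\alpha$, which is excluded by the hypothesis $l(ws_\alpha)>l(w)$. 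Hence $\mC(w,\lambda_1)=\mC(w,\lambda_2)$, giving $\St^{\nabla_1}(w)=\St^{\nabla_2}(w)$.

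For the case $l(w)>l(ws_\alpha)$ with $\alpha=\alpha_s$ simple, I would apply Corollary~\ref{cor:stabind} to the pair $(ws,s)$. Since $l((ws)s)=l(w)>l(ws)$, the recursion reads $q^{1/2}\St^{s\lambda}(w)=\DLq{s,\lambda}\St^\lambda(ws)$. Taking $\lambda=\lambda_1$ (so $s\lambda_1\in\nabla_2$) and $\lambda=\lambda_2$ (so $s\lambda_2\in\nabla_1$), and invoking the previous case applied to $ws$---which yields $\St^{\nabla_1}(ws)=\St^{\nabla_2}(ws)$ since $l((ws)s)>l(ws)$---one obtains
\[
q^{1/2}\St^{\nabla_2}(w)=\DLq{s,\lambda_1}\St^{\nabla_2}(ws),\qquad q^{1/2}\St^{\nabla_1}(w)=\DLq{s,\lambda_2}\St^{\nabla_2}(ws).
\]
The key algebraic identity $\DL{s,\lambda_1}-\DL{s,\lambda_2}=(1+y)\,\id$ follows at once from Definition~\ref{df:DL}, since $\lceil-\langle\lambda_1,\alpha_s^\vee\rangle\rceil=1$ while $\lceil-\langle\lambda_2,\alpha_s^\vee\rangle\rceil=0$. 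Substituting $y=-q$ and subtracting the two displayed relations yields $q^{1/2}(\St^{\nabla_2}(w)-\St^{\nabla_1}(w))=(1-q)\St^{\nabla_2}(ws)$, which rearranges to the claimed formula.

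For general positive $\alpha$ in the hard case $l(w)>l(ws_\alpha)$, the same scheme applies with Corollary~\ref{cor:tw1} in place of Corollary~\ref{cor:stabind}: setting $w^{-1}=s_\alpha$ gives $\DLq{s_\alpha,\lambda}\St^\lambda(w)=q^{l(s_\alpha)/2}\St^{s_\alpha\lambda}(ws_\alpha)$, and the $\lambda_1$- and $\lambda_2$-versions again have equal right-hand sides by the easy case applied to $ws_\alpha$. Expanding $\DL{s_\alpha,\lambda}$ along a palindromic reduced decomposition of $s_\alpha$, one checks that only the middle factor (the unique index $k$ with $u_{>k}^{-1}\alpha_{i_k}=\alpha$) is sensitive to $H_{\alpha,0}$ and contributes an additive $(1+y)\,\id$, so $\DL{s_\alpha,\lambda_1}-\DL{s_\alpha,\lambda_2}=(1+y)\,C$, where $C$ is the composition of the outer $2p$ Demazure--Lusztig operators ($p=\tfrac12(l(s_\alpha)-1)$). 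The main obstacle is evaluating $C$ on the class $\St^{\nabla_2}(ws_\alpha)$ to extract the required scalar $q^{p}$; I would accomplish this by iteratively applying the quadratic relation of Proposition~\ref{pro:kwadrat1} after using the braid relations of Section~\ref{s:alg} to rearrange adjacent factors into positions where the slope arguments pair up correctly under the quadratic identity.
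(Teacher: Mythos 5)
Your reduction via Theorem~\ref{tw:otoczki}, the disposal of the easy case (including the ``easy direction'' for arbitrary positive $\alpha$ through Propositions~\ref{pro:Chev0}--\ref{pro:Chev}), and the treatment of the simple case via Corollary~\ref{cor:stabind} and the identity $\DL{s,\lambda_1}-\DL{s,\lambda_2}=(1+y)\,\id$ are all correct, and they essentially reproduce parts $a)$, $b)$ and Lemma~\ref{lem:Wall0} of the paper's argument (the paper works with $\mC$ directly; you work in stable-envelope language, which amounts to the same thing). Your route for the general positive root is a genuinely different packaging: the paper conjugates one simple reflection at a time via Lemma~\ref{lem:Wall2} and inducts on the length of the conjugating element, whereas you expand $\DL{s_\alpha,\lambda}$ along a palindromic reduced word $\underline{u}\cdot s_k\cdot\underline{u^{-1}}$ all at once and isolate the middle factor. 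That the middle factor is the unique one affected by $H_{\alpha,0}$ is correct (the $\gamma_j$'s from Chevalley's formula are the distinct inversions of $s_\alpha$, so exactly one equals $\alpha$), and the resulting identity $\DL{s_\alpha,\lambda_1}-\DL{s_\alpha,\lambda_2}=(1+y)\,A\circ B$ with $A=\DL{u,s_ku^{-1}\lambda_2}$, $B=\DL{u^{-1},\lambda_2}$ is a clean observation.

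The gap is in your final step. The operator $C=A\circ B$ is \emph{not} a scalar multiple of the identity, and the quadratic relation does not pair up its factors. Already for $\SL_3$ with $\alpha=\alpha_1+\alpha_2$ one has $C=\DL{s_1,a}\circ\DL{s_1,b}$ with $\lceil a\rceil=\lceil-\langle\lambda_2,\alpha_2^\vee\rangle\rceil$ and $\lceil b\rceil=\lceil-\langle\lambda_2,\alpha_1^\vee\rangle\rceil$; Proposition~\ref{pro:kwadrat1} collapses this to $-y\,\id$ only when $\lceil a\rceil+\lceil b\rceil=1$, which fails for generic $\lambda_2\in\nabla_2$ (take $\langle\lambda_2,\alpha_1^\vee\rangle=0.5$, $\langle\lambda_2,\alpha_2^\vee\rangle=0.3$), and with only two factors there is no room for braid relations to rearrange anything. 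So $C$ acts by $q^p$ only on the specific class $\St^{\nabla_2}(ws_\alpha)$, not on all of $K_\T(G/B)$, and the braid/quadratic calculus you propose cannot establish this on its own. The correct way to close the gap is to feed the easy case back in: after applying $B$, Proposition~\ref{pro:tw1}(2) gives $(-y)^{e_1}\mC(ws_\alpha u,u^{-1}\lambda_2)$; the slope $u^{-1}\lambda_2$ of this class disagrees with the slope argument $s_ku^{-1}\lambda_2$ of $A$, but because $l(ws_\alpha u\, s_k)=l(wu)>l(wus_k)=l(ws_\alpha u)$ (which follows from $l(w)>l(ws_\alpha)$), part $b)$ applied across the wall $H_{\alpha_k,0}$ lets you replace $u^{-1}\lambda_2$ by $s_ku^{-1}\lambda_2$; then Proposition~\ref{pro:tw1}(2) applies to $A$, the two length-dependent exponents $e_1,e_2$ sum to $p$, and a final application of part $b)$ replaces $s_\alpha\lambda_2$ by $\lambda_2$. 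In effect, this is a collapsed version of the paper's inductive Lemma~\ref{lem:Wall2}: the slope-mismatch you must reconcile at each stage is precisely what the paper handles one simple reflection at a time.
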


We propose an alternative approach to this result based on the geometry of Bott-Samelson resolution. Theorem \ref{tw:otoczki} 
states that stable envelopes can be computed using twisted motivic Chern classes. We will prove that
\begin{atw} \label{tw:2}
	Let $\alpha$ be a positive root and $s$ the corresponding reflection. Suppose that alcoves $\nabla_1$ and $\nabla_2$ are adjacent and separated by the wall $H_{\alpha,0}$ and the functional $\langle-,\alpha^\vee\rangle$ is positive on $\nabla_2$.  Let $w\in (G/B)^\T\simeq W$ be a fixed point. Then
	\begin{enumerate}[{a)}]
		\item The twisted motivic Chern class is constant on  alcoves, i.e. if slopes $\lambda_1,\lambda_2$ belong to the same alcove, then
		$$\mC(w,\lambda_1)=\mC(w,\lambda_2)\,.$$
		\item Choose slopes $\lambda_1\in\nabla_1$ and $\lambda_2\in\nabla_2$. Suppose that $l(ws)>l(w)$ then
		$$\mC(w,\lambda_1)=\mC(w,\lambda_2)\,.$$
		\item Choose slopes $\lambda_1\in\nabla_1$ and $\lambda_2\in\nabla_2$. Suppose that $l(ws)<l(w)$ then
		$$\mC(w,\lambda_1)=\mC(w,\lambda_2)- (-y)^{\frac{1}{2}(l(w)-l(ws)-1)}(1+y)\cdot \mC(ws_{\alpha},\lambda_2)\,.$$
	\end{enumerate}
\end{atw}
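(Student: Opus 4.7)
Parts (a) and (b) follow from the Chevalley formula (Propositions \ref{pro:Chev0} and \ref{pro:Chev}). Fix a reduced decomposition $\wu$ of $w$; then $p_\wu^*\Delta_{w,\lambda} = \sum_j \langle\lambda,\gamma_j^\vee\rangle\,\partial_jZ_\wu$, and every Chevalley root $\gamma_j$ satisfies $l(ws_{\gamma_j}) < l(w)$. Within one alcove no functional $\langle\lambda,\gamma_j^\vee\rangle$ crosses an integer, so the ceilings $\lceil\langle\lambda,\gamma_j^\vee\rangle\rceil$ — and hence $\mC(w,\lambda)$ — are constant, proving (a). For (b), the hypothesis $l(ws_\alpha) > l(w)$ forces $\alpha\notin\{\gamma_j\}$, so the ceiling divisor is unchanged when crossing the single wall $H_{\alpha,0}$ separating $\nabla_1$ and $\nabla_2$.

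For (c), $\alpha$ does appear as some $\gamma_{j_0}$, and passing from $\nabla_1$ to $\nabla_2$ increases $\lceil\langle\lambda,\alpha^\vee\rangle\rceil$ by one, so $\lceil p_\wu^*\Delta_{w,\lambda_2}\rceil = \lceil p_\wu^*\Delta_{w,\lambda_1}\rceil + \partial_{j_0}Z_\wu$. Tensoring the short exact sequence of $\partial_{j_0}Z_\wu$ with $\O(\lceil p_\wu^*\Delta_{w,\lambda_1}\rceil)\cdot\mC(Z_\wu^\circ\subset Z_\wu)$ and applying $p_{\wu*}$, then using the SNC residue formula \cite[Lemma 9.7]{KonW} to restrict $\mC(Z_\wu^\circ\subset Z_\wu)$ to $\partial_{j_0}Z_\wu$, gives
\[
\mC(w,\lambda_2)-\mC(w,\lambda_1) = (1+y)\,q_*\!\Bigl(\O(\lceil p_\wu^*\Delta_{w,\lambda_1}\rceil)\big|_{\partial_{j_0}Z_\wu}\cdot\mC((\partial_{j_0}Z_\wu)^\circ\subset\partial_{j_0}Z_\wu)\Bigr),
\]
where $q$ is the restriction of $p_\wu$ to $\partial_{j_0}Z_\wu\cong Z_{\wu^{(j_0)}}$, with $\wu^{(j_0)}$ denoting $\wu$ with the $j_0$-th letter deleted — an expression of $ws_\alpha$ of length $l(w)-1$.

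When $l(w)-l(ws_\alpha)=1$ the word $\wu^{(j_0)}$ is reduced: matching the restricted ceiling divisor against $\lceil p_{\wu^{(j_0)}}^*\Delta_{ws_\alpha,\mu}\rceil$ for an appropriate $\mu\in\nabla_2$ and using part (a) to swap $\mu$ for $\lambda_2$ identifies the right-hand side with $(1+y)\mC(ws_\alpha,\lambda_2)$, giving the formula with exponent zero. The main obstacle is the case $l(w)-l(ws_\alpha)=2k+1\geq 3$: now $\wu^{(j_0)}$ is forced to be non-reduced and $q\colon Z_{\wu^{(j_0)}}\to X_{ws_\alpha}$ has generic fiber of dimension $2k$. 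I plan to resolve this by induction on $l(w)-l(ws_\alpha)$, combining the right recursion of Theorem \ref{tw:1} (in the reverse-length form of Corollary \ref{rem:tw1}) with the quadratic relation of Proposition \ref{pro:kwadrat1}: each excess pair of letters in a non-reduced Bott-Samelson word contributes a factor of $-y$ under pushforward, and $k$ accumulated contributions give exactly the predicted prefactor $(-y)^{(l(w)-l(ws_\alpha)-1)/2}$.
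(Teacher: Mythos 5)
Parts (a) and (b) are handled exactly as in the paper, via the Chevalley formula and Proposition~\ref{pro:Chev}. For part (c), however, your approach diverges from the paper's: you push the difference $\mC(w,\lambda_2)-\mC(w,\lambda_1)$ forward from the boundary component $\partial_{j_0}Z_\wu$, whereas the paper first proves the simple-reflection case directly from Theorem~\ref{tw:1} (Lemma~\ref{lem:Wall0}) and then inducts by conjugating $s=r\tilde s r$ with $r$ simple (Lemma~\ref{lem:Wall2}), never leaving the world of reduced words.

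The reduced case ($l(w)-l(ws_\alpha)=1$) of your argument can be made to work, but the phrase ``an appropriate $\mu$'' hides a computation you should spell out: $\alpha$ occurs exactly once among the Chevalley roots $\{\gamma_j\}$, since these are the $l(w)$ distinct roots in the inversion set of $w^{-1}$; the Chevalley roots $\gamma'_j$ for the deleted word satisfy $\gamma'_j=\gamma_j$ for $j>j_0$ but $\gamma'_j=s_\alpha\gamma_j$ for $j<j_0$; and one then needs both that $s_\alpha\lambda_1$ and $\lambda_2$ lie in the same alcove $\nabla_2$ and that $\gamma_j\neq\pm\alpha$ for $j\neq j_0$ to match the restricted ceiling divisor with $\lceil p_{\wu^{(j_0)}}^*\Delta_{ws_\alpha,\lambda_2}\rceil$. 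None of this is automatic and the single-$\mu$ phrasing is misleading (the naive restriction does \emph{not} equal $\lceil p_{\wu^{(j_0)}}^*\Delta_{ws_\alpha,\mu}\rceil$ for any fixed $\mu$ until you invoke those wall-comparison facts). The serious gap is the non-reduced case $l(w)-l(ws_\alpha)\ge 3$, which you correctly flag as the main obstacle but for which you only offer a plan. What is missing is any treatment of the twisted motivic Chern class for a \emph{non-reduced} Bott--Samelson word: the divisor $\lceil p_\wu^*\Delta_{w,\lambda_1}\rceil|_{\partial_{j_0}Z_\wu}$ restricted to this non-reduced $Z_{\wu^{(j_0)}}$ is not of the form considered in Definition~\ref{df:twistedmC}, and the asserted ``each excess pair of letters contributes $-y$'' would require a version of the recursion and quadratic relation for non-reduced words with twists — something the paper deliberately avoids developing. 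The paper's conjugation argument (Lemma~\ref{lem:Wall2}) sidesteps precisely this difficulty, keeping every intermediate word reduced at the cost of doing the induction on the length of a conjugating element rather than on $l(w)-l(ws_\alpha)$. Your geometric route could in principle be completed, but as written it stops at a plausibility argument exactly where the hard work begins.
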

\begin{rem}
	The above theorem is equivalent to Theorem \ref{tw:SZZ}.
\end{rem}

\begin{rem}
	In Theorems \ref{tw:SZZ} and \ref{tw:2} only the walls $H_{\alpha,0}$ are considered. The formulas for general walls $H_{\alpha,n}$ can be deduced from the periodicity of the twisted motivic Chern classes and the stable envelopes (cf. \cite[Lemma 8.2(a)]{AMSS} and \cite[Remark 2.4]{KonW}). For an integral weight $\mu\in \Hom(\T,\C^*)$ we have
	\begin{align*}
		\mC(w,\lambda+\mu)&=\LL(\mu)_{|w}^{-1}\LL(\mu)\cdot\mC(w,\lambda)\,, \\
		\St^{\nabla+\mu}(w)&=\LL(\mu)_{|w}^{-1}\LL(\mu)\cdot\St^{\nabla}(w) \,.
	\end{align*}
	See \cite[Corollary 5.3 and the discussion after Lemma 3.7]{SZZ2} for a detailed account.
\end{rem}

\begin{proof} [Proof of parts $a)$ and $b)$ of Theorem \ref{tw:2}]
	Choose a reduced world decomposition $\wu$ of $w$. By definition the twisted motivic Chern class $\mC(w,\lambda)$ is determined by the pullback bundle $p_\wu^*\LL(\lambda)$. This bundle is described by the Chevalley formula. The conclusion follows from Proposition \ref{pro:Chev}.
\end{proof}
The rest of this section is devoted to the proof of part $c)$ of Theorem \ref{tw:2}.
Due to the part $a)$ we may assume that $\lambda_2=s\lambda_1$.
\begin{lemma} \label{lem:Wall0}
	Consider the situation described in the part $c)$. Suppose that s is a simple reflection. Then the theorem holds, i.e.
	$$
	\mC(w,\lambda_1)=
	\mC(w,s\lambda_1)- (1+y)\mC(ws,s\lambda_1)\,.
	$$
\end{lemma}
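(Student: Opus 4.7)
The plan is to write both sides of the target identity as images of a single class in $K_\T(G/B)[y]$ under two different right twisted Demazure-Lusztig operators, and then reduce the lemma to an explicit one-line identity between these operators. (The reduction $\lambda_2 = s\lambda_1$ already built into the lemma statement is justified by part $a)$ together with the fact that $s\nabla_1 = \nabla_2$, since $s$ is the reflection in the wall $H_{\alpha_s,0}$ separating $\nabla_1$ and $\nabla_2$.)

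First, since $l(ws) < l(w)$ is equivalent to $l((ws)\cdot s) > l(ws)$, the already-established part $b)$ applied to the fixed point $ws$ (with the same reflection $s$) gives $\mC(ws, \lambda_1) = \mC(ws, s\lambda_1)$. Denote this common value by $m$. Next, Theorem \ref{tw:1} applied to $ws$ with the simple reflection $s$ yields $\mC(w, s\mu) = \DL{s, \mu}(\mC(ws, \mu))$ for an arbitrary slope $\mu$. Specializing $\mu = \lambda_1$ gives $\mC(w, s\lambda_1) = \DL{s, \lambda_1}(m)$, while specializing $\mu = s\lambda_1$ and using $s \cdot s\lambda_1 = \lambda_1$ gives $\mC(w, \lambda_1) = \DL{s, s\lambda_1}(m)$. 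The lemma therefore reduces to the operator identity $\DL{s, s\lambda_1} - \DL{s, \lambda_1} = -(1+y)\,\id$, evaluated on $m$.

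For the final step, observe that the $s^{\textsc r}$ term in the definition of $\DL{s, \lambda}$ is independent of $\lambda$ and drops out of the difference, so only the scalar identity term remains. The computation hinges on the two ceilings $\lceil -\langle \lambda_1, \alpha_s^\vee\rangle\rceil$ and $\lceil -\langle s\lambda_1, \alpha_s^\vee\rangle\rceil$: since $\nabla_1$ is adjacent to $\nabla_2$ across $H_{\alpha_s, 0}$ from the side where $\langle -, \alpha_s^\vee\rangle < 0$, one has $\langle \lambda_1, \alpha_s^\vee\rangle \in (-1, 0)$, so the two ceilings are $1$ and $0$ respectively. The quotient $\frac{(1+y)(\Ls^{1} - \Ls^{0})}{1 - \Ls}$ then collapses to $-(1+y)$, completing the verification. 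The only subtlety in the whole argument is this interval analysis for $\langle \lambda_1, \alpha_s^\vee\rangle$ — once it is settled, the rest is mechanical manipulation of the definitions.
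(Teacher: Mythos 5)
Your proof is correct and is, in substance, the same argument as the paper's: both apply Theorem \ref{tw:1} to $ws$ with the two slopes $\lambda_1$ and $s\lambda_1$, invoke part $b)$ to identify $\mC(ws,\lambda_1)=\mC(ws,s\lambda_1)$, and subtract. You merely package the subtraction as the operator identity $\DL{s,s\lambda_1}-\DL{s,\lambda_1}=-(1+y)\,\id$ (and you make explicit the interval argument $\langle\lambda_1,\alpha_s^\vee\rangle\in(-1,0)$, which the paper leaves implicit in the exponents $\Ls^0$ and $\Ls^1$ appearing in its two displayed equations).
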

\begin{proof} 
	Let $\Ls$ be the relative tangent bundle of the projection $G/B\to G/P_{s}$. Theorem \ref{tw:1} implies that
	\begin{align}
		\label{eq:1}
		\mC(w,\lambda_1)&=
		s^{\textsc r}\left(\mC(ws,s\lambda_1)\right) \cdot \frac{1+y\Ls^*}{1-\Ls}
		-\mC(ws,s\lambda_1) \cdot\frac{1+y}{1-\Ls}\,, \\
		\label{eq:2}
		\mC(w,s\lambda_1)&=
		s^{\textsc r}\left(\mC(ws,\lambda_1)\right) \cdot \frac{1+y\Ls^*}{1-\Ls}
		-\mC(ws,\lambda_1) \cdot\frac{(1+y)\cdot\Ls}{1-\Ls}\,.
	\end{align}
	Moreover, part $b)$ implies that $\mC(ws,\lambda_1)=\mC(ws,s\lambda_1)$.
	Subtracting equations \eqref{eq:1}--\eqref{eq:2} we obtain the desired equality.
\end{proof}
\begin{lemma} \label{lem:Wall2}
	Suppose $r,\tilde{s}\in W$ are reflections such that $s=r\tilde{s}r$, $r$ is simple and $r\neq \tilde{s}$. If part $c)$ of Theorem \ref{tw:2} holds for reflection $\tilde{s}$ then it also holds for $s$.
\end{lemma}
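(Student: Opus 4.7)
The plan is to pull back the claim for $s$ to the hypothesis for $\tilde{s}$ via the right Demazure-Lusztig operator for the simple reflection $r$. Put $v=wr$, $\mu_1 = r\lambda_1$, $\mu_2 = r\lambda_2$, and let $\tilde{\alpha}$ be the positive root of $\tilde{s}$. Since $r$ is simple and $r\neq\tilde{s}$, the root $\alpha:=r\tilde{\alpha}$ is positive and is exactly the positive root of $s$ (this is the content of the proof of proposition~\ref{lem:Wall1}). A direct calculation using $s=r\tilde{s}r$ gives $\mu_2=\tilde{s}\mu_1$, places $\mu_1,\mu_2$ in alcoves $r\nabla_1, r\nabla_2$ separated by $H_{\tilde{\alpha},0}$ with $\langle -,\tilde{\alpha}^\vee\rangle$ positive on $r\nabla_2$, and yields $v\tilde{s}=wsr$ via $r\tilde{s}=sr$. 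Proposition~\ref{lem:Wall1} with $q_1=r$, $q_2=\tilde{s}$, $q_3=s$ converts the assumption $l(ws)<l(w)$ into $l(v\tilde{s})<l(v)$, and $\mu_1,\mu_2$ are general enough since they lie in open alcoves.

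The inductive hypothesis (part $c)$ for $\tilde{s}$) applied to $v,\mu_1,\mu_2$ then reads
\[
\mC(v,\mu_1) = \mC(v,\mu_2) - (-y)^{(l(v)-l(wsr)-1)/2}(1+y)\,\mC(wsr,\mu_2). \qquad (*)
\]
The key move is to apply the single operator $\DL{r,\mu_1}=\DL{r,\mu_2}$ (justified below) to both sides of $(*)$. Three applications of proposition~\ref{pro:tw1}(2) convert each term: $\mC(v,\mu_i)$ becomes $(-y)^{(l(v)+1-l(w))/2}\mC(w,\lambda_i)$ for $i=1,2$, while $\mC(wsr,\mu_2)$ becomes $(-y)^{(l(wsr)+1-l(ws))/2}\mC(ws,\lambda_2)$. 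After cancelling the common factor $(-y)^{(l(v)+1-l(w))/2}$ (which is $1$ or $-y$) on the $\mC(w,\cdot)$ terms, the exponents telescope via $(l(v)-l(wsr)-1)/2 + (l(wsr)+1-l(ws))/2 = (l(wr)-l(ws))/2$, and the correction term collapses precisely to $(-y)^{(l(w)-l(ws)-1)/2}(1+y)\mC(ws,\lambda_2)$, which is the desired identity.

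The one genuinely non-trivial point is the identification $\DL{r,\mu_1}=\DL{r,\mu_2}$. By definition of the twisted Demazure-Lusztig operator this reduces to the equality of ceilings $\lceil\langle\lambda_1,\alpha_r^\vee\rangle\rceil=\lceil\langle\lambda_2,\alpha_r^\vee\rangle\rceil$ (after using $r\alpha_r^\vee=-\alpha_r^\vee$ to rewrite the ceilings of $-\langle\mu_i,\alpha_r^\vee\rangle$). From $\alpha=r\tilde{\alpha}$ with $\tilde{\alpha}\neq\alpha_r$ (since $\tilde{s}\neq r$), one sees $\alpha\neq\pm\alpha_r$, so no hyperplane $H_{\alpha_r,n}$ coincides with the wall $H_{\alpha,0}$ separating $\nabla_1,\nabla_2$; hence these adjacent alcoves lie on the same side of every $H_{\alpha_r,n}$, and $\langle\lambda_1,\alpha_r^\vee\rangle$, $\langle\lambda_2,\alpha_r^\vee\rangle$ lie in the same open integer interval. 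This small piece of alcove geometry is the main obstacle; once it is in place everything else is mechanical length-parity bookkeeping.
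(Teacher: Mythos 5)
Your proof is correct and follows essentially the same approach as the paper. The paper also transfers the claim to the word-shortened element $wr$, establishes the key identity $\DL{r,r\lambda_1}=\DL{r,rs\lambda_1}$ by showing $\lceil\langle\lambda_1,\alpha_r^\vee\rangle\rceil=\lceil\langle s\lambda_1,\alpha_r^\vee\rangle\rceil$ (using $\alpha\neq\alpha_r$), invokes proposition~\ref{lem:Wall1} for the length condition on $wr$, applies the inductive hypothesis for $\tilde s$, and then pushes back with $\DL{r,\cdot}$ while tracking powers of $(-y)$; the only cosmetic difference is that the paper first factors $\DL{r,r\lambda_1}$ out of the difference $\mC(w,\lambda_1)-\mC(w,s\lambda_1)$, whereas you apply the inductive hypothesis first and hit both sides with the operator, which are logically equivalent.
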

\begin{proof}
	Let
	\begin{align*}
		a&=\frac{1}{2}(l(w)-l(wr)-1)\,,&
		b&=\frac{1}{2}(l(wr)-l(wr\tilde{s})-1)\,,&
		c&=\frac{1}{2}(l(wr\tilde{s})-l(ws)+1)\,.
	\end{align*}
	Let $\alpha_r$ be the positive root defining the reflection $r$. We compute the difference using Corollary \ref{rem:tw1}
	$$\mC(w,\lambda_1)-\mC(w,s\lambda_1)
	=(-y)^a\left(\DL{r,r\lambda_1}\left(\mC(wr,r\lambda_1)\right)-\DL{r,rs\lambda_1}\left(\mC(wr,rs\lambda_1)\right)\right)\,.$$
	The exponents in the definition of $\DL{r,rs\lambda_1}$ and $\DL{r,r\lambda_1}$ (Definition \ref{df:DL}) are  equal to $\lceil -\langle r\lambda_1,\alpha_r\rangle\rceil$ and $\lceil -\langle rs\lambda_1,\alpha_r\rangle\rceil$ correspondingly.
	We have 
	$$-\langle r\lambda_1,\alpha_r^\vee\rangle=\langle \lambda_1,\alpha_r^\vee\rangle\,,
\qquad -\langle rs\lambda_1,\alpha_r^\vee\rangle=\langle s\lambda_1,\alpha_r^\vee\rangle\,.$$
	Since $\lambda_1$ and $s\lambda_1$ are separated from each other only by the wall $H_{\alpha,0}$ and $\alpha\neq\alpha_r$, thus
	$$\lceil\langle \lambda_1,\alpha_r^\vee\rangle\rceil=\lceil\langle s\lambda_1,\alpha_r^\vee\rangle\rceil\,,$$
	hence
	$$\lceil -\langle r\lambda_1,\alpha_r\rangle\rceil=\lceil -\langle rs\lambda_1,\alpha_r\rangle\rceil\,, \qquad\text{and}\qquad  \DL{r,r\lambda_1}=\DL{r,rs\lambda_1}\,.$$
	Therefore
	\begin{align*}
		\mC(w,\lambda_1)-\mC(w,s\lambda_1)
		=&(-y)^a\cdot\DL{r,r\lambda_1}\left(\mC(wr,r\lambda_1)-\mC(wr,\tilde{s}r\lambda_1)\right) \\
		=&(-1)\cdot(-y)^{a+b}\cdot(1+y)\cdot\DL{r,r\lambda_1}\left(\mC(wr\tilde{s},r\lambda_1)\right) \\
		=&(-1)\cdot(-y)^{a+b+c}\cdot(1+y)\cdot\mC(ws,\lambda_1)\,.
	\end{align*}
	The second equality follows from Theorem \ref{tw:2} for $\tilde{s}$ and the third from  Corollary \ref{rem:tw1}.
	Note that the assumption $l(wr)>l(wr\tilde{s})$ for c) is satisfied by Proposition \ref{lem:Wall1}. 
\end{proof}

\begin{proof}[Proof of the part c)]
	Choose $\sigma\in W$ of minimal length such that $s_0=\sigma^{-1}s\sigma$ is a simple reflection (cf. Proposition \ref{lem:Weyl}). Choose a reduced word decomposition $\underline{\sigma}$ of $\sigma$. Let $\sigma_{k} \in W$ be the product of the last $k$ letters of $\underline{\sigma}$. Let
	$$s_k=\sigma_{k} s_0 (\sigma_{k})^{-1}\,.$$
	We prove the theorem by induction on $k$. Lemma \ref{lem:Wall0} proves the inductive assumption for~$k=0$. Lemma \ref{lem:Wall2} proves the inductive step.
\end{proof}

\section{Upgrade from $\GL_n/B$ to $\End(\C^n)$}\label{sec:upgrade}
From now on (except the last section) we concentrate on the $A_n$-case. We show that the construction which we performed for the flag varieties can be lifted to the level of matrices.

\subsection{The Kirwan map}
In the study of the homogeneous spaces for $\SL_n$ it is convenient to consider $G=\GL_n$ instead of $\SL_n$.
Let $B$ be the standard Borel subgroup of $\GL_n$ consisting of the upper-triangular matrices.
Let $\T\subset \GL_n$ be the diagonal torus. Let $\alpha_1,\dots ,\alpha_n \in \ttt^*$ be the simple roots of $\SL_n$, i.e.
$$\alpha_k=(0,0,\dots,\underbrace{1,-1,}_{k,~~k+1}\dots,0)\in \R^n\simeq\ttt^* \,.$$
The root $\alpha_k$ corresponds to the simple reflection $s_k=(k,k+1)$. \\
 For $i\in \{1,2,\dots,n\}$ denote by $t_i$ the characters of $\T$ associated with coordinates. The equivariant K-theory of the point 
$$\KTh_\T(pt)=\Z[t_1^{\pm1},t_2^{\pm1},\dots, t_n^{\pm1}]$$
will be denoted by $\Z[\underline t^{\pm1}]$ for short.
The equivariant K-theory of the flag manifold 
$$\GL_n/B=\{V_0\subset V_1\subset\dots \subset V_n=\C^n\, :\,\dim V_i=i\text{ for }i\in \{0,1,\dots, n\}\}$$ 
is generated over $\Z[\underline t^{\pm1}]$ by the classes of the tautological line bundles $V_i/V_{i-1}$. Let $\underline x=\{ x_i\}_{i=1,2,\dots,n}$ be a set of variables.  The surjection
\begin{align*}\KTh_{\T\times\T}(\End(\C^n))\simeq\Z[\underline t^{\pm1},\underline x^{\pm1}]&\to \KTh_\T(\GL_n/B)\,,\\
x_i\;&\mapsto \;[V_i/V_{i-1}]\end{align*}
has a geometric interpretation. Let $\T^2$ be the product of tori and let $\underline t$ and $\underline x$ be the sets of coordinate characters. The first factor of $\TD$ acts on $\End(\C^n)$ by the left multiplication, the second factor acts by the right multiplication. The composition of the restriction map and the natural isomorphism
\begin{equation}\label{kirwan}\Z[\underline t^{\pm1},\underline x^{\pm1}]\simeq \KTh_{\TD}(\End(\C^n))\longrightarrow \KTh_{\TD}(\GL_n) \simeq \KTh_\T(\GL_n/B)\end{equation}
will be denoted by $\kappa$ and called the Kirwan map.
Let $L_{i,x}=\frac{x_{i+1}}{x_i}$ and $L_{i,t}=\frac{t_{i+1}}{t_i}$ for $i\in\{1,2,\dots,n-1\}$. We have \begin{align} \label{w:kirwan}
	\kappa(L_{i,t})=
	\LL(\alpha_i)\,, \qquad \kappa(L_{i,x})=
	\alpha^{-1}_i\,.
\end{align}
The bundle $\LL(\alpha_i)$ is the line bundle tangent  to the fibers of the elementary contraction $G/B\to G/P_i$. 
It is induced from the representation $\C_{-\alpha_i}$, see  Section \ref{notation}. For computation it is convenient to use the composition of $\kappa$ with the restriction to the fixed point set.  The composition is of the form
$$\overline\kappa:\Z[\underline t^{\pm 1},\underline x^{\pm 1},y]\to \KTh_\T((\GL_n/B)^\T[y])=\bigoplus_{\sigma\in S_n} \Z[\underline t^{\pm 1},y]\,,$$
$$f(\underline t,\underline x,y)\mapsto \bigoplus_{\sigma \in S_n}f(\underline t,\underline x,y)_{x_i:=t_{\sigma(i)}}\,.$$
Here $S_n$ denotes the group of permutations of $n$ elements, the Weyl group of $\GL_n$.
For a detailed description of the equivariant K-theory of the flag variety see e.g.~\cite{Uma}.

\subsection{Matrix Schubert varieties}
The action of $B\times B$ on $\End(\C^n)$ is given by 
$$(b_1,b_2)\cdot a=b_1ab_2^{-1}\,.$$
We identify $\End(\C^n)$ with the space of $n\times n$ matrices
 and we identify permutations with their matrices.  For $w\in S_n$ we denote by $\O_w$ the $B\times B$-orbit  of $w$. Let $\XX_w=\overline{\O_w}$ be its closure.  It is a maximal rank matrix Schubert variety. In general,
by the matrix Schubert variety we understand the closure of a $B\times B$ orbit in $\End(\C^n)$.
Let $pr:\GL_n\to \GL_n/B$ be the quotient map. Then 
$$\XX_w\cap \GL_n=pr^{-1}(X_w)\,.$$
We will discuss here only the maximal rank orbits. They are exactly the orbits of the permutation matrices. Matrix Schubert varieties and their characteristic classes in K-theory or cohomology of the vector space $\End(\C^n)$ were widely studied.
The coincidence of  Schubert polynomials with fundamental classes of matrix Schubert varieties was discovered in \cite{FR2}. Later, another proof based on the divided difference operators was given in \cite{AE}.
For more details see \cite[Chapter 15]{MillerSturmfels}.

\subsection{The lift of boundary divisors}
\label{divdef}
Let \hbox{$s_k=(k,k+1)$} be a simple reflection and let $w_0$ be the longest permutation. The Schubert variety $X_{w_0s_k}\subset \GL_n/B$ is of codimension one. By the Chevalley formula for a fractional character $\lambda\in\Hom(\T,\C^*)\otimes_\Z\Q\simeq\Q^n$ the line bundle $\LL(\lambda)$ can be written (nonequivariantly) in terms of Schubert divisors as
$$\LL(\lambda)=\O_{\GL_n/B}\left( \sum_{i=k}^{n-1}(\lambda_k-\lambda_{k+1}) X_{w_0s_k}\right)\,.$$
The analogous formula for the restriction of $\LL(\lambda)$ to Schubert variety $X_w$ is given e.g.~in \cite[Proposition 1.4.5]{BrionLec}.
We will repeat this construction in $\End(\C^n)$.
\medskip

For a matrix $a=\{a_{i,j}\}_{1\leq i,j\leq n}$ and $1\leq k\leq n$ let $$m_k(a)=\det\left(\{a_{i,j}\}_{1\leq i,j\leq k}\right)$$ be the first principal $k$-minor. Define 
\begin{equation}\label{defM}M_{w,k}=\{a\in \XX_w\;|\;m_k(w^{-1}a)=0\}\,.\end{equation}

\begin{rem}
The Schubert divisor $X_{w_0s_k}\subset \GL_n/B$  is pulled back from the Grassmannian $Gr_k(\C^n)$, hence it is the zero locus of a section of the bundle $\Lambda^kV_k$.
The divisor $M_{w_0,k}$ restricted to $\GL_n$ is the inverse image of the Schubert divisor in $X_{w_0s_k}$
$$M_{w_0,k}\cap\GL_n= pr^{-1}(X_{w_0s_k})\,.$$
The divisor $M_{w,k}$ is the restriction to $\XX_w$ of a left shift of $M_{w_0,k}$.
\end{rem}

\begin{pro} For $w\in S_n$ the divisors $M_{w,k}$ are contained in the boundary of $\XX_w$, i.e.~they do not intersect $\mathcal O_w$.\end{pro}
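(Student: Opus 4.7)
The plan is to take an arbitrary point $a \in \mathcal{O}_w$, write it as $a = b_1 w b_2^{-1}$ for some $b_1, b_2 \in B$, and show directly that $m_k(w^{-1}a) \neq 0$. Multiplying out, $w^{-1}a = (w^{-1}b_1 w)\, b_2^{-1}$. Denote $b_1' := w^{-1}b_1 w$. The first step is a factorization of the minor: because $b_2^{-1}$ is upper triangular, the top-left $k \times k$ block of the product $b_1' \cdot b_2^{-1}$ only involves the first $k$ columns of $b_1'$ and the top-left $k \times k$ block of $b_2^{-1}$. More precisely, the top-left $k \times k$ block of $b_1' b_2^{-1}$ equals the product of the top-left $k \times k$ blocks of $b_1'$ and $b_2^{-1}$. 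Taking determinants,
\begin{equation*}
m_k(w^{-1}a) = m_k(b_1') \cdot m_k(b_2^{-1}).
\end{equation*}

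The second factor is easy: $b_2^{-1} \in B$ is upper triangular and invertible, so $m_k(b_2^{-1})$ is the product of its first $k$ diagonal entries, all nonzero. For the first factor, one uses $(b_1')_{ij} = (b_1)_{w(i), w(j)}$, so $m_k(b_1')$ is the determinant of the submatrix of $b_1$ with row and column set $I = \{w(1), \ldots, w(k)\}$ (in that order). Reordering rows and columns by the same permutation to sort $I$ as $i_1 < i_2 < \cdots < i_k$ introduces two signs that cancel, so the determinant equals that of the symmetric submatrix $(b_1)_{I,I}$ in sorted order. Since $b_1$ is upper triangular, this sorted submatrix is itself upper triangular with diagonal entries $(b_1)_{i_j, i_j}$, all nonzero because $b_1$ is invertible. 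Hence $m_k(b_1') \neq 0$.

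Combining the two, $m_k(w^{-1}a) \neq 0$ for every $a \in \mathcal{O}_w$, which means $\mathcal{O}_w \cap M_{w,k} = \emptyset$, i.e.\ $M_{w,k} \subset \partial \mathcal{X}_w$. There is no real obstacle here; the only subtlety is keeping track of the role of upper-triangularity twice: once to factor the $k$-minor of a product along columns (because $b_2^{-1}$ is upper triangular) and once to reduce the determinant of a principal-indexed submatrix of $b_1$ to a diagonal product (the sorting argument for the index set $\{w(1),\ldots,w(k)\}$).
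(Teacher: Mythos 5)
Your proof is correct and follows essentially the same route as the paper: write $a = b_1 w b_2^{-1}$, reduce the claim to showing $m_k(w^{-1}b_1 w)\neq 0$ and $m_k(b_2^{-1})\neq 0$, and verify each using upper-triangularity. The only difference is that you fully spell out the two sub-facts (the block factorization of the leading $k$-minor under right multiplication by an upper-triangular matrix, and the sorting argument for the principal minor of $w^{-1}b_1 w$), which the paper records as unproved observations (its equations (8.5) and (8.6)).
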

\begin{proof}First let us note that for $a\in\End(\C^n)$, $b\in B$  we have
\begin{equation}\label{gal1}m_k(ab)=0\;\Longleftrightarrow\;m_k(a)=0\,.\end{equation}
Also for $b\in B$, $w\in S_n$
\begin{equation}\label{gal2}m_k(w^{-1}bw)\neq 0\,.\end{equation}
We show that  $m_k(w^{-1}a)\neq 0$ for $a=b_1wb_2^{-1}$, $b_1,b_2\in B$. Since
$$m_k(w^{-1}b_1wb_2^{-1})=0\;\Longleftrightarrow\;m_k(w^{-1}b_1w)=0$$
and $b_1\in B$ by (\ref{gal1}--\ref{gal2}) we obtain the claim.
\end{proof}

For a fixed sequence of rational numbers $\lambda=(\lambda_1,\lambda_2,\dots,\lambda_n)$ let 
\begin{equation}\label{dywizor}D_{w,\lambda}=\sum_{k=1}^n (\lambda_{k}-\lambda_{k+1})M_{w,k}\,.\end{equation}
We set for convenience $\lambda_{n+1}=0$. In this way  for $w=\id$ 
$$D_{\id,\lambda}=\dv\Big(\prod_{i=1}^n a_{ii}^{\lambda_i}\Big)\subset \XX_{\id}=\overline B\,.$$
Here $\dv(f)$ stands for the $\Q$-divisor of the formal expression with rational exponents. 
The divisor $D_{w,\lambda}$ is defined so that
\begin{align} \label{w:1}
	(D_{w,\lambda})_{|\GL_n\cap \XX_w}=pr^{*}(\Delta_{w,\lambda})\,.
\end{align}
See Section \ref{s:mC} for the definition of $\Delta_{w,\lambda}$.

\begin{ex} For $n=2$, $S_2=\{\id,s_1\}$
\begin{align*}D_{\id,\lambda}&=(\lambda_1-\lambda_2)\dv(a_{11})+ \lambda_2\dv\det\!\Bigl(\hbox{\footnotesize$\begin{matrix}a_{11}&\hskip-0.7em a_{12}\\0&\hskip-0.7em a_{22}\end{matrix}$}\Bigr)\\
&=\lambda_1\dv(a_{11})+ \lambda_2\dv(a_{22})\,,\\
 D_{s_1,\lambda}&=(\lambda_1-\lambda_2)\dv(a_{21})+\lambda_2\dv\det\!\Bigl(\hbox{\footnotesize$\begin{matrix}a_{11}&\hskip-0.7em a_{12}\\a_{21}&\hskip-0.7em a_{22}\end{matrix}$}\Bigr)\,.\end{align*}
The boundary of the open orbit consists of the degenerate or upper-triangular matrices
$$\partial \XX_{s_1}=\{a\in \End(\C^2)\;:\; \det(a)=0\;\vee\;a_{21}=0\}\,.$$
Consider the map resolving the singularity of the boundary 
 $$\mu:P_1\times_B\overline B\to \End(\C^2)\,.$$ In one of the standard affine charts it is given by the formula 
$$\mu(c,b_{11},b_{12},b_{22})=\begin{pmatrix}1&0\\c&1\end{pmatrix}\cdot \begin{pmatrix}b_{11}&b_{12}\\0&b_{22}\end{pmatrix}=
\begin{pmatrix}b_{11}&b_{12}\\c\,b_{11}&c\,b_{12}+b_{22}\end{pmatrix}\,.$$
Then
\begin{align*}\mu^*(D_{s_1,\lambda})&=\dv\big( (c\,b_{11})^{\lambda_1-\lambda_2}(b_{11}b_{22})^{\lambda_2}\big)
\\&=
 (\lambda_1-\lambda_2)\dv(c)+\lambda_1\dv(b_{11})+\lambda_2\dv(b_{22})\,.\end{align*}

\end{ex}

\section{Universal algebra} \label{s:universal}
We define an algebra acting on the ring of Laurent polynomials. We will show that the generating operations satisfy the same quadratic and braid relations, as those acting on the level of K-theory of $\GL_n/B$.
The described algebra can be compared with the algebra generated by the Demazure-Lusztig operators which computes the motivic Chern classes of matrix Schubert varieties. 
\subsection{The small algebra}\label{smallalgebra}
Let us define operations 
$$\Tfr{i}{a}\colon \Z[z_1^{\pm 1},z_2^{\pm 1},\dots,z_n^{\pm 1},y]\longrightarrow \Z[z_1^{\pm 1},z_2^{\pm 1},\dots,z_n^{\pm 1},y]$$ 
on Laurent polynomials in $z_1,z_2,\dots, z_n$. The operations depend on $i\in\{1,2,\dots,n-1\}$ and $a\in \Q$. For $f\in \Z[z_1^{\pm 1},z_2^{\pm 1},\dots,z_n^{\pm 1},y]$ let 
$$L_i=\frac{z_{i+1}}{z_i}$$
and
$$\Tfr{i}{a}(f)
=-\frac{(1+y) L_i^{\lceil a\rceil}\cdot f- \big(1+y\, L_i^{-1}
\big)\cdot s_i f}{1-L_i }
\,,$$
where 
$$s_i f(z_1,z_2,\dots, z_i,z_{i+1},\dots,z_n,y)=f(z_1,z_2,\dots, z_{i+1},z_i,\dots,z_n,y)\,.$$

If we set $z_i=z_{i+1}$ then the numerator is equal to 0, therefore, it is divisible by $z_i-z_{i+1}$, which is equivalent to divisibility in Laurent polynomials by $1-\frac{z_{i}}{z_{i+1}}$. Hence the operators $\Tfr{i}{a}$ transform  Laurent polynomials into Laurent polynomials. The operators $\Tfr{i}{a}$ can be treated as variants of the isobaric divided differences, see e.g.~\cite[Section 2.1]{RimanyiSzenes}. Another way of writing the operators~$\Tfr{i}{a}$ is the following
$$\Tfr{i}{a}(f)=
\frac{\big(1+y\,L_i^{-1}
\big)\cdot s_i f}{1-L_i }
+
\frac{(1+y) L_i^{\lceil a\rceil-1}\cdot f}{1-L_i^{-1} }
\,.$$
This form is adapted to the localization formula.
A slight modification of the operator~$\Tfr{i}{a}$ will be useful 
\begin{align*}\Tfl{i}{a}(f)&= -\frac{(1+y) L_i^{\lceil a\rceil}\cdot f-\big(1+y\,L_i
\big)\cdot s_i f}{1-L_i }\\
&=\frac{\big(1+y\,L_i
\big)\cdot s_i f}{1-L_i }
+
\frac{(1+y) L_i^{\lceil a\rceil-1}\cdot f}{1-L_i^{-1} }
\,.\end{align*}
It is elementary to note that
$$\Tfl{i}{a}((1+y\,L_i)f)=(1+y\,L_i)~\Tfr{i}{a}(f)\,,$$
i.e. $\Tfl{i}{a}$ differs from $\Tfr{i}{a}$ by the conjugation with $(1+y\,L_i)$.

The properties described below hold for both $\Tfr{i}{a}$ and $\Tfl{i}{a}$, therefore we will simply use the notation $\Tf{i}{a}$.

\begin{lemma}[Quadratic relation]\label{quadratic}
Suppose $a\notin \Z$, then
$$\Tf{i}{-a}\circ \Tf{i}{a}=-y \,\id\,.$$
If $a\in \Z$, then
$$\Tf{i}{1-a}\circ \Tf{i}{a}=-y \,\id\,.$$
\end{lemma}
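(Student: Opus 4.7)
The plan is to treat $\Tf{i}{a}$ uniformly as an operator of the shape $A\cdot s_i + B_a$, where $A$ and $B_a$ are multiplication operators by rational functions of $L_i$ and $y$. More precisely, using the localization form displayed above the lemma, I would set
$$B_a=\frac{(1+y)\,L_i^{\lceil a\rceil-1}}{1-L_i^{-1}}\,,\qquad A^{\textsc r}=\frac{1+y\,L_i^{-1}}{1-L_i}\,,\qquad A^{\textsc l}=\frac{1+y\,L_i}{1-L_i}\,.$$
The paper already notes the conjugation relation $\Tfl{i}{a}\circ M = M\circ \Tfr{i}{a}$ with $M$ the multiplication by $1+yL_i$; since the desired relation is of the form $\Tf{i}{b}\circ \Tf{i}{a}=c\cdot\id$ with scalar $c=-y$, conjugation preserves it (after passing to the fraction field, and the result is forced to live in the polynomial ring because both sides do). So it suffices to prove the claim for $\Tfr{i}{a}$.

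Next I would expand the composition using $s_i^2=\id$ and the commutation $s_i\cdot F = s_i(F)\cdot s_i$ (for any Laurent polynomial $F$):
$$\Tfr{i}{b}\circ\Tfr{i}{a}=A\cdot s_i(A)\;+\;\bigl(A\cdot s_i(B_a)+B_b\cdot A\bigr)\cdot s_i\;+\;B_b B_a\,.$$
Thus the lemma reduces to the two scalar identities
$$(\ast)\quad A\cdot s_i(A)+B_b B_a=-y\,,\qquad (\ast\ast)\quad s_i(B_a)+B_b=0\,.$$

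The key combinatorial ingredient is that in both regimes of the lemma one has $\lceil a\rceil+\lceil b\rceil=1$. Indeed, if $a\notin\Z$ and $b=-a$, then $\lceil a\rceil=\lfloor a\rfloor+1$ and $\lceil -a\rceil=-\lfloor a\rfloor$; if $a\in\Z$ and $b=1-a$, then $\lceil a\rceil+\lceil 1-a\rceil=a+(1-a)=1$. Granted this, $(\ast\ast)$ is immediate: $s_i(B_a)=\frac{(1+y)L_i^{1-\lceil a\rceil}}{1-L_i}=-\frac{(1+y)L_i^{\lceil b\rceil}}{1-L_i}=-\frac{(1+y)L_i^{\lceil b\rceil-1}}{1-L_i^{-1}}\cdot\frac{L_i(1-L_i^{-1})}{1-L_i}\cdot(-1)^{-1}$, which after a one-line simplification is $-B_b$.

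For $(\ast)$ I would write $u=L_i$ and compute:
$$A\cdot s_i(A)=\frac{(1+yu^{-1})(1+yu)}{(1-u)(1-u^{-1})}=-\frac{(u+y)(1+uy)}{(u-1)^2}\,,$$
while $B_b B_a=\dfrac{(1+y)^2\,u^{\lceil a\rceil+\lceil b\rceil}}{(u-1)^2}=\dfrac{(1+y)^2\,u}{(u-1)^2}$. Adding and expanding,
$$-(u+y)(1+uy)+(1+y)^2 u=-y(u-1)^2\,,$$
so the sum equals $-y$. The only genuine obstacle is the ceiling bookkeeping of step three — once the clean identity $\lceil a\rceil+\lceil b\rceil=1$ is in hand, everything collapses to elementary algebra in $\Q(u,y)$.
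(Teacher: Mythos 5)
Your argument is correct and is essentially the paper's own computation from Proposition~\ref{pro:kwadrat1}: expand the composition and verify that the coefficient of the reflection vanishes while the scalar coefficient equals $-y$; your unifying observation $\lceil a\rceil+\lceil b\rceil=1$ handles both regimes of the lemma at once, and the conjugation reduction from $\Tfl{i}{a}$ to $\Tfr{i}{a}$ is valid. One small slip: in your chain for $(\ast\ast)$ the equality $\frac{(1+y)L_i^{1-\lceil a\rceil}}{1-L_i}=-\frac{(1+y)L_i^{\lceil b\rceil}}{1-L_i}$ has a spurious minus sign (since $1-\lceil a\rceil=\lceil b\rceil$ the two sides match without it); the needed sign appears only when the denominator is converted to $1-L_i^{-1}$, yielding $s_i(B_a)=-B_b$ as claimed.
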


\begin{proof}The proof is identical as in Proposition \ref{pro:kwadrat}.\end{proof}

\begin{rem} For a formula involving the composition $\Tf{i}{b}\circ \Tf{i}{a}$ for arbitrary $a,b\in \Q$ see Theorem \ref{kwogolny}.\end{rem}

The operators $\Tf{i}{a}$ satisfy the  braid relation with parameters:

\begin{lemma}\label{universal_braid} For $\lambda_1,\lambda_2,\lambda_3\in \Q$ we have
$$\Tf{i}{\lambda_3 - \lambda_2}\circ \Tf{i+1}{\lambda_3 - \lambda_1}\circ \Tf{i}{\lambda_2 - \lambda_1}= 
    \Tf{i+1}{\lambda_2 - \lambda_1}\circ \Tf{i}{ \lambda_3 - \lambda_1}\circ \Tf{i+1}{\lambda_3 - \lambda_2}\,.$$
\end{lemma}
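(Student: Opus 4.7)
The plan is a direct algebraic verification after two preliminary reductions. First, each operator $\Tf{i}{a}$ or $\Tf{i+1}{a}$ acts by the identity in all variables except $z_i, z_{i+1}, z_{i+2}$, so both sides of the claimed identity act trivially outside these three variables, and we may restrict attention to $n=3$, $i=1$. Second, the right and left variants $\Tfr{i}{a}$ and $\Tfl{i}{a}$ differ only by the substitution $L_i \leftrightarrow L_i^{-1}$ inside the ``swap'' coefficient, so an argument for one transfers formally (after the obvious involution on the $z_j$'s) to the other; I focus on $\Tfr{}{}$.

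Writing $\Tfr{i}{a} = A_i(a)\cdot \id + B_i \cdot s_i$, with explicit rational functions $A_i(a) = -\frac{(1+y)L_i^{\lceil a\rceil}}{1-L_i}$ and $B_i = \frac{1+yL_i^{-1}}{1-L_i}$, the triple composition on each side expands into $2^3 = 8$ terms. Collecting by the elements of $\langle s_1, s_2\rangle = S_3$ yields an expression of the form $\sum_{\sigma \in S_3} C_\sigma \cdot \sigma$ on each side, so the braid identity reduces to six equalities of rational functions $C_\sigma^{LHS} = C_\sigma^{RHS}$ in $\Q(z_1,z_2,z_3,y)$, parametrized by the three integer exponents $m_1 = \lceil \lambda_2 - \lambda_1 \rceil$, $m_2 = \lceil \lambda_3 - \lambda_2 \rceil$, and $m_{12} = \lceil \lambda_3 - \lambda_1 \rceil$. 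These can be verified by clearing denominators and comparing Laurent polynomial coefficients; in particular the $\sigma \in \{s_1, s_2\}$ and $\sigma \in \{s_1 s_2, s_2 s_1\}$ coefficients follow directly from the classical (untwisted, $a = 0$) braid relation for Lusztig's operators, while the $\sigma = \id$ and $\sigma = s_1 s_2 s_1$ coefficients require using the constraint $(\lambda_3-\lambda_2) + (\lambda_2-\lambda_1) = (\lambda_3-\lambda_1)$ and simplifying.

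The main obstacle is that $m_{12}$ is not a linear function of $m_1, m_2$: the ceiling function enforces $m_{12} \in \{m_1 + m_2,\, m_1 + m_2 - 1\}$, depending on whether the fractional parts of $\lambda_3 - \lambda_2$ and $\lambda_2 - \lambda_1$ sum to at most $1$ or exceed $1$, so the verification naturally splits into two cases. A conceptually cleaner organization, mirroring proposition \ref{pro:defcor}(5), is to decompose $\Tfr{i}{a} = \Tfr{i}{0} + \frac{(1+y)(1 - L_i^{\lceil a\rceil})}{1 - L_i}\cdot \id$; then the untwisted part supplies the classical Lusztig braid relation, and one only needs to check that the correction terms indexed by the three ceilings $m_1, m_2, m_{12}$ cancel correctly in each of the two cases. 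Alternatively, one may invoke the geometric interpretation: under the Kirwan map $\overline\kappa$ the operator $\Tfr{i}{a}$ maps to $\DL{s_i,\lambda}$, which satisfies the braid relation by proposition \ref{pro:braid}, and since the coefficients on both sides of the asserted identity are rational functions depending polynomially on the integer exponents $m_j$, agreement for generic $\lambda$ extends to all integer values.
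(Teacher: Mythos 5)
Your primary method---restrict to $n=3$, $i=1$; split into two cases depending on whether $\lceil\lambda_3-\lambda_1\rceil$ equals $\lceil\lambda_3-\lambda_2\rceil+\lceil\lambda_2-\lambda_1\rceil$ or that minus one; verify the resulting rational-function identity---is exactly the paper's first proof, and the organizational devices you suggest (expanding over $S_3$, isolating the untwisted part via $\Tf{i}{a}=\Tf{i}{0}+\text{correction}$) are sensible ways to carry out the ``elementary although lengthy'' check the paper alludes to.

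Two caveats about the secondary claims. First, $\Tfl{i}{a}$ and $\Tfr{i}{a}$ are conjugate via multiplication by $1+yL_i$, as the paper records just before the lemma, rather than related by an $L_i\leftrightarrow L_i^{-1}$ substitution; and in either case the conjugating factor depends on $i$, so the braid relation for one family does not follow formally from the braid relation for the other by a single conjugation or substitution---one simply reruns the same computation with the other swap coefficient. Second, your alternative geometric argument has a genuine gap: the identity $\kappa\circ\Tcr{i}{\lambda}=\DL{s_i,\lambda}\circ\kappa$ together with the braid relation for the twisted Demazure--Lusztig operators does not by itself yield the braid relation for $\Tcr{i}{\lambda}$, because $\kappa$ is not injective. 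The paper's alternative proof supplies the missing injectivity by composing $\kappa$ with restriction to the fixed point $[\id]\in\GL_3/B$, which sends $x_i\mapsto t_i$ and is therefore injective on the subring $\Z[z_1^{\pm1},z_2^{\pm1},z_3^{\pm1},y]$. The ``agreement for generic $\lambda$ extends to all integer values'' reasoning you invoke is not the missing step---proposition~\ref{pro:braid} already holds for all fractional $\lambda$---and it does not repair the injectivity issue.
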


\begin{rem}The vector $\lambda=(\lambda_1,\lambda_2,\lambda_3)$ can be understood as a weight for the maximal torus of $\GL_3$, then the above formula coincides with the formula from Example \ref{przyklad_sl3}.
\end{rem}

\begin{proof} The formula can be checked by hand (or rather using computer algebra software). We can  assume that $n=3$, $i=1$. Since $\lceil \lambda_i-\lambda_j\rceil$ appears in the formulas, one is led to consider various cases depending on the fractional parts. Setting $a=\lambda_1-\lambda_2$ and $b=\lambda_2-\lambda_3$ it is enough to check two cases assuming for $a,b\in \Z$
$$\Tf{1}{ a}\circ \Tf{2}{ a + b}\circ \Tf{1}{ b}=
   \Tf{2}{b}\circ \Tf{1}{ a + b}\circ \Tf{2}{a}$$
and
$$\Tf{1}{ a}\circ \Tf{2}{ a + b+1}\circ \Tf{1}{ b}=
   \Tf{2}{b}\circ \Tf{1}{ a + b+1}\circ \Tf{2}{ a}\,.$$
Verification is an elementary, although lengthy, calculation.
\medskip

There is an alternative method of the proof, not demanding direct calculation, but relying on the fact that the Demazure-Lusztig operators generate an action of the Hecke algebra on the K-theory of $\GL_n/B$. 
It will be presented after Theorem \ref{podniesienie}.
\end{proof}

\subsection{Operators with parameters in $\ttt^*_n\subset\mathfrak{gl}_n^*$}
 \label{abstract_algebra}
\begin{adf}\label{df:univTcr}
	For $\lambda\in \ttt^*_n$ we define the operator $\Tcr{i}{\lambda}$ acting on $\Z[\underline x^{\pm1},y]$
$$\Tcr{i}{\lambda}=\Tfr{i}{-\langle\lambda,\alpha^\vee_i\rangle}\,,$$
after the substitution $x_i=z_i$. We extend the action of the operator $\Tcr{i}{\lambda}$ to 
$$\Tcr{i}{\lambda}\colon\Z[\underline t^{\pm 1},\underline x^{\pm 1},y]\to\Z[\underline t^{\pm 1},\underline x^{\pm 1},y]\,,$$
linearly with respect to $\Z[\underline t^{\pm 1}]$.
Explicitly for $f\in \Z[\underline x^{\pm 1},\underline t^{\pm 1},y]$ we have
\begin{equation}\label{opTR}\Tcr{i}{\lambda}(f)=\frac{(1+y)
   \big(\frac{x_i}{x_{i+1}}\big)
   {}^{1-\left\lceil \lambda_{i+1}-\lambda_i\right\rceil }
   }{1-\frac{x_i}{x_{i+1}}} f+
   \frac{
   \big(1+y\frac{x_i
   }{x_{i+1}}\big)
   }{1-\frac{x_{i+1}}{x_i}} s_i^xf\,,
   \end{equation}
where the reflection $s_i^x$ acts on $x$-variables, switching $x_i$ with $x_{i+1}$. 
\end{adf}

\begin{rem} The assignment $\lambda\mapsto \Tcr{i}{\lambda}$ is constant on the alcoves in $\ttt^*$.\end{rem}

Directly from Lemmas \ref{quadratic} and \ref{universal_braid} we obtain:  
\begin{pro}\label{uniwersalny_braid}
For a generic $\lambda$ we have 
$$\Tcr{i}{\lambda}\circ \Tcr{i}{s_i\lambda}=-y \,\id\,.$$
and for arbitrary $\lambda$
$$\Tcr{i}{s_{i+1}s_{i}\lambda}\circ \Tcr{i+1}{s_i \lambda}\circ \Tcr{i}{\lambda}= 
 \Tcr{i+1}{s_{i}s_{i+1}\lambda}\circ \Tcr{i}{s_{i+1} \lambda}\circ \Tcr{i+1}{\lambda}\,.$$
\end{pro}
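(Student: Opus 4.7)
The plan is to reduce both identities to the already-established scalar statements of Lemma~\ref{quadratic} and Lemma~\ref{universal_braid}, by carefully translating the parameters. Recall that $\Tcr{i}{\lambda}$ is by construction the $\Z[\underline t^{\pm 1}]$-linear extension of the scalar operator $\Tfr{i}{-\langle\lambda,\alpha_i^\vee\rangle}$ acting on $\Z[\underline x^{\pm1}, y]$. Since both relations to be proven are $\Z[\underline t^{\pm 1}]$-linear and the $\underline t$-variables are left untouched by every $\Tcr{\bullet}{\bullet}$, it is enough to verify them on $\Z[\underline x^{\pm 1},y]$. After substituting $x_i = z_i$, the composition of any two (resp.\ three) operators $\Tcr{j}{\mu}$ becomes a composition of operators $\Tf{j}{a}$ with explicit parameters $a = -\langle \mu,\alpha_j^\vee\rangle$, and the claim becomes purely about matching those parameters.

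For the quadratic relation I would use the identity $\langle s_i\lambda,\alpha_i^\vee\rangle=\langle\lambda,s_i\alpha_i^\vee\rangle=-\langle\lambda,\alpha_i^\vee\rangle$. Setting $a:=-\langle\lambda,\alpha_i^\vee\rangle$, this gives $-\langle s_i\lambda,\alpha_i^\vee\rangle=-a$, so $\Tcr{i}{\lambda}\circ\Tcr{i}{s_i\lambda}$ specializes to $\Tfr{i}{a}\circ\Tfr{i}{-a}$. Under the genericity assumption on $\lambda$ we have $a\not\in\Z$, and Lemma~\ref{quadratic} yields $-y\,\id$, as desired.

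For the braid relation I would write $\lambda = (\lambda_1,\ldots,\lambda_n)$ and track the coordinates through $s_i$ and $s_{i+1}$, using that $s_i$ acts on $\ttt^*$ by swapping $\lambda_i$ and $\lambda_{i+1}$ (and similarly for $s_{i+1}$). Computing $-\langle \mu,\alpha_j^\vee\rangle$ for each of the six weights $\mu \in \{\lambda, s_i\lambda, s_{i+1}s_i\lambda, \lambda, s_{i+1}\lambda, s_is_{i+1}\lambda\}$ and each corresponding index $j$, the parameters on the left hand side, read right to left, turn out to be $(\lambda_{i+1}-\lambda_i,\ \lambda_{i+2}-\lambda_i,\ \lambda_{i+2}-\lambda_{i+1})$, and the parameters on the right hand side come out as $(\lambda_{i+2}-\lambda_{i+1},\ \lambda_{i+2}-\lambda_i,\ \lambda_{i+1}-\lambda_i)$. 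Setting $\lambda_1':=\lambda_i,\ \lambda_2':=\lambda_{i+1},\ \lambda_3':=\lambda_{i+2}$ in Lemma~\ref{universal_braid} matches both sides term by term, so the universal braid relation immediately upgrades to the $\Tcr{\bullet}{\bullet}$-relation.

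The only non-routine step is the bookkeeping of the six parameters; since the definition of $\Tcr{i}{\lambda}$ contains a ceiling function hidden inside $\Tfr{i}{a}$, one must be sure that no case distinction beyond what Lemma~\ref{universal_braid} already handles is introduced by the Weyl group action. This is automatic: both sides share the same multiset of differences $\{\lambda_i-\lambda_j\}$ of the coordinates, so the two expressions are literal applications of Lemma~\ref{universal_braid} with the same input, and no further analysis is required.
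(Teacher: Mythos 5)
Your proof is correct and takes essentially the same approach as the paper, which simply states ``Directly from lemmas \ref{quadratic} and \ref{universal_braid} we obtain:''. Your explicit parameter bookkeeping ($a=-\langle\lambda,\alpha_i^\vee\rangle$ reducing the quadratic relation to $\Tfr{i}{a}\circ\Tfr{i}{-a}$, and the six parameters matching $\lambda_1'=\lambda_i,\ \lambda_2'=\lambda_{i+1},\ \lambda_3'=\lambda_{i+2}$ in lemma \ref{universal_braid}) is exactly the verification the paper leaves implicit.
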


We define the left operator using $\Tcl{i}{\lambda}$  acting on $t$--variables. 

\begin{adf}\label{df:univTcl}
		For $\lambda\in \ttt^*_n$ we define the operator $\Tcl{i}{\lambda}$ acting on $\Z[\underline t^{\pm1},y]$
	$$\Tcl{i}{\lambda}=\Tfl{i}{\langle\lambda,\alpha^\vee_i\rangle}\,,$$
	after the substitution $t_i=z_i$. We extend the action of the operator $\Tcl{i}{\lambda}$ to 
	$$\Tcl{i}{\lambda}\colon\Z[\underline t^{\pm 1},\underline x^{\pm 1},y]\to\Z[\underline t^{\pm 1},\underline x^{\pm 1},y]\,,$$
	linearly with respect to $\Z[\underline x^{\pm 1}]$.
	Explicitly for $f\in \Z[\underline x^{\pm 1},\underline t^{\pm 1},y]$ we have
\begin{equation}
	\label{opTL}\Tcl{i}{\lambda}(f)=\frac{(1+y)
		\big(\frac{t_i}{t_{i+1}}\big)
		{}^{1-\lceil\lambda_{i}-\lambda_{i+1}\rceil
	}}{1-\frac{t_i}{t_{i+1}}}f+\frac{
		\big(1+y\frac{t_{i+1}}{t_i}\big)}{1-\frac{t_{i+1}}{t_i}} s_i^t(f)\,.
\end{equation}
	where the reflection $s_i^t$ acts on $t$-variables, switching $t_i$ with $t_{i+1}$.
\end{adf}

Directly from Lemmas \ref{quadratic} and \ref{universal_braid}  
we obtain braid and quadratic relations for $\Tcl{i}{\lambda}$ operators.
\begin{pro}\label{universalny_braid_l}For a generic $\lambda$ we have 
$$\Tcl{i}{s_i\lambda}\circ \Tcl{i}{\lambda}=-y \,\id\,.$$
and for arbitrary $\lambda$
$$\Tcl{i}{s_{i+1}s_i\lambda}\circ \Tcl{i+1}{s_i \lambda}\circ \Tcl{i}{\lambda}= 
 \Tcl{i+1}{s_is_{i+1}\lambda}\circ \Tcl{i}{s_{i+1} \lambda}\circ \Tcl{i+1}{\lambda}\,.$$
\end{pro}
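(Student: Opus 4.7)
The plan is to reduce both identities directly to the small-algebra statements in lemmas \ref{quadratic} and \ref{universal_braid}. By Definition \ref{df:univTcl}, $\Tcl{i}{\lambda}$ agrees on $\Z[\underline t^{\pm 1},y]$ with $\Tfl{i}{\langle\lambda,\alpha_i^\vee\rangle}$ (after the relabelling $t_j=z_j$) and is extended $\Z[\underline x^{\pm 1}]$-linearly. Since the $x$-variables are frozen, every composition of $\Tcl{i}{\lambda}$-operators on the two sides of the identities is literally a composition of operators of the form $\Tfl{i}{a}$ acting on the $t$-variables. The whole task is therefore to verify that the parameters appearing in these compositions match those in the small-algebra lemmas.

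For the quadratic relation, $W$-invariance of the pairing gives $\langle s_i\lambda,\alpha_i^\vee\rangle=\langle\lambda,s_i\alpha_i^\vee\rangle=-\langle\lambda,\alpha_i^\vee\rangle$. Setting $a:=\langle\lambda,\alpha_i^\vee\rangle$, the genericity assumption forces $a\notin\Z$, so lemma \ref{quadratic} yields $\Tfl{i}{-a}\circ\Tfl{i}{a}=-y\,\id$, which is precisely the claim.

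For the braid relation, write $\lambda=(\lambda_1,\dots,\lambda_n)$ in coordinates and use $\langle\mu,\alpha_j^\vee\rangle=\mu_j-\mu_{j+1}$. A short calculation of the permuted coordinates shows that, reading right-to-left, the three factors on the left-hand side carry parameters $\lambda_i-\lambda_{i+1}$, $\lambda_i-\lambda_{i+2}$, $\lambda_{i+1}-\lambda_{i+2}$, while the three factors on the right-hand side carry $\lambda_{i+1}-\lambda_{i+2}$, $\lambda_i-\lambda_{i+2}$, $\lambda_i-\lambda_{i+1}$. Introducing auxiliary parameters $\mu_1,\mu_2,\mu_3$ with $\mu_2-\mu_1=\lambda_i-\lambda_{i+1}$ and $\mu_3-\mu_2=\lambda_{i+1}-\lambda_{i+2}$ (so that $\mu_3-\mu_1=\lambda_i-\lambda_{i+2}$) puts both sides into the exact form appearing in lemma \ref{universal_braid}, which concludes the argument.

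No significant obstacle is expected: the proof is a bookkeeping reduction to the already-established small-algebra relations. The only point demanding attention is the sign convention relating the left Weyl-group action $s_i$ to the pairing with $\alpha_i^\vee$, but this is absorbed entirely into lemma \ref{universal_braid}, whose own verification is a direct rational-function computation (or, alternatively, an argument through the Hecke action on $K_\T(\GL_n/B)$ as the authors indicate).
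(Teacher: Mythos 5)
Your proof is correct and takes essentially the same route as the paper: the paper's proof is simply the one-line observation that the relations follow directly from lemmas \ref{quadratic} and \ref{universal_braid} via $\Tcl{i}{\lambda}=\Tfl{i}{\langle\lambda,\alpha_i^\vee\rangle}$ acting on the $t$-variables, and your write-up spells out the parameter bookkeeping (which is carried out correctly) that the paper leaves implicit.
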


\begin{ex}
	For $n=2$ and $\lambda=(\lambda_1,\lambda_2)$
	we have
	\begin{multline*}\Tcr{1}{\lambda}(f)(t_1,t_2,x_1,x_2,y)=
		\\=
		\frac{(1+y)
			\big(\frac{x_1}{x_2}\big)
			{}^{1-\left\lceil \pmb{\color{purple}\lambda_2-\lambda_1}\right\rceil }
		}{1-\frac{x_1}{x_2}}f\left(t_1,t_2,x_1,x_2,y\right)+\frac{
			1+y{\color{purple}\pmb{\frac{x_1
					}{x_2}}}}{1-\frac{x_2}{x_1}}
		f\left(t_1,t_2,x_2,x_1,y\right)\,,\end{multline*}
	while 
	\begin{multline*}\Tcl{1}{\lambda}(f)(t_1,t_2,x_1,x_2,y)=
		\\=\frac{(1+y)
			\big(\frac{t_1}{t_2}\big)
			{}^{1-\left\lceil \pmb{\color{purple}\lambda_1-\lambda_2}\right\rceil }}{1-\frac{t_1}{t_2}}
		f\left(t_1,t_2,x_1,x_2,y\right)
		+
		\frac{1+y\pmb{\color{purple}\frac{t_2
				}{t_1}}}{1-\frac{t_2}{t_1}}f\left(t_2,t_1,x_1,x_2,y\right)\,.\end{multline*}
\end{ex}

\subsection{Comparison with $\KTh_\T(\GL_n/B_n)$}

The main reason why we introduced the operators $\Tcr{i}{\lambda}$ and $\Tcl{i}{\lambda}$ is the following theorem:

\begin{atw}\label{podniesienie} The operators $\Tcr{i}{\lambda}$ are  lifts of the operators $\DL {s_i,\lambda}$ and the operators  $\Tcl{i}{\lambda}$ are  lifts of the operators $\DLL {s_i,\lambda}$:
\[\begin{matrix}\begin{tikzcd}
    \Z[\underline t^{\pm 1},\underline x^{\pm 1},y]
    \arrow[r,"\Tcr{i}{\lambda}"]\arrow[d,"\kappa"'] & \Z[\underline t^{\pm 1},\underline x^{\pm 1},y]\arrow[d,"\kappa"] \\
    \KTh_\T(\GL_n/B)[y]\arrow[r,"\DL {s_i,\lambda}"]&\KTh_\T(\GL_n/B)[y],
\end{tikzcd}
&~~&
\begin{tikzcd}
    \Z[\underline t^{\pm 1},\underline x^{\pm 1},y]
    \arrow[r,"\Tcl{i}{\lambda}"]\arrow[d,"\kappa"'] & \Z[\underline t^{\pm 1},\underline x^{\pm 1},y]\arrow[d,"\kappa"] \\
    \KTh_\T(\GL_n/B)[y]\arrow[r,"\DLL {s_i,\lambda}"]&\KTh_\T(\GL_n/B)[y].
\end{tikzcd}
\end{matrix}
\]
\end{atw}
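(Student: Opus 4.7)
The plan is to verify both commutative squares via fixed-point localization. The restriction map $K_\T(\GL_n/B)[y] \hookrightarrow \bigoplus_{\sigma \in S_n} \Z[\underline t^{\pm 1}, y]$ is injective, and its composition with $\kappa$ is precisely the map $\overline\kappa$ recalled above. Hence it suffices to show, for every fixed point $\sigma$, that
\[
\overline\kappa\bigl(\Tcr{i}{\lambda}(f)\bigr)_\sigma = \DL{s_i,\lambda}(\kappa f)|_\sigma
\quad\text{and}\quad
\overline\kappa\bigl(\Tcl{i}{\lambda}(f)\bigr)_\sigma = \DLL{s_i,\lambda}(\kappa f)|_\sigma.
\]

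For the right operator, I will first record two dictionary entries. At a fixed point $\sigma$, $\Ls|_\sigma = t_{\sigma(i+1)}/t_{\sigma(i)} = \overline\kappa(L_{i,x})_\sigma$, reflecting the identification $\kappa(L_{i,x}) = \Ls$ inherent in \eqref{w:kirwan}. Moreover, $s_i^{\textsc r}(\kappa f)|_\sigma = \kappa(f)|_{\sigma s_i} = \overline\kappa(s_i^x f)_\sigma$, since the right action of $s_i$ shifts $\sigma$ to $\sigma s_i$, which amounts to swapping $x_i$ and $x_{i+1}$ before substitution. Plugging these into the defining formula for $\DL{s_i,\lambda}$ and comparing with the evaluation of \eqref{opTR} at $x_j = t_{\sigma(j)}$, the two expressions agree term by term. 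The only algebraic manipulation needed is the elementary identity
\[
\frac{(1+y)\,L^{\lceil a\rceil - 1}}{1 - L^{-1}} = -\frac{(1+y)\,L^{\lceil a\rceil}}{1 - L},
\]
which bridges the two equivalent forms of $\DL{s_i,\lambda}$ listed in definition \ref{df:DL}, with parameter $a = -\langle\lambda,\alpha_i^\vee\rangle = \lambda_{i+1} - \lambda_i$.

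The left operator is handled in parallel. Now the role of $\Ls$ is played by the $\sigma$-independent character $\alpha_i^{-1} = t_{i+1}/t_i = \overline\kappa(L_{i,t})_\sigma$, and the left Weyl action translates as $s_i^{\textsc l}(\kappa f)|_\sigma = \overline\kappa(s_i^t f)_\sigma$. This last identity follows from the formula $s_i^{\textsc l}(x)|_\sigma = s_i(x|_{s_i\sigma})$: the inner restriction substitutes $x_j \mapsto t_{s_i\sigma(j)}$, and the outer $s_i$ acting on $t$-variables cancels the inner $s_i$ inside the substitutions while swapping $t_i \leftrightarrow t_{i+1}$ in the ambient coefficients of $f$, jointly equivalent to first applying $s_i^t$ to $f$ and then substituting $x_j = t_{\sigma(j)}$. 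Taking $a = \langle\lambda,\alpha_i^\vee\rangle = \lambda_i - \lambda_{i+1}$ and using the analogous bridge identity, the comparison with \eqref{opTL} is again a direct verification.

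The main potential obstacle is purely notational: tracking dualizations ($L$ versus $L^{-1}$), the direction of the ceiling shift ($\lceil a\rceil$ versus $1-\lceil a\rceil$), and the distinction between the right action, which permutes $x$-variables without touching the $t$-coefficients, and the left action, which permutes $t$-variables inside $f$. Once the bookkeeping is set up consistently, no further geometric input beyond \eqref{w:kirwan} and the localization isomorphism is needed.
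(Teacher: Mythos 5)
Your proof is correct and takes essentially the same approach as the paper: both verify the two commutative squares by observing that the Kirwan map $\kappa$ intertwines $s_i^x$ with $s_i^{\textsc r}$, intertwines $s_i^t$ with $s_i^{\textsc l}$, and sends $L_{i,x}\mapsto\Ls$ and $L_{i,t}\mapsto\alpha_i^{-1}$, then matching the defining formulas term by term. The paper states these compatibilities directly and leaves the comparison implicit, whereas you spell out the verification at each fixed point $\sigma$ via $\overline\kappa$; this is a more explicit rendering of the same argument, not a different route. (You also correctly use $\kappa(L_{i,x})=\Ls$ and $\kappa(L_{i,t})=\alpha_i^{-1}$, which is what the geometry dictates and what the paper's own proof uses, even though formula \eqref{w:kirwan} as printed has the two swapped.)
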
  

\begin{proof}
We compare the equations \eqref{opTR} and \eqref{opTL}  with Definition \ref{df:DL}. We use the fact that the Kirwan map commutes with both Weyl group actions, i.e.
$$\kappa\circ s_i^x= s^{\textsc r}_{\alpha_i}\circ\kappa\,, \qquad \kappa\circ s_i^t= s^{\textsc l}_{\alpha_i}\circ\kappa\,,$$
and  formula \eqref{w:kirwan}, i.e.
$$\kappa(L_{i,t})=\kappa\left(\frac{t_{i+1}}{t_i}\right)=\alpha_i^{-1}\,, \qquad
\kappa(L_{i,x})=\kappa\left(\frac{x_{i+1}}{x_i}\right)=\LL(\alpha_i)\,.
$$
\end{proof}

\begin{rem}We have four different types of operators and we denote  them using different fonts: 
\begin{enumerate}[label=(\roman*)]

\item $\DL{s,a}$ and $\DLL{s,a}$ act on the K-theory $\KTh_\T(G/B)[y]$ (Definitions \ref{df:DL}.1 and \ref{df:DL}.3). The parameter $a$ is a rational number. Real parameter makes sense as well. 
\item $\DL{s,\lambda}$ and $\DLL{s,\lambda}$ act on the K-theory $\KTh_\T(G/B)[y]$ for a semisimple group $G$, (Definitions \ref{df:DL}.2 and \ref{df:DL}.4). The group $\GL_n$ is allowed as well. The parameter $\lambda$ is a weight of the fixed maximal torus $\T\subset B$ and $s$ is a simple reflection.
\item $\Tfr{i}{a}$ and $\Tfl{i}{a}$ act on Laurent polynomials or rational functions, $a\in \Q$ is a parameter (Section \ref{smallalgebra}). 
\item $\Tcr{i}{\lambda}$ and $\Tcl{i}{\lambda}$ act on Laurent polynomials or rational functions, $\lambda\in \ttt^*=\R^n$ is a fractional character of the standard maximal torus in $\GL_n$ (Definitions \ref{df:univTcr} and \ref{df:univTcl}).
\end{enumerate}
\end{rem}
\begin{ex}\label{ex:zlystart}
Let $n=2$, 
and consider the composition of $\kappa$ with the restriction to the fixed points
$$\overline\kappa:\Z[t_1^{\pm 1},t_2^{\pm 1},x_1^{\pm 1},x_2^{\pm 1},y]\to \KTh_\T((\GL_2/B)^\T[y])=\bigoplus_{\sigma\in S_2} \Z[t_1^{\pm 1},t_2^{\pm 1},y]\,,$$
$$f(t_1,t_2,x_1,x_2,y)\mapsto (\,f(t_1,t_2,t_1,t_2,y)\,,\,f(t_1,t_2,t_2,t_1,y)\,)\,.$$
The polynomial 
$f_0=1-\tfrac{x_1}{t_2}$ is sent to $(1-\tfrac{t_1}{t_2},0)$, which is the restriction of the class of the point $X_{\id}$.
Applying $\Tcr{1}{\lambda}$ we obtain
$$\frac{(1+y) 
   \big(\frac{x_1}{x_2}\big){}^{1-\lceil \lambda_2-\lambda_1\rceil
   }}{1-\frac{x_1}{x_2}}\big(1-\frac{x_1}{t_2}\big)+
\frac{1+y\frac{x_1 }{x_2}}{1-\frac{x_2}{x_1}}\big(1-\frac{x_2}{t_2}\big)\,.$$
After restriction to the fixed point set we arrive at the formula for the twisted motivic Chern class of $\C\subset \PP^1$
$$\big(\,(1+y) \big(\tfrac{t_1}{t_2}\big){}^{1-\lceil \lambda_2-\lambda_1\rceil}\,,
\,1+y\tfrac{t_2 }{t_1}\,\big)\,.$$
By Theorem \ref{podniesienie} and the inductions given in Theorem \ref{tw:1}
$$\kappa(\Tcr{1}{\lambda}(f_0))=\DL {s_1,\lambda}(\kappa(f_0))=\mC(s_1,s_1\lambda)\,.$$
On the other hand applying $\Tcl{1}{s_1\lambda}$ we obtain
$$\frac{(1+y)
   \big(\frac{t_1}{t_2}\big)
   {}^{1-\left\lceil {\lambda_2-\lambda_1}\right\rceil }}{1-\frac{t_1}{t_2}}\left(1-\tfrac{x_1}{t_2}\right)
+\frac{1+y {\frac{t_2
   }{t_1}}
   }{1-\frac{t_2}{t_1}}\left(1-\tfrac{x_1}{t_1}\right
   )\,.$$
After restriction to the fixed point and simplifying we obtain
  the twisted motivic Chern class of $\C\subset \PP^1$
$$\big(\,(1+y) \big(\tfrac{t_1}{t_2}\big){}^{1-\lceil \lambda_2-\lambda_1\rceil}\,,
\,1+y\tfrac{t_2 }{t_1}\,\big)\,,$$
which  by Theorem \ref{tw:L1} is equal to
$$\kappa(\Tcl{1}{s_1\lambda}(f_0))=\DLL{s_1,s_1\lambda}(\kappa(f_0))=\mC(s_1,\lambda)\,.$$
Although we have equality 
$$\kappa(\Tcr{1}{\lambda}(f_0))=\kappa(\Tcl{1}{s_1\lambda}(f_0))\,,$$ but one can check that $$\Tcr{1}{\lambda}(f_0)\neq\Tcl{1}{s_1\lambda}(f_0)\,.$$
This discrepancy can be repaired by a different choice of the initial polynomial $f_0$.
A better choice for the beginning of the induction is proposed in the next section.
\end{ex}
	
\begin{proof}[Alternative proof of Lemma \ref{universal_braid}]
We know that the braid relations hold in $\KTh_\T(\GL_n/B)$, see Proposition \ref{pro:braid} and Example \ref{przyklad_sl3}. Take $n=3$ and apply the Kirwan map \eqref{kirwan}  composed with restriction to the fixed point corresponding to $[\id]\in\GL_3/B$
\begin{multline*}\Z[z_1^{\pm1},z_2^{\pm1},z_3^{\pm1},y]\stackrel{z_i\mapsto x_i}\hookrightarrow \KTh_{\T\times \T}(\End(\C^3))[y]\stackrel{\kappa}\longrightarrow \\ 
\stackrel{\kappa}\longrightarrow \KTh_\T(\GL_3/B)[y]
\stackrel{\iota^*}\longrightarrow \KTh_\T(pt)[y]=\Z[t_1^{\pm1},t_2^{\pm1},t_3^{\pm1},y]\,.\end{multline*}
The composition sends the variables $x_i$ to the equivariant variables $t_i$, therefore the map
$$\Z[z_1^{\pm1},z_2^{\pm1},z_3^{\pm1},y]\to \KTh_\T(\GL_3/B)[y]$$
is injective. 
The operators $\Tcr{i}{\lambda}$ are lifts of the twisted Demazure-Lusztig operators.
Hence the braid relations hold in $\Z[z_1^{\pm1},z_2^{\pm1},z_3^{\pm1},y]$.
\end{proof}

\section{Comparison with the matrix Schubert varieties} \label{s:matrixSchubert}
We will show that the purely algebraic definition of the operator $\Tcr{i}{\lambda}$ and $\Tcr{i}{\lambda}$ have geometric meaning. They describe how the twisted motivic Chern classes change when we pass from $\XX_w$ to $\XX_{ws}$ or to $\XX_{sw}$. We start with untwisted classes and reformulate the results of \cite{RudnickiW}.
\subsection{Corollaries from the work on square-zero matrices}

 A relation between motivic Chern classes of invariant subvarieties of a $G$-manifold and motivic Chern classes of  quotients is studied in  \cite[Section 8]{FRWproc}.  In our situation we consider $\GL_n/B$ as a quotient $\End(\C^n)/\!/B$ (quotient by the right action).
Let $$\BB=\mC(B\subset \overline B)$$
with $\T$ acting on $B$ by conjugation, expressed in $x$-variables.
Here $\overline B$ is the closure of $B$ in $\End(\C^n)$. The closure $\overline B$ is the vector space consisting of upper-triangular matrices. 
In terms of the coordinate torus characters 
\begin{equation}\label{eq:BB}\BB=(1+y)^n\prod_{1\leq i<j\leq n}(1+y\tfrac{x_j}{x_i})\in \KTh_\T(\overline B)[y]=
\Z[\underline x,y]
\,.\end{equation}
It follows that
\begin{pro}\cite[Theorems 8.9 and 8.12]{FRWproc}\label{dzielenie0} For a   subvariety $Y\subset  \End(\C^n)$ which is invariant with respect to the right $B$ action and left $\T$ action
$$\mC((Y\cap \GL_n)/B\subset \GL_n/B)=\kappa(\BB^{-1}\mCD(Y\subset \End(\C^n)))\,,$$
where $\kappa$ is the Kirwan map \eqref{kirwan}. 
\end{pro}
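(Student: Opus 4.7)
The plan is to deduce the formula from the general quotient formulas for motivic Chern classes of $B$-invariant subvarieties developed in \cite[theorems 8.9 and 8.12]{FRWproc}. Geometrically, $\pi\colon \GL_n \to \GL_n/B$ is a principal right $B$-bundle and the hypothesis on $Y$ gives $Y \cap \GL_n = \pi^{-1}((Y \cap \GL_n)/B)$. One therefore expects a multiplicative relation
$$\mCD(Y\cap\GL_n \subset \GL_n) = \BB \cdot \pi^* \mC((Y\cap\GL_n)/B \subset \GL_n/B),$$
in which the factor $\BB$ is the motivic Chern class of the typical fiber $B$ sitting as an open subset of the linear slice $\overline B$.

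First I would restrict $\mCD(Y \subset \End(\C^n))$ to the open $\T^2$-invariant subset $\GL_n \subset \End(\C^n)$. By additivity applied to the $\T^2$-equivariant decomposition $Y = (Y \cap \GL_n) \sqcup (Y \setminus \GL_n)$, this restriction equals $\mCD(Y\cap\GL_n \subset \GL_n)$, which under the identification $K_{\T^2}(\GL_n) \simeq K_\T(\GL_n/B)$ underlying the Kirwan map is precisely $\kappa(\mCD(Y \subset \End(\C^n)))$. Second, I would apply \cite[theorems 8.9 and 8.12]{FRWproc} to rewrite $\mCD(Y\cap\GL_n \subset \GL_n)$ as $\BB$ times the lift (via $\pi^*$) of $\mC((Y\cap\GL_n)/B \subset \GL_n/B)$. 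The explicit shape $\BB = (1+y)^n \prod_{i<j}(1 + y\, x_j/x_i)$ is then forced by the normalization axiom together with the $\T$-weight decomposition of the affine space $\overline B$ under conjugation, combined with the additivity formula applied to the open embedding $B \hookrightarrow \overline B$.

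The main obstacle is the rigorous justification of the multiplicative step in the equivariant setting, where the two tori interact nontrivially: the left $\T$ acts on the base $\GL_n/B$ while the right copy of $\T$ sits inside $B$, acts along the fibre, and is gauged away in the quotient; matching these two actions through the Kirwan identification is exactly what prevents the relation from being a one-line consequence of the multiplicativity of $\mC$ under trivial fibrations. This matching is the content of \cite[theorems 8.9 and 8.12]{FRWproc}, which one invokes as a black box. Once that is in hand, dividing through by $\BB$ and reading the result on the $K_\T(\GL_n/B)[y]$ side yields the claimed identity.
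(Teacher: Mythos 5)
Your proposal matches the paper's own treatment: Proposition \ref{dzielenie0} is established there by the same black-box citation of \cite[theorems 8.9 and 8.12]{FRWproc}, with the surrounding formal steps (restriction to the open $\T^2$-invariant $\GL_n\subset\End(\C^n)$, then the Kirwan identification $K_{\T^2}(\GL_n)\simeq K_\T(\GL_n/B)$) organized exactly as you describe. For context: the paper observes that the FRW result is phrased in terms of Segre classes --- which is precisely why the correction factor $\BB^{-1}$ appears --- and it additionally supplies a self-contained proof of the twisted generalization (proposition \ref{dzielenie}) via fixed-point localization along the local slices $\sigma N_-\subset\GL_n$, an approach that sidesteps the black box.
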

The formulation in \cite{FRWproc} is given in terms of Segre classes. That is why we divide the motivic Chern class by $\BB$. We will give an alternative proof,  which is valid for twisted motivic Chern classes as well, see Proposition \ref{dzielenie}.
\medskip

We can identify the space $\End(\C^n)$ with the subspace of square-zero upper-triangular matrices of the dimension $2n\times 2n$ of the block form {\footnotesize $\begin{pmatrix}0\ a\\0\,\ 0\end{pmatrix}$}.
The results of \cite{RudnickiW} for $B$-orbits of square-zero matrices apply. In particular the motivic Chern classes (for the trivial slope) can be computed inductively. We translate that result to our situation. 
The Euler class of   
$\End(\C^n)$ is equal to
$$\EE=\prod_{i=1}^n\prod_{j=1}^n(1-\tfrac{x_{j}}{t_i})\,.$$
The localized motivic Chern class of a $\T^2$--invariant subvariety $Y\subset \End(\C^n)$ is defined
as the quotient
$$\mClocD(Y\subset\End(\C^n))=\EE^{-1}\mCD(Y\subset\End(\C^n))\,.$$
By \cite[Corollary 5.4]{RudnickiW} for $i\in\{1,2,\dots,n-1\}$
\begin{equation}\label{lewy}\mClocD(\O_{s_iw}\subset \End(\C^n))=A_i^{t}(\mClocD(\O_{w}\subset \End(\C^n)))\,,\end{equation}
\begin{equation}\label{prawy}\mClocD(\O_{ws_i}\subset \End(\C^n))=A_i^x(\mClocD(\O_{w}\subset \End(\C^n)))\,.\end{equation}
The operations $A_i^t$ and $A_i^x$ are defined in a more general context of square zero matrices  by a uniform formula\footnote{Setting $t_{n+i}:=x_i$ we do not have to distinguish between $t$--variables and $x$--variables. Instead, in \cite{RudnickiW}, the superscript is used to indicate that we deal with K-theory.}. In our situation
$$A_i^t(f)=\frac{(1+y)\tfrac{t_{i}}{t_{i+1}}}{1-\tfrac{t_{i}}{t_{i+1}}}f+\frac{1+y\tfrac{t_{i+1}}{t_{i}}}{1-\tfrac{t_{i+1}}{t_{i}}}s^t_if\,,$$
$$A_i^x(f)=\frac{(1+y)\tfrac{x_{i}}{x_{i+1}}}{1-\tfrac{x_{i}}{x_{i+1}}}f+\frac{1+y\tfrac{x_{i+1}}{x_{i}}}{1-\tfrac{x_{i+1}}{x_{i}}}s^x_if$$
for $i\in\{1,2,\dots ,n-1\}$. Above, the reflections $s^t_i$  act on $t$--variables and  $s^x_i$ on $x$--variables.

Since $\EE$ is symmetric with respect to both sets of variables $\{t_i\}_{i=1,2,\dots,n}$ and $\{x_i\}_{i=1,2,\dots,n}$ separately and $\mCD(-)=\EE\cdot\mClocD(-)$, the localized classes in the formulas (\ref{lewy}--\ref{prawy}) can be replaced by the usual motivic Chern classes.
The operation $A_i^t$ coincides with $\Tcl{i}{0}$. It commutes with multiplication by $\BB$, since $\BB$ is expressed only by $x$-variables. On the other hand $A_{i}^x$ differs from $\Tcr{i}{0}$ by the conjugation with $1+y\frac{x_{i+1}}{x_i}$. 
For any $f$ we have
$$\Tcr{i}{0}(\BB^{-1} f)=\BB^{-1}A_{i}^x( f)\,. $$
Therefore we obtain
\begin{pro} Let $1\leq i\leq n-1$. If $l(s_iw)>l(w)$, then 
$$\BB^{-1}\mCD(\O_{s_iw}\subset \End(\C^n))=\Tcl{i}{0}\left(\BB^{-1}\mCD(\O_{w}\subset \End(\C^n))\right)\,.$$
If $l(ws_i)>l(w)$, then 
$$\BB^{-1}\mCD(\O_{ws_i}\subset \End(\C^n))=\Tcr{i}{0}\left(\BB^{-1}\mCD(\O_{w}\subset \End(\C^n))\right)\,.$$
\end{pro}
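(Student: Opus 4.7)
The plan is to invoke the recursive formulas \eqref{lewy}--\eqref{prawy} established in \cite{RudnickiW} as the geometric input. These give, under the appropriate length hypotheses,
\begin{align*}
\mCD(\O_{s_iw}\subset\End(\C^n))&=A_i^t\bigl(\mCD(\O_w\subset\End(\C^n))\bigr), \\
\mCD(\O_{ws_i}\subset\End(\C^n))&=A_i^x\bigl(\mCD(\O_w\subset\End(\C^n))\bigr),
\end{align*}
so the task reduces to comparing $A_i^t$ and $A_i^x$ with $\Tcl{i}{0}$ and $\Tcr{i}{0}$ after conjugation by $\BB^{-1}$.

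For the left recursion, substituting $\lambda=0$ into Definition~\ref{df:univTcl} yields $\lceil\lambda_i-\lambda_{i+1}\rceil=0$, which makes \eqref{opTL} coincide verbatim with the defining formula for $A_i^t$; hence $\Tcl{i}{0}=A_i^t$ as operators on $\Z[\underline t^{\pm 1},\underline x^{\pm 1},y]$. Since $\BB$ is a Laurent polynomial in the $x$-variables only by \eqref{eq:BB}, it is fixed by $s_i^t$, so $A_i^t$ commutes with multiplication by $\BB^{-1}$. Multiplying \eqref{lewy} on the left by $\BB^{-1}$ yields the first identity.

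The right recursion requires a short additional computation because $\Tcr{i}{0}$ and $A_i^x$ are not literally equal: inspecting \eqref{opTR} against the formula for $A_i^x$, one sees that their reflection coefficients differ by the factor $(1+y\tfrac{x_i}{x_{i+1}})/(1+y\tfrac{x_{i+1}}{x_i})$. The key lemma is the conjugation identity
$$\Tcr{i}{0}(\BB^{-1}f)=\BB^{-1}A_i^x(f),$$
which I would establish by computing $s_i^x(\BB)/\BB$ directly from \eqref{eq:BB}. All factors of $\BB$ indexed by pairs $(k,l)$ with $\{k,l\}\neq\{i,i+1\}$ are permuted among themselves by $s_i^x$, so only the factor from the pair $(i,i+1)$ survives in the ratio, giving $s_i^x(\BB)/\BB = (1+y\tfrac{x_i}{x_{i+1}})/(1+y\tfrac{x_{i+1}}{x_i})$. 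Plugging this into the explicit two-term formula for $\Tcr{i}{0}(\BB^{-1}f)$ and simplifying produces exactly $\BB^{-1}A_i^x(f)$. Combining with \eqref{prawy} gives the second identity.

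The main obstacle is purely bookkeeping: verifying the $s_i^x$-action on the product \eqref{eq:BB} and the resulting conjugation identity. In fact this mismatch is a manifestation of the conjugation by $(1+yL_{i,x})$ that already relates $\Tfl{i}{a}$ and $\Tfr{i}{a}$ in general, so the calculation is essentially forced once the two operators are written side by side; no new geometric argument is required beyond the results of \cite{RudnickiW}.
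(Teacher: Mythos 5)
Your proof is correct and follows essentially the same route as the paper: invoke the recursions of \cite{RudnickiW}, identify $A_i^t=\Tcl{i}{0}$ (which trivially commutes with $\BB^{-1}$ since $\BB$ involves only $x$-variables), and reduce the right identity to the conjugation relation $\Tcr{i}{0}(\BB^{-1}f)=\BB^{-1}A_i^x(f)$. The only cosmetic difference is that you verify this last relation by directly computing $s_i^x(\BB)/\BB=(1+y\tfrac{x_i}{x_{i+1}})/(1+y\tfrac{x_{i+1}}{x_i})$ from the product \eqref{eq:BB}, whereas the paper points to the general conjugation $\Tfl{i}{a}((1+yL_i)f)=(1+yL_i)\Tfr{i}{a}(f)$ from section~\ref{smallalgebra}; the two observations are equivalent.
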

Setting $f_w=\BB^{-1}\mCD(\O_{ws_i}\subset \End(\C^n))$ we obtain the following corollary:
\begin{cor}There exist rational functions $f_w$ for $w\in S_n$, such that
\begin{enumerate}
\item $$f_\id=\BB^{-1}\prod_{i=1}^n \big((1+y)\tfrac{x_i}{t_i}\big)
\prod_{1\leq i<j\leq n}\big(1+y\tfrac{x_j}{t_i}\big)\prod_{1\leq j<i\leq n}\big(1-\tfrac{x_j}{t_i}\big)\,.$$
\item if $l(ws_i)>l(w)$, then
$$f_{ws_i}=\Tcr{i}{0}(f_w)\,.$$
\item if $l(s_iw)>l(w)$, then
$$f_{s_iw}=\Tcl{i}{0}(f_w)\,.$$
\item for each permutation $\sigma\in S^n$ after the substitution $x_i=t_{\sigma(i)}$ the function $f_w$ specializes to $\mC(w,0)_{|\sigma}$, i.e.
$$\kappa(f_w)=\mC(w,0)\,. $$ 
\end{enumerate}
\end{cor}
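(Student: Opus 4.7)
The plan is to set $f_w:=\BB^{-1}\mCD(\O_w\subset \End(\C^n))$ for each $w\in S_n$ and verify properties (1)--(4) in turn. Properties (2) and (3) are immediate from the proposition stated just before the corollary, which translates the recursions \eqref{lewy}--\eqref{prawy} of \cite{RudnickiW} into the identities
$$\BB^{-1}\mCD(\O_{s_iw}\subset \End(\C^n))=\Tcl{i}{0}\bigl(\BB^{-1}\mCD(\O_{w}\subset \End(\C^n))\bigr)$$
and the analogous equality with $\Tcr{i}{0}$ and $\O_{ws_i}$, under the length conditions $l(s_iw)>l(w)$ and $l(ws_i)>l(w)$ respectively. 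So (2) and (3) require no further argument.

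For property (4) I would invoke proposition \ref{dzielenie0} with $Y=\O_w$. Since $\O_w=BwB$ consists of invertible matrices, we have $\O_w\cap \GL_n=\O_w$, and its image under the quotient $\GL_n\to \GL_n/B$ is exactly the Schubert cell $X_w^\oo$. Hence
$$\kappa(f_w)=\kappa\bigl(\BB^{-1}\mCD(\O_w\subset \End(\C^n))\bigr)=\mC(X_w^\oo\subset \GL_n/B)=\mC(w,0),$$
the last equality being the remark (stated before theorem \ref{tw:otoczki}) identifying the untwisted class with the motivic Chern class of the Schubert cell.

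Property (1) is the only place where an explicit calculation is needed. Here $\O_\id=B$, and $\End(\C^n)$ admits a $\T^2$-equivariant product decomposition into the subspaces of strictly upper-triangular, diagonal, and strictly lower-triangular matrices, under which $B$ corresponds to $N^+\times(\C^*)^n\times\{0\}$. Multiplicativity of the motivic Chern class over $\T^2$-equivariant products then reduces the computation to three elementary one-dimensional contributions: on a weight-$\chi$ line one has $\mC(\C\subset \C)=1+y\chi^{-1}$ for the upper block, $\mC(\C^*\subset \C)=(1+y)\chi^{-1}$ for the diagonal, and $\mC(\{0\}\subset \C)=1-\chi^{-1}$ for the lower block. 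With the convention (fixed by compatibility with \eqref{w:kirwan}) that the matrix entry $a_{ij}$ carries $\T^2$-weight $t_i/x_j$, assembling these contributions over the three blocks and dividing by $\BB$ reproduces exactly the formula claimed in (1).

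The only obstacle of any substance is fixing and verifying the convention on the $\T^2$-weights of the matrix entries, which is forced by \eqref{w:kirwan}; once this is done, the corollary reduces to bookkeeping combined with the preceding proposition, and no braid or quadratic identities need to be invoked.
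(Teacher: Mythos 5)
Your proof is correct and follows the same route as the paper: you take $f_w:=\BB^{-1}\mCD(\O_w\subset \End(\C^n))$, get (2)--(3) from the recursion proposition immediately preceding the corollary, get (4) from proposition \ref{dzielenie0} applied to $Y=\O_w$, and compute (1) directly from the $\T^2$-equivariant cell decomposition of $B\subset\End(\C^n)$ via multiplicativity. The weight convention $a_{ij}\mapsto t_i/x_j$ and the three elementary classes $(1+y)\chi^{-1}$, $1+y\chi^{-1}$, $1-\chi^{-1}$ are all correctly assigned, so no gap remains.
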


\begin{rem}The function $f_w$ differs from the weight function of \cite{RTV}, which does not satisfy the property (2). 
The weight function is related to a different presentation of the flag variety: either as the quiver variety associated with $A_n$-quiver or as the quotient $\Hom(\C^{n-1},\C^n)/B_{n-1}$.
See the explanation in \cite[Section 7]{RudnickiW}.\end{rem}

\subsection{Twisted motivic Chern classes of matrix Schubert varieties}
We will describe the twisted motivic Chern classes of $\XX_w$ with respect to a family of divisors $D_{w,\lambda}$ which we defined in Section \ref{divdef}.
Let us introduce a notation for the images of the twisted motivic Chern classes in $\KTh_{\T^2}(\End(\C^n))[y]$. Let
$$\mCm(w,\lambda)=\widehat\iota_{w*}\mCD(\XX_w,\partial \XX_w;D_{w,\lambda}) \in\Z[\underline{x}^\pm,\underline{t}^\pm,y]\,,$$
where $\widehat\iota_w:\XX_w\to \End(\C^n)$ is the embedding. 

First we generalize Proposition \ref{dzielenie0} to the case of twisted classes of Schubert varieties: 
\begin{pro}\label{dzielenie} With the notation as above and the notation of Definition \ref{mc-defndf}
 
$$\kappa\left(\BB^{-1}\mCm(w,\lambda)\right)=\mC(w,\lambda)\,.$$
\end{pro}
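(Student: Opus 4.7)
The plan is to generalize proposition~\ref{dzielenie0} (i.e.\ \cite[theorems~8.9, 8.12]{FRWproc}) to the twisted setting by tracking the divisor $D_{w,\lambda}$ through a compatible resolution. The untwisted statement already pairs $\BB^{-1}\mCm(\XX_w)$ on the matrix side with $\mC(X_w^\oo \subset \GL_n/B)$ on the flag side through the principal $B$-bundle $pr\colon\GL_n\to\GL_n/B$; the twisted statement amounts to checking that $D_{w,\lambda}$ descends correctly to $\Delta_{w,\lambda}$ under this identification.

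Concretely, I would fix a reduced word $\wu$ of $w$ and consider the Bott--Samelson resolution $p_\wu\colon Z_\wu\to X_w$. From it I would build a resolution of $\XX_w$ as the associated $\overline B$-bundle,
$$\tilde Z_\wu \;:=\; \bigl(Z_\wu\times_{\GL_n/B}\GL_n\bigr)\times^B \overline B,$$
with $B$ acting on $\GL_n$ on the right (principal bundle structure) and on $\overline B$ on the left. The map $[z,g,b]\mapsto g\cdot b$ defines a $\TD$-equivariant projective birational morphism $\tilde p_\wu\colon \tilde Z_\wu\to\XX_w$, and the projection $q\colon \tilde Z_\wu\to Z_\wu$ is a Zariski-locally trivial $\overline B$-bundle that restricts to a $B$-bundle over $Z_\wu^\oo$. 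Since $\overline B\setminus B=\bigcup_i\{x_i=0\}$ is SNC in the smooth fiber $\overline B$ and $\partial Z_\wu$ is SNC in $Z_\wu$, the total boundary $\partial \tilde Z_\wu=\tilde p_\wu^{-1}(\partial \XX_w)$ is SNC in $\tilde Z_\wu$.

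Next I would split the pullback $\tilde p_\wu^*D_{w,\lambda}$ into horizontal and vertical parts. By \eqref{w:1}, over $\XX_w\cap \GL_n$ the divisor $\tilde p_\wu^*D_{w,\lambda}$ coincides with $q^*p_\wu^*\Delta_{w,\lambda}$. The remaining components are supported over $\overline B\setminus B$ and, thanks to the left-translation trick in \eqref{defM}, restrict in each $\overline B$-fiber to $D_{\id,\lambda}=\dv\bigl(\prod_i x_i^{\lambda_i}\bigr)$. Because the horizontal and vertical components meet transversely in $\tilde Z_\wu$, proposition~\ref{pro:round} yields the clean decomposition
$$\lceil\tilde p_\wu^*D_{w,\lambda}\rceil \;=\; q^*\lceil p_\wu^*\Delta_{w,\lambda}\rceil \;+\; (\text{fiber contribution from }D_{\id,\lambda}).$$
Applying definition~\ref{df:twistedmC} to $\tilde p_\wu$ and using multiplicativity of $\mC$ on locally trivial SNC products, the integrand factors into a base factor pulled back from $(Z_\wu,\partial Z_\wu;p_\wu^*\Delta_{w,\lambda})$ and a fiber factor computing $\mC(\overline B,\overline B\setminus B;D_{\id,\lambda})$. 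The fiber push produces exactly $\BB$ together with the monomial $\prod_i x_i^{\lceil\lambda_i\rceil}$, which $\kappa$ absorbs correctly via \eqref{w:kirwan}; the horizontal push through $p_\wu$ delivers, by definition~\ref{mc-defndf}, the lift of $\mC(w,\lambda)$ to $\Z[\underline t^{\pm1},\underline x^{\pm1},y]$. Dividing by $\BB$ and applying the Kirwan map gives $\kappa(\BB^{-1}\mCm(w,\lambda))=\mC(w,\lambda)$.

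The main obstacle is verifying the clean horizontal/vertical splitting of $\tilde p_\wu^*D_{w,\lambda}$: one must check that the $q$-fiber of each $M_{w,k}$ is precisely the hypersurface in $\overline B$ cut out by the vanishing of the $k$-th principal minor, so that the vertical part of $D_{w,\lambda}$ equals $D_{\id,\lambda}$ fiberwise. Once this geometric input and the Bott--Samelson compatibility of $\tilde Z_\wu$ are in place, the rest reduces to multiplicativity of motivic Chern classes on locally trivial SNC products, proposition~\ref{pro:round}, and naturality of the Kirwan map.
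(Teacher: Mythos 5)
Your approach is genuinely different from the paper's. The paper proves the statement by localization at fixed points: it uses the $\T$-equivariant slice $\sigma N_-\subset \GL_n$ at each fixed point $\sigma$, trivializes the bundle $pr\colon\GL_n\to\GL_n/B$ over that slice, and invokes the product formula \cite[proposition~4.7(ii)]{KonW} to split $\mCm(w,\lambda)$ locally as $pr^*\mC(w,\lambda)\boxtimes\mC(\id_B)$. Your approach instead builds a global SNC resolution of $\XX_w$ over the Bott--Samelson variety; in fact your $\tilde Z_\wu$ is canonically isomorphic to the paper's left resolution $\ZL\wu=P_{i_1}\times_B\cdots\times_B P_{i_l}\times_B\overline B$ from section~\ref{sec:matrix_resolution}, and your horizontal/vertical divisor decomposition is exactly proposition~\ref{krotnosc} there. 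This is a legitimate alternative route, and it has the advantage of making the boundary combinatorics explicit, which the paper only needs later for theorem~\ref{indukcja_macierzowa}.

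There is, however, a concrete gap in how you extract the conclusion. You speak of a ``fiber push'' producing $\BB\cdot\prod_i x_i^{\lceil\lambda_i\rceil}$ and a separate ``horizontal push through $p_\wu$'' producing a lift of $\mC(w,\lambda)$, but $\tilde p_\wu\colon\tilde Z_\wu\to\End(\C^n)$ does not factor through $Z_\wu$, so there is no canonical way to ``push the fiber'' and ``push the base'' separately. The spurious monomial $\prod_i x_i^{\lceil\lambda_i\rceil}$ is a symptom of this: it does not appear in the final answer, and $\kappa$ does not ``absorb'' it (its restriction at a fixed point $\sigma$ is $\prod_j(t_j/t_{\sigma(j)})^{\lceil\lambda_j\rceil}\neq 1$ in general). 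The correct way to organize your argument is to restrict to $\GL_n$ \emph{before} pushing: $\kappa$ only sees $\mCm(w,\lambda)\big|_{\GL_n}$, and over $\GL_n$ the preimage $\tilde p_\wu^{-1}(\GL_n)=Z_\wu\times_{\GL_n/B}\GL_n$ is precisely the pullback of $p_\wu$ along $pr$, the $B$-boundary $\partial_B\ZL\wu$ is empty (since $b\in B$ forces $x_j\neq 0$), so the vertical divisor and the monomial drop out, $\lceil\tilde p_\wu^*D_{w,\lambda}\rceil\big|_{\GL_n}=q^*\lceil p_\wu^*\Delta_{w,\lambda}\rceil$ by \eqref{w:1}, Verdier--Riemann--Roch for the smooth affine $B$-bundle $q$ produces the factor $\lambda_y(\mathfrak b^*)=\BB$, and flat base change along the Cartesian square then gives $\mCm(w,\lambda)\big|_{\GL_n}=\BB\cdot pr^*\mC(w,\lambda)$, from which the statement follows immediately. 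With that reorganization your proof is complete; without it the ``factorization of the push-forward'' step is not justified.
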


\begin{proof}We compare the restrictions of both classes at the fixed points. First, we concentrate on the point $[\id]\in G/B$. 
Let $N_-$ be the group consisting of lower-triangular unipotent matrices. The composition of the inclusion and the quotient map $$N_-\hookrightarrow \GL_n\to  \GL_n/B$$ is an isomorphism to the image which is a neighbourhood of $[\id]$. Over that neighbourhood the bundle $\GL_n\to \GL_n/B$ is trivial. The trivialization map
$$\begin{matrix}&(n,b)&\mapsto & nb\\[5pt]
\widehat\varphi_{\id}:&N_-\times B&\longrightarrow & \GL_n\\[5pt] 
&\big\downarrow&&\big\downarrow~~^{pr}\\[5pt]
\varphi_{\id}:&N_-&\longrightarrow & \GL_n/B
\end{matrix}
$$
is equivariant with respect to the diagonal torus $\T\subset \T^2$ acting by conjugation on $N_-$, $B$ and $\GL_n$.
Thus $N_-\hookrightarrow \GL_n$ is a local $\T$-equivariant slice of the $B$-bundle \hbox{$pr:\GL_n\to \GL_n/B$.}
The inverse image $\widehat\varphi_{\id}^{-1}(\XX_w)$ is of the product form $\varphi_{\id}^{-1}(X_w)\times B$.
 Moreover the divisor $D_{w,\lambda}$ is the preimage of $\Delta_{w,\lambda}$, see formula \eqref{w:1}. Therefore \cite[Proposition~4.7~(ii)]{KonW} implies that
$$\big(\mCm(w,\lambda)_{x_i:=t_i}\big){}_{|N_-\cdot B}= pr^*\mC(w,\lambda)_{|N_-} \boxtimes \mC(\id_B)\,.$$
We have $B\simeq \T\times N_+$ and the conjugation action on the torus is trivial. We restrict the above class to $\id\in \GL_n$ and obtain
$$
\big(\mCm(w,\lambda)_{x_i:=t_i}
\big){}_{|\id}=\mC(w,\lambda)_{|[\id]}\cdot (1+y)^n\cdot \prod_{1\leq i<j \leq n} (1+\tfrac {t_j}{t_i})\,.$$
The second and the third factor give $\BB$ with $x_i$ substituted by $t_i$. Thus
$$\mCm(w,\lambda)_{x_i:=t_i}=\kappa(\BB)_{|[\id]} \cdot\mC(w,\lambda)_{|[\id]}\,.$$

For an arbitrary fixed point $\sigma\in (G/B)^\T$ we consider the translated slice $\sigma N_-$. It is invariant with respect to the subtorus $\T^\sigma\subset \T^2$ consisting of pairs $(t, \sigma^{-1} t \sigma)$.
Indeed for $\sigma g\in \sigma N_-$ $$ t (\sigma g)(\sigma^{-1} t\sigma)^{-1}=\sigma (\sigma^{-1} t\sigma)g  (\sigma^{-1} t\sigma)^{-1}\in \sigma N_-\,.$$
In particular $g=\sigma$ is a fixed point for the action of $\T^\sigma$. We apply the previous computation of $\mCm(w,\lambda)$ restricted to the torus $\T^\sigma$ at the point $\sigma$.
The resulting substitution is $x_i\mapsto t_{\sigma(i)}$. Therefore we obtain
$$\mCm(w,\lambda)_{x_i:=t_{\sigma(i)}}=\kappa(\BB)_{|\sigma} \cdot\mC(w,\lambda)_{|\sigma}\,.$$
\end{proof}

The main result relating the twisted motivic Chern classes with the algebra described in Section \ref{abstract_algebra} is the following:

\begin{atw}\label{indukcja_macierzowa} The left and right recursions hold
$$\Tcl{i}{w\lambda}\big(\BB^{-1} \mCm(w,\lambda)\big)=\BB^{-1}\mCm(s_iw,\lambda)\quad \text{if }~~l(s_iw)>l(w)\,,$$
$$\Tcr{i}{\lambda}\big(\BB^{-1} \mCm(w,\lambda)\big)=\BB^{-1}\mCm(ws_i,s_i\lambda)\quad \text{if }~~l(ws_i)>l(w)\,,$$
where $w(\lambda_1,\lambda_2,\dots,\lambda_n)=(\lambda_{w(1)},\lambda_{w(2)},\dots,\lambda_{w(n)})$.
\end{atw}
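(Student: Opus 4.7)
The strategy is to run, at the matrix level, the same localization argument that proves theorems~\ref{tw:1} and~\ref{tw:L1}. For each direction of the recursion I would construct a distinct SNC resolution of the pair $(\XX_w, \partial \XX_w)$ by lifting a reduced-word Bott--Samelson of $X_w$ to $\End(\C^n)$: the right recursion uses a matrix Bott--Samelson $\widetilde Z_\wu \to \XX_w$ in which the inductive $\PP^1$-bundle is attached by right multiplication by $P_{s_i}$, and the left recursion uses the mirror construction (cf.\ remark~\ref{rem:BP}). Proposition~\ref{dzielenie} together with theorem~\ref{podniesienie} and the flag-variety recursions already implies that the two sides of each claimed identity agree after applying the Kirwan map, so the content of the present theorem is the lift of those identities from $K_\T(\GL_n/B)[y]$ to the larger ring $\Z[\underline t^\pm, \underline x^\pm, y]$.

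For the right recursion, I would fix reduced words $\wu$ for $w$ and $\ws$ for $ws_i$ and build matrix resolutions $\widetilde Z_\wu \to \XX_w$, $\widetilde Z_\ws \to \XX_{ws_i}$ so that $\widetilde Z_\ws \to \widetilde Z_\wu$ is a $\PP^1$-bundle lifting $Z_\ws \to Z_\wu$. A matrix analogue of Kempf's lemma (proposition~\ref{pro:kem}) should express the pullback of $D_{ws_i, s_i\lambda}$ as the pullback of $D_{w, \lambda}$ plus a multiple of the zero-section. Lefschetz--Riemann--Roch along the $\PP^1$-bundle, performed verbatim as in lemmas~\ref{lem:BSind0}--\ref{lem:BSind}, then produces the recursion; the tangent characters of the $\PP^1$-fiber at its two $\T^2$-fixed sections are $L_{i,x}^{\pm 1}=(x_{i+1}/x_i)^{\pm 1}$, so the denominators and exponents appearing in formula~\eqref{opTR} for $\Tcr{i}{\lambda}$ match those produced by LRR after conjugation by $\BB$.

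For the left recursion I would perform the mirror construction: a left matrix Bott--Samelson with an extra $\PP^1$ attached on the left, whose relative tangent character is $L_{i,t}^{\pm 1}=(t_{i+1}/t_i)^{\pm 1}$. The same LRR argument yields the formula in terms of $\Tcl{i}{w\lambda}$, and since $\BB$ depends only on the $x$-variables it commutes with $\Tcl{i}{w\lambda}$, so in this direction the conjugation by $\BB$ is transparent. The parameter shift $\lambda \mapsto w\lambda$ inside the ceiling $\lceil \langle w\lambda, \alpha_i^\vee\rangle \rceil$ arises from the Chevalley formula (proposition~\ref{pro:Chev0}) applied to the initial segment $\wu$: it controls the multiplicity of the newly attached boundary component in $p_\wu^{*} D_{s_i w, \lambda}$ and accounts for the relative twisting on the new $\PP^1$-bundle.

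The main obstacle will be the first step: constructing the matrix Bott--Samelson $\widetilde Z_\wu$ as a $\T^2$-equivariant SNC resolution of $(\XX_w, \partial \XX_w)$ and verifying a matrix-level Chevalley formula for the pullback of $D_{w, \lambda}$. In particular, one needs to check that each boundary divisor $M_{w,k}$, defined in~\eqref{defM} by vanishing of principal minors of $w^{-1} a$, lifts to an explicit SNC component of $\partial \widetilde Z_\wu$, and that the coefficients in~\eqref{dywizor} correctly reproduce the $\Q$-divisor predicted by the lifted Chevalley formula; the factor $\BB^{-1}$ then encodes the motivic Chern class of the fiber of $\GL_n \to \GL_n/B$, consistently with proposition~\ref{dzielenie}. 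Once these geometric preparations are in place, the remainder of the proof is a direct transcription of the computations preceding theorems~\ref{tw:1} and~\ref{tw:L1} to the two-variable ring $\Z[\underline t^\pm, \underline x^\pm, y]$.
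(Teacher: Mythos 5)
Your plan captures the correct overall strategy — matrix Bott–Samelson resolutions, Lefschetz–Riemann–Roch, and normalization by $\BB$ as in proposition~\ref{dzielenie} — but the two geometric ingredients you put at its core are not what the paper actually uses, and one of them would fail as stated.

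First, the claimed $\PP^1$-bundle structure $\widetilde Z_{\ws} \to \widetilde Z_{\wu}$ lifting $Z_{\ws}\to Z_{\wu}$ does not exist for the natural matrix resolutions. The right resolution $\ZR{\ws} = \overline B\times_B P_{i_1}\times_B\cdots\times_B P_{i_l}\times_B P_{s_i}$ has no well-defined map to $\ZR{\wu}=\overline B\times_B P_{i_1}\times_B\cdots\times_B P_{i_l}$: dropping the last coordinate is not compatible with the $\times_B$-equivalences (in $\ZR{\ws}$ the coordinate $p_l$ is quotiented on the right by $B$, in $\ZR{\wu}$ it is not), and absorbing $p_{l+1}$ into $\overline B$ takes one out of $P_{i_l}$. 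The presence of $\overline B$ on the opposite end of the product destroys the fiber-product description available for $Z_{\ws}\to Z_{\wu}$. What the paper uses instead, for \emph{both} the left and the right recursion, is the "mirror" structure of remark~\ref{rem:BP}: a locally trivial fibration $\varpi:\ZR{\ws}\to B\backslash P_{s_i}\simeq\PP^1$ (respectively $\varpi:\ZL{\sw}\to P_{s_i}/B\simeq\PP^1$) with $\PP^1$ as the \emph{base} and fibers $F_{\id},F_{s}$ each isomorphic to $\ZR{\wu}$ (respectively $\ZL{\wu}$). A genuine subtlety that your plan misses is that the isomorphism $F_s\simeq\ZR{\wu}$ is only $\T^2$-equivariant after twisting the action by $s^x$ (or $s^t$ in the left case); this twist is exactly what produces the $s^x f$ and $s^t f$ summands in the operators $\Tcr{i}{\lambda}$, $\Tcl{i}{\lambda}$, and without tracking it the LRR decomposition in lemmas~\ref{lem:ind1}, \ref{lem:ind2} does not come out right.

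Second, the paper does not prove a matrix analogue of Kempf's lemma. Instead, the multiplicity of each boundary component $\partial_j\ZL{\wu}$ (and $\partial_{B,j}\ZL{\wu}$) inside $\mu_{\wu}^*(M_{w,k})$ is computed directly by testing along an explicit curve built from an elementary matrix $E_{i_j}(u)$ and evaluating orders of vanishing of principal minors (the computation around~\eqref{krotnosckrzywej}, leading to proposition~\ref{krotnosc} and its right-resolution analogue \ref{krotnoscdual}). This hands-on approach also explains the appearance of $w\lambda$ versus $\lambda$ in the exponents of the two recursions, which your plan attributes to a Chevalley-type formula that is not actually established in the matrix setting. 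To make your plan work you would need to either prove such a Kempf/Chevalley statement for $D_{w,\lambda}$ on the matrix resolutions, or carry out the explicit minor computation as the paper does; the latter is the more economical route, and in particular it is what forces the choice of fibration-over-$\PP^1$ rather than $\PP^1$-bundle geometry, since the boundary multiplicities are read off fiberwise over the two $\T^2$-fixed points of the new $\PP^1$.
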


The theorem is proven in Section \ref{dowodindukcji}.

\begin{rem} A good  starting point for the induction of twisted classes is  $\XX_\id=\overline B\subset\End(\C^n)$. The rational function 
	$$\BB^{-1}\mCD(\id,\lambda)=\BB^{-1}\prod_{i=1}^n \big((1+y)\big(\tfrac{x_i}{t_i}\big)^{1-\lceil \lambda_i\rceil}
	\big)
	\prod_{1\leq i<j\leq n}\big(1+y\tfrac{x_j}{t_i}\big)
	\prod_{1\leq j<i\leq n}\big(1-\tfrac{x_j}{t_i}\big)$$
	has the property that equally well one can apply the left or right Demazure-Lusztig operators, obtaining distinguished representatives of higher dimensional cells. That was not so for the \emph{easy} representative of $\mC(\id,\lambda)$ given in Example \ref{ex:zlystart}.
	The price to pay is that $\BB^{-1}\mCD(\id,\lambda)$ is not a Laurent polynomial.
\end{rem}

\begin{rem}
	The class $\mCm(\id,\lambda)$ which is the start of induction depends on $\lambda$. On the other hand the class  $\mC(\id,\lambda)=[X_\id] \in \KTh_{\T}(G/B)[y]$ does not depend on $\lambda$.
\end{rem}

\section{Resolution of matrix Schubert varieties}
\label{sec:matrix_resolution}
Before proceeding with the proof of Theorem \ref{indukcja_macierzowa} let us describe the geometry of  resolutions of $\XX_w$.
We will extend the construction of the Bott-Samelson resolution in our case, basically applying the more general method of \cite{BenderPerrin}.

\subsection{The left resolution}
For a reduced word $\wu=s_{i_1}s_{i_2}\dots s_{i_l}$ let
$$\ZL \wu=P_{i_1}\times_BP_{i_2}\times_B\dots\times_BP_{i_l}\times_B \overline B\,,$$
where $P_i$ is the minimal parabolic subgroup  containing $B$ and $s_i$. 
The resolution map 
$$\mu_\wu:\ZL \wu\longrightarrow \XX_w\,,$$
is defined as the product of coordinates
$$\mu_\wu([p_1,p_2,\dots,p_l,b])=p_1\, p_2\dots p_l\, b\,,\qquad p_j\in P_{i_j}\,,\quad b\in \overline B\,.$$
The Borel group acts on $\ZL \wu$ from the left and from the right. 
Let 
$$q_\wu:\ZL \wu\longrightarrow Z_\wu= P_{i_1}\times_BP_{i_2}\times_B\dots\times_BP_{i_l}\times_B \{*\}$$
be the map induced by the contraction of $\overline B$ to a point.
The map $q_\wu:\ZL \wu\to Z_\wu$ is a vector bundle over $Z_\wu$. Denote by $\iota_\wu$ its zero section.
The $B\times B$-orbit of $[s_{i_1},s_{i_2},\dots,s_{i_l},\id]\in\ZL w$ is open in $\ZL \wu$ and  maps isomorphically to $\O_w$. The boundary
$$
\partial \ZL \wu= \mu^{-1}_\wu(\partial X_w)
$$
consists of the inverse image  $q_\wu^{-1}(\partial Z_\wu)$ and the \emph{$B$-boundary} 
$$\partial_B\ZL \wu= P_{i_1}\times_BP_{i_2}\times_B\dots\times_BP_{i_l}\times_B \partial B\,,\qquad  \partial B= \overline B\setminus B\,.$$
The divisor $\partial_B\ZL \wu$ is the sum of $n$ irreducible components. For $j\in\{1,2,\dots, n\}$ set 
$$ \partial_{B,j}\ZL \wu= P_{i_1}\times_BP_{i_2}\times_B\dots\times_BP_{i_l}\times_B (\{a_{jj}=0\}\cap\overline B)\,.$$
Moreover $q_\wu^{-1}(\partial Z_\wu)$ decomposes into irreducible components $\partial_j\ZL \wu=q_\wu^{-1}(\partial_j Z_\wu)$ corresponding to omitting the $j$-th letter in the word $\wu$. We obtain a decomposition of the boundary
$$\partial \ZL \wu=
\partial_B \ZL \wu\cup q_\wu^{-1}(\partial Z_\wu)=
\bigcup_{j=1}^{n}\partial_{B,j} \ZL \wu \cup \bigcup_{j=1}^{l(w)}\partial_{j} \ZL \wu\,. $$
Note that $\partial B$ is a SNC divisor in $\overline B$. It follows that $\partial \ZL \wu$ is a SNC divisor.

Each fixed point $p_\eheta\in(\ZL \wu)^\T$ belongs to the zero section $\iota_\wu(Z_\wu)$ and it corresponds to a binary sequence $\eheta=(\epsilon_1,\epsilon_2,\dots,\epsilon_l)$ 
$$p_\eheta=[s_{i_1}^{\epsilon_1},s_{i_2}^{\epsilon_2},\dots,s_{i_l}^{\epsilon_l},0]$$
(the last $0$ denotes the  zero matrix belonging to $\overline B$). Further we will denote the fixed points simply by $\eheta$, dropping $p$ from the notation. 
We will compute the multiplicity
$\mu_\wu^*(M_{w,k})$ along the component $\partial_j\ZL \wu$ (see \eqref{defM} for the definition of $M_{w,k}$).
Let us test the multiplicity on a curve, which is transverse at the point 
$$[s_{i_1},\dots,s_{i_{j-1}},\id,s_{i_{j+1}},\dots,s_{i_l},\id]\,.$$
Let $E_i(u)$ be the elementary matrix of the form
$$\hbox{\footnotesize
$\begin{matrix}i\\i+1\end{matrix}\begin{pmatrix}
1&\\
&\ddots \\
&&1&0\\
&&u&1&\\
&&&&\ddots \\
&&&&& &1
\end{pmatrix}$}\,.$$
The testing curve is defined as
$$u\mapsto [s_{i_1},\dots,s_{i_{j-1}},E_{i_j}(u),s_{i_{j+1}},\dots,s_{i_l},\id]\,.$$
The multiplicity is equal to the order of vanishing of the function
\begin{equation}\label{krotnosckrzywej}u\mapsto m_k(w^{-1}s_{i_1}\dots s_{i_{l-1}}E_{i_j}(u)s_{i_{l+1}}\dots s_{i_l})=m_k(w_{>j}^{-1}\,s_{i_j}\,E_{i_j}(u)\,w_{>j})\,,\end{equation}
where $w_{>j}=s_{i_{j+1}}s_{i_{j+2}}\dots s_{i_l}$.
The resulting formula is fairly explicit. 
The matrix $w^{-1}_{>j}\,s_{i_j}\,E_{i_j}(u)\,w_{>j}$ differs from the identity matrix only at four  entries having indices in the set
$$\{w^{-1}_{>j}(i_j)\;,\;w_{>j}^{-1}(i_j+1)\}\,.$$ 
The key $2\times2$ submatrix is equal to
$$\begin{pmatrix}u&1\\1&0\end{pmatrix}\,.$$
It follows, that the coefficient of $\partial_j \ZL {\wu}$ in $\mu^*_{\wu}(M_{w,k})$  
 is 1 if $$w^{-1}_{>j}(i_j)\leq k<w_{>j}^{-1}(i_j+1)$$ and otherwise it is equal to 0. Note that since the word $\wu$ is reduced, 
$$w_{>j}^{-1}(i_j)<w_{>j}^{-1}(i_j+1)\,.$$
The multiplicity of  $\mu^*_\wu(M_{w,k})$ at $\partial_{B,j} \ZL {\wu}$ is equal to 1 if $k\geq j$ and otherwise it is equal to 0.
By the above discussion we find the multiplicities of the pullback of the divisor  $D_{w,\lambda}$ (see the formula \eqref{dywizor} for the definition of $D_{w,\lambda}$).
\begin{pro}\label{krotnosc}
We have
$$\mu_{\wu}^*(D_{w,\lambda})=
\sum_{j=1}^{n} \lambda_j\cdot\partial_{B,j} \ZL \wu +
\sum_{j=1}^{l(w)}
\langle w_{>j}\lambda,\alpha_{i_j}^\vee \rangle\cdot\partial_{j} \ZL \wu\,.$$
\end{pro}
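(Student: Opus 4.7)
The plan is to feed the multiplicity tables for $\mu_\wu^*(M_{w,k})$ along the irreducible components of $\partial\ZL\wu$ (which were computed in the paragraphs just above the statement) into the defining expression $D_{w,\lambda}=\sum_{k=1}^{n}(\lambda_k-\lambda_{k+1})M_{w,k}$ from \eqref{dywizor}, and then telescope in $k$.

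First I would use $\Q$-linearity of divisor pullback to reduce to computing, for each irreducible component of the SNC divisor $\partial\ZL\wu$, its coefficient in $\mu_\wu^*(M_{w,k})$ and summing with weight $\lambda_k-\lambda_{k+1}$:
$$\mu_\wu^*(D_{w,\lambda})=\sum_{k=1}^{n}(\lambda_k-\lambda_{k+1})\,\mu_\wu^*(M_{w,k}).$$

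Next, for the $B$-boundary component $\partial_{B,j}\ZL\wu$, the preceding analysis says that the multiplicity of $\mu_\wu^*(M_{w,k})$ is $1$ precisely when $k\geq j$ and is $0$ otherwise. Collecting the contributions therefore gives the telescoping sum
$$\sum_{k=j}^{n}(\lambda_k-\lambda_{k+1})=\lambda_j-\lambda_{n+1}=\lambda_j,$$
using the convention $\lambda_{n+1}=0$ fixed right after \eqref{dywizor}. For a Bott-Samelson component $\partial_j\ZL\wu$ with $1\leq j\leq l(w)$, I would set $a=w_{>j}^{-1}(i_j)$ and $b=w_{>j}^{-1}(i_j+1)$; the inequality $a<b$ is automatic from reducedness of $\wu$ (since $w_{>j}^{-1}\alpha_{i_j}$ is a positive root by proposition \ref{pro:walpha}), and the multiplicity of $\mu_\wu^*(M_{w,k})$ along $\partial_j\ZL\wu$ equals $1$ precisely for $a\leq k<b$. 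A second telescoping then yields
$$\sum_{k=a}^{b-1}(\lambda_k-\lambda_{k+1})=\lambda_a-\lambda_b=\lambda_{w_{>j}^{-1}(i_j)}-\lambda_{w_{>j}^{-1}(i_j+1)}.$$

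To finish, I would identify the last display with $\langle w_{>j}\lambda,\alpha_{i_j}^\vee\rangle$ by combining the type-$A$ coroot formula $\langle\mu,\alpha_{i_j}^\vee\rangle=\mu_{i_j}-\mu_{i_j+1}$ with the left $W$-action on $\ttt^*$ fixed in section \ref{notation}, under which $(w_{>j}\lambda)_k=\lambda_{w_{>j}^{-1}(k)}$. The main obstacle is not conceptual but purely clerical: one must be careful that the conjugation in \eqref{krotnosckrzywej} produces the non-trivial $2\times 2$ block at indices $w_{>j}^{-1}(i_j)$ and $w_{>j}^{-1}(i_j+1)$ (not at $w_{>j}(i_j)$ and $w_{>j}(i_j+1)$), so that the two occurrences of $w_{>j}^{-1}$ in the telescoped expression end up assembling into $w_{>j}\lambda$ on the right-hand side under the action convention.
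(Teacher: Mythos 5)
Your proof is correct and matches the paper's own reasoning: the paper's ``proof'' of this proposition is precisely the multiplicity computation in the preceding paragraphs, followed by plugging the two multiplicity tables into $D_{w,\lambda}=\sum_k(\lambda_k-\lambda_{k+1})M_{w,k}$ and telescoping. You simply write out explicitly the two telescoping sums and the final identification $\lambda_{w_{>j}^{-1}(i_j)}-\lambda_{w_{>j}^{-1}(i_j+1)}=\langle w_{>j}\lambda,\alpha_{i_j}^\vee\rangle$ that the paper leaves implicit (and records only in the remark immediately after the proposition).
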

Note that 
$\langle w\lambda,\alpha_i^\vee \rangle=\lambda_{w^{-1}(i)}-\lambda_{w^{-1}(i+1)}\,. $

\begin{cor} \label{cor:krotnosc2}
	Let $s$ be a simple reflection corresponding to the root $\alpha_i$. Suppose that $w\in W$ is a Weyl group element such that $l(sw)>l(w)$. Let $\eheta$ be a binary sequence of length $l(w)$ and $\ehet=(\delta,\eheta)$ for $\delta\in\{0,1\}$.
	\begin{enumerate}
		\item For $\delta=0$ we have
		$$
		\O_{\ZL {\sw}}(\lceil\mu_{\sw}^*(D_{sw,\lambda})\rceil)_{|\ehet}
		=\O_{\ZL {\wu}}(\lceil\mu_{\wu}^*(D_{w,\lambda})\rceil)_{|\eheta}
		\cdot \O_{\ZL {\sw}}({\lceil\langle w\lambda,\alpha_{i} ^\vee\rangle\rceil}\partial_{1} \ZL \sw)_{|\ehet}\,.
		$$
		\item
		For $\delta=1$ we have
$$
		\O_{\ZL {\sw}}(\lceil\mu_{\sw}^*(D_{sw,\lambda})\rceil)_{|\ehet}
		=s^t(\O_{\ZL {\wu}}(\lceil\mu_{\wu}^*(D_{w,\lambda})\rceil)_{|\eheta})\,.
		$$	
	\end{enumerate}
Here $s^t$ denotes the action on $t$--variables of $\KTh_{\T^2}(pt)=\Z[\underline t^{\pm 1},\underline x^{\pm 1}].$
\end{cor}
\begin{proof}
	Let  $\eheta=(\epsilon_1,\epsilon_2,\dots,\epsilon_l) \in (\ZL{\wu})^\T$ be a fixed point. The character in the normal direction to $\partial_{B,j} \ZL {\wu}$ at the point $\eheta$,  is equal to 
	$$t_{w^\eheta(j)}/x_j\,,$$
	where $$w^\eheta=s_{i_1}^{\epsilon_1}\,s_{i_2}^{\epsilon_2}\dots s_{i_l}^{\epsilon_l}\,.$$
	The remaining characters are the same as for the Bott-Samelson resolution $Z_w$.
	Therefore for $j\in\{1,\dots,n\}$ we have 
	$$
	\O_{\ZL {\sw}}(\partial_{B,j} \ZL {\sw})_{|\ehet}=
	\begin{cases}
		\O_{\ZL {\wu}}(\partial_{B,j} \ZL {\wu})_{|\eheta} & \text{ when } \delta=0\,, \\
		s^t(\O_{\ZL {\wu}}(\partial_{B,j} \ZL {\wu})_{|\eheta}) & \text{ when } \delta=1\,.
	\end{cases}
	$$
	Moreover for $j\in\{1,\dots,l(w)\}$, as in \cite[Section 3.2]{RW}
	$$
	\O_{\ZL {\sw}}(\partial_{j+1} \ZL {\sw})_{|\ehet}=
	\begin{cases}
		\O_{\ZL {\wu}}(\partial_{j} \ZL {\wu})_{|\eheta} & \text{ when } \delta=0\,, \\
		s^t(\O_{\ZL {\wu}}(\partial_{j} \ZL {\wu})_{|\eheta}) & \text{ when } \delta=1\,.
	\end{cases}
	$$
	We apply Proposition \ref{krotnosc} to deduce the recursive formula of Corollary \ref{cor:krotnosc2}.
\end{proof}

\begin{ex}Let $n=4$, $w=s_2s_3$
	$$w(1)=1,\quad w(2)=3,\quad w(3)=4,\quad w(4)=2\,.$$
	Let us analyse the divisors $M_{w,k}$
	$$P_2\times_B P_3\times_B \overline B\to \End(\C^4)$$
	in the neighbourhood of the empty word
	\begin{multline*}s_3\cdot s_2\cdot\hbox{\footnotesize$\left(
			\begin{array}{cccc}
				1 & 0 & 0 & 0 \\
				0 & 1 & 0 & 0 \\
				0 & u & 1 & 0 \\
				0 & 0 & 0 & 1 \\
			\end{array}
			\right)
			\cdot\left(
			\begin{array}{cccc}
				1 & 0 & 0 & 0 \\
				0 & 1 & 0 & 0 \\
				0 & 0 & 1 & 0 \\
				0 & 0 & v & 1 \\
			\end{array}
			\right)
			\cdot\left(
			\begin{array}{cccc}
				a_{11} & a_{12} & a_{13} & a_{14} \\
				0 & a_{22} & a_{23} & a_{24} \\
				0 & 0 & a_{33} & a_{34} \\
				0 & 0 & 0 & a_{44} \\
			\end{array}
			\right)$}=
		\\
		=\hbox{\footnotesize$ \left(
			\begin{array}{cccc}
				a_{11} & a_{12} & a_{13} & a_{14} \\
				0 & u a_{22} & u a_{23}+a_{33} & u
				a_{24}+a_{34} \\
				0 & 0 & v a_{33} & v a_{34}+a_{44} \\
				0 & a_{22} & a_{23} & a_{24} \\
			\end{array}
			\right)$}\,.
	\end{multline*}
	The  consecutive minors are equal to
	$$a_{11},\quad u\,a_{11} a_{22} ,\quad u v\,a_{11}
	a_{22} a_{33} ,\quad a_{11} a_{22}
	a_{33} a_{44}\,.$$
	Therefore in the neighbourhood of the point corresponding to the empty word the divisor $D_{w,\lambda}$ is equal to
		$$\dv\big(a_{11}^{\lambda_1-\lambda_2}( u\,a_{11} a_{22})^{\lambda_2-\lambda_3}(u v\,a_{11}
		a_{22} a_{33})^{\lambda_3-\lambda_4}(a_{11} a_{22}
		a_{33} a_{44})^{\lambda_4}\big)=$$
		$$=(\lambda_2-\lambda_4)\dv(u)+(\lambda_3-\lambda_4)\dv(v)+\sum_{i=1}^4\lambda_i \dv(a_{ii})\,.
		$$
\end{ex}

\subsection{The right resolution}\label{dualresolution}
Multiplying from the right the upper-triangular matrices by the  matrices belonging to $P_i$ we obtain a different resolution 
$$\nu_\wu:\ZR \wu\longrightarrow \XX_w\,,$$
$$\ZR \wu=\overline B\times_B P_{i_1}\times_B P_{i_1}\times_B\dots\times_B P_{i_l}\,,$$
$$\nu_\wu([b,p_1,p_2,\dots,p_l])=b\,p_1\, p_2\dots p_l\,,\qquad b\in \overline B\,,\quad  p_j\in P_{i_j}.$$
As before the boundary  $\partial\ZR \wu=\nu_\wu^{-1}(\partial\XX_w)$ is a SNC divisor. It is a sum of irreducible components
$$\partial \ZR \wu=
\bigcup_{j=1}^{n}\partial_{B,j} \ZR \wu \cup \bigcup_{j=1}^{l(w)}\partial_{j} \ZR \wu\,. $$
The divisor $\partial_j\ZR \wu$ is defined by the condition $p_j\in B$, the $B$-boundary divisor $\partial_{B,j}\ZR \wu$ is defined by $a_{jj}=0$.
The argument as before leads us to the following conclusion

\begin{pro} \label{krotnoscdual}

We have 
	$$\nu_{\wu}^*(D_{w,\lambda})=
	\sum_{j=1}^{n} (w\lambda)_{j}\cdot\partial_{B,j} \ZR \wu +
	\sum_{j=1}^{l(w)}
	\langle w_{>j}\lambda,\alpha_{i_j} ^\vee\rangle\cdot\partial_{j} \ZR \wu\,.$$
\end{pro}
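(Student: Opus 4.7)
The plan is to mirror, step by step, the argument of proposition \ref{krotnosc}, exploiting the fact that $\partial\ZR\wu = \nu_\wu^{-1}(\partial\XX_w)$ is SNC with irreducible components $\partial_{B,j}\ZR\wu$ and $\partial_j\ZR\wu$ already labelled in section \ref{dualresolution}. By linearity of $D_{w,\lambda}$ in $\lambda$, it is enough to compute, for each $k$ and each irreducible component $\partial_\star\ZR\wu$, the multiplicity of $\nu_\wu^*(M_{w,k})$ along $\partial_\star\ZR\wu$ and then sum against $\lambda_k-\lambda_{k+1}$. Each such multiplicity will be read off by restricting the defining function $m_k\circ(w^{-1}\nu_\wu)$ to a curve through a smooth point of the component that lies on no other boundary divisor.

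For the omitted-letter divisor $\partial_j\ZR\wu$ I will base the test at the point $[\id,s_{i_1},\ldots,s_{i_{j-1}},\id,s_{i_{j+1}},\ldots,s_{i_l}]$ and use the transversal curve $u\mapsto[\id,s_{i_1},\ldots,s_{i_{j-1}},E_{i_j}(u),s_{i_{j+1}},\ldots,s_{i_l}]$. Its image under $\nu_\wu$ equals $w_{<j}\,E_{i_j}(u)\,w_{>j}$, so
\[
w^{-1}\nu_\wu(u) = w_{>j}^{-1}\,s_{i_j}\,E_{i_j}(u)\,w_{>j},
\]
which is exactly the matrix \eqref{krotnosckrzywej} already analysed for the left resolution. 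The multiplicity of $\nu_\wu^*(M_{w,k})$ along $\partial_j\ZR\wu$ therefore agrees with that along $\partial_j\ZL\wu$: it equals $1$ when $w_{>j}^{-1}(i_j)\leq k<w_{>j}^{-1}(i_j+1)$ and $0$ otherwise. Summing against $(\lambda_k-\lambda_{k+1})$ telescopes to $\lambda_{w_{>j}^{-1}(i_j)}-\lambda_{w_{>j}^{-1}(i_j+1)}=\langle w_{>j}\lambda,\alpha_{i_j}^\vee\rangle$, which is the claimed coefficient.

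For the $B$-boundary divisor $\partial_{B,j}\ZR\wu$ I will use the point $[b_0,s_{i_1},\ldots,s_{i_l}]$ where $b_0\in\overline B$ is diagonal with $(b_0)_{jj}=0$ and all other diagonal entries equal to $1$ (so the point lies on $\partial_{B,j}\ZR\wu$ and on no other boundary component), together with the transversal curve $u\mapsto[b(u),s_{i_1},\ldots,s_{i_l}]$, where $b(u)$ is the same diagonal matrix with the $j$-th entry replaced by $u$. Its image under $\nu_\wu$ is $b(u)\cdot w$, and
\[
w^{-1}\nu_\wu(u) = w^{-1}\,b(u)\,w
\]
is the diagonal matrix equal to $u$ at position $w^{-1}(j)$ and $1$ elsewhere. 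Hence $m_k$ of this matrix is $u$ when $w^{-1}(j)\leq k$ and $1$ otherwise, so the multiplicity of $\nu_\wu^*(M_{w,k})$ along $\partial_{B,j}\ZR\wu$ is $1$ iff $w^{-1}(j)\leq k$. Summing against $(\lambda_k-\lambda_{k+1})$ telescopes to $\lambda_{w^{-1}(j)}=(w\lambda)_j$. The only substantive deviation from the left case is this conjugation step: in the left resolution $b(u)$ multiplies $w$ from the right and $w^{-1}$ cancels to give $\lambda_j$, whereas in the right resolution $b(u)$ must be transported through $w$, which permutes the index $j$ to $w^{-1}(j)$. The main thing to keep straight is that this is consistent with the convention $(w\lambda)_j=\lambda_{w^{-1}(j)}$ underlying the statement, as already used in the identity $\langle w\lambda,\alpha_i^\vee\rangle=\lambda_{w^{-1}(i)}-\lambda_{w^{-1}(i+1)}$ recorded after proposition \ref{krotnosc}.
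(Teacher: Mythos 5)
Your proof is correct and reconstructs, with full details, exactly the argument the paper only sketches in the remark following the proposition (the paper appeals to the analogy with proposition \ref{krotnosc} and points out the conjugation $m_k(w^{-1}bw)$ that permutes the indices). Your identification of the two test curves, the telescoping against $\lambda_k-\lambda_{k+1}$, and the observation that the convention $(w\lambda)_j=\lambda_{w^{-1}(j)}$ matches the one implicitly used after proposition \ref{krotnosc} are all precisely on target.
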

Note that for $\partial_{B,j}\ZR {\wu}$ the indices of $\lambda$ are permuted, since we compute the degree of $m_k(w^{-1}E_k(u)w)$, not $m_k(E_k(u))$ as in the previous case.

\begin{cor} \label{cor:krotnosc1}
	Let $s$ be a simple reflection corresponding to the root $\alpha_i$. Suppose $w\in W$ is a Weyl group element such that $l(ws)>l(w)$. Let $\eheta$ be a binary sequence of length $l(w)$ and $\ehet=(\eheta,\delta)$ for $\delta\in\{0,1\}$.
	\begin{enumerate}
		\item For $\delta=0$ we have
		$$
		\O_{\ZR {\ws}}(\lceil\mu_{\ws}^*(D_{ws,s\lambda})\rceil)_{|\ehet}
		=\O_{\ZR {\wu}}(\lceil\mu_{\wu}^*(D_{w,\lambda})\rceil)_{|\eheta}
		\cdot \O_{\ZR {\ws}}({\lceil \langle s\lambda,\alpha_{i}^\vee \rangle \rceil}\partial_{l(ws)} \ZR \ws)_{|\ehet}\,.
		$$
		\item
		For $\delta=1$ we have
		$$
		\O_{\ZR {\ws}}(\lceil\nu_{\ws}^*(D_{ws,s\lambda})\rceil)_{|\ehet}
		=s^x(\O_{\ZR {\wu}}(\lceil\nu_{\wu}^*(D_{w,\lambda})\rceil)_{|\eheta})\,.
		$$		
	\end{enumerate}
	Here $s^x$ denotes the action on $x$--variables of $\KTh_{\T^2}(pt)=\Z[\underline t^{\pm 1},\underline x^{\pm 1}].$
\end{cor}
\begin{proof}
	Let  $\eheta=(\epsilon_1,\epsilon_2,\dots,\epsilon_l) \in (\ZR{\wu})^\T$ be a fixed point. The character in the normal direction to $\partial_{B,j} \ZR {\wu}$ at the point $\eheta$,  is equal to 
	$$t_{j}/x_{(w^{\eheta})^{-1}(j)}\,.$$
Therefore for $j\in\{1,\dots,n\}$ we have 
$$
\O_{\ZR {\ws}}(\partial_{B,j} \ZR {\ws})_{|\ehet}=
\begin{cases}
	\O_{\ZR {\wu}}(\partial_{B,j} \ZR {\wu})_{|\eheta} & \text{ when } \delta=0\,, \\
	s^x(\O_{\ZR {\wu}}(\partial_{B,j} \ZR {\wu})_{|\eheta}) & \text{ when } \delta=1\,.
\end{cases}
$$
Moreover for $j\in\{1,\dots,l(w)\}$ (see e.g.~\cite[Section 3.2]{RW})
$$
\O_{\ZR {\ws}}(\partial_{j} \ZR {\ws})_{|\ehet}=
\begin{cases}
	\O_{\ZR {\wu}}(\partial_{j} \ZR {\wu})_{|\eheta} & \text{ when } \delta=0\,, \\
	s^x(\O_{\ZR {\wu}}(\partial_{j} \ZR {\wu})_{|\eheta}) & \text{ when } \delta=1\,.
\end{cases}
$$
The claim follows from Corollary \ref{krotnoscdual}.
\end{proof}

\begin{ex}
	For the dual resolution 
	$$\overline B\times_B P_2\times_B P_3\to \End(\C^4)$$
	we have
	\begin{multline*}s_3\cdot s_2\cdot \hbox{\footnotesize $
			\left(
			\begin{array}{cccc}
				a_{11} & a_{12} & a_{13} & a_{14} \\
				0 & a_{22} & a_{23} & a_{24} \\
				0 & 0 & a_{33} & a_{34} \\
				0 & 0 & 0 & a_{44} \\
			\end{array}
			\right)
			\cdot
			\left(
			\begin{array}{cccc}
				1 & 0 & 0 & 0 \\
				0 & 1 & 0 & 0 \\
				0 & u & 1 & 0 \\
				0 & 0 & 0 & 1 \\
			\end{array}
			\right)
			\cdot
			\left(
			\begin{array}{cccc}
				1 & 0 & 0 & 0 \\
				0 & 1 & 0 & 0 \\
				0 & 0 & 1 & 0 \\
				0 & 0 & v & 1 \\
			\end{array}
			\right)$}=\\
		=\hbox{\footnotesize $
			\left(
			\begin{array}{cccc}
				a_{11} & a_{12}+u a_{13} & a_{13}+v
				a_{14} & a_{14} \\
				0 & u a_{33} & a_{33}+v a_{34} &
				a_{34} \\
				0 & 0 & v a_{44} & a_{44} \\
				0 & a_{22}+u a_{23} & a_{23}+v a_{24}
				& a_{24} \\
			\end{array}
			\right)$}\,.\end{multline*}
	The minors are equal to
	$$a_{11},\quad u\,a_{11} a_{33} ,\quad u v\,a_{11}
	a_{33} a_{44} ,\quad a_{11} a_{22}
	a_{33} a_{44}\,.$$
	Therefore in the neighbourhood of the point corresponding to the empty word the divisor $D_{w,\lambda}$ is equal to
		$$\dv\big(a_{11}^{\lambda_1-\lambda_2}( u\,a_{11} a_{33})^{\lambda_2-\lambda_3}(u v\,a_{11}
		a_{33} a_{44})^{\lambda_3-\lambda_4}(a_{11} a_{22}
		a_{33} a_{44})^{\lambda_4}\big)=$$
		$$=(\lambda_2-\lambda_4)\dv(u)+(\lambda_3-\lambda_4)\dv(v)+\lambda_1 \dv(a_1)+\lambda_4 \dv(a_2)+\lambda_2 \dv(a_3)+\lambda_3 \dv(a_4).
		$$
\end{ex}

\section{Proof of Theorem \ref{indukcja_macierzowa}}
\label{dowodindukcji}
Let us recall the notation
$$L_{i,x}=\frac{x_{i+1}}{x_i}\,, \qquad L_{i,t}=\frac{t_{i+1}}{t_i}\,. $$
\subsection{Left induction}
For an arbitrary element $w\in W$ with a reduced word decomposition $\wu$ we define
\begin{align*}
	\mClocDL{\wu}{\lambda}&:=
	\frac{\mC((\ZL{\wu})^\oo \subset \ZL {\wu})\cdot \O_{\ZL {\wu}}(\lceil \mu_{\wu}^*(D_{\sigma,\lambda})\rceil)}{\eu(T\ZL {\wu})}  \,.
\end{align*}
 Let $s=s_i$ be a simple reflection corresponding to the root $\alpha_i$. Suppose $w\in W$ is a Weyl group element such that $l(sw)>l(w)$. Fix a reduced word decomposition $\wu$ of $w$ and the resulting decomposition $\sw$ of $sw$.
\begin{lemma} \label{lem:ind1}
	Let $\eheta$ be a binary sequence corresponding to a fixed point in $\ZL \wu$. Let $\ehet$ be a binary sequence of the form $\ehet=(\delta,\eheta)$, where $\delta\in\{0,1\}$. We have the following equalities in the localized K-theory of a point $S^{-1}\KTh_{\T^2}(pt)[y]$.
	\begin{enumerate}
		\item $$\eu(T\ZL \sw)_{|\pehet}=
		\begin{cases}
			\eu(T \ZL \wu)_{|\peheta}
			\cdot(1-L^{-1}_{i,t}) &\text{ when } \delta=0\,, \\
			s^t(\eu(T \ZL \wu)_{|\peheta})
			\cdot(1-L_{i,t})&\text{ when } \delta=1\,.
		\end{cases}
	$$
		\item
		$$
		O_{\ZL {\sw}}(\lceil\mu_{\sw}^*(D_{sw,\lambda})\rceil)_{|\ehet}=
		\begin{cases}
			\O_{\ZL {\wu}}(\lceil\mu_{\sw}^*(D_{w,\lambda})\rceil)_{|\eheta}
			\cdot L_{i,t}^{\lceil \langle w\lambda,\alpha_{i}^\vee \rangle
				\rceil}   & \text{ when } \delta=0\,, \\
			s^t(\O_{\ZL {\wu}}(\lceil\mu_{\sw}^*(D_{w,\lambda})\rceil)_{|\eheta}) & \text{ when } \delta=1\,.
		\end{cases}
		$$
		
		\item $$\mC((\ZL\sw)^\oo  \subset \ZL \sw)_{|\pehet}=
		\begin{cases}
			\mC((\ZL \wu)^\oo  \subset \ZL \wu)_{|\peheta}\cdot(1+y)L^{-1}_{i,t} & \text{ when } \delta=0\,, \\
			s^t(\mC((\ZL \wu)^\oo  \subset \ZL \wu)_{|\peheta})\cdot\left(1+yL_{i,t}\right)  & \text{ when } \delta=1\,.
		\end{cases}$$
		
		\item
		$$\mClocDL{\sw}{s\lambda}_{|\pehet}=
		\begin{cases}
			\mClocDL{\wu}{\lambda}{}_{|\peheta}\cdot
			(1+y)\cdot\frac{ L_{i,t}^{\left\lceil \langle w\lambda,\alpha_{i}^\vee \rangle
			\right\rceil-1}}{1-L^{-1}_{i,t}}
			& \text{ when } \delta=0\,,\\
			s^t(\mClocDL{\wu}{\lambda}{}_{|\peheta})\cdot
			\frac{1+yL_{i,t}}{1-L_{i,t}} & \text{ when } \delta=1\,.
		\end{cases}
		$$
	\end{enumerate}
\end{lemma}

\begin{proof} The  following map is well defined and it is $B\times B$-equivariant:
$$\varpi:\ZL {\ws}\longrightarrow P_i/B\simeq \PP^1
\,,\qquad 
\varpi([p_0,p_1,p_2,\dots,p_l,x])=[p_0]\,.$$
Let $F_{\id}, F_s$ be the fibers of $\varpi$ over $[\id],[s]\in P_{i}/B$ respectively. The subvariety $F_{\id} \subset \ZL {sw}$ coincides with divisor $\partial_1\ZL \sw$. It is equivariantly isomorphic to $\ZL \wu$. The variety $F_s$ is also isomorphic to $\ZL \wu$. This isomorphism is equivariant after the twist of the $\T^2$-torus action by~$s^t$. 
The fixed point $\pehet$ lies either in $F_{\id}$ when $\delta=0$ or in $F_{s}$ when $\delta=1$.
\\
\\
{\bf 1)} Let $ T_\varpi$ be the relative tangent bundle of $\varpi$.  We have
$$\eu(T\ZL \sw)=\eu(T_\varpi)(1-\varpi^*[T^*P_i/B])\,. $$
We observe that
$$
(T_\varpi)_{\pehet}=
\begin{cases}
	(T\ZL \wu)_{|\peheta} & \text{ for } \delta=0 \,, \\
	s^t(T\ZL \wu)_{|\peheta} & \text{ for } \delta=1 \,,
\end{cases}
\quad\text{and}\quad
\varpi^*(T^* P_{i}/B)_{|\varpi(\pehet)}=
\begin{cases}
	L^{-1}_{i,t} & \text{ for } \delta=0 \,, \\
	L_{i,t} & \text{ for } \delta=1 \,.
\end{cases}
$$
The claim follows from the above formulas. \\
\\
{\bf 2)} We have
$$\O_{\ZL \sw}(\partial_1 \ZL \sw)=\O_{\ZL \sw}(F_{\id}) =\varpi^*\O_{P_i/B}([\id])\,. $$
Therefore
$$\O_{\ZL \sw}(\partial_1 \ZL \sw)_{|\pehet}=\O_{P_i/B}([\id])_{|\varpi(\pehet)}=
\begin{cases}
	L_{i,t} & \text{ for } \delta=0 \,, \\
	1 & \text{ for } \delta=1 \,.
\end{cases}$$
The claim follows from Proposition \ref{cor:krotnosc2}. \\
\\
{\bf 3)}
Suppose that $\delta=0$. Then $\pehet \in F_{\id}$. The divisor $\partial Z_\sw$ is SNC. Therefore (cf. \cite[Lemma 9.7]{KonW})
\begin{align*}
	\mC((\ZL\sw)^\oo  \subset \ZL \sw)_{|F_{\id}} 
	&=(1+y)\cdot \O_{\ZL \sw}(-F_{\id})_{|F_{\id}} \cdot \mC(F_{\id}^\oo  \subset F_{\id})\\
	&=(1+y)L^{-1}_{t,i} \cdot \mC((\ZL \wu)^\oo  \subset \ZL \wu) \,.
\end{align*}
Suppose that $x=1$. The divisor $\partial Z_\sw + F_s$ is SNC, therefore 
\begin{align*} 
	\mC((\ZL\sw)^\oo  \subset \ZL \sw)_{|F_s}
	&=(1+y\O_{\ZL \sw}(-F_{s})_{|F_{s}}) \cdot \mC(F_{s}^\oo  \subset F_{s})\\
	&=(1+yL_{t,i}) \cdot s^t(\mC((\ZL \wu)^\oo  \subset \ZL \wu)) \,.
\end{align*}
{\bf 4)} The last point follows from (2), (3) and Definition \ref{df:twistedmC}. 
\end{proof}
\begin{lemma} \label{lewaindukcja}
	We have
	$$\Tcl{i}{w\lambda}\big(\mCm(w,\lambda)\big)=\mCm(sw,\lambda)\,.$$
\end{lemma}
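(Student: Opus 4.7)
The plan is to apply the Lefschetz--Riemann--Roch formula (Theorem \ref{tw:LRR}) to the composition $\widehat\iota_{sw}\circ\mu_{\sw}\colon \ZL{\sw}\to\End(\C^n)$ and to use the decomposition of fixed points provided by the projection $\varpi\colon \ZL{\sw}\to P_i/B\simeq\PP^1$ introduced in the proof of Lemma \ref{lem:ind1}. The target $\End(\C^n)$ is a $\T^2$-representation with a single fixed point $0$, and the $\T^2$-equivariant Euler class of its tangent space equals $\EE$. Thus, after fixing a reduced decomposition $\wu$ of $w$ and the induced decomposition $\sw$ of $sw$, the LRR formula gives
$$\frac{\mCm(sw,\lambda)}{\EE}=\sum_{\pehet\in(\ZL{\sw})^{\T^2}}\mClocDL{\sw}{\lambda}_{|\pehet}\,,$$
and analogously for $w$ with $\ZL{\wu}$ in place of $\ZL{\sw}$.

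Next, I would split each fixed point $\pehet\in(\ZL{\sw})^{\T^2}$ as $(\delta,\peheta)$ with $\delta\in\{0,1\}$ and $\peheta\in(\ZL{\wu})^{\T^2}$, and apply Lemma \ref{lem:ind1}\,(4). In each of the two cases the prefactor depends only on the $t$-variables through $L_{i,t}=t_{i+1}/t_i$, so it is constant in $\peheta$ and can be pulled out of the sum. Combined with LRR applied to $\widehat\iota_w\circ\mu_\wu$, this will give
$$\frac{\mCm(sw,\lambda)}{\EE}=(1+y)\,\frac{L_{i,t}^{\lceil\langle w\lambda,\alpha_i^\vee\rangle\rceil-1}}{1-L_{i,t}^{-1}}\cdot\frac{\mCm(w,\lambda)}{\EE}+\frac{1+yL_{i,t}}{1-L_{i,t}}\cdot s^t_i\!\left(\frac{\mCm(w,\lambda)}{\EE}\right)\,.$$
Since $\EE$ is symmetric in the $t$-variables we have $s^t_i(\EE)=\EE$, so multiplying by $\EE$ on both sides yields an identity for $\mCm(sw,\lambda)$ in terms of $\mCm(w,\lambda)$ and $s^t_i(\mCm(w,\lambda))$.

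The final step is a direct comparison of the resulting expression with the explicit formula \eqref{opTL} for $\Tcl{i}{w\lambda}$, using the identifications $\langle w\lambda,\alpha_i^\vee\rangle=(w\lambda)_i-(w\lambda)_{i+1}$ and $t_i/t_{i+1}=L_{i,t}^{-1}$; the two expressions will match term by term. The main technical obstacle is the bookkeeping in Lemma \ref{lem:ind1}\,(4), in particular verifying that the exponent $\lceil\langle w\lambda,\alpha_i^\vee\rangle\rceil-1$ arising there corresponds, after the change $L_{i,t}\mapsto L_{i,t}^{-1}$, to the exponent $1-\lceil(w\lambda)_i-(w\lambda)_{i+1}\rceil$ appearing in the definition of $\Tcl{i}{w\lambda}$. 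Everything else is a consequence of LRR and the $t$-symmetry of $\EE$; note moreover that the $\BB^{-1}$ factor featuring in Theorem \ref{indukcja_macierzowa} may be inserted for free, since $\BB\in\Z[\underline x^{\pm1},y]$ commutes with $\Tcl{i}{w\lambda}$ (which is linear over $\Z[\underline x^{\pm1}]$).
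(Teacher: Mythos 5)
Your proposal is correct and follows essentially the same route as the paper: reduce to the localized classes $\mClocD = \mCm/\EE$, apply LRR to split the sum over fixed points of $\ZL{\sw}$ by $\delta\in\{0,1\}$, invoke Lemma~\ref{lem:ind1}\,(4), and match the result against the explicit formula~\eqref{opTL} for $\Tcl{i}{w\lambda}$. The only difference is cosmetic ordering: the paper clears the $\EE$-factor first (using that multiplication by $\EE$ commutes with $\Tcl{i}{w\lambda}$ because $\EE$ is symmetric in $t$) and then applies LRR, whereas you apply LRR directly to $\widehat\iota_{sw}\circ\mu_{\sw}$ and restore $\EE$ at the end.
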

\begin{proof} Since the Euler  class $\EE$ is symmetric, hence the multiplication by $\EE$ commutes with $\Tcl{i}{w\lambda}$, 
it is enough to prove the equality
$$\Tcl{i}{w\lambda}\big(\mClocD(w,\lambda)\big)=\mClocD(sw,\lambda)\,.$$
	By Definition \ref{df:DL}
	$$\mCm(sw,\lambda)=\mu_{\sw*} \left( \mC((\ZL\sw)^\oo \subset \ZL \sw)\cdot \O_{Z_\sw}(\lceil \mu_\ws^*D_{sw,\lambda}\rceil)\right)\,. $$
	The LRR formula (Theorem \ref{tw:LRR}) implies that 
	the local twisted motivic Chern class $\mClocD(sw,\lambda)$ is equal to the sum
	$$\mClocD(sw,\lambda)=\sum_{\ehet\in (\ZL \sw)^\T} \mClocDL{\sw}{\lambda}_{|\ehet}\,.$$
	The above sum splits as the sum of two subsums: those with $\ehet=(0,\eheta)$ and with $\ehet=(1,\eheta)$, cf. proof of Lemma \ref{lem:BSind}.
The first summand is equal to
\begin{align} \label{w:6}
	\sum_{\eheta\in (\ZL \wu)^\T}\frac{(1+y)
   L_{i,t}^
{\lceil
	\langle w\lambda,\alpha_{i}^\vee \rangle
	\rceil-1}
   }{1-L_{i,t}^{-1}}
\cdot
\mClocDL{\wu}{\lambda}_{|\eheta}=
\frac{(1+y)\cdot
   L_{i,t}^{\lceil \langle w\lambda,\alpha_{i}^\vee \rangle
   	\rceil-1}
   }{1-L_{i,t}^{-1}}
\cdot
\mClocD(w,\lambda)\,.
\end{align}
The second summand is equal to
\begin{align} \label{w:7}
	\sum_{\eheta\in (\ZL \wu)^\T}
\frac{1+yL_{i,t}}
{1-L_{i,t}}\cdot
s^t(\mClocDL{\wu}{\lambda})_{|\eheta}=
\frac{1+yL_{i,t}}
{1-L_{i,t}}\cdot
s^t
\mClocD(w,\lambda)\,.
\end{align}
The sum \eqref{w:6}+\eqref{w:7} is exactly the definition of $\Tcl{i}{\lambda}\big(\mClocD(w,\lambda)\big)$.
\end{proof}
\begin{proof}[Proof of Theorem \ref{indukcja_macierzowa} (1)]
	The operator $\Tcl{i}{w\lambda}$ commutes with multiplication by the classes $\EE$ and~$\BB$. Therefore the result follows from Lemma \ref{lewaindukcja}.
\end{proof}
\begin{rem}It is possible to write a closed formula for $\mClocDL{\wu}{\lambda}$. The restriction at the point  $\eheta=(\epsilon_1,\epsilon_2,\dots,\epsilon_l)\in\{0,1\}^l$ is equal to
	$$(1+y)^n\cdot w^\epsilon\!\!\left(\prod_{j=1}^n \big(\tfrac{x_j}{t_{j}}\big)^{1-\lceil \lambda_i\rceil} \cdot\prod_{1\leq i< j\leq n}\frac{1+y\, {x_j}/{t_i}}{1- {x_j}/{t_i}}\right)
	\cdot\prod_{j=1}^lw^\epsilon_{<j}(\psi(\wu,\eheta,j))\,,$$
	$$\psi(\wu,\eheta,j)=
	\begin{cases} 
		\frac{(1+y)({t_{i_j}}/{t_{i_j+1}})
			^{1-\lceil a
				\rceil}}
		{1- {t_{i_j}}/{t_{i_j+1}}}
		& \text{ if } \epsilon_j=0\,,~
		a=\lambda_{w^{-1}_{>j}(i_j)}-\lambda_{w_{>j}^{-1}(i_j+1)}\,,\\
		\frac{1+y\, {t_{i_j+1}}/{t_{i_j}}}
		{1- {t_{i_j+1}}/{t_{i_j}}}
		& \text{ if } \epsilon_j=1\,,\\
	\end{cases}
	$$
	where $w^\epsilon$ and $w^\epsilon_{<j}=s_{i_1}^{\epsilon_1}\,s_{i_2}^{\epsilon_2}\dots s_{i_{j-1}}^{\epsilon_{j-1}}$ are acting on the indices of $t$--variables.
	The proof of this formula is a direct application of Lemma \ref{lem:ind1}.\end{rem}
\subsection{Right induction}
For an arbitrary element $w\in W$ with a reduced word decomposition $\underline{w}$ we define
\begin{align*}
	\mClocDR{\underline{w}}{\lambda}&:=
	\frac{\mC((\ZR{\underline{w}})^\oo \subset \ZR {\underline{w}})\cdot \O_{\ZR {\underline{w}}}(\lceil \nu_{\underline{w}}^*(D_{w,\lambda})\rceil)}{\eu(T\ZR {\underline{w}})}\,.
\end{align*}
Let $s=s_i$ be a simple reflection corresponding to the root $\alpha_i$. Suppose $w\in W$ is a Weyl group element such that $l(ws)>l(w)$. Fix a reduced word decomposition $\wu$ of $w$. It induces a reduced word decomposition $\ws$ of $ws$.
\begin{lemma} \label{lem:ind2}
	Let $\eheta$ be a binary sequence corresponding to a fixed point in $\ZR \wu$. Let $\ehet$ be a binary sequence of the form $\ehet=(\eheta,\delta)$, where $\delta\in\{0,1\}$. We have the following equalities in the localized K-theory of a point $S^{-1}\KTh_{\T^2}(pt)[y]$.
	\begin{enumerate}
		\item $$\eu(T\ZR \ws)_{|\pehet}=
		\begin{cases}
			\eu(T \ZR \wu)_{|\peheta}
			\cdot(1-L^{-1}_{i,x}) &\text{ when } \delta=0\,, \\
			s^t(\eu(T \ZR \wu)_{|\peheta})
			\cdot(1-L_{i,x})&\text{ when } \delta=1\,.
		\end{cases}
		$$
		\item
		$$
		O_{\ZR {\ws}}(\lceil\nu_{\ws}^*(D_{ws,s\lambda})\rceil)_{|\ehet}=
		\begin{cases}
			\O_{\ZR {\wu}}(\lceil\nu_{\ws}^*(D_{w,\lambda})\rceil)_{|\eheta}
			\cdot L_{i,x}^{\lceil -\langle \lambda,\alpha_{i}^\vee \rangle
				\rceil}  &  \text{ when } \delta=0\,, \\
			s^x(\O_{\ZR {\wu}}(\lceil\nu_{\ws}^*(D_{w,\lambda})\rceil)_{|\eheta}) & \text{ when } \delta=1\,.
		\end{cases}
		$$
		
		\item $$\mC((\ZR\ws)^\oo  \subset \ZR \ws)_{|\pehet}=
		\begin{cases}
			\mC((\ZR \wu)^\oo  \subset \ZR \wu)_{|\peheta}\cdot(1+y)L^{-1}_{i,x} & \text{ when } \delta=0\,, \\
			s^x(\mC((\ZR \wu)^\oo  \subset \ZR \wu)_{|\peheta})\cdot\left(1+yL_{i,x}\right)  & \text{ when } \delta=1\,.
		\end{cases}$$
		
		\item
		$$\mClocDR{\ws}{s\lambda}_{|\pehet}=
		\begin{cases}
			\mClocDR{\wu}{\lambda}{}_{|\peheta}\cdot
			(1+y)\cdot\frac{ L_{i,x}^{\left\lceil -\langle \lambda,\alpha_{i} ^\vee\rangle
					\right\rceil}-1}{1-L^{-1}_{i,x}} &
			\text{ when } \delta=0\,,\\
			s^x(\mClocDR{\wu}{\lambda}{}_{|\peheta})\cdot
			\frac{1+yL_{i,x}}{1-L_{i,x}} & \text{ when } \delta=1\,.
		\end{cases}
		$$
	\end{enumerate}
\end{lemma}

\begin{proof} The proof differs from the proof of Lemma \ref{lem:ind1} by  switching right with left actions.
We employ the natural map to the left coset $B\backslash P_i\simeq \PP^1$:  
	$$\varpi:\ZR {\ws}\longrightarrow B\backslash P_i\,,\qquad 
	\varpi([x,p_1,p_2,\dots,p_{l(w)+1}])=[p_{l(w)+1}]\,.$$
The map $\varpi$ is $B\times B$-equivariant. Let $F_{\id}, F_s$ be the fibers of $\varpi$ over $[\id],[s]\in P_{i}/B$ respectively. The subvariety $F_{\id} \subset \ZR {ws}$ coincides with the divisor $\partial_{l(ws)}\ZR \ws$. It is equivariantly isomorphic to $\ZR \wu$. The variety $F_s$ is also isomorphic to $\ZR \wu$. This isomorphism is equivariant after the twist of the $\T^2$-torus action by~$s^x$. 
	The fixed point $\pehet$ lies either in $F_{\id}$ when $\delta=0$ or in $F_{s}$ when $\delta=1$.
\medskip

The rest of the proof is essentially a repetition of the proof for the left induction. The small differences are caused by the fact that the divisors $D_{w,\lambda}$ are not symmetric with respect to switching from the left to the right action. We leave the details to the reader. 
\end{proof}
\begin{adf}
	Let
	$$\Tcrr{i}{\lambda}(f)=\frac{1+y\,L_{i,x}
		}{1-L_{i,x}}\cdot s^x
	+
	\frac{(1+y) \cdot L_{i,x}^{\lceil -\langle\lambda,\alpha_i^\vee\rangle\rceil-1}}{1-L_{i,x}^{-1} } \cdot \id
	\,.$$
	\end{adf}
Note that 
	$\Tcrr{i}{\lambda}$ is equal to $\Tfl{i}{-\langle \lambda,\alpha_{i}^\vee \rangle}$ acting on $x$--variables.
\begin{rem}
	The operator $\Tcrr{i}{\lambda}$ differs from $\Tcr{i}{\lambda}$ by conjugation by $1+y\,L_{i,x}$, thus it satisfies
\begin{equation}\label{sprzezenie}\BB^{-1}\Tcrr{r}{\lambda}(f)=\Tcr{r}{\lambda}(\BB^{-1}f)\,.\end{equation}
\end{rem}
Repeating the proof of Lemma \ref{lewaindukcja} we obtain
\begin{lemma} \label{prawaindukcja}
	We have
	$$\Tcrr{i}{\lambda}\big(\mClocD(w,\lambda)\big)=\mClocD(ws,s\lambda)\,.$$
\end{lemma}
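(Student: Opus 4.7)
The plan is to mimic the strategy of Theorem~\ref{tw:1}, but to replace the Bott--Samelson resolution $Z_\wu\to X_w$ by the two enlarged resolutions $\ZL\wu\to\XX_w$ (for the left recursion) and $\ZR\wu\to\XX_w$ (for the right recursion) introduced in Section~\ref{sec:matrix_resolution}. First I would observe that the operator $\Tcl{i}{w\lambda}$ is $\Z[\underline x^{\pm 1}]$-linear and the operator $\Tcr{i}{\lambda}$ is (up to conjugation by $\BB$, see \eqref{sprzezenie}) $\Z[\underline t^{\pm 1}]$-linear. Since $\BB\in\Z[\underline x^{\pm1},y]$ and $\EE$ is symmetric in both $\underline t$ and $\underline x$ separately, it suffices to prove the two intermediate statements
\[
\Tcl{i}{w\lambda}\bigl(\mClocD(w,\lambda)\bigr)=\mClocD(sw,\lambda),\qquad
\Tcrr{i}{\lambda}\bigl(\mClocD(w,\lambda)\bigr)=\mClocD(ws,s\lambda),
\]
after which multiplying by $\EE$ and then by $\BB^{-1}$ yields the two claimed identities.

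For the left identity, fix a reduced word $\wu$ of $w$ and use the reduced word $\sw$ of $sw$. The key geometric input is that $\ZL\sw$ carries a $B\times B$-equivariant $\PP^1$-fibration $\varpi:\ZL\sw\to P_i/B$ whose two distinguished fibers $F_{\id}=\partial_1\ZL\sw$ and $F_s$ are both isomorphic to $\ZL\wu$ (the second isomorphism being equivariant only after twisting by $s^t$). This identifies the fixed point set $(\ZL\sw)^\T$ with two copies of $(\ZL\wu)^\T$ indexed by $\delta\in\{0,1\}$, and allows computation of the restrictions of $eu(T\ZL\sw)$, of $\mC((\ZL\sw)^\oo\subset\ZL\sw)$ (via the SNC normal crossing formula of \cite[lemma~9.7]{KonW}), and of $\O_{\ZL\sw}(\lceil\mu_\sw^*D_{sw,\lambda}\rceil)$ (using Proposition~\ref{krotnosc} for the $B$-boundary contribution and the standard Bott--Samelson analysis for the remaining components). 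The outcome packages into a formula of the shape of Lemma~\ref{lem:ind1}, and after applying LRR to $\mCm(sw,\lambda)=\mu_{\sw*}(\mC((\ZL\sw)^\oo\subset \ZL\sw)\cdot\O_{Z_\sw}(\lceil\mu_\sw^*D_{sw,\lambda}\rceil))$ and splitting the sum over fixed points according to $\delta$, the two pieces assemble precisely into the two summands of $\Tcl{i}{w\lambda}$ as defined by \eqref{opTL} acting on $\mClocD(w,\lambda)$.

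For the right identity I would apply the same argument to $\ZR\ws$, using instead the $B\times B$-equivariant projection $\varpi:\ZR\ws\to B\backslash P_i$ onto the last factor, and the divisor multiplicities of Proposition~\ref{krotnoscdual}. The key sign/variable change is that the $B$-boundary of $\ZR\wu$ carries normal weights $t_j/x_{(w^\eheta)^{-1}(j)}$, whereas for the left resolution they are $t_{w^\eheta(j)}/x_j$; this is precisely what forces the recursion to be expressed via the operator $\Tcrr{i}{\lambda}$ acting on $\underline x$-variables, rather than the operator $\Tcr{i}{\lambda}$. Formally, the analogue of Lemma~\ref{lem:ind1} (namely Lemma~\ref{lem:ind2}) and the LRR summation give Lemma~\ref{prawaindukcja}; combining this with the conjugation identity $\BB^{-1}\Tcrr{i}{\lambda}(f)=\Tcr{i}{\lambda}(\BB^{-1}f)$ of \eqref{sprzezenie} completes the proof.

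The main technical obstacle will be the bookkeeping of the pullback divisor $\mu_\wu^*D_{w,\lambda}$ (and its right counterpart $\nu_\wu^*D_{w,\lambda}$) along both the SNC components $\partial_j\ZL\wu$ inherited from Bott--Samelson \emph{and} the new $B$-boundary components $\partial_{B,j}\ZL\wu$, together with verifying that the ceilings $\lceil\cdot\rceil$ interact correctly with pullbacks (Proposition~\ref{pro:round}). The clean formula in Proposition~\ref{krotnosc}, obtained from the explicit testing curve computation \eqref{krotnosckrzywej} of a $2\times 2$ minor of $w_{>j}^{-1}s_{i_j}E_{i_j}(u)w_{>j}$, is what makes the exponents $\lceil\langle w\lambda,\alpha_i^\vee\rangle\rceil$ in $\Tcl{i}{w\lambda}$ materialize out of the geometry; the rest of the proof is a near-verbatim adaptation of Lemmas~\ref{lem:BSind0}--\ref{lem:BSind} and of the proof of Theorem~\ref{tw:1}.
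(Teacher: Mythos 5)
Your proposal is correct and follows essentially the same route as the paper: transport the proof of Lemma~\ref{lewaindukcja} to the right resolution $\ZR\ws$ with the $\PP^1$-fibration $\varpi:\ZR\ws\to B\backslash P_i$ onto the last factor, use Lemma~\ref{lem:ind2} (the right analogue of Lemma~\ref{lem:ind1}) together with LRR, and split the fixed-point sum by $\delta$ to assemble the two summands of $\Tcrr{i}{\lambda}$ acting on $\underline x$-variables. The closing remark about the conjugation identity~\eqref{sprzezenie} belongs to the proof of Theorem~\ref{indukcja_macierzowa}(2) rather than of this lemma, but that does not affect the validity of your argument here.
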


\begin{proof}[Proof of Theorem \ref{indukcja_macierzowa} (2)] We apply Proposition \ref{prawaindukcja} and the formula \eqref{sprzezenie}. The Euler class $\EE$ commutes with the operator $\Tcr{i}{\lambda}$, hence we  obtain the recursion for $\EE\BB^{-1}\mClocD(w,\lambda)=\BB^{-1}\mCm(w,\lambda)$.
\end{proof}

\section{Twisted double Hecke algebra of the general type}\label{final}
Let $G$ be a semisimple, simply-connected Lie group with a maximal torus $\T$. Consider the representation  ring of~$\T^2$. The characters of the first factor are decorated with the superscript $t$, the characters of the second factor with $x$. 
For a simple reflection $s$ let 
$$L_{s,t}=\alpha_s^{-1}\otimes 1\in R(\T)\otimes R(\T)\simeq R(\T^2)\,,$$
$$L_{s,x}=1\otimes \alpha_s^{-1}\in R(\T)\otimes R(\T)\simeq R(\T^2)\,,$$
$$\Tfl{s}{a}(f)
=\frac{1+y\, L_{s,t}}{1-L_{s,t}} s^tf+ \frac{(1+y)L_{s,t}^{\lceil a\rceil-1}}{1-L_{s,t}^{-1} }
 f\,,$$

$$\Tfr{s}{a}(f)
=\frac{1+y\, L_{s,x}^{-1}}{1-L_{s,x}} s^xf+ \frac{(1+y)L_{s,x}^{\lceil a\rceil-1}}{1-L_{s,x}^{-1} }
 f\,,$$
$$\Tcl{s}{\lambda}=\Tfl{s}{\langle\lambda,\alpha_s^\vee\rangle}\,,\qquad\Tcr{s}{\lambda}=\Tfr{s}{-\langle\lambda,\alpha_s^\vee\rangle}\,.$$
Here the left and right operations are indexed by simple reflections. It is justified to call
the algebra generated by both types of operations (depending on parameters) 
{\it the twisted double Hecke algebra of the general type}.

\begin{rem}
	For $G=\SL_n$ we obtain operators coinciding with the operators from Section \ref{s:universal}, up to a slight change of notation. For the simple reflection $s_i=(i,i+1)$ we have
	$$
	\Tcl{s_i}{\lambda}:=\Tcl{i}{\lambda}\,,\qquad \Tcr{s_i}{\lambda}:=\Tcr{i}{\lambda}\,.
	$$
	The operators defined above are a generalization to the case of a general semisimple, simply-connected Lie group.
\end{rem}
As in the $A_n$ case the right operators commute with left operators.
By construction $\Tcr{s}{\lambda}$ and $\Tcl{s}{\lambda}$ are lifts of the operators $\DL{s,\lambda}$ and $\DLL{s,\lambda}$ given in Definition \ref{df:DL}. Precisely, let $$\kappa:R(\T^2)\to \KTh_\T(G/B)$$
be the surjection sending $L_{s,x}$ to $\LL(\alpha_s)$ and $L_{s,t}$ to $\alpha_s^{-1}=\C_{-\alpha_s}$ viewed as an element of the coefficient ring $\KTh_\T(pt)\simeq R(\T)$. Then
$$\DL{s,\lambda}\circ\kappa=\kappa\circ\Tcr{s}{\lambda}\,,\qquad 
\DLL{s,\lambda}\circ\kappa=\kappa\circ\Tcl{s}{\lambda}\,.$$ 
The operators $\Tcr{s}{\lambda}$ and $\Tcl{s}{\lambda}$ satisfy quadratic relations from Propositions \ref{uniwersalny_braid} and \ref{universalny_braid_l}, because when only one reflection is involved,  one can assume that  $G=\SL_2$.
\begin{atw}\label{kwogolny} Suppose $a,b\in\Q$. If
$a\not\in\Z$, then
$$\Tf{s}{b}\circ\Tf{s}{a}=(y+1)\frac{L^{\lceil -a\rceil}-L^{\lceil b\rceil}}{1-L}\Tf{s}{a}-y\,\id\,.$$
If $a\in \Z$
$$\Tf{s}{b}\circ\Tf{s}{a}=(y+1)\frac{L^{\lceil 1-a\rceil}-L^{\lceil b\rceil}}{1-L}\Tf{s}{a}-y\,\id\,,$$
where $L=L_{s,x}$ or $L_{s,t}$ and $\Tf{s}{c}=\Tfr{s}{c}$ or $\Tfl{s}{c}$ correspondingly, $c\in\{a,b\}$,
\end{atw}
\begin{proof}
Directly by the definition above
$$
\Tf{s}{b}=\Tf{s}{-a}+(y+1)\frac{L^{\lceil -a\rceil}-L^{\lceil b\rceil}}{1-L}\id\,.$$
Hence, if $a\not \in \Z$, by Lemma \ref{quadratic}
$$\Tf{s}{b}\circ\Tf{s}{a}=\Tf{s}{-a}\circ\Tf{s}{a}+(y+1)\frac{L^{\lceil -a\rceil}-L^{\lceil b\rceil}}{1-L}\Tf{s}{a}=-y\,\id+(y+1)\frac{L^{\lceil -a\rceil}-L^{\lceil b\rceil}}{1-L}\Tf{s}{a}\,.$$
For $a\in \Z$ we  replace $-a$ by $1-a$ in the above formula.\end{proof}

The braid relations 
for simply-laced groups already follow from the $\SL_3$ case. It remains to check the braid relations for the $C_2$ and $G_2$ cases. The proofs are straightforward generalizations of the proof of Proposition \ref{universal_braid}. We state the braid relation without the superscripts $\textsc l$, $\textsc r$:  

\subsection{Type $C_2$}
$$\Tcc{1}{  s_2 s_1 s_2 \lambda}\, 
  \Tcc{2}{ s_1 s_2 \lambda}\, \Tcc{1}{ s_2 \lambda}\, \Tcc{2}{  \lambda}
=
 \Tcc{2}{ s_1 s_2 s_1 \lambda}\, 
  \Tcc{1}{ s_2 s_1 \lambda}\, \Tcc{2}{  s_1 \lambda}\, \Tcc{1}{ \lambda}\,.
$$
In the standard coordinates of $\ttt\subset \mathfrak{sp}_2$ the simple roots are the following
$$\alpha_1=(1,-1)\,,\qquad \alpha_2=(0,2)\,.$$ 
Writing the  weight $\lambda=(a,b)$ in coordinates we have the identity

$$\Tf{1}{a-b}\,\Tf{2}{a}\,\Tf{1}{a+b}\,\Tf{2}{b}\,=\,\Tf{2}{b}\,\Tf{1}{a+b}\,\Tf{2}{a}\,\Tf{1}{a-b}\,.$$

\subsection{Type $G_2$}

\begin{multline*}\Tcc{1}{ s_2 s_1 s_2 s_1 s_2 \lambda}
  \Tcc{2}{ s_1 s_2 s_1 s_2 \lambda} 
   \Tcc{1}{ s_2 s_1 s_2 \lambda} 
    \Tcc{2}{ s_1 s_2 \lambda} 
     \Tcc{1}{  s_2 \lambda} \Tcc{2}{ \lambda} =\\
= \Tcc{2}{ s_1 s_2 s_1 s_2 s_1 \lambda} 
  \Tcc{1}{  s_2 s_1 s_2 s_1 \lambda} 
   \Tcc{2}{ s_1 s_2 s_1 \lambda} 
    \Tcc{1}{ s_2 s_1 \lambda} \Tcc{2}{s_1 \lambda} \Tcc{1}{ \lambda}\,.
\end{multline*}
For the weight $\lambda=(a,b)$  written in the basis of simple roots $\alpha_1,\alpha_2$ with $|\alpha_1|>|\alpha_2|$ we obtain an equivalent identity
$$\Tf{1}{a}\,\Tf{2}{3 a+3 b}\,\Tf{1}{2 a+3 b}\,\Tf{2}{3 a+6 b}\,\Tf{1}{a+3 b}\,\Tf{2}{3
   b}
=\Tf{2}{3 b}\,\Tf{1}{a+3 b}\,\Tf{2}{3 a+6 b}\,\Tf{1}{2 a+3 b}\,\Tf{2}{3
   a+3 b}\,\Tf{1}{a}\,.$$
\bigskip

We hope to provide a geometric meaning of $\Tcr{s}{\lambda}$ and $\Tcl{s}{\lambda}$ for a semisimple group $G$ of a general type. The matrix Schubert varieties would be replaced by Borel orbits contained in the nilpotent cone. This topic will be explored in future research. It goes far beyond the scope of the current paper.


\begin{thebibliography}{AMSS24}
	
	\bibitem[AM16]{AM}
	P.~Aluffi and L.~C. Mihalcea.
	\newblock Chern-{S}chwartz-{M}ac{P}herson classes for {S}chubert cells in flag
	manifolds.
	\newblock {\em Compos. Math.}, 152(12):2603--2625, 2016.
	
	\bibitem[AMSS22]{AMSSnew}
	P.~Aluffi, L.~C. Mihalcea, J.~Schürmann, and C.~Su.
	\newblock From motivic {C}hern classes of {S}chubert cells to their
	{H}irzebruch and {C}{S}{M} classes, 2022.
	\newblock arXiv:2212.12509.
	
	\bibitem[AMSS23]{AMSS0}
	P.~Aluffi, L.~C. Mihalcea, J.~Sch\"urmann, and C.~Su.
	\newblock Shadows of characteristic cycles, {V}erma modules, and positivity of
	{C}hern-{S}chwartz-{M}ac{P}herson classes of {S}chubert cells.
	\newblock {\em Duke Math. J.}, 172(17):3257--3320, 2023.
	
	\bibitem[AMSS24]{AMSS}
	P.~Aluffi, L.~C. Mihalcea, J.~Sch\"urmann, and C.~Su.
	\newblock Motivic {C}hern classes of {S}chubert cells, {H}ecke algebras, and
	applications to {C}asselman's problem.
	\newblock {\em Ann. Sci. \'Ec. Norm. Sup\'er. (4)}, 57(1):87--141, 2024.
	
	\bibitem[AO21]{OA}
	M.~Aganagic and A.~Okounkov.
	\newblock Elliptic stable envelopes.
	\newblock {\em J. Amer. Math. Soc.}, 34(1):79--133, 2021.
	
	\bibitem[BFM75]{BFM}
	P.~Baum, W.~Fulton, and R.~MacPherson.
	\newblock Riemann-{R}och for singular varieties.
	\newblock {\em Inst. Hautes \'{E}tudes Sci. Publ. Math.}, (45):101--145, 1975.
	
	\bibitem[BGG73]{BGG}
	I.~N. Bern\v{s}te\u{\i}n, I.~M. Gelfand, and S.~I. Gelfand.
	\newblock Schubert cells, and the cohomology of the spaces {$G/P$}.
	\newblock {\em Uspehi Mat. Nauk}, 28(3(171)):3--26, 1973.
	
	\bibitem[BK05]{BK}
	M.~Brion and S.~Kumar.
	\newblock {\em Frobenius splitting methods in geometry and representation
		theory}, volume 231 of {\em Progress in Mathematics}.
	\newblock Birkh\"{a}user Boston, Inc., Boston, MA, 2005.
	
	\bibitem[BL03]{BoLi1}
	L.~Borisov and A.~Libgober.
	\newblock Elliptic genera of singular varieties.
	\newblock {\em Duke Math. J.}, 116(2):319--351, 2003.
	
	\bibitem[BP19]{BenderPerrin}
	M.~Bender and N.~Perrin.
	\newblock Singularities of closures of {$B$}-conjugacy classes of nilpotent
	elements of height 2.
	\newblock {\em Transform. Groups}, 24(3):741--768, 2019.
	
	\bibitem[Bri97]{Brion2}
	M.~Brion.
	\newblock Equivariant {C}how groups for torus actions.
	\newblock {\em Transform. Groups}, 2(3):225--267, 1997.
	
	\bibitem[Bri05]{BrionLec}
	M.~Brion.
	\newblock Lectures on the geometry of flag varieties.
	\newblock In {\em Topics in cohomological studies of algebraic varieties},
	Trends Math., pages 33--85. Birkh\"{a}user, Basel, 2005.
	
	\bibitem[Bri15]{Brion}
	M.~Brion.
	\newblock On linearization of line bundles.
	\newblock {\em J. Math. Sci. Univ. Tokyo}, 22(1):113--147, 2015.
	
	\bibitem[BSY10]{BSY}
	J.-P. Brasselet, J.~Sch{\"u}rmann, and S.~Yokura.
	\newblock Hirzebruch classes and motivic {C}hern classes for singular spaces.
	\newblock {\em J. Topol. Anal.}, 2(1):1--55, 2010.
	
	\bibitem[Car05]{CarLieAlg}
	R.~W. Carter.
	\newblock {\em Lie algebras of finite and affine type}, volume~96 of {\em
		Cambridge Studies in Advanced Mathematics}.
	\newblock Cambridge University Press, Cambridge, 2005.
	
	\bibitem[CG10]{ChGi}
	N.~Chriss and V.~Ginzburg.
	\newblock {\em Representation theory and complex geometry}.
	\newblock Modern Birkh\"{a}user Classics. Birkh\"{a}user Boston, Inc., Boston,
	MA, 2010.
	\newblock Reprint of the 1997 edition.
	
	\bibitem[Dem74]{Demazure}
	M.~Demazure.
	\newblock D\'{e}singularisation des vari\'{e}t\'{e}s de {S}chubert
	g\'{e}n\'{e}ralis\'{e}es.
	\newblock {\em Ann. Sci. \'{E}cole Norm. Sup. (4)}, 7:53--88, 1974.
	
	\bibitem[Deo87]{Deo}
	V.~V. Deodhar.
	\newblock On some geometric aspects of {B}ruhat orderings. {II}. {T}he
	parabolic analogue of {K}azhdan-{L}usztig polynomials.
	\newblock {\em J. Algebra}, 111(2):483--506, 1987.
	
	\bibitem[EV98]{EitingofVarchenko}
	P.~Etingof and A.~Varchenko.
	\newblock Solutions of the quantum dynamical {Y}ang-{B}axter equation and
	dynamical quantum groups.
	\newblock {\em Comm. Math. Phys.}, 196(3):591--640, 1998.
	
	\bibitem[Fel95]{Felder}
	G.~Felder.
	\newblock Conformal field theory and integrable systems associated to elliptic
	curves.
	\newblock In {\em Proceedings of the {I}nternational {C}ongress of
		{M}athematicians, {V}ol. 1, 2 ({Z}\"{u}rich, 1994)}, pages 1247--1255.
	Birkh\"{a}user, Basel, 1995.
	
	\bibitem[FR03]{FR2}
	L.~M. Feh\'er and R.~Rim\'anyi.
	\newblock Schur and {S}chubert polynomials as {T}hom polynomials---cohomology
	of moduli spaces.
	\newblock {\em Cent. Eur. J. Math.}, 1(4):418--434, 2003.
	
	\bibitem[FR18]{FR}
	L.~M. Feh\'{e}r and R.~Rim\'{a}nyi.
	\newblock Chern-{S}chwartz-{M}ac{P}herson classes of degeneracy loci.
	\newblock {\em Geom. Topol.}, 22(6):3575--3622, 2018.
	
	\bibitem[FRW20]{FRWproc}
	L.~M. Feh\'{e}r, R.~Rim\'{a}nyi, and A.~Weber.
	\newblock Characteristic classes of orbit stratifications, the axiomatic
	approach.
	\newblock In {\em Schubert calculus and its applications in combinatorics and
		representation theory}, volume 332 of {\em Springer Proc. Math. Stat.}, pages
	223--249. Springer, Singapore, [2020] \copyright 2020.
	
	\bibitem[FRW21]{FRW}
	L.~M. Feh\'{e}r, R.~Rim\'{a}nyi, and A.~Weber.
	\newblock Motivic {C}hern classes and {K}-theoretic stable envelopes.
	\newblock {\em Proc. Lond. Math. Soc. (3)}, 122(1):153--189, 2021.
	
	\bibitem[GKV97]{GKV}
	V.~Ginzburg, M.~Kapranov, and E.~Vasserot.
	\newblock Residue construction of {H}ecke algebras.
	\newblock {\em Adv. Math.}, 128(1):1--19, 1997.
	
	\bibitem[GN17]{GorNeg}
	E.~Gorsky and A.~Negu\c{t}.
	\newblock Infinitesimal change of stable basis.
	\newblock {\em Selecta Math. (N.S.)}, 23(3):1909--1930, 2017.
	
	\bibitem[Hir56]{chiy}
	F.~Hirzebruch.
	\newblock {\em Neue topologische {M}ethoden in der algebraischen {G}eometrie}.
	\newblock Ergebnisse der Mathematik und ihrer Grenzgebiete (N.F.), Heft 9.
	Springer-Verlag, Berlin-G\"{o}ttingen-Heidelberg, 1956.
	
	\bibitem[Hum90]{Hump}
	J.~E. Humphreys.
	\newblock {\em Reflection groups and {C}oxeter groups}, volume~29 of {\em
		Cambridge Studies in Advanced Mathematics}.
	\newblock Cambridge University Press, Cambridge, 1990.
	
	\bibitem[Kem76]{Kempf}
	G.~R. Kempf.
	\newblock Linear systems on homogeneous spaces.
	\newblock {\em Ann. of Math. (2)}, 103(3):557--591, 1976.
	
	\bibitem[KK90]{KostantKumar}
	B.~Kostant and S.~Kumar.
	\newblock {$T$}-equivariant {$K$}-theory of generalized flag varieties.
	\newblock {\em J. Differential Geom.}, 32(2):549--603, 1990.
	
	\bibitem[KM05]{AE}
	A.~Knutson and E.~Miller.
	\newblock Gr\"obner geometry of {S}chubert polynomials.
	\newblock {\em Ann. of Math. (2)}, 161(3):1245--1318, 2005.
	
	\bibitem[Knu03]{Kn}
	A.~Knutson.
	\newblock A {S}chubert calculus recurrence from the noncomplex {W}-action on
	{G/B}, 2003.
	\newblock arXiv:math/0306304.
	
	\bibitem[Kon22]{Kon2}
	J.~Koncki.
	\newblock Comparison of motivic {C}hern classes and stable envelopes for
	cotangent bundles.
	\newblock {\em J. Topol.}, 15(1):168--203, 2022.
	
	\bibitem[KRW20]{KRW}
	S.~Kumar, R.~Rim\'{a}nyi, and A.~Weber.
	\newblock Elliptic classes of {S}chubert varieties.
	\newblock {\em Math. Ann.}, 378(1-2):703--728, 2020.
	
	\bibitem[KW22]{KonW}
	J.~Koncki and A.~Weber.
	\newblock Twisted motivic {C}hern class and stable envelopes.
	\newblock {\em Adv. Math.}, 404:Paper No. 108374, 2022.
	
	\bibitem[Laz04]{multiplier}
	R.~Lazarsfeld.
	\newblock {\em Positivity in algebraic geometry. {II}}, volume~49 of {\em
		Ergebnisse der Mathematik und ihrer Grenzgebiete. 3. Folge. A Series of
		Modern Surveys in Mathematics [Results in Mathematics and Related Areas. 3rd
		Series. A Series of Modern Surveys in Mathematics]}.
	\newblock Springer-Verlag, Berlin, 2004.
	\newblock Positivity for vector bundles, and multiplier ideals.
	
	\bibitem[LS82]{LascouxSchutz}
	A.~Lascoux and M.-P. Sch\"{u}tzenberger.
	\newblock Structure de {H}opf de l'anneau de cohomologie et de l'anneau de
	{G}rothendieck d'une vari\'{e}t\'{e} de drapeaux.
	\newblock {\em C. R. Acad. Sci. Paris S\'{e}r. I Math.}, 295(11):629--633,
	1982.
	
	\bibitem[Lus85]{Lusztig}
	G.~Lusztig.
	\newblock Equivariant {$K$}-theory and representations of {H}ecke algebras.
	\newblock {\em Proc. Amer. Math. Soc.}, 94(2):337--342, 1985.
	
	\bibitem[Mac74]{CSM}
	R.~D. MacPherson.
	\newblock Chern classes for singular algebraic varieties.
	\newblock {\em Annals of Mathematics}, 100(2):423--432, 1974.
	
	\bibitem[MNS22]{MNS}
	L.~C. Mihalcea, H.~Naruse, and C.~Su.
	\newblock Left {D}emazure--{L}usztig {O}perators on {E}quivariant ({Q}uantum)
	{C}ohomology and {K}-{T}heory.
	\newblock {\em Int. Math. Res. Not. IMRN}, (16):12096--12147, 2022.
	
	\bibitem[MO19]{OM}
	D.~Maulik and A.~Okounkov.
	\newblock Quantum groups and quantum cohomology.
	\newblock {\em Ast\'{e}risque}, (408):ix+209, 2019.
	
	\bibitem[MS05]{MillerSturmfels}
	E.~Miller and B.~Sturmfels.
	\newblock {\em Combinatorial commutative algebra}, volume 227 of {\em Graduate
		Texts in Mathematics}.
	\newblock Springer-Verlag, New York, 2005.
	
	\bibitem[MS22]{AMSwhit}
	L.~C. Mihalcea and C.~Su.
	\newblock Whittaker functions from motivic {C}hern classes.
	\newblock {\em Transform. Groups}, 27(3):1045--1067, 2022.
	\newblock With an appendix by Mihalcea, Su and Dave Anderson.
	
	\bibitem[Ohm06]{Oh2}
	T.~Ohmoto.
	\newblock Equivariant {C}hern classes of singular algebraic varieties with
	group actions.
	\newblock {\em Mathematical Proceedings of the Cambridge Philosophical
		Society}, 140(1):115–134, 2006.
	
	\bibitem[Oko17]{O2}
	A.~Okounkov.
	\newblock Lectures on {K}-theoretic computations in enumerative geometry.
	\newblock In {\em Geometry of moduli spaces and representation theory},
	volume~24 of {\em IAS/Park City Math. Ser.}, pages 251--380. Amer. Math.
	Soc., Providence, RI, 2017.
	
	\bibitem[Oko21]{O3}
	A.~Okounkov.
	\newblock Inductive construction of stable envelopes.
	\newblock {\em Lett. Math. Phys.}, 111(6):Paper No. 141, 56, 2021.
	
	\bibitem[OS22]{OS}
	A.~Okounkov and A.~Smirnov.
	\newblock Quantum difference equation for {N}akajima varieties.
	\newblock {\em Invent. Math.}, 229(3):1203--1299, 2022.
	
	\bibitem[RS23]{RimanyiSzenes}
	R.~Rim\'anyi and A.~Szenes.
	\newblock Residues, {G}rothendieck polynomials, and {K}-theoretic {T}hom
	polynomials.
	\newblock {\em Int. Math. Res. Not. IMRN}, (23):20039--20075, 2023.
	
	\bibitem[RSVZ19]{RSVZ}
	R.~Rim\'{a}nyi, A.~Smirnov, A.~Varchenko, and Z.~Zhou.
	\newblock Three-dimensional mirror self-symmetry of the cotangent bundle of the
	full flag variety.
	\newblock {\em SIGMA Symmetry Integrability Geom. Methods Appl.}, 15:Paper No.
	093, 22, 2019.
	
	\bibitem[RTV15]{RTV0}
	R.~Rimányi, V.~Tarasov, and A.~Varchenko.
	\newblock Trigonometric weight functions as {K}-theoretic stable envelope maps
	for the cotangent bundle of a flag variety.
	\newblock {\em Journal of Geometry and Physics}, 94:81 -- 119, 2015.
	
	\bibitem[RTV19]{RTV}
	R.~Rim\'{a}nyi, V.~Tarasov, and A.~Varchenko.
	\newblock Elliptic and {K}-theoretic stable envelopes and {N}ewton polytopes.
	\newblock {\em Selecta Math. (N.S.)}, 25(1):Art. 16, 43, 2019.
	
	\bibitem[RV18]{RV}
	R.~Rim\'{a}nyi and A.~Varchenko.
	\newblock Equivariant {C}hern-{S}chwartz-{M}ac{P}herson classes in partial flag
	varieties: interpolation and formulae.
	\newblock In {\em Schubert varieties, equivariant cohomology and characteristic
		classes---{IMPANGA} 15}, EMS Ser. Congr. Rep., pages 225--235. Eur. Math.
	Soc., Z\"{u}rich, 2018.
	
	\bibitem[RW20]{RW}
	R.~Rimányi and A.~Weber.
	\newblock Elliptic classes of {S}chubert varieties via {B}ott–{S}amelson
	resolution.
	\newblock {\em Journal of Topology}, 13(3):1139--1182, 2020.
	
	\bibitem[RW22a]{RWLag}
	R.~Rim\'{a}nyi and A.~Weber.
	\newblock Elliptic classes on {L}anglands dual flag varieties.
	\newblock {\em Commun. Contemp. Math.}, 24(1):Paper No. 2150014, 15, 2022.
	
	\bibitem[RW22b]{RudnickiW}
	P.~Rudnicki and A.~Weber.
	\newblock Characteristic classes of {B}orel orbits of square-zero
	upper-triangular matrices.
	\newblock {\em J. Algebra}, 598:351--384, 2022.
	
	\bibitem[Smi21]{SmirHilb2}
	A.~Smirnov.
	\newblock Quantum differential and difference equations for
	$\mathrm{Hilb}^{n}(\mathbb{C}^2)$, 2021.
	\newblock arXiv:2102.10726.
	
	\bibitem[Su17]{Su15}
	C.~Su.
	\newblock Restriction formula for stable basis of the {S}pringer resolution.
	\newblock {\em Selecta Math. (N.S.)}, 23(1):497--518, 2017.
	
	\bibitem[SY07]{SYp}
	J.~Sch\"{u}rmann and S.~Yokura.
	\newblock A survey of characteristic classes of singular spaces.
	\newblock In {\em Singularity theory}, pages 865--952. World Sci. Publ.,
	Hackensack, NJ, 2007.
	
	\bibitem[SZ20]{SZsurv}
	C.~Su and C.~Zhong.
	\newblock Stable bases of the springer resolution and representation theory.
	\newblock In {\em Schubert Calculus and Its Applications in Combinatorics and
		Representation Theory}, pages 195--221, Singapore, 2020. Springer Singapore.
	
	\bibitem[SZZ20]{SZZ}
	C.~Su, G.~Zhao, and C.~Zhong.
	\newblock On the {K}-theory stable bases of the {S}pringer resolution.
	\newblock {\em Ann. Sci. \'{E}c. Norm. Sup\'{e}r. (4)}, 53(3):663--711, 2020.
	
	\bibitem[SZZ21]{SZZ2}
	C.~Su, G.~Zhao, and C.~Zhong.
	\newblock Wall-crossings and a categorification of {$K$}-theory stable bases of
	the {S}pringer resolution.
	\newblock {\em Compos. Math.}, 157(11):2341--2376, 2021.
	
	\bibitem[Tho92]{Tho}
	R.~W. Thomason.
	\newblock Une formule de lefschetz en {K}-th{\'e}orie {\'e}quivariante
	alg{\'e}brique.
	\newblock {\em Duke Math. Journal}, 68:447--462, 1992.
	
	\bibitem[Uma13]{Uma}
	V.~Uma.
	\newblock Equivariant {$K$}-theory of flag varieties revisited and related
	results.
	\newblock {\em Colloq. Math.}, 132(2):151--175, 2013.
	
	\bibitem[Web16]{WeHir}
	A.~Weber.
	\newblock Equivariant {H}irzebruch class for singular varieties.
	\newblock {\em Selecta Mathematica}, 22(3):1413--1454, 2016.
	
\end{thebibliography}
\end{document}